\documentclass[11pt]{amsart}
\usepackage{amsmath,amsthm}
\usepackage{bbm}
\usepackage{enumitem}
\usepackage{mathtools}
\usepackage{moreenum}
\usepackage{amssymb,verbatim}
\usepackage{upgreek}
\usepackage{xspace,cmap}
\usepackage{csquotes}
\usepackage{stmaryrd} 
\usepackage[colorlinks,citecolor=blue,urlcolor=black,linkcolor=blue]{hyperref}

\usepackage{mathtools}

\usepackage{pst-node}
\usepackage{tikz-cd} 
\usepackage{todonotes}

\renewcommand{\restriction}{\mathbin\upharpoonright}    
\newtheorem*{theorem*}{Theorem}
\newtheorem*{maintheorem*}{Main Theorem}
\newtheorem*{corollary*}{Corollary}
\newtheorem*{definition*}{Definition}

\newtheorem{theorem}{Theorem}[section]

\newtheorem{prop}[theorem]{Proposition}

\newtheorem*{claim*}{Claim}

\newtheorem{lemma}[theorem]{Lemma}
\newtheorem{cor}[theorem]{Corollary}

\newtheorem{question}{Question}
\newtheorem{fact}[theorem]{Fact}

\theoremstyle{definition}

\newtheorem{definition}[theorem]{Definition}
\newtheorem{notation}[theorem]{Notation}

\theoremstyle{remark}
\newtheorem{remark}[theorem]{Remark}

\newcommand\ZFC{\textnormal{ZFC}}
\newcommand\ZF{\textnormal{ZF}}
\newcommand\HOD{\textnormal{HOD}}
\newcommand\gHOD{\textnormal{gHOD}}
\newcommand\OD{\textnormal{OD}}
\newcommand{\Add}[2]{{\rm{Add}}({#1},{#2})}

\newcommand\cat[1]{{}^\curvearrowright #1}

\newcommand*\axiomfont[1]{\textsf{\textup{#1}}}

\newcommand\dc{\axiomfont{DC}}

\newcommand\gch{\axiomfont{GCH}}

\newcommand\ale[1]{\marginpar{Alejandro: #1}}

\newcommand{\seq}[2]{\langle{#1}~\vert~{#2}\rangle}
\newcommand{\map}[3]{{#1}:{#2}\longrightarrow{#3}}
\newcommand{\pmap}[4]{{#1}:{#2}\xrightarrow{#4}{#3}}
\newcommand{\ran}[1]{{{\rm{ran}}(#1)}}
\newcommand{\cof}[1]{{{\rm{cof}}(#1)}}
\newcommand{\Set}[2]{\{{#1}~\vert~{#2}\}}

\newcommand{\Lim}{{\mathrm{Lim}}}
\newcommand{\CCC}{{\mathbb{C}}}
\newcommand{\PPP}{{\mathbb{P}}}
\newcommand{\QQQ}{{\mathbb{Q}}}
\newcommand{\calC}{{\mathcal{C}}}
\newcommand{\calF}{{\mathcal{F}}}
\newcommand{\calU}{{\mathcal{U}}}

\DeclareMathOperator{\crit}{crit}

\DeclareMathOperator{\ob}{OB}
\DeclareMathOperator{\Ult}{Ult}
\DeclareMathOperator{\id}{id}

    \def\sq{\sqsubseteq}
    
    \newcommand{\one}{\mathop{1\hskip-3pt {\rm l}}}

\makeatletter
\newcommand{\tpitchfork}{%
  \vbox{
    \baselineskip\z@skip
    \lineskip-.52ex
    \lineskiplimit\maxdimen
    \m@th
    \ialign{##\crcr\hidewidth\smash{$-$}\hidewidth\crcr$\pitchfork$\crcr}
  }%
}
\makeatother

\def\s{\subseteq}
\def\forces{\Vdash}

\DeclareMathOperator{\otp}{otp}

\DeclareMathOperator{\ord}{Ord}

\renewcommand{\mid}{\mathrel{|}\allowbreak}
\newcommand{\mc}{\mathop{\mathrm{mc}}\nolimits}
\newcommand{\dom}{\mathop{\mathrm{dom}}\nolimits}

\newcommand{\Col}{\mathop{\mathrm{Col}}}

\title[]{The effects of forcing on exacting and ultraexacting cardinals}

\author[Lücke]{Philipp Lücke}
\address[Lücke]{Fachbereich Mathematik, Universit\"at Hamburg, Bundesstraße 55, Hamburg, 20146,
Germany}
\email{philipp.luecke@uni-hamburg.de}
\author[Poveda]{Alejandro Poveda}
\address[Poveda]{Departamento de Matemáticas, CUNEF Universidad,  Madrid, 28040, Spain}
\email{alejandro.poveda@cunef.edu}
\urladdr{https://www.alejandropovedaruzafa.com}

\subjclass[2020]{03E35, 03E45, 03E55, 03E57}
\keywords{Exacting cardinals, Ultraexacting cardinals, Prikry-type forcing, Ordinal definability, HOD Conjecture.}

\begin{document}

\begin{abstract}
 Motivated by recent work of Aguilera--Bagaria--Lücke in \cite{ABL} and Aguilera--Bagaria--Goldberg--Lücke in \cite{ABLG}, we analyze various  effects of set-theoretic forcing upon the classes of exacting and ultraexacting cardinals. Using Magidor support products of Prikry forcings, we prove that  an I2-embedding yields a transitive, set-sized model of ZFC with a proper class of exacting cardinals. 
 We show that under Woodin's HOD Conjecture, no exacting cardinal can be a limit of exacting cardinals. In contrast, we prove that  models of ZF containing large cardinals beyond choice have forcing extensions that are  models of ZFC in which a regular  cardinal  is a stationary limit of ultraexacting cardinals. 
 We also show that, assuming the HOD Conjecture, an analogue of the classical Levy--Solovay theorem holds for exacting and ultraexacting cardinals. 
 Finally, we analyze the large cardinal properties possessed by exacting cardinals in HOD. We prove that if $\lambda$ is exacting and $V_\lambda$ satisfies the HOD Hypothesis, then $\lambda$ has strong large cardinal properties in $\HOD$. Carrying forward  forcing constructions from \cite{ABLG},  we start with a model of ZFC with an I2-embedding and produce a model of ZF in which the successor of an exacting cardinal $\lambda$ is  extendible in all models of the form $\HOD_x$ for  $x\s \lambda$.
\end{abstract}

\maketitle



\section{Introduction}\label{sec:Intro}

\emph{Large cardinal axioms} postulate the existence of uncountable cardinals whose properties resemble those of the smallest infinity, the cardinality $\aleph_0$ of the set of natural numbers. 
The combinatorial properties of these cardinals are strong enough to imply the consistency of $\ZFC$\footnote{This stands for the axioms of Zermelo-Fraenkel set theory with the Axiom of Choice.} and therefore  the Gödel's Incompleteness Theorems show that their existence can not be proven within this axiom system. 
Some of the earliest  examples of large cardinals are \emph{weakly inaccessible cardinals}, discovered by Hausdorff \cite{Hausdorff} in his work on the classification of  linear orders, and  \emph{weakly compact cardinals}, isolated by Erd\"os and Tarski \cite{ErdosTarski}    through their investigations of Ramsey-like theorems for uncountable cardinals. 
Besides their numerous applications across various fields of mathematics (see, for example, \cite{BagaCasa,AbadDodosTodorcevic,MagShe,Solovay}),  large cardinal axioms have played a pivotal role in  modern set theory in their capacity of canonical candidates for axioms extending the usual foundation of mathematics provided by the axioms of $\ZFC$. This thesis, vividly defended by  Gödel in the 30's (see \cite{Maddy}),  has been reinforced after a number of recent developments. Indeed, it has been demonstrated that large cardinal axioms enable a neat hierarchical classification of all mathematical theories extending $\ZFC$  through so-called \emph{equiconsistency results} (see, for example, \cite{KoellnerPlato}).

\smallskip

In his work on the construction of \emph{canonical inner models} for very large cardinals \cite{SEM1}, Woodin proved that strong large cardinal assumptions (\emph{extendible cardinals}) yield a surprising dichotomy: the set-theoretic universe ($V$) is either very close to Gödel's class of  \emph{Hereditarily Ordinal Definable sets} ($\HOD$) or it is very far from this inner model.  
%
Subsequent work of Goldberg in \cite{MR4693981} yields a  dichotomy akin to Woodin's assuming the existence of weaker large cardinals (i.e., \emph{strongly compact cardinals}).

The HOD dichotomy (see \S\ref{sec:HODhypothesis}) raises various far-reaching questions -- the most pressing one being if it is  a dichotomy at all. Indeed, it is widely open whether a model of $\ZFC$ that contains sufficiently strong large  cardinals (e.g., strongly compact cardinals) and is far from its $\HOD$ can be constructed starting from assumptions that are not outright incompatible with ZFC. 
Motivated by this, Woodin formulated the \emph{HOD Conjecture}, stating\footnote{The official formulation of the $\HOD$ Conjecture can be found in \S\ref{sec:HODhypothesis} below.} that $\ZFC$ proves that the existence of an extendible cardinal implies that $V$ and $\HOD$ are closed to each other. Woodin showed that this conjecture has  deep connections to central open problems in contemporary set theory; among others,  the construction of canonical inner models for supercompact cardinals and the consistency of \emph{large cardinals beyond choice} \cite{BagKoelWoo}. 

\smallskip

In \cite[Section 7]{SEM1}, Woodin discusses extensions of $\ZFC$ whose consistency would refute the $\HOD$ Conjecture. 
One of the axioms considered is the existence of a non-trivial elementary embedding $\map{j}{V_{\lambda+1}}{V_{\lambda+1}}$ with the property that for every $\Sigma_2$-formula $\varphi(v)$ and every $x\in V_{\lambda+1}$, the statement $\varphi(x)$ holds in $V$ if and only if the statement $\varphi(j(x))$ holds in $V$. 
The results of \cite{SEM1} then show that this assumption implies that $\lambda^+$ is a measurable cardinal in $\HOD$ and its consistency with the axioms of $\ZFC$ follows from the consistency of $\ZFC$  with \emph{Axiom I0}.\footnote{I.e., the existence of a non-trivial elementary embedding $\map{j}{L(V_{\lambda+1})}{L(V_{\lambda+1})}$ with $\crit(j)<\lambda$.} 
In particular, the consistency of $\ZFC$ with the existence of such an embedding at a cardinal $\lambda$ that is greater than an extendible cardinal would disprove the $\HOD$ Conjecture. 
Results of Woodin in \cite{SEM1} show that the consistency
of this constellation is derived from the consistency of $\ZF$  with certain large cardinals beyond choice. 

\smallskip

We now outline a line of research that arrives at the above axiom from a very different angle (see also \cite{PNAS}). 
In \cite{Huge}, Bagaria and the first author investigate strong model-theoretic reflection principles with the aim of finding a natural principle whose validity is equivalent to large cardinal axioms stronger than the category-theoretic postulate \emph{Vop\v{e}nka's Principle}.

This work left open the question of the consistency of the strongest of these principles. A continuation of this project with Aguilera in \cite{ABL} settled this question through the isolation of the notion of \emph{exacting cardinals}. 
This property can be formulated as a direct weakening of \emph{rank-Berkeleyness}, introduced by Goldberg and Schlutzenberg in \cite{GoSch24}. Here, a cardinal $\lambda$ is defined to be \emph{rank-Berkeley} if for all ordinals $\alpha<\lambda<\zeta$, there is a non-trivial elementary embedding $\map{j}{V_\zeta}{V_\zeta}$ such that  $\alpha<\crit(j)<\lambda$ and $\lambda$ is the first non-trivial fixed point of $j$. 
While the \emph{Kunen Inconsistency} implies that $\ZFC$ proves the non-existence of rank-Berkeley cardinals, the results of \cite{ABL}  show that a natural weakening of this property that still possesses many of the interesting combinatorial properties of rank-Berkeleyness is compatible with $\ZFC$:

\begin{definition}[{\cite{ABL}}]
    A cardinal $\lambda$ is called \emph{exacting} if for all ordinals $\alpha<\lambda<\zeta$, there is an elementary submodel $X$ of $V_\zeta$ with $V_\lambda\cup\{\lambda\}\s X$ and an elementary embedding $\map{j}{X}{V_\zeta}$ with $j\restriction\alpha=\id_\alpha$, $j\restriction \lambda\neq \id_{\lambda}$ and $j(\lambda)=\lambda$.  
\end{definition}

It can then be shown that the strongest reflection principles studied in \cite{Huge} hold at exacting cardinals. Moreover, the results of \cite{ABL} show that Axiom I0 implies the existence of a set-sized model of ZFC with an exacting cardinal. This result was later improved significantly in \cite{ABLG} (see also Corollary \ref{corollary:ExactingModels}\eqref{item:ExactingModels1} below), where it is shown that the existence of such a model follows from the existence of an \emph{I2-embedding}, {i.e.,} an elementary embedding $\map{j}{V}{M}$ of  $V$ into an inner model $M$ satisfying $V_\lambda\subseteq M$ holds, where $\lambda$ is the first non-trivial fixed point of $j$ (see \S\ref{subsection:RankIntoRank} for details).

It is shown in \cite{ABL} that the domains of elementary embeddings witnessing the exactingness of a cardinal $\lambda$ are always missing elements of $V_{\lambda+1}$. Therefore, a canonical way to strengthen the notion of exactingness is to demand that these domains contain certain elements of $V_{\lambda+1}$. 
The consistency proofs of \cite{ABL,ABLG}  first produce the elementary submodels that are to form the domains of the desired embeddings, and then obtain these functions as branches through trees of partial embeddings. In particular, these constructions cannot ensure that longer initial segments of these embeddings are contained in the domain models. These considerations  lead to the following definition: 
\begin{definition}[{\cite{ABL}}]
    A cardinal $\lambda$ is called \emph{ultraexacting} if for all ordinals $\alpha<\lambda<\zeta$, there is an elementary submodel $X$ of $V_\zeta$ with $V_\lambda\s X$ and an elementary embedding $\map{j}{X}{V_\zeta}$ with $j\restriction\alpha=\id_\alpha$, $j\restriction \lambda\neq \id_{\lambda}$, $j(\lambda)=\lambda$ and $j\restriction V_\lambda\in X$.
\end{definition}
In \cite{ABL}, this property is shown to be equivalent to what is, in a certain respect, a maximal strengthening of the model-theoretic reflection principles studied in \cite{Huge}. 
Moreover, in \cite{ABLG}, it is proven to be equivalent to the above-mentioned  property formulated by Woodin in \cite{SEM1}. 
Finally, it is shown in \cite{ABLG} how results in \cite{ABL} and \cite{Brink} can be combined to conclude that the consistency of $\ZFC$ with the existence of an ultraexacting cardinal is equivalent to the consistency of $\ZFC$ with Axiom I0. 

Besides answering the questions raised by the results of \cite{Huge}, the analysis of \cite{ABL} and \cite{ABLG} reveals that
exacting and ultraexacting cardinals possess various interesting structural properties. 
In particular, a short argument shows that exacting cardinals are singular cardinals that are regular in $\HOD$ and it therefore follows that the $\HOD$ Conjecture predicts the non-existence of such cardinals  above an extendible cardinal. 
Conversely, the consistency results of Woodin in \cite{SEM1} discussed above show that the consistency of $\ZFC$ with this constellation would follow from the consistency of $\ZF$ with certain large cardinals beyond choice. 
%
%

\smallskip

The goal of this paper is to further advance the systematic study of exacting and ultraexacting cardinals. 
These investigations are guided by far-reaching questions that connect these notions to central themes of set theory and whose answers should be based on substantial new insights into these topics.  
The first of these questions arises directly from the interplay between exacting cardinals and the covering properties of $\HOD$  described above:

\begin{question}\label{Q01}
    Does $\ZFC$ prove the non-existence of exacting cardinals above an extendible cardinal? 
\end{question}

We approach this question by considering the consequences of the given cardinal configuration and investigating whether the  consistency of these consequences can  be derived from large cardinal assumptions that are widely believed to be consistent. 
 The first  consequence we consider is the unprovability of bounds on the number of exacting cardinals. 
Note that, since the existence of an exacting cardinal above an ordinal $\alpha$ can be expressed by a $\Sigma_3$-formula with parameter $\alpha$ (see {\cite[Section 3]{ABL}}) and  extendible cardinals are $\Sigma_3$-correct (see {\cite[Proposition 23.10]{MR1994835}}), it follows that the existence of an exacting cardinal above an extendible cardinal implies the existence of a model of $\ZFC$ with a proper class of exacting cardinals. 
In contrast, the 
results of \cite{ABL} and \cite{ABLG} only produce models 
with a single exacting cardinal.\footnote{Notably, it can be arranged that the construction in the proof of {\cite[Theorem 5.4]{ABLG}} (see Corollary \ref{corollary:ExactingModels}\eqref{item:ExactingModels1} below) produces a model of the form $\HOD_{\vec{\kappa}}$, where $\vec{\kappa}$ is a strictly increasing $\omega$-sequence of cardinals that is Prikry-generic over $\HOD$ and whose supremum is an exacting cardinal. The analysis of \cite{ABL} then shows that this supremum is the only exacting cardinal in this model.} 
The following result, proven in \S\ref{section:ProperClassExacting}, now answers the arising question about the consistency strength of this consequence:

\begin{theorem}\label{thm: a proper class of exactings}
   If there exists an I2-embedding, then there exists a transitive set-sized model of the theory $$\ZFC ~ + ~ "\textit{There is a proper class of exacting cardinals}".$$ 
\end{theorem}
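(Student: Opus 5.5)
Starting from an I2-embedding $\map{j}{V}{M}$ with first non-trivial fixed point $\lambda$, we first pass to a suitable $\HOD$-like inner model in which many cardinals below $\lambda$ carry the measures needed for Prikry forcing. We then force with a Magidor-support product of Prikry forcings to add cofinal $\omega$-sequences to a proper class, relative to a set-sized target model, of such cardinals at once. The abstract indicates the overall strategy. The single-exacting construction of \cite{ABLG} (Corollary~\ref{corollary:ExactingModels}) makes the supremum of a Prikry-generic sequence over $\HOD$ exacting in $\HOD_{\vec\kappa}$. To get many exacting cardinals we replicate that construction simultaneously at unboundedly many points.

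\textbf{Step 1: find the points.} Using the iterates $j^n$ and the $\Sigma$-elementarity of I2 embeddings restricted to $V_\lambda$, we reflect. I would show that there is a closed unbounded set of $\mu<\lambda$ and an inaccessible $\theta<\lambda$ such that $V_\theta$ contains unboundedly many $\mu$ of a certain type. Each such $\mu$ should be the first fixed point of an embedding $j_\mu:V_{\mu}\to V_\mu$ (or an I3-type fragment) that extends to $V_{\mu+1}$ suitably within the I2 framework. Concretely, I would take $\theta$ to be a large enough ordinal below $\lambda$ with $V_\theta\prec_{\Sigma_n} V_\lambda$, and use the fact that $\lambda$ is a limit of cardinals $\mu$ admitting rank-into-rank embeddings with the relevant $\Sigma_1$-correctness. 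This is the property that \cite{ABLG} uses to produce one exacting cardinal, and it reflects down because I2 at $\lambda$ implies that many I3-type $\mu<\lambda$ exist.

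\textbf{Step 2: force.} For each such $\mu$ in $V_\theta$ (working in $\HOD$, or in an inner model $N$ with enough choice where these $\mu$ are regular and carry normal measures, e.g.\ via the $\mathrm{crit}$-sequence of $j_\mu$), we pick a Prikry forcing $\PPP_\mu$ adding a cofinal $\omega$-sequence $\vec\kappa_\mu$ through the critical sequence. We then take the Magidor-support product $\PPP=\prod_\mu\PPP_\mu$. Magidor-style supports, i.e.\ full support but only finitely many non-direct extensions, give a Prikry lemma and preserve enough of the structure so that cardinals are not collapsed and that $\mu$ becomes of cofinality $\omega$. Let $G$ be generic and consider $W=N[G]\cap V_\theta$, or $\HOD_{\langle\vec\kappa_\mu\rangle}$ computed inside the appropriate model.

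\textbf{Step 3: verify exactingness at each $\mu$.} Here we imitate the tree-of-partial-embeddings argument from \cite{ABL,ABLG}. Given $\alpha<\mu<\zeta<\theta$, we build an elementary $X\prec V_\zeta^W$ containing $V_\mu^W\cup\{\mu\}$, using Skolem hulls with the generic sequences. We then find $i:X\to V_\zeta^W$ fixing $\mu$ and moving something below $\mu$. To do so we use the Prikry property to see that the generic for the factors other than $\PPP_\mu$ is sufficiently generic over the structure witnessing exactingness at $\mu$. Then $j_\mu$ can be lifted, since the tail of $\vec\kappa_\mu$ is fixed setwise by the shift and the other coordinates are either below $\mathrm{crit}$ (handled by small forcing) or above $\mu$ (handled by the closure of direct extensions), and absoluteness of the existence of a branch through the tree of partial embeddings carries this into $W$. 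I expect this step, specifically the factorization of the Magidor-support product around a single coordinate $\mu$ together with the lifting argument, to be the main obstacle. My first attempt would be to split $\PPP\cong\PPP_{<\mu}\times\PPP_\mu\times\PPP_{>\mu}$, show that $\PPP_{>\mu}$ adds no new subsets of $V_{\mu+1}$ via a direct-extension closure or Prikry argument, and treat $\PPP_{<\mu}$ as absorbed below the relevant embedding. Finally, $V_\theta^W$ (or $W$ truncated at an inaccessible $\theta$ that is a limit of these $\mu$) is a transitive set model of $\ZFC$ in which the exacting $\mu$ are unbounded, i.e.\ a proper class.
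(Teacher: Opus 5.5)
There is a genuine gap, and it starts with your choice of points. A first non-trivial fixed point $\mu$ of an I3-embedding has countable cofinality and carries no normal measure, so there is no Prikry forcing at $\mu$ over $V$. Your fallback, working in $\HOD$ or in ``an inner model $N$ where these $\mu$ are regular and carry normal measures'', is not supported by anything.

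For a single point, the model that does this job is the iterate $M_\omega$ of an I2-extender. It works only because the critical sequence itself is Prikry-generic over $M_\omega$ and $j\restriction M_\omega[\vec\kappa]$ maps this model to itself. Different $\mu$ give different iterates, so there is no single $N$ in which all of them are measurable with their critical sequences jointly generic for a Magidor product. Once you force with an arbitrary generic $G$ over some $N$, the argument in Step 3 that ``the shift fixes the tail of $\vec\kappa_\mu$'' is lost. Besides, $j_\mu$ is only defined on $V_\mu$ and cannot be lifted to a model containing the measure on $\mu$.

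The idea you are missing is to reflect the \emph{conclusion} of the single-cardinal result rather than the embedding. By Corollary~\ref{corollary:ExactingModels}\eqref{item:ExactingModels2}, Prikry forcing with the normal measure $\calU$ derived from $j$ makes $\kappa=\crit(j)$ exacting. This is a statement about $V_\lambda$ and its small generic extensions. Since $j(V_\lambda)=V_\lambda$, the set of measurable $\rho<\kappa$ carrying a normal measure whose Prikry forcing makes $\rho$ exacting over $V_\lambda$ belongs to $\calU$ (Lemma~\ref{lemma:BelowKappa}). One then forces over $V$ with the Magidor product on a discrete subset $D$ of this set. The desired model is $V[G]_\kappa$, using Corollary~\ref{corollary:MagidorSupportPreserveInaccessible}, and a countable such model is pulled back to $V$ by Shoenfield absoluteness.

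Even with the right points, your treatment of the coordinates above $\delta$ would fail. That $\PPP_{D\setminus(\delta+1)}$ adds no new subsets of $V_{\delta+1}$ is necessary but not sufficient. Exactingness of $\delta$ in the final model depends on which subsets of $V_{\delta+1}$ are ordinal definable there, or equivalently on embeddings $X\to V_\zeta$ with $\zeta$ above the whole forcing. Lifting a witness through the tail forcing would require it to respect both $\PPP_{D\setminus(\delta+1)}$ and the tail generic, and nothing in your setup controls that.

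The paper avoids lifting altogether. It first forces $V=\gHOD$ in $V_\lambda$ while preserving the I2-embedding (Lemma~\ref{lemma:I2andVHOD}). Then the tail forcing is ordinal definable in $W=V_\lambda[G_{<\delta}\times g]$. It is also weakly homogeneous and adds no elements of $V_{\delta+\omega}$, so every $\OD$ subset of $V_{\delta+1}$ in $V_\lambda[G]$ lies in $W$ and is $\OD$ there. In $W$, $\delta$ is exacting by Lemma~\ref{lemma:ExactingSmallForcing}, and the characterization of Theorem~\ref{fact:CharExacting} applied in $W$ finishes the proof. Without some such device, your Step 3 has no argument.
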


By the results of \cite{ABL}, an exacting cardinal must have countable cofinality; however, this does not preclude the possibility that an exacting cardinal might be an $\omega$-limit of exacting cardinals. In the model constructed in the proof of Theorem \ref{thm: a proper class of exactings}, none of the exacting cardinals obtained were limits of exacting cardinals, so it is natural to ask whether this is merely an artifact of the argument or a genuine limitation:

\begin{question}\label{Q02}
    Does $\ZFC$ prove that a limit of exacting cardinals is not an exacting cardinal?
\end{question}

The next observation, proven in \S\ref{sec:ExactingLimits}, shows that, at least under the \emph{Weak HOD Conjecture}\footnote{The statement of this weakening of the $\HOD$ Conjecture can be found in \S\ref{sec:HODhypothesis} below.}, the above question has an affirmative answer, thus establishing the optimality of the method employed in the proof of Theorem \ref{thm: a proper class of exactings}. 
In addition, in order to motivate further work, it gives an example how the validity of this conjecture would also rule out the existence of stronger limits of exacting cardinals.

\begin{theorem}\label{theorem:ExactingLimit}
   If the Weak HOD Conjecture is true, then $\ZFC$ proves the following statements: 
   \begin{enumerate}
       \item\label{item:ExactingLimit1} A limit of
   exacting cardinals is not an exacting cardinal. 

       \item\label{item:ExactingLimit2} For every cardinal $\kappa$ of uncountable cofinality, there is a closed unbounded subset of $\kappa$ that does not contain an exacting cardinal.
   \end{enumerate}    
\end{theorem}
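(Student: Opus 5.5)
The plan is to prove \eqref{item:ExactingLimit1} by contradiction: from a counterexample we extract a set-sized model of $\ZFC$ that contains an extendible cardinal with an exacting cardinal above it. We then check that the Weak HOD Conjecture, combined with Woodin's HOD dichotomy (as recalled in \S\ref{sec:HODhypothesis}), rules out such a model. Part \eqref{item:ExactingLimit2} will then follow by a short club argument.

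First, the consequence of the conjecture. By the results of \cite{ABL}, every exacting cardinal is singular but regular in $\HOD$. The Weak HOD Conjecture states that $\ZFC+{}$``there is an extendible cardinal'' proves that there is a regular cardinal, above the extendible, which is not $\omega$-strongly measurable in $\HOD$. By Woodin's dichotomy, if $\delta$ is extendible and such a regular cardinal exists, then $\HOD$ is close to $V$ above $\delta$: every singular cardinal $\gamma>\delta$ is singular in $\HOD$ and $\gamma^+$ is computed correctly. Hence $\ZFC$ proves that no exacting cardinal lies above an extendible cardinal. This is the only use of the conjecture.

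Now suppose $\lambda$ is exacting and a limit of exacting cardinals. Since exacting cardinals are strong limits (indeed limits of large cardinals), $V_\lambda\models\ZFC$. Apply exactingness with $\alpha=0$ and some $\zeta>\lambda$. This gives $X\prec V_\zeta$ with $V_\lambda\cup\{\lambda\}\s X$ and $\map{j}{X}{V_\zeta}$ with $j(\lambda)=\lambda$ and critical point $\kappa<\lambda$. We then do two things.
\begin{enumerate}
\item We show that $\kappa$ is extendible in $V_\lambda$. For $\eta<\lambda$, the restriction $j\restriction V_\eta\colon V_\eta\to V_{j(\eta)}$ is an elementary embedding with critical point $\kappa$, and it lies in $V_\lambda$. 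To also obtain $j(\kappa)>\eta$, we use that the critical sequence $\kappa_{n+1}=j(\kappa_n)$ is cofinal in $\lambda$, which should follow from \cite{ABL}. Alternatively, we compose iterates of $j\restriction V_\lambda$, or choose $\alpha$ and $j$ more carefully.
\item We pick an exacting cardinal $\mu$ with $\kappa<\mu<\lambda$ and show that $V_\lambda\models$ ``$\mu$ is exacting''. This follows by downward absoluteness. For $\beta<\mu<\zeta'<\lambda$, the witnessing pair $(X',j')$ is a subset of $V_{\zeta'}\times V_{\zeta'}$, so it belongs to $V_\lambda$, and elementarity into $V_{\zeta'}$ is absolute.
\end{enumerate}
Thus $V_\lambda$ is a model of $\ZFC$ with an extendible cardinal below an exacting cardinal, contradicting the previous paragraph.

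For \eqref{item:ExactingLimit2}, let $\kappa$ have uncountable cofinality. If the exacting cardinals below $\kappa$ are bounded, say by $\beta<\kappa$, then the interval $(\beta,\kappa)$ is a club disjoint from them. If they are unbounded, the set of limits of exacting cardinals below $\kappa$ is club in $\kappa$, because $\cof{\kappa}>\omega$. By \eqref{item:ExactingLimit1}, none of its elements is exacting.

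The main obstacle is the extendibility of $\kappa$ in $V_\lambda$, specifically the requirement $j(\kappa)>\eta$ for arbitrarily large $\eta<\lambda$. If cofinality of the critical sequence cannot be extracted from \cite{ABL}, a fallback is to settle for $\kappa$ being $<\lambda$-supercompact in $V_\lambda$. In that case we would need the version of the conjecture and dichotomy that uses supercompactness, following Goldberg's strongly compact version of the dichotomy.
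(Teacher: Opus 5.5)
Your architecture matches the paper's: work inside $V_\lambda$, find an extendible cardinal there below a singular cardinal that is regular in $\HOD$, and contradict the HOD Dichotomy. Your variant differs only in how regularity in $\HOD$ is transferred. You show that some exacting $\mu\in(\kappa,\lambda)$ stays exacting in $V_\lambda$ and apply the $\ZFC$-theorem of \cite{ABL} there, whereas the paper uses $\HOD^{V_\lambda}\subseteq\HOD$; this is a harmless difference. Your club argument for part (2) is the paper's.

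There is, however, a gap exactly where the hypothesis is used. What you state is the HOD Conjecture, not the Weak HOD Conjecture. The Weak HOD Conjecture only says that $\ZFC+{}$``there is an extendible cardinal \emph{with a huge cardinal above it}'' proves the HOD Hypothesis. So your intermediate claim, that $\ZFC$ proves no exacting cardinal lies above an extendible cardinal, does not follow from the conjecture as you invoke it. The final contradiction also needs $V_\lambda$ to contain a huge cardinal above $\kappa$, and you never establish this.

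The repair uses the embedding you already have. The restriction $j\restriction V_\lambda$ is an I3-embedding with critical sequence $\langle\kappa_n\mid n<\omega\rangle$. Its critical point $\kappa$ is huge in $V_\lambda$, witnessed by the normal measure $\{Y\subseteq[\kappa_1]^\kappa\mid j[\kappa_1]\in j(Y)\}$ on $[\kappa_1]^\kappa$, which is an element of $V_\lambda$. By elementarity of $j\restriction V_\lambda$, $\kappa_1=j(\kappa)>\kappa$ is huge in $V_\lambda$ as well. So $V_\lambda$ satisfies $\ZFC$ plus ``there is an extendible cardinal with a huge cardinal above it.'' This is precisely the paper's remark that an I3-embedding of $V_\eta$ yields an extendible cardinal below a huge cardinal in $V_\eta$.

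Two smaller points. First, the obstacle you single out is not one. By the Kunen Inconsistency, the critical sequence of the I3-embedding $j\restriction V_\lambda$ is cofinal in $\lambda$ (see \S\ref{subsection:RankIntoRank}). Hence the $n$-fold composites of $j\restriction V_\lambda$, restricted to $V_\eta$, witness that $\kappa$ is extendible in $V_\lambda$, and no supercompactness fallback is needed. Second, $V_\lambda\models\ZFC$ does not follow from $\lambda$ being a strong limit, since Replacement can fail in $V_\lambda$ for singular $\lambda$ (e.g.\ $\lambda=\beth_\omega$). The right reason is that $V_{\kappa_n}\prec V_{\kappa_{n+1}}$ for all $n$, so $V_\lambda$ is the union of an elementary chain of models $V_{\kappa_n}$ of $\ZFC$.
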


These results raise the question of whether the given statements  can be proven without relying on some version of the $\HOD$ Conjecture. 
Our results will show that, in canonical scenarios in which the $\HOD$ Conjecture is false, these statements cannot be derived from the axioms of $\ZFC$.  
Below, we state a simplified version of the result deriving the strongest conclusion from the strongest assumption.  The definitions of the used assumptions can be found in \S\ref{sec:LCnoAC} and \S\ref{section:LCbeyondChoice}. The proof of this theorem can be found in \S\ref{sec:ExactingLimits}.

\begin{theorem}\label{thm:SimplifiedConsStatLimitUltra}
        If $\ZF$ is consistent with the existence of a cardinal that is both supercompact and totally Reinhardt, then $\ZFC$ is consistent with the existence of a regular cardinal that is a stationary limit of ultraexacting cardinals. 
\end{theorem}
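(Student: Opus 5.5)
We start from a model $V$ of $\ZF$ with a cardinal $\kappa$ that is both supercompact and totally Reinhardt, and pass to a forcing extension of a suitable inner model that satisfies $\ZFC$. The plan is to work inside $\HOD_{V_{\kappa}}$-type models, or more concretely in $\HOD$ of $V$ relativized to suitable parameters. We force over such a choiceful inner model $M$ to add Prikry sequences to stationarily many measurable-like $\lambda<\kappa$. Woodin's analysis shows that below a totally Reinhardt cardinal there are stationarily many $\lambda<\kappa$ carrying rank-into-rank embeddings $j\colon V_{\lambda+2}\to V_{\lambda+2}$. Here $\lambda$ is the first fixed point above the critical point, with further correctness coming from total Reinhardtness. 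We want each such $\lambda$ to become ultraexacting in the final $\ZFC$ model, while $\kappa$ stays regular and the set of such $\lambda$ stays stationary.

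\textbf{Step 1.} Use total Reinhardtness of $\kappa$ to find a stationary set $S\subseteq\kappa$ of cardinals $\lambda$ of countable cofinality. For each $\lambda\in S$ and all $\alpha<\lambda$ there should be embeddings $j\colon V_{\lambda+1}\to V_{\lambda+1}$ with $\crit(j)>\alpha$ that are $\Sigma_2$-correct in the sense of Woodin's formulation quoted in the introduction. By the equivalence proved in \cite{ABLG}, this formulation is equivalent to ultraexactness, so this is exactly the right target. Supercompactness of $\kappa$ in $\ZF$ is used to obtain a choiceful inner model. We would take $N=\HOD_{V_\kappa}$, or $\HOD$ itself, together with an ultrafilter or normal measure on $\kappa$ that survives, so that $\kappa$ remains regular there and the relevant fragments are preserved.

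\textbf{Step 2.} In $N$, each $\lambda\in S$ is regular, since these cardinals are regular in $\HOD$. Force with a Magidor-support (or Easton-support) product of Prikry forcings at the $\lambda\in S$, using normal measures that $N$ sees. Those measures come from restrictions of the embeddings, as in the proof of Theorem \ref{thm: a proper class of exactings}. In the extension $N[G]$, each $\lambda\in S$ has cofinality $\omega$. The Prikry-type analysis (the Prikry property plus the closure and chain condition of the product) shows that $\kappa$ remains regular, and that $S$ remains stationary because the forcing is $\kappa$-cc with a suitable ground-club argument.

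\textbf{Step 3, the main obstacle.} We must show that each $\lambda\in S$ is ultraexacting in $N[G]$. The approach mirrors \cite{ABLG}. $V_{\lambda+1}^{N[G]}$ is definable inside $V$ from $V_{\lambda+1}$ and the generic, which is ordinal-definable modulo the generic. Ideally we choose the generic inside $V$, which is possible when the forcing is small relative to $V$'s structure. The embedding $j\colon V_{\lambda+1}\to V_{\lambda+1}$ from $V$ then restricts and lifts to $N[G]$'s $V_{\lambda+1}$. This requires that $j$ fix the relevant generic pieces, which we arrange by choosing $j$ with critical point above $\alpha$ and using the homogeneity of Prikry forcing. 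The required $\Sigma_2$-correctness in $N[G]$ should follow from $\Sigma_2$-correctness of $j$ in $V$ together with definability of $N[G]$ in $V$. I expect this transfer of correctness, and obtaining $j\restriction V_\lambda$ inside the domain model, to be the hardest point. I would first try to handle it by taking $j$ to be an $\omega$-iterate of a Reinhardt-type embedding, so that $j$ fixes all the relevant parameters.

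Finally, the existence of a transitive model follows by reflecting the construction, which gives the stated consistency result.
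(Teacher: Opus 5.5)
Your proposal breaks down at the choice of the $\ZFC$ model. Step~3, which you set aside as the ``main obstacle'', is in fact the whole content of the theorem.

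\begin{itemize}
\item $\HOD_{V_\kappa}$ is not a model of $\ZFC$. It contains $V_\kappa$, which contains non-trivial elementary embeddings $V_{\lambda+2}\to V_{\lambda+2}$, so by Kunen's argument $V_\kappa$ cannot be well-ordered.
\item $\HOD$ does satisfy $\ZFC$, but it loses the embeddings. Ultraexactness of $\lambda$ in a $\ZFC$ model $W$ needs non-trivial elementary self-maps of $W_{\lambda+1}$ that are \emph{elements} of $W$. Nothing puts $j\restriction\HOD_\lambda$ into $\HOD$ for the embeddings $j$ of $V$. The paper obtains $j\restriction\HOD_\eta\in\HOD$ only under a local HOD Hypothesis assumption (Solovay's splitting argument in the proof of Theorem~\ref{theorem:ExactingInHOD}), which you have no access to here. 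Whether an exacting cardinal even yields an I3-embedding in $\HOD$ is left open.
\item Forcing Prikry sequences over $\HOD$ does not repair this. In \cite{ABLG} and Theorem~\ref{thm: a proper class of exactings}, the method works only because the Prikry sequence is (an image of) the critical sequence of the embedding, so $j$ maps the generic to a tail of itself. Weak homogeneity does not make a given $j$ fix an arbitrary generic. Even in those settings the method gives only exactness, since $j\restriction V_\lambda$ is never placed in the model.
\item Step~2 has a further mismatch. Embeddings with first non-trivial fixed point $\lambda$ induce measures on their critical points, not on $\lambda$, which already has countable cofinality in $V$. So the measures on $\lambda$ in $N$ that your Prikry product needs are never produced.
\end{itemize}

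The missing idea is to add choice to $V_\delta$ by forcing (writing $\delta$ for your $\kappa$) rather than passing to an inner model. Then the $V_{\lambda+1}$ on which the embeddings act is literally the $V_{\lambda+1}$ of the final $\ZFC$ model.

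The paper uses Woodin's partial order $\QQQ_\delta$ from \cite[Theorem 226]{SEM1}. For rank-Berkeley $\lambda$ with $V_\lambda\prec V_\delta$:
\begin{itemize}
\item $\QQQ_\delta$ factors as $\QQQ_\lambda*\dot{\PPP}$.
\item A rank-Berkeley embedding $j\colon V_\zeta\to V_\zeta$ with first fixed point $\lambda$ lifts through $\QQQ_\lambda$ below a master condition $p_j$.
\item Theorem~\ref{theorem:UltraexactingInnerModels} turns this lift into ultraexactness.
\item The ${<}\lambda^+$-closed, weakly homogeneous, ordinal definable tail $\dot{\PPP}$ preserves it (Lemma~\ref{lemma:WoodinForcingExacting}).
\end{itemize}

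Stationarity comes from total Reinhardtness, which survives the small initial part of $\QQQ_\delta$ that forces $\dc$. Let $G_0$ be generic for that part and work in $V[G_0]$.
\begin{itemize}
\item Code into some $A\subseteq V_\delta$ the remaining $\sigma$-closed forcing (the rest of $\QQQ_\delta$ followed by $\Col({<}\delta,V_\delta)$) together with the forcing relation of a name $\dot{C}$ for a club.
\item Take an $A$-super Reinhardt $\kappa<\delta$, and an $A$-respecting $j\colon V[G_0]_\delta\to V[G_0]_\delta$ with critical sequence $\seq{\kappa_n}{n<\omega}$.
\item Use $j$ to build a descending sequence of conditions forcing a point of $\dot{C}$ into each interval $[\kappa_n,\kappa_{n+1})$.
\end{itemize}
Then $\lambda=\sup_n\kappa_n$ is forced into $\dot{C}$. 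It is rank-Berkeley by the argument of Lemma~\ref{lemma:SuperReinhardtExacting} and Proposition~\ref{prop:LocalRankBerkeleySupercompact}, hence ultraexacting in the extension (Lemma~\ref{lemma:StatUltra}). The $\ZFC$ model is $L(V[G]_\delta)[H]$, and ultraexactness transfers to it via Propositions~\ref{prop:ExactingDownToL} and~\ref{prop:PreserveExactingGoodForcing}.
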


In their classical paper \cite{LevySolovay}, Levy and Solovay demonstrated that small forcings do not create new measurable cardinals. 
More recently, Hamkins \cite{GapForcing} extended this result showing that the \emph{Levy--Solovay phenomenon} applies to  various other large cardinals, such as \emph{strong, Woodin, or supercompact}. 
In contrast, it turns out that the Levy--Solovay phenomenon for \emph{rank-into-rank embeddings} (see \S\ref{subsection:RankIntoRank}) is ostensibly more subtle, as demonstrated by Laver \cite{La97} and Woodin \cite{SEM1}. 
While Laver proved in \cite{La97} that the existence of an I3-embedding $\map{j}{V_\lambda}{V_\lambda}$ in a generic extension by a poset $\mathbb{P}\in H(\lambda)$ yields an I3-embedding $\map{i}{V_{\eta}}{V_{\eta}}$ for some cardinal $\eta\leq\lambda$ in the ground model, 
he also showed that it is possible that  forcing with the Levy collapse $\Col(\omega,\omega_1)$ introduces an I3-embedding $\map{j}{V_\lambda}{V_\lambda}$ at a cardinal $\lambda$ that has uncountable cofinality in the ground model (see \cite[p.2]{La97}).  Since the Kunen Inconsistency rules out the existence of non-trivial embedding $\map{j}{V_\eta}{V_\eta}$ for ordinals $\eta$ of uncountable cofinality, this shows that small forcings can create new rank-into-rank embeddings. 
 Woodin has also observed that similar phenomena take place for I0-embeddings.

Since the results of \cite{ABLG} show that exacting and ultraexacting cardinals can be characterized through direct strengthenings of rank-into-rank axioms (see Theorems~\ref{fact:CharExacting} and \ref{fact:CharUltraexacting}), it is now natural to ask whether similiar results can be proven for these notions or  whether the Levy--Solovay phenomenon can be extended to them:

\begin{question}\label{Q03}
    Can small forcings create new exacting and ultraexacting cardinals?
\end{question}

We approach this question by proving an analog of the main result of \cite{La97} for exacting and ultraexacting cardinals: 

\begin{theorem}\label{theorem:LevySolovayBeyondHOD}
  Let $\lambda$ be a cardinal and let $\PPP\in H(\lambda)$ be a partial order. 
  \begin{enumerate}
      \item\label{item:LevySolovayBeyondHOD1} If $\one\Vdash_\PPP``\textit{$\lambda$ is an exacting cardinal }"$, then the following statements hold: 
       \begin{enumerate}
           \item If $\lambda$ has countable cofinality, then $\lambda$ is an exacting cardinal. 

           \item If $\lambda$ has uncountable cofinality, then every closed unbounded subset of $\lambda$ contains an exacting cardinal. 
       \end{enumerate}

        \item\label{item:LevySolovayBeyondHOD2} If $\one\Vdash_\PPP``\textit{$\lambda$ is an ultraexacting cardinal }"$, then the following statements hold: 
       \begin{enumerate}
           \item If $\lambda$ has countable cofinality, then $\lambda$ is an ultraexacting cardinal. 

           \item If $\lambda$ has uncountable cofinality, then every closed unbounded subset of $\lambda$ contains an ultraexacting cardinal. 
       \end{enumerate}
  \end{enumerate}
\end{theorem}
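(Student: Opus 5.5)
Since $\PPP\in H(\lambda)$, we fix $\delta<\lambda$ with $\PPP\in V_\delta$; the forcing is then small relative to $\lambda$. The plan is to pull witnessing embeddings from $V[G]$ back to the ground model. We use the characterizations of exactingness and ultraexactingness through strengthened rank-into-rank embeddings from \cite{ABLG} (Theorems~\ref{fact:CharExacting} and \ref{fact:CharUltraexacting}), following Laver's argument in \cite{La97}.

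\textbf{Step 1 (restriction).} Work in $V[G]$ and take a witness $j\colon X\to V_\zeta[G]$ for exactingness with $j\restriction\delta'=\id$ for some $\delta'>\delta$, where $\PPP,G\in X$. We may assume $\zeta$ is large and $V_\zeta[G]=V_\zeta^{V[G]}$. Elementarity gives $j(\PPP)=\PPP$ and $j(G)=G$, and $j$ is the identity on names of rank below $\delta'$. Since $V$ is definable in $V[G]$ from a ground parameter (Laver's ground definability), $j$ maps $X\cap V_\zeta$ elementarily into $V_\zeta$. Moreover $V_\lambda\subseteq X$ implies $V^V_\lambda\subseteq X$. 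Hence $i=j\restriction(X\cap V)$ is an elementary embedding with $\crit(i)<\lambda$ and $i(\lambda)=\lambda$, and we have $\alpha<\crit(i)$ for $\alpha<\lambda$ chosen in advance. In the ultraexacting case we also need $i\restriction V_\lambda^V\in X\cap V$. I would try to derive this from the fact that $j\restriction V_\lambda\in X$ and that $V^V_\lambda$ is definable in $X$. The catch is that $X\cap V$ need not be an element of $V$, nor need $i$ be. This is the main obstacle.

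\textbf{Step 2 (ground model).} To get the witness into $V$, I would imitate the branch/tree argument of \cite{ABL,ABLG}. In $V$, fix $\alpha,\zeta$ and consider the tree of finite (or $<\lambda$-sized) partial approximations to a pair $(X,i)$. This tree has a branch in $V[G]$ by Step 1, and the existence of a branch is absolute for well-founded trees of this kind, or is handled via the characterizations of \cite{ABLG}. Alternatively, one can use that $\PPP$ has size $<\crit(j)$: then $j$ lifts and lowers cleanly, and $X\cap V\in V$ can be arranged by $\delta$-covering, following the Hamkins gap forcing approach. From this we get that $\lambda$ is exacting in $V$ whenever the defining property of exactingness reflects at $\lambda$ itself.

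\textbf{Step 3 (cofinality split).} The exactingness of $\lambda$ in $V[G]$ forces $\lambda$ to be singular of countable cofinality there, and small forcing preserves $\cof(\lambda)$ whenever it is at least $\delta$. If $\cof(\lambda)=\omega$ in $V$, then Step 2 applies directly. If $\cof(\lambda)$ is uncountable, it must be collapsed to $\omega$, so $\cof(\lambda)<\delta$. Then no embedding with $i(\lambda)=\lambda$ can be continuous at $\lambda$ in the ground model, exactly as in Laver's example. In that case I would take $\eta=\sup i``\lambda$, or more generally the supremum of the critical sequence, which has countable cofinality. I would show that $\eta$ is exacting in $V$ and can be arranged to lie in any given club $C\subseteq\lambda$. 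To do this, put $C$ into $X$ and use $i(C)=C$ together with closure of $C$ under the critical sequence. Given the obstacle in Step 1, the hardest point is showing that the restricted embeddings and their critical-sequence limits really witness exactingness at $\eta$, including the requirement $V_\eta\subseteq X$. For this I would lean on the Laver-style fixed-point analysis of the I3 case.
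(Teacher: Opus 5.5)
Your proposal has genuine gaps at the two points that carry the weight of the theorem. First, Step~2 gives no working mechanism for moving witnesses into $V$, and you flag this yourself in Step~1. A tree of finite or ${<}\lambda$-sized approximations to the pair $(X,i)$ cannot do it. Whether the union along a branch is an elementary submodel $X\prec V_\zeta$ with $V_\lambda\cup\{\lambda\}\subseteq X$ carrying an elementary $i\colon X\to V_\zeta$ is not decided by small pieces, let alone whether $i\restriction V_\lambda\in X$. Moreover, absoluteness of ill-foundedness only helps if the tree lies in $V$ and the branch found in $V[G]$ runs through its nodes. That requires $j\restriction V^V_\gamma\in V$ for all $\gamma<\lambda$, which is a lemma of Laver (\cite[Lemma 4]{MR2364192}, an approximation-property argument using $\crit(j)>|\PPP|$) that you never establish.

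With that lemma, part (1)(a) can be rescued along your lines via Theorem~\ref{fact:CharExacting}. Take the least OD counterexample $A$ in $V$. It is definable in $V[G]$ from $\lambda$ and a ground-definability parameter fixed by $j$, so $j(A)=A$. Pick $x\in A\cap X$, put $y=j(x)$, and use the tree of partial embeddings of $\langle V_\lambda,\in,x\rangle$ into $\langle V_\lambda,\in,y\rangle$ along a cofinal $\omega$-sequence in $V$. This does not give (2)(a), however. A branch yields at best $k\colon V_\lambda\to V_\lambda$ with $k_+(A)=A$, while you need $k_+$ to be elementary on $\langle V_{\lambda+1},\in,A\rangle$, and that is not a property of the finite pieces.

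Second, Step~3 fails outright. In $V[G]$ the map $j\restriction V[G]_\lambda$ is an I3-embedding, so by Kunen's theorem its critical sequence is cofinal in $\lambda$, and so is $j[\lambda]$. Your candidate $\eta$ is therefore $\lambda$ itself, which has uncountable cofinality in $V$ and cannot be exacting there. Nor is $j(C)=C$ available for an arbitrary club $C\in V$. What is missing is a way to reflect the full (ultra)exactingness property down to a point of $C$ below $\lambda$.

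The paper gets this by never pulling embeddings back to $V$. Since $\lambda$ is a limit of Woodin cardinals (which reflects from $V[G]$ to $V$), one forces with the countable stationary tower $\QQQ_{<\delta}$ at a Woodin cardinal $\delta<\lambda$ above $\PPP$. Then $V[g]$ contains a $V$-generic $G\subseteq\PPP$ and is a small extension of $V[G]$. The generic embedding $j_g\colon V\to M_g$ has $\crit(j_g)=\omega_1^V$, and $M_g$ is closed under $\omega$-sequences in $V[g]$. Laver's lemma together with \cite[Lemma 147]{SEM1} puts each $j\restriction(M_g)_{\kappa_n}$ into $M_g$. The $\omega$-closure then gives $j\restriction(M_g)_\lambda\in M_g$ for every I3-embedding $j$ of $V[g]_\lambda$ with large critical point. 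In the ultraexacting case, $(j\restriction(M_g)_\lambda)_+$ computed in $M_g$ is the required I1-map. This transfers (ultra)exactingness of $\lambda$ from $V[g]$ to $M_g$, and elementarity of $j_g$ finishes the argument. If $\lambda$ has countable cofinality in $V$, then $j_g(\lambda)=\lambda$. Otherwise $\lambda\in j_g(C)$, because $j_g$ fixes cofinally many inaccessibles below $\lambda$ but is discontinuous at $\lambda$.
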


In particular, if $\lambda$ is a singular cardinal with countable cofinality, it is equivalent to $\lambda$ to be exacting (respectively,  ultraexacting) and exacting (respectively, ultraexacting) in any generic extension by a poset in $H(\lambda)$.  Since the Weak HOD Conjecture establishes the non-existence of exacting cardinals that are  limit of exacting  cardinals (by Theorem~\ref{theorem:ExactingLimit}), the above theorem shows that the Weak HOD Conjecture yields the Levy--Solovay theorem for exacting and ultraexacting cardinals:

\begin{theorem}\label{theorem: WeakHODConjecture and LevySolovay}
    If the Weak HOD Conjecture is true, then $\ZFC$ proves the following statements: 
  \begin{enumerate}
   \item The following statements are equivalent for every cardinal $\lambda$: 
     \begin{enumerate}
         \item $\lambda$ is an exacting cardinal. 

        \item ${\one}\Vdash_\PPP\textit{$``\lambda$ is an exacting cardinal }"$ holds for every partial order $\PPP\in H(\lambda)$.   
        \item ${\one}\Vdash_\PPP\textit{$``\lambda$ is an exacting cardinal }"$ holds for some partial order $\PPP\in H(\lambda)$.   
     \end{enumerate}

    \item The following statements are equivalent for every cardinal $\lambda$: 
     \begin{enumerate}
         \item $\lambda$ is an ultraexacting cardinal. 

        \item ${\one}\Vdash_\PPP\textit{$``\lambda$ is an ultraexacting cardinal }"$ holds for every partial order $\PPP\in H(\lambda)$.   
        \item ${\one}\Vdash_\PPP\textit{$``\lambda$ is an ultraexacting cardinal }"$ holds for some partial order $\PPP\in H(\lambda)$.   
     \end{enumerate}
 \end{enumerate}
\end{theorem}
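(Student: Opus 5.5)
We derive Theorem \ref{theorem: WeakHODConjecture and LevySolovay} from Theorem \ref{theorem:LevySolovayBeyondHOD} and Theorem \ref{theorem:ExactingLimit}. Both parts are handled identically, since Theorem \ref{theorem:LevySolovayBeyondHOD} treats exacting and ultraexacting cardinals in parallel and every ultraexacting cardinal is exacting. We work inside $\ZFC$ and assume the Weak HOD Conjecture, so the conclusions of Theorem \ref{theorem:ExactingLimit} hold in $V$. Moreover, since the conjecture asserts that certain statements are provable in $\ZFC$, these conclusions also hold in every generic extension $V[G]$.

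The implication (b)$\Rightarrow$(c) is trivial: take $\PPP$ to be the trivial poset, which lies in $H(\lambda)$ because exacting cardinals are uncountable cardinals.

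For (c)$\Rightarrow$(a), fix $\PPP\in H(\lambda)$ with $\one\Vdash_\PPP``\lambda$ is (ultra)exacting$"$.
\begin{itemize}
\item If $\lambda$ has countable cofinality in $V$, clause (a) of the relevant part of Theorem \ref{theorem:LevySolovayBeyondHOD} gives the conclusion directly.
\item Otherwise $\lambda$ has uncountable cofinality in $V$. Since $\PPP\in H(\lambda)$ has size less than $\lambda$, cofinalities at least $\lambda$ are preserved, so $\cof{\lambda}$ remains uncountable in $V[G]$. By the results of \cite{ABL}, exacting cardinals have countable cofinality, so $\lambda$ cannot be exacting in $V[G]$, a contradiction.
\end{itemize}
Alternatively, in the second case one can apply Theorem \ref{theorem:ExactingLimit}\eqref{item:ExactingLimit2} in $V$ to get a club containing no exacting cardinal, contradicting clause (b) of Theorem \ref{theorem:LevySolovayBeyondHOD}. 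Either route closes the case, and the first does not even need the conjecture.

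The main work is (a)$\Rightarrow$(b). Fix $\lambda$ (ultra)exacting in $V$ and $\PPP\in H(\lambda)$, and let $G$ be generic. Suppose towards a contradiction that $\lambda$ fails to be (ultra)exacting in $V[G]$.

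The plan is to use homogeneity-free reflection through small forcing. Let $\mu<\lambda$ be such that $\PPP\in V_\mu$. We use the characterization of exactness via embeddings $j:X\to V_\zeta$ with $\crit(j)>\alpha\geq\mu$. Given such $X$ and $j$ in $V$, we lift $j$ to $X[G]\to V_\zeta[G]$. This works because $j$ fixes $\PPP$ and every element of $V_\mu$, so $j``G=G$. We also need $X[G]\prec V_\zeta[G]=V^{V[G]}_\zeta$ to contain $V_\lambda^{V[G]}\cup\{\lambda\}$; this holds since $V_\lambda^{V[G]}=V_\lambda[G]$ and $\PPP\in X$. For the ultraexacting case, the lifted $j\restriction V_\lambda[G]$ is definable from $j\restriction V_\lambda$ and $G$, so it belongs to $X[G]$. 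The requirement $j\restriction\alpha=\id$ is only needed for $\alpha$ large, which is harmless.

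The step I expect to be the main obstacle is verifying that the lift is well defined and elementary. The standard argument is that $j(\dot x^G)=j(\dot x)^G$, using that names in $X$ are mapped to names and that the forcing relation is definable in $V_\zeta$ for suitable $\zeta$ (take $\zeta$ a limit of large cofinality). If this direct lifting works, (a)$\Rightarrow$(b) needs neither the conjecture nor Theorem \ref{theorem:LevySolovayBeyondHOD}. The conjecture is then used only to exclude the uncountable-cofinality alternative in (c)$\Rightarrow$(a) via Theorem \ref{theorem:ExactingLimit}, and to ensure that Theorem \ref{theorem:LevySolovayBeyondHOD}(a) pins $\lambda$ down exactly.
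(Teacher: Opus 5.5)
Your decomposition is the paper's. (a)$\Rightarrow$(b) is Lemma~\ref{lemma:ExactingSmallForcing}, which the paper proves by your lifting argument. It sidesteps your worries about $V[G]_\zeta=V_\zeta[G]$ and the elementarity of $X[G]$ in two ways: it uses Lemma~\ref{lemma:CharExactingSingleEmb} with a single $\zeta\in C^{(3)}$, which stays in $C^{(3)}$ after forcing with $\PPP\in V_\zeta$, and it chooses $j\restriction\delta=\id_\delta$ for an inaccessible $\delta<\lambda$ with $\PPP\in V_\delta$. (c)$\Rightarrow$(a) comes from Theorem~\ref{theorem:LevySolovayBeyondHOD} together with Theorem~\ref{theorem:ExactingLimit}.

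However, your first argument for the uncountable-cofinality case of (c)$\Rightarrow$(a) is wrong. A poset of size $\nu<\lambda$ preserves cofinalities $\geq\nu^+$, so it preserves the regularity of a regular $\lambda$. But if $\lambda$ is singular with $\omega<\cof{\lambda}\leq|\PPP|$, then $\PPP$ can give $\lambda$ countable cofinality, e.g.\ $\PPP=\Col(\omega,\cof{\lambda})\in H(\lambda)$. This is exactly Laver's example recalled in the introduction, where $\Col(\omega,\omega_1)$ creates an I3-embedding at a $\lambda$ of uncountable ground-model cofinality. It is also why the proof of Theorem~\ref{theorem:LevySolovayBeyondHOD} starts from a poset forcing $\cof{\lambda}=\omega$ and needs a separate clause (b). Your claim that this route ``does not even need the conjecture'' should have been a warning sign: it would settle Question~\ref{que:isWHODHnecessary} negatively by a trivial $\ZFC$ argument.

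Your alternative route is the correct one and is precisely the paper's proof. Suppose $\cof{\lambda}>\omega$ and $\one\Vdash_\PPP$ ``$\lambda$ is (ultra)exacting''. Since ultraexacting cardinals are exacting, clause (b) of Theorem~\ref{theorem:LevySolovayBeyondHOD} puts an exacting cardinal into every club in $\lambda$. This contradicts Theorem~\ref{theorem:ExactingLimit}.\eqref{item:ExactingLimit2}, and it is the only place where the Weak HOD Conjecture is used. Discard the first route, and also your remark that the conclusions of Theorem~\ref{theorem:ExactingLimit} hold in $V[G]$, which you never use. With that, the proof goes through.
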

Whether or not the Weak HOD Conjecture is necessary to prove the above theorem is an open problem (see Question~\ref{que:isWHODHnecessary}).


%



\smallskip

In the final part of the manuscript, we investigate the extent of large cardinal properties in $\HOD$ that  exacting cardinals and their successors provably possess, as well as the degree to which these properties can consistently be strengthened. 
This general line of research  traces back to work of Woodin \cite{SEM1}  and has since been continued by  Cheng--Hamkins--Friedman \cite{ChengFriedmanHamkins}, Apter--Friedman--Fuchs \cite{ApterFriedmanFuchs} and Goldberg--Osinski--Poveda \cite{GOP}. The basic idea is: if the $\HOD$ Hypothesis holds, then $V$ is close to $\HOD$ and therefore the large cardinal hierarchies of $V$ and $\HOD$ are very similar –– at least, above the first extendible cardinal. This is precisely Woodin's \emph{Universality Theorem} (see {\cite[Theorem 3.26]{woodin_Midrasha}}) which, in its simpler formulation, asserts that, assuming the $\HOD$ conjecture,  if $\delta$ is the first extendible cardinal, then any cardinal $\kappa>\delta$  exhibiting \emph{virtually}  any large cardinal property retains this property in $\HOD$. In \cite{GOP}, it was shown that this result cannot be generalized to $\delta$ itself.

\smallskip

Since the $\HOD$ Hypothesis implies that there are no exacting cardinals  above the first extendible cardinal, it is necessary to study the large cardinal properties that exacting cardinals possess in $\HOD$ under weaker assumptions. 
 Moreover, the fact that exacting cardinals are singular in $V$ and regular in $\HOD$ directly rules out the direct transfer of various large cardinal properties between $V$ and $\HOD$. Our work takes the following question as a starting point:

\begin{question}\label{Q04}
  If $\lambda$ is an exacting cardinal, which of the following  statements holds in $\HOD$? 
 \begin{enumerate}
     \item $\lambda$ is a weakly compact cardinal. 

     \item $\lambda$ is a Woodin cardinal. 

     \item $\lambda$ is a limit of cardinals $\eta$ with the property that there exists an I3-embedding $\map{j}{V_\eta}{V_\eta}$. 
 \end{enumerate}
\end{question}

The first two statements are motivated by questions of Schlutzenberg regarding the large cardinal properties possesed in $\HOD$ by the first non-trivial fixed point of a Reinhardt embedding   (see {\cite[Question 3.8]{zbMATH07458790}}). The third statement is motivated by {\cite[Proposition 5.9]{ABLG}}, showing that exacting cardinals have this property in $V$.  
In \S\ref{sec: exactings in HOD}, we work towards answering this question by proving the following results that shows that the Weak $\HOD$ Conjecture provides an affirmative answer to the second and third statement in Question \ref{Q04}: 
%

\begin{theorem}\label{theorem:ExactingInHODIntro}
  If the Weak HOD Conjecture is true, then $\ZFC$ proves that the following statements hold for every exacting cardinal $\lambda$: 
  \begin{enumerate}
      \item\label{item:ExactingInHODIntro1} $\lambda$ is a Vop\v{e}nka cardinal in $\HOD$. 

      \item\label{item:ExactingInHODIntro2} In $\HOD$, for every closed unbounded subset $C$ of $\lambda$, there is  an I3-embedding whose critical sequence consists of elements of $C$.  
  \end{enumerate}
\end{theorem}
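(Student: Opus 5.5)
The plan is to run everything inside $V_\lambda$. There the Weak HOD Conjecture gives a HOD Hypothesis, and from it Woodin-style universality arguments show that the rank-into-rank embeddings coming from exactingness restrict to embeddings that are amenable to $\HOD$. Three preliminary steps come first.

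\textbf{Step 1.} I would show that $V_\lambda$ is a model of $\ZFC$ with a proper class of extendible cardinals. By the results of \cite{ABL,ABLG}, an exacting $\lambda$ is a limit of cardinals with very strong reflection properties; in particular, if $j$ witnesses exactingness, then $\crit(j)$ is extendible in $V_\lambda$ and so are the points of its critical sequence. Applying the Weak HOD Conjecture inside the $\ZFC$-model $V_\lambda$ then yields that $V_\lambda$ satisfies the HOD Hypothesis.

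\textbf{Step 2.} I would relate $\HOD^{V_\lambda}$ to $\HOD\cap V_\lambda$. Using the $\Sigma_3$-definability of exactingness and the characterization of exacting cardinals through elementary embeddings (Theorem~\ref{fact:CharExacting}), I expect to show that $\HOD\cap V_\lambda=\HOD^{V_\lambda}$. At worst, every $A\in\HOD$ with $A\s V_\lambda$ should be ordinal definable in a structure that an exactingness embedding can capture. Concretely, I would take $\zeta$ large, $X\prec V_\zeta$ with $V_\lambda\cup\{\lambda\}\s X$, and $\map{j}{X}{V_\zeta}$. I would first arrange $A\in X$ by absorbing the ordinal parameters into the construction of $X$.

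\textbf{Step 3.} From such a $j$ we get an I3-embedding $\map{i=j\restriction V_\lambda}{V_\lambda}{V_\lambda}$ with $i(A\cap V_\alpha)=j(A)\cap V_{i(\alpha)}$ for all $\alpha<\lambda$.

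\textbf{Main obstacle.} The key step is to show that $i\restriction\HOD^{V_\lambda}$ is amenable to $\HOD$, that is, that $i\restriction(\HOD\cap V_\alpha)\in\HOD$ for every $\alpha<\lambda$. Woodin's theory under the HOD Hypothesis is the tool here: in $V_\lambda$, for an extendible $\delta$ below $\crit(i)$, every elementary embedding of a rank initial segment with critical point above $\delta$ restricts to $\HOD$ and is amenable to it, since $\HOD$ has the $\delta$-approximation and cover properties. I would try first to apply this to the embeddings $i\restriction V_{\gamma}\colon V_\gamma\to V_{i(\gamma)}$ for $\gamma<\lambda$. The delicate points are two: one must check that the restriction commutes with $\HOD$, which uses $i(\HOD\cap V_\gamma)=\HOD\cap V_{i(\gamma)}$ by elementarity and Step~2, and one must check that uniformity in $\gamma$ survives the passage to the limit $\lambda$. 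Once this holds for each $\gamma$, the full restriction $i\restriction(\HOD\cap V_\lambda)$ is a $\lambda$-sequence of elements of $\HOD$. It may not itself lie in $\HOD$, but that is harmless, because both conclusions only require its initial segments.

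\textbf{Conclusions.} For \eqref{item:ExactingInHODIntro1}, let $A\in\HOD$ with $A\s V_\lambda$. By the above, for each $\alpha<\lambda$ the restriction $i\restriction(\HOD\cap V_\alpha)$ is in $\HOD$, and it witnesses that $\crit(i)$ is $\alpha$-$A$-strong (indeed $\alpha$-supercompact for $A$) in $\HOD$. The critical point can be chosen above any given ordinal, since $j\restriction\alpha=\id_\alpha$ is allowed, so $\lambda$ is a limit of such cardinals. This is the standard characterization of Vop\v{e}nka cardinals via $A$-strongness for all $A$. For \eqref{item:ExactingInHODIntro2}, let $C\in\HOD$ be club in $\lambda$. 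I would put $C$ into $X$ and use elementarity to get $j(C)=C$. Then I would apply a reflection argument inside $\HOD$, using the amenable restrictions, to find $\eta<\lambda$ together with an embedding $\map{k}{V_\eta^{\HOD}}{V_\eta^{\HOD}}$ in $\HOD$ whose critical sequence lies in $C$. This mimics \cite[Proposition 5.9]{ABLG}, where the critical sequence of $i$ consists of closure points of $C$ and therefore of elements of $C$.
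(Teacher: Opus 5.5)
Your proposal has a genuine gap at Step~2, and the main step depends on it. Woodin's universality theorem, run inside $V_\lambda$ where the $\HOD$ Hypothesis holds, only shows that the restrictions $i\restriction(\HOD^{V_\lambda}\cap V_\gamma)$ lie in $\HOD^{V_\lambda}$. Both conclusions, however, concern $\HOD=\HOD^V$. The clubs $C$ and predicates $A$ range over $\HOD$, not $\HOD^{V_\lambda}$, and $V_\eta^{\HOD}$ may properly contain $\HOD^{V_\lambda}\cap V_\eta$.

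The identity $\HOD\cap V_\lambda=\HOD^{V_\lambda}$ that you hope to prove can fail. Start with a model with an exacting $\lambda$ and add a Cohen real $c$. By Lemma~\ref{lemma:ExactingSmallForcing}, $\lambda$ stays exacting, and $c\notin\HOD^{V_\lambda}$ by homogeneity. Then code $c$ into the continuum function above $\lambda$ by a $\lambda^+$-closed, weakly homogeneous forcing definable from $c$. This adds no new elements of $V_{\lambda+1}$, and every ordinal definable subset of $V_{\lambda+1}$ in the final model is $\mathrm{OD}_{\{c\}}$ in the intermediate one. Exactingness embeddings fix $c$, so the least-counterexample argument together with Theorem~\ref{fact:CharExacting} shows $\lambda$ is still exacting. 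Yet now $c\in(\HOD\cap V_\lambda)\setminus\HOD^{V_\lambda}$.

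The missing idea is to transfer from $V_\lambda$ to $V$ only something that is upward absolute. The $\HOD$ Hypothesis in $V_\lambda$ gives, for each $\eta<\lambda$, a regular $\delta\in((2^\eta)^+,\lambda)$ and a partition $\vec S$ of $E^\delta_\omega$ into $\eta$ stationary sets with $\vec S\in\HOD^{V_\lambda}\subseteq\HOD$. Write $\seq{T_\gamma}{\gamma<j(\eta)}=j(\vec S)\in\HOD$. Solovay's argument then gives $j[\eta]=\Set{\gamma<j(\eta)}{T_\gamma\cap\sup j[\delta]\text{ is stationary}}$. So $j\restriction\eta$ is ordinal definable in $V$. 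Conjugating by the collapse of the canonical well-order of $\HOD_\delta$ then yields $j\restriction\HOD_\delta\in\HOD$, with no weak extender model property of $\HOD^V$ needed.

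Three further steps also fail as written.
\begin{enumerate}
\item You cannot secure $A\in X$, let alone $j(A)=A$ or $j(C)=C$, by absorbing ordinal parameters into $X$. Exactingness hands you $X$ and $j$, and the domains of such embeddings always miss elements of $V_{\lambda+1}$. The ordinals defining $A$ need be neither in $X$ nor fixed by $j$. The paper instead argues by contradiction and takes the least counterexample $\langle A,C\rangle$ in the canonical well-order of $\HOD$. This pair is definable from $\lambda$, so it lies in $X$ and is fixed by $j$, and the critical sequence of $j$ then lies in $C$.
\item Knowing that each $j\restriction\HOD_{\kappa_n}$ is in $\HOD$ is not enough for \eqref{item:ExactingInHODIntro2}, which asks for an embedding that is an element of $\HOD$. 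One forms, in $\HOD$, the tree of partial embeddings with domain $\HOD_{\mu_n}\cup\{\mu_n\}$, critical-sequence points $\mu_m\in C$, and $\langle\HOD_{\mu_m},\in,A\cap V_{\mu_m}\rangle\prec\langle\HOD_\lambda,\in,A\rangle$. The map $j$ yields a branch in $V$, so by absoluteness of well-foundedness there is a branch in $\HOD$.
\item $A$-strongness for all $A$ characterizes Woodin cardinals, not Vop\v{e}nka cardinals. For \eqref{item:ExactingInHODIntro1} you need $A$-supercompactness (Perlmutter). Alternatively, as in the paper, apply the previous conclusion to a proper class of graphs $\Gamma\in\HOD$ and the club of limits of their ranks.
\end{enumerate}
Finally, invoking the Weak $\HOD$ Conjecture in Step~1 requires a huge cardinal above an extendible in $V_\lambda$. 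For example, $\kappa_1$ is huge there and lies above the extendible $\kappa_0$.
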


 The analysis underlying the proof of this result also shows that $\ZFC$ alone proves that exacting cardinals are stationary limits of measurables in $\HOD$ (see Corollary \ref{cor:ExactingStationaryLimitsHOD}).

 In \cite{ABLG}, it was shown that, over $\ZFC$, the existence of an  ultraexacting cardinal and Axiom I0 are equiconsistent. Moreover, the results of \cite{ABL} show that if $\lambda$ is ultraexacting, then $\lambda$ is exacting and $\lambda^+$ is measurable in $\HOD$. Since the existence of an exacting cardinal has weaker consistency strength than Axiom I2, it is now natural to ask if we can  get this configuration from assumptions weaker than I0 and if stronger large cardinal properties of $\lambda^+$  in $\HOD$ are possible. In \S\ref{sec: exacting from beyond choice}, we prove the following result for the $\ZF$-context:

\begin{theorem}\label{theorem: exacting + extendible} 
   If there exists an I2-embedding, then there exists a transitive  model of $\ZF$ with an exacting cardinal $\lambda$ such that $\lambda^+$ is extendible in $\HOD_x$ for every $x\subseteq\lambda$. 
\end{theorem}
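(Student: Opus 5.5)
We derive the theorem from the construction of \cite{ABLG} that produces, from an I2-embedding, a model of the form $\HOD_{\vec\kappa}$ with an exacting cardinal (see Corollary \ref{corollary:ExactingModels}). The plan has three stages. First, use the I2-embedding to find a transitive set-sized model $N$ of $\ZF$ in which $\lambda$ is the first non-trivial fixed point of an elementary embedding $\map{j}{V_{\lambda+1}}{V_{\lambda+1}}$, or more generally in which choice fails around $\lambda^+$ with Reinhardt-type embeddings present. Second, force over $N$ with a Prikry-type forcing to add a cofinal $\omega$-sequence $\vec{\kappa}$ through the critical sequence of such embeddings. Third, pass to a symmetric inner model, or to $\HOD_{\vec\kappa}$ computed in the extension. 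In that model, $\lambda$ is exacting: the tree-of-partial-embeddings argument from \cite{ABLG} supplies embeddings $\map{j}{X}{V_\zeta}$ with $V_\lambda\cup\{\lambda\}\s X$.

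The new point is the extendibility of $\lambda^+$ in $\HOD_x$. The natural source is Woodin's analysis in \cite{SEM1}: in a $\ZF$ model with a non-trivial $\map{j}{V_{\lambda+2}}{V_{\lambda+2}}$ (or an I0-like configuration), $\lambda^+$ is measurable in $\HOD$. More strongly, if choice fails badly above $\lambda$, then $\lambda^+$ behaves like a large cardinal in every $\HOD_x$ with $x\s\lambda$. Concretely, we aim to arrange the model so that $\lambda^+$ is supercompact/extendible in the symmetric sense. Since the ground model has I2 rather than beyond-choice cardinals, we first collapse or symmetrically force choice to fail at $\lambda^+$, for instance via a symmetric collapse making $\lambda^+$ the successor of a singular limit of large cardinals of the I2 model. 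We then verify extendibility of $\lambda^+$ in $\HOD_x$ by lifting embeddings of the ground $V_\eta$'s through the homogeneous forcing. Homogeneity gives $\HOD_x^{\text{ext}}\s$ the ground model (with the parameter $x$), and the elementarity of $j$ yields, for each $\eta$, an embedding $V_\eta^{\HOD_x}\to V_{\eta'}^{\HOD_x}$ with critical point $\lambda^+$.

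Key steps, in order:
\begin{enumerate}
\item Fix the I2-embedding and its first fixed point $\lambda$, and pick a suitable $\vec\kappa$ and a symmetric system.
\item Prove exactingness of $\lambda$ in the symmetric model by adapting \cite[Theorem 5.4]{ABLG}.
\item Show that every $x\s\lambda$ in the model is in a small subextension, so $\HOD_x$ is computed from the ground model plus a small generic.
\item Build the extendibility embeddings for $\lambda^+$ in $\HOD_x$.
\end{enumerate}

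The main obstacle is step (4): obtaining embeddings with critical point exactly $\lambda^+$, which is not a critical point of the ground I2 embedding. To handle it, we would first try to exploit that $\lambda^+$ of the symmetric model is much larger than the ground $\lambda^+$. This makes it a limit of ground large cardinals, so $\HOD_x$ sees $\lambda^+$ as a fixed point of I2-derived structure. We then use the Magidor-style reflection of the extendibility of suitable ground cardinals to $\lambda^+$ inside $\HOD_x$.
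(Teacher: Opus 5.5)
Your proposal has a genuine gap, and it sits exactly at the step you flag yourself: it never explains where the extendibility of $\lambda^+$ in $\HOD_x$ comes from.

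\textbf{Stage 1 starts from the wrong hypothesis.} It asks for structure you do not have. An I2-embedding does not give you an I1-type embedding $V_{\lambda+1}\to V_{\lambda+1}$, let alone Reinhardt-type embeddings. The Woodin results you invoke concern I0-type or choiceless $V_{\lambda+2}$-embeddings. In any case they yield only measurability of $\lambda^+$ in $\HOD$, not extendibility.

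\textbf{Step (4) has no mechanism.} It tries to produce embeddings with critical point $\lambda^+$ from $j$. But $j$ fixes $\lambda$, and in the model the paper builds it even fixes $(\lambda^+)^N$ (Proposition \ref{proposition:IterationFixedPoints}). Reflecting extendibility to a limit of ground large cardinals also fails: a limit of extendible cardinals need not be extendible, since the least one is singular.

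\textbf{The homogeneity inclusion points the wrong way.} To transfer a large cardinal property of a ground model to $\HOD_x$, you need that ground model to be \emph{contained} in $\HOD_x$. This requires a robust form of $V=\HOD$ in the ground, and nothing in your plan provides it.

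\textbf{The missing idea} is to choose the future $\lambda^+$ to be a cardinal that is already extendible in the ground model, and then merely preserve it. The paper proceeds as follows.
\begin{enumerate}
\item It first forces \gch{} and then $\mathrm{CCA}$ in $V_\lambda$ while keeping the I2-embedding (Lemma \ref{lemma:I2andVHOD}).
\item It picks $\mu$ extendible in $V_\lambda$ with $\kappa<\mu<j(\kappa)$, and lets $\calU$ be the normal fine measure on $\mathcal{P}_\kappa(\mu)$ derived from $j$.
\item Iterating $j$, the sequence $x_n=j_{n,\omega}[j_{0,n}(\mu)]$ is generic over $M_\omega$ for the supercompact Prikry forcing $j_{0,\omega}(\PPP_{\calU})$. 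This forcing singularizes $\lambda$ and collapses the cardinals between $\lambda$ and $\bar\mu=j_{0,\omega}(\mu)$, which is extendible in $\bar M=(M_\omega)_{j_{0,\omega}(\lambda)}$.
\item The model $N$ is Kafkoulis's model, generated over $\bar M$ by the subsets of $\lambda$ lying in the subextensions $\bar M[\vec x_\alpha]$ for $\alpha<\bar\mu$ regular in $\bar M$. Consequently $(\lambda^+)^N=\bar\mu$.
\item Since $j$ fixes $\bar M$ and sends $\vec x$ to its tail, $j(N)=N$, and Corollary \ref{corollary:ExactingInnerModel} gives that $\lambda$ is exacting in $N$. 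This is also why the Prikry sequence must come from the iteration of $j$ itself rather than from forcing over some other model.
\item $\mathrm{CCA}$ yields $\bar M\subseteq\HOD^N\subseteq\HOD^N_x\subseteq\bar M[\vec x_\alpha]$ for some $\alpha$. So $\HOD^N_x$ is a generic extension of $\bar M$ by a forcing of size less than $\bar\mu$, and $\bar\mu$ stays extendible by Levy--Solovay-type preservation.
\end{enumerate}
Your step (3) is the right ingredient, but it only pays off once these choices are in place. No embedding with critical point $\lambda^+$ is ever constructed by hand.
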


The proof of this theorem (see Theorem~\ref{thm:generalizingABLG}) provides a generalization of  \cite[Corollary~5.6]{ABLG} which is relevant to Question~\ref{Q01}.

\smallskip

The structure of the paper is the following:  \S\ref{sec:preliminaries} collects a few relevant preliminaries; in  \S\ref{section:ProperClassExacting} we prove Theorem~\ref{thm: a proper class of exactings} and Theorem~\ref{theorem:ExactingLimit};   \S\ref{sec: exacting from beyond choice} is devoted to obtaining many exacting cardinals from cardinals beyond choice; \S\ref{sec: smallforcing}    discusses the Levy--Solovay phenomenon for exacting and ultraexacting cardinals proving Theorem~\ref{theorem:LevySolovayBeyondHOD} and  Theorem~\ref{theorem: WeakHODConjecture and LevySolovay}. In \S\ref{sec:ExactingInHOD} we prove Theorem~\ref{theorem:ExactingInHODIntro} and Theorem~\ref{theorem: exacting + extendible}. Finally, \S\ref{sec:openproblems} garners a few open problems.


\section{Preliminaries}\label{sec:preliminaries}


\subsection{HOD}\label{sec:HODhypothesis}
A set $X$ is \emph{ordinal definable}  if there is a set-theoretic formula $\varphi(v_0,\ldots,v_n)$ and ordinals $\alpha_0,\dots,\alpha_{n-1}$ such that $X=\Set{x}{\varphi(x,\alpha_0,\dots,\alpha_n)}$. We let $\OD$ denote the class of all ordinal definable sets. Additionally, a set $X$ is  \emph{hereditarily ordinal definable} if the transitive closure 
of $\{X\}$ is contained in $\OD$ and we let $\HOD$ denote the class of all hereditarily ordinal definable sets. 
In \cite{SEM1}, Woodin  defined a cardinal $\kappa$ to be \emph{$\omega$-strongly measurable in HOD} if for some $\eta$ with $(2^\eta)^\HOD < \kappa$, there is no partition  of the set $E^\kappa_\omega=\Set{\xi< \kappa}{\cof{\xi} = \omega}$ into $\eta$-many stationary sets that is an element of $\HOD$.  
Moreover, given a cardinal $\kappa$, he defined an inner model $M$ of $\ZFC$ to be a \emph{weak extender model for the supercompactness of $\kappa$} if for all \(X\in M\), there is a normal fine \({<}\kappa\)-complete ultrafilter
\(\mathcal U\) on \(\mathcal{P}_\kappa(X)\cap M\) with \(\mathcal U\cap M\in M\). Note that this definition implies that  \(\kappa\) is a supercompact cardinal in both \(M\) and \(V\).  
As it turns, weak extender models posses strong covering properties which make them close to the universe of sets. For instance, if $M$ is a weak extender model for the supercompactness of $\kappa$, then every singular cardinal $\lambda>\kappa$ is singular in $M$ and $\lambda^{+M}=\lambda$.

\smallskip

Woodin has proven the following striking dichotomy:

\begin{theorem}[The $\HOD$ Dichotomy, Woodin, \cite{SEM1}]\label{thm:HODdichotomy}
    If \(\delta\) is an extendible cardinal then exactly one of the following holds:
    \begin{enumerate}
      \item\label{item:HODclose} \(\HOD\) is a weak extender model for the supercompactness of \(\delta\).
        \item\label{item:HODfar} Every regular cardinal \(\kappa \geq \delta\) is \(\omega\)-strongly measurable in \(\HOD\).
    \end{enumerate}
\end{theorem}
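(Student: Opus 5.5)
The plan is to prove the two halves separately: first that the alternatives are mutually exclusive, and then that the failure of \eqref{item:HODfar} forces \eqref{item:HODclose}. Throughout, fix the extendible cardinal $\delta$, and use that $\HOD$ is uniformly definable, so any elementary $\map{j}{V_{\zeta}}{V_{j(\zeta)}}$ with $\zeta$ a limit of $\Sigma_2$-correct ordinals maps $\HOD\cap V_\zeta$ to $\HOD\cap V_{j(\zeta)}$.

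\textbf{Exclusivity.} Assume $\HOD$ is a weak extender model for the supercompactness of $\delta$, and let $\kappa\geq\delta$ be regular.
\begin{enumerate}
\item By the covering properties of weak extender models recalled above, $\kappa$ is regular in $\HOD$.
\item Every $V$-club subset of $\kappa$ contains a club lying in $\HOD$: cover it by a set in $\HOD$ of size ${<}\delta$ from the $\HOD$ side, using the normal fine measure on $\mathcal{P}_\delta(\kappa)\cap\HOD$.
\item Hence Solovay splitting performed inside $\HOD$, for some $\eta$ with $\delta\leq\eta$ and $(2^\eta)^{\HOD}<\kappa$, partitions $E^\kappa_\omega$ into $\eta$ many sets that are stationary in $\HOD$. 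These sets stay stationary in $V$ by the previous step.
\item So $\kappa$ is not $\omega$-strongly measurable in $\HOD$.
\end{enumerate}

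\textbf{Far implies close, failing \eqref{item:HODfar}.} Suppose some regular $\kappa\geq\delta$ is not $\omega$-strongly measurable in $\HOD$.

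\emph{Step 1: propagation upward.} Show that every regular $\gamma\geq\kappa$ is then also not $\omega$-strongly measurable in $\HOD$. I would argue by contradiction, taking the least counterexample $\gamma$, which is definable. Then use an extendibility embedding $j$ with critical point $\delta$ and $j(\delta)>\gamma$, together with elementarity, to transport a partition witnessing non-strong-measurability below $\gamma$ up to $\gamma$. This is the step I expect to be the main obstacle. It needs Woodin's analysis that, if $\gamma$ were $\omega$-strongly measurable, the club filter restricted to $E^\gamma_\omega$ would induce in $\HOD$ an ultrafilter-like object with too few pieces. I would first try to reduce to the case where $\gamma$ is a successor of a singular cardinal.

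\emph{Step 2: the measures.} Given a set $X\in\HOD$, which we may take to be an ordinal $\gamma\geq\kappa$ regular, choose an extendibility embedding $\map{j}{V_{\zeta}}{V_{j(\zeta)}}$ with critical point $\delta$ and $j(\delta)>\gamma$. Define
$$\mathcal{U}=\Set{A\s\mathcal{P}_\delta(\gamma)}{j``\gamma\in j(A)},$$
a normal fine ${<}\delta$-complete ultrafilter. It remains to show that $\mathcal{U}\cap\HOD\in\HOD$, and for that it suffices that $j``\gamma$ is ordinal definable in $V_{j(\zeta)}$ from $j(\gamma)$ and $\sup j``\gamma$.

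\emph{Step 3: definability of $j``\gamma$.} By Step 1, fix the $\HOD$-least partition $\vec S=\seq{S_\alpha}{\alpha<\eta}$ of $E^\gamma_\omega$ into stationary sets with $\delta\leq\eta$. Since $\gamma$ is regular it is not $\omega$-strongly measurable, so we may refine this partition to one indexed by $\gamma$ itself, with all pieces stationary. Then $j(\vec S)\in\HOD$ by definability, and I would verify the characterization
$$j``\gamma=\Set{\beta<j(\gamma)}{j(\vec S)_\beta\cap\sup j``\gamma \text{ is stationary in } \sup j``\gamma}.$$
This is the standard argument: $j``\gamma$ is ${<}\delta$-closed in $\sup j``\gamma$, and stationarity reflects along $j$ at points of countable cofinality.

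\emph{Step 4: conclusion.} With Step 3, $\mathcal{U}\cap\HOD$ is definable from $j``\gamma$, so $\mathcal{U}\cap\HOD\in\HOD$. Hence $\HOD$ is a weak extender model for the supercompactness of $\delta$.
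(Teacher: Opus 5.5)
The paper gives no proof of this statement; it is quoted from Woodin. Your outline has the right overall architecture (propagation, Solovay splitting, derived measures), but several steps fail as written.

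\textbf{Exclusivity.} Step (2) is unjustified. ${<}\delta$-covering only controls sets of size ${<}\delta$, whereas a club $C\subseteq\kappa$ has size $\kappa\geq\delta$. Nothing in the definition of a weak extender model prevents $V$ from having clubs, for instance generically added ones, that contain no club of $\HOD$. You are also aiming at more than is needed, namely that \emph{no} regular $\kappa\geq\delta$ is $\omega$-strongly measurable in $\HOD$; it suffices to find one. The standard route goes as follows. By Woodin's covering theorem for weak extender models (recalled in \S\ref{sec:HODhypothesis}), $(\gamma^+)^{\HOD}=\gamma^+$ for every singular $\gamma>\delta$. So $\kappa=\gamma^+$ is a successor cardinal of $\HOD$ that is regular in $V$. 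Take an Ulam matrix for $\kappa$ in $\HOD$ and combine it with the $\kappa$-completeness in $V$ of the nonstationary ideal on $E^\kappa_\omega$. This yields an ordinal definable family of $\kappa$ pairwise disjoint $V$-stationary subsets of $E^\kappa_\omega$, so $\kappa$ is not $\omega$-strongly measurable in $\HOD$.

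\textbf{Main direction.} Step 1 claims too much, and your sketch does not prove it. Since $\kappa\geq\delta=\crit(j)$, the embedding moves $\kappa$, so elementarity only relates the least counterexample above $\kappa$ to the least one above $j(\kappa)$, and no contradiction arises. What extendibility gives cheaply is unboundedly many such cardinals. Take $\zeta\in C^{(2)}$ and $\map{j}{V_\zeta}{V_{j(\zeta)}}$ with $\crit(j)=\delta$ and $j(\delta)>\theta$. Then $j(\kappa)>\theta$ is regular and not $\omega$-strongly measurable in $\HOD^{V_{j(\zeta)}}$. This transfers to $V$ because $\HOD^{V_{j(\zeta)}}\subseteq\HOD$ and $j(\kappa)$ is a cardinal of $V$.

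Step 3 then breaks. Non-$\omega$-strong measurability of $\gamma$ only provides $\HOD$-partitions of $E^\gamma_\omega$ into $\eta$ stationary pieces for $\eta$ with $(2^\eta)^{\HOD}<\gamma$, never into $\gamma$ pieces, so your ``refinement'' is not available. The fix is to run Solovay's argument at a larger cardinal. Pick a regular $\kappa'$ that is not $\omega$-strongly measurable in $\HOD$ with $(2^\theta)^{\HOD}<\kappa'$, and take $j$ with $j(\delta)>\kappa'$. Partition $E^{\kappa'}_\omega$ in $\HOD$ into $\theta$ stationary sets $\vec S$, and recover $j[\theta]$ from $j(\vec S)$ and $\sup j[\kappa']$. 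This is exactly the claim $j\restriction\eta\in\HOD$ in the proof of Theorem~\ref{theorem:ExactingInHOD}.

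Step 4 needs more than $j[\gamma]$. To decide whether $j[\gamma]\in j(A)$ for $A\in\HOD$, you must know $j$ on $\mathcal{P}(\mathcal{P}_\delta(\gamma))\cap\HOD$. This follows from $j\restriction\theta\in\HOD$ for $\theta$ at least the order type of that set in the canonical well-order of $\HOD$, via the collapse argument of the claim $j\restriction\HOD_\eta\in\HOD$ in the same proof. That is why you need non-$\omega$-strongly measurable regular cardinals far above $\gamma$, i.e.\ the $\HOD$ Hypothesis, rather than information at $\gamma$ alone.
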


Succinctly speaking, scenario \eqref{item:HODclose} states that $V$ is \emph{close} to $\HOD$, while scenario \eqref{item:HODfar} states that it is \emph{far} from $\HOD$. Which of these two antagonistic scenarios prevails is a major open question. The hypothesis and conjecture that settle this question are the following:

\begin{definition}[The HOD Hypothesis, Woodin, {\cite{SEM1}}]
  There is a proper class of regular cardinals that are not $\omega$-strongly measurable in $\HOD$.   
\end{definition}

\begin{definition}[Woodin, {\cite{SEM1}}]\hfill
    \begin{enumerate}
        \item The \emph{$\HOD$ Conjecture} is the assertion that the theory
        $$\ZFC+\textit{``There is an extendible cardinal"}$$
        proves the $\HOD$ Hypothesis. 
        
        \item The \emph{Weak $\HOD$ Conjecture} is the assertion that the theory
        $$\ZFC+\textit{``There is an extendible cardinal with a huge cardinal above"}$$
        proves the $\HOD$ Hypothesis.
    \end{enumerate}
\end{definition}

We end this section by considering strengthenings of the axiom $``V=\HOD"$ that we will use in our arguments. 
The axiom $``V=\gHOD$" asserts that the universe of sets is equal to the class $$\gHOD ~ = ~ \Set{x}{\forall \mathbb{P} ~  \one\forces_{\mathbb{P}}\check{x}\in\HOD}$$ (see \cite[\S4]{Geology}). This axiom  can be forced  over any given model of $\ZFC+\mathrm{GCH}$
 via a class forcing of any desired degree of directed closure  \cite{McAloon}. In fact, McAloon's method yields the consistency of a refined version of axiom $V=\mathrm{gHOD}$ called \emph{Continuum Coding Axiom} ($\mathrm{CCA}$) in work of Reitz \cite{Reitz}. The $\mathrm{CCA}$ is the axiom asserting that given any set of ordinals $a\s \alpha$ there is  another ordinal $\theta$ such that
 $$a ~ = ~ \Set{\beta < \alpha}{ 2^{\aleph_\theta+\beta+1}=\aleph_{\theta+\beta+2}}.$$


\subsection{Rank-into-rank embeddings}\label{subsection:RankIntoRank}
In \cite{KunenIncon}, Kunen proved the celebrated \emph{Kunen Inconsistency Theorem}, asserting that the Axiom of Choice precludes the existence of a nontrivial elementary embedding $$\map{j}{V_{\lambda+2}}{V_{\lambda+2}}.$$ A natural long-standing open question is whether Kunen's theorem is optimal, or whether the existence of elementary embeddings between smaller rank-initial segments of the universe of sets is already incompatible with the Axiom of Choice. This question motivated the study of so-called \emph{rank-into-rank embeddings}.

Given a limit ordinal $\lambda$, we refer to non-trivial elementary embeddings $\map{i}{V_\lambda}{V_\lambda}$ as \emph{I3-embeddings}. 
As usual, we define the \emph{critical sequence} of an I3-embedding $i$ is the unique sequence $\seq{\kappa_n}{n<\omega}$ satisfying  $\kappa_0=\crit(i)$ and $\kappa_{n+1}=i(\kappa_n)$ for all $n<\omega$. 
Since $\lambda$ is assumed to be a limit ordinal, the Kunen Inconsistency ensures that $\sup_{n<\omega}\kappa_n=\lambda$ holds for every I3-embedding $\map{i}{V_\lambda}{V_\lambda}$ with critical sequence $\seq{\kappa_n}{n<\omega}$.

If $\map{i}{V_\lambda}{V_\lambda}$ is an I3-embedding, then $$\map{i_+}{V_{\lambda+1}}{V_{\lambda+1}}; ~ A\mapsto\bigcup\Set{i(A\cap V_\gamma)}{\gamma<\lambda}$$ is the unique $\Sigma_0$-elementary map from $V_{\lambda+1}$ into itself that extends $i$ (see \cite[Lemma 3.3]{MR3902806}). 
It is now possible to strengthen the notion of I3-embeddings by demanding that these embeddings are elementary for higher-order formulas, where the extension $i_+$ is used to map second-order parameters. More specifically, we say that an I3-embedding $\map{i}{V_\lambda}{V_\lambda}$ is \emph{$\Sigma^1_n$-elementary} for some natural number $n$ if 
 \begin{equation*}
  \begin{split}
    V_\lambda & \models\Phi(x_0,\ldots,x_{k-1},A_0,\ldots,A_{\ell-1}) \\
      &  ~ \Longleftrightarrow ~ V_\lambda\models\Phi(i(x_0),\ldots,i(x_{k-1}),i_+(A_0),\ldots,i_+(A_{\ell-1}))
  \end{split}
 \end{equation*}
 holds for every $\Sigma^1_n$-formula\footnote{See \cite[p. 295]{MR1940513}.} $\Phi(v_0^0,\ldots,v^0_{k-1},v^1_0,\ldots,v^1_{\ell-1})$ with first-order variables $v_0^0,\ldots,v^0_{k-1}$ and second-order variables $v^1_0,\ldots,v^1_{\ell-1}$ and all parameters $x_0,\ldots,x_{k-1}\in V_\lambda$ and  $A_0,\ldots,A_{\ell-1}\in V_{\lambda+1}$. 
 We then define an I3-embedding $j$ to be an \emph{I1-embedding} if it is $\Sigma^1_n$-elementary for all natural numbers $n$. Note that this is equivalent to demanding that $j_+$ is an elementary embedding of $V_\lambda+1$ into itself. For this reason, we also refer to non-trivial elementary embeddings $\map{i}{V_{\lambda+1}}{V_{\lambda+1}}$ as I1-embeddings.

 Recall that a non-trivial elementary embedding $\map{j}{V}{M}$ is  an \emph{I2-embedding} if $V_\lambda \subseteq M$ holds, where $\lambda$ is the \emph{first non-trivial fixed point} of $j$, {i.e.,} the least ordinal $\gamma>\crit(j)$ satisfying $j(\gamma)=\gamma$. If we define the critical sequence $\seq{\kappa_n}{n<\omega}$ as above, then the Kunen Inconsistency again ensures that $\lambda=\sup_{n<\omega}\kappa_n$ holds.  
 %
%
 Classical results of Martin and Gaifman--Powell now show that the ability to extend an I3-embedding $\map{i}{V_\lambda}{V_\lambda}$ to an I2-embedding $\map{j}{V}{M}$ is equivalent to certain correctness properties of the embedding $i$:

 \begin{theorem}[{\cite[Theorem 1.3]{La97}}]\label{theorem:I2Char}
     The following statements are equivalent for every I3-embedding $\map{i}{V_\lambda}{V_\lambda}$: 
     \begin{enumerate}
        \item The embedding $i$ can be extended to an I2-embedding $\map{j}{V}{M}$. 

         \item If $R$ is a well-founded relation on $V_\lambda$, then $i_+(R)$ is a well-founded relation on $V_\lambda$. 
         
         \item The embedding $i$ is $\Sigma^1_1$-correct. 

         \item The embedding $i$ is $\Sigma^1_2$-correct.  
     \end{enumerate}
 \end{theorem}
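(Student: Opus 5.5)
We plan to establish the cycle $(1)\Rightarrow(2)\Rightarrow(3)\Rightarrow(1)$, together with the trivial implication $(4)\Rightarrow(3)$ and a separate argument for $(1)\Rightarrow(4)$. Throughout we write $\seq{\kappa_n}{n<\omega}$ for the critical sequence of $i$.

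For $(1)\Rightarrow(2)$, let $\map{j}{V}{M}$ be an I2-embedding extending $i$. The first step is to show that $j(R)=i_+(R)$ for every $R\subseteq V_\lambda$.
\begin{itemize}
\item Since $j(\lambda)=\lambda$ and $V_\lambda\s M$, we have $j(V_\lambda)=V_\lambda^M=V_\lambda$.
\item For every $\gamma<\lambda$, the set $R\cap V_\gamma$ lies in $V_\lambda$, so $j(R\cap V_\gamma)=i(R\cap V_\gamma)$.
\item Hence $j(R)=\bigcup_\gamma j(R\cap V_\gamma)=i_+(R)$.
\end{itemize}
Now suppose $R$ is well-founded. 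Then $M\models$ ``$j(R)$ is well-founded'' by elementarity. Well-foundedness is absolute for the transitive class $M$, so $i_+(R)$ is well-founded in $V$. The implication $(4)\Rightarrow(3)$ is immediate.

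For $(2)\Rightarrow(3)$, we first note that every I3-embedding makes $i_+$ elementary for first-order formulas over $V_\lambda$ with second-order parameters. This follows by reflecting the formula to the sets $V_{\kappa_n}$ and using that $i$ is elementary on $V_\lambda$. Consequently, $\Sigma^1_1$ statements are preserved upward: a witness $B$ is mapped to the witness $i_+(B)$. The downward direction is the core of the argument. Fix a $\Sigma^1_1$ formula $\exists B\,\varphi(B,A,x)$. We Skolemize $\varphi$ so that the second-order witness also contains Skolem functions, making the remaining condition universal. We then use $\mathrm{cof}(\lambda)=\omega$ to form a tree $T_{A,x}\s V_\lambda$, definable over $V_\lambda$ from $A$ and $x$ uniformly. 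The nodes of $T_{A,x}$ are finite sequences of approximations $B\cap V_{\gamma_k}$, and each node is checked against the conditions it already decides. The tree should satisfy:
\begin{itemize}
\item $T_{A,x}$ has a cofinal branch if and only if $V_\lambda\models\exists B\,\varphi(B,A,x)$;
\item by uniformity of the definition, $i_+(T_{A,x})=T_{i_+(A),i(x)}$.
\end{itemize}
If $\exists B\,\varphi(B,A,x)$ fails, then $T_{A,x}$ ordered by reverse extension is well-founded. By (2), $T_{i_+(A),i(x)}$ is then well-founded, so $\exists B\,\varphi(B,i_+(A),i(x))$ fails as well. I expect the main technical obstacle to be the correct coding of this tree. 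The levels $\gamma_k$ must be chosen definably, for instance via a fixed club or a $\Delta_0$-reflection argument, so that both bullet points hold.

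For $(3)\Rightarrow(1)$, we extend $i$ by the extender it generates. For each $n$ and each finite $a\s\kappa_n$, let $E_a=\Set{X\s[\kappa_n]^{|a|}}{a\in i(X)}$, and let $j\colon V\to M=\Ult(V,E)$ be the resulting direct limit ultrapower. The argument then proceeds in three steps.
\begin{itemize}
\item \emph{Well-foundedness.} Suppose $M$ were ill-founded. A descending $\omega$-sequence in $M$ is witnessed by countably many functions. Collapsing to the relevant data on $V_\lambda$, we obtain a well-founded relation $R$ on $V_\lambda$ whose image $i_+(R)$ is ill-founded. Ill-foundedness of $i_+(R)$ is a $\Sigma^1_1$ property, so by $\Sigma^1_1$-correctness $R$ would be ill-founded as well, which is a contradiction.
\item \emph{Agreement.} The inclusion $V_\lambda\s M$ and the equalities $j\restriction V_\lambda=i$ and $j(\lambda)=\lambda$ are routine consequences of $i$ being elementary on $V_\lambda$.
\item \emph{Fixed point.} Since $\lambda=\sup_n\kappa_n$, the ordinal $\lambda$ is the first non-trivial fixed point of $j$.
\end{itemize}

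For $(1)\Rightarrow(4)$, the upward direction uses the same $j$ and the identity $j(A)=i_+(A)$. A $\Sigma^1_2$ statement $\exists B\,\forall C\,\varphi$ is witnessed in $V$. By elementarity, $M$ has a witness for the image statement. The $\Pi^1_1$ part is equivalent to the well-foundedness of a tree on $V_\lambda$, which is absolute between $M$ and $V$, so the statement holds in $V$. For the downward direction, we use a Shoenfield-type tree on $V_\lambda\times\lambda^+$ that projects to the set of witnesses. Such a tree is definable and also belongs to $M$, so the existence of a branch is absolute between $M$ and $V$. Combined with elementarity of $j$, this pulls the statement back from $i_+(A)$ to $A$. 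If this step proves delicate, we would instead argue by contraposition using the Martin argument: we combine the $\Sigma^1_1$-correctness of $i$ with the fact, established in $(1)\Rightarrow(2)$, that $j$ preserves the well-foundedness of relations on $V_\lambda$.
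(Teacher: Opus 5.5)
The paper does not prove this theorem; it quotes it from Laver, and the result goes back to Martin and Gaifman--Powell. Your architecture is the standard one: $(1)\Rightarrow(2)$ by absoluteness, $(3)\Rightarrow(1)$ via countable completeness of the derived extender, and a tree representation of $\Sigma^1_1$ statements over $V_\lambda$ for the hard direction. The two outer steps are fine in outline. In the extender argument, say explicitly that the generators $a_k$ must be replaced by their collapses inside a countable ordinal, which $i$ fixes, so that the inverse collapse gives a branch of $i_+(R)$. Note also that $(3)\Rightarrow(2)$ is immediate.

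\textbf{The gap: the tree $T_{A,x}$.} As you describe it, $T_{A,x}$ is built from approximations $B\cap V_{\gamma_k}$ checked against the Skolemized universal condition wherever that condition is already decided. Such a tree does not satisfy ``cofinal branch iff $\exists B\,\varphi$''.
\begin{itemize}
\item Skolem witnesses can have ranks cofinal in $\lambda$ already for arguments in $V_\omega$. So totality of the Skolem functions is never decided at a finite stage.
\item Consequently a branch may yield only partial Skolem functions, which satisfy the universal clause vacuously.
\item Example: let $A$ code an increasing cofinal $\omega$-sequence. The false statement ``there is a bounded $B\colon\omega\to\lambda$ with $B(n)\geq A(n)$ for all $n$'' acquires a branch.
\end{itemize}
The difficulty is not the definable choice of levels. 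A cofinal $K\subseteq\lambda$ of order type $\omega$, added as a second-order parameter, handles that, since $i_+(K)=i[K]$ is again cofinal.

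What is missing is bookkeeping by $\omega$-valued functions. At stage $n$, a node should also record $g\restriction V_{\kappa_n}$, where $g(\vec{x})<\omega$ is the stage by which $F(\vec{x})$ must have appeared. This restriction lies in $V_\lambda$, and totality becomes a closed condition.

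The same device (countable partitions indexed by $\omega$, respected by $i$ because $\crit(i)>\omega$) is what actually proves $\Sigma^1_0$-elementarity of $i_+$. Your justification ``by reflecting to $V_{\kappa_n}$'' is false as stated: $\langle V_{\kappa_n},\in,A\cap V_{\kappa_n}\rangle$ need not be elementary in $\langle V_\lambda,\in,A\rangle$. Take $A$ cofinal of order type $\omega$.

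\textbf{The downward half of $(1)\Rightarrow(4)$.} This also does not go through as written. A tree projecting to the $\Sigma^1_2$ set must certify well-foundedness of $T_{A,B}$, which has $\lambda$ many nodes, along branches of length $\omega$. So its nodes must carry $\kappa_n$-sequences of ordinals. These need not lie in $M$, which is not even closed under $\omega$-sequences, since otherwise $j[\lambda]=\bigcup_n i[\kappa_n]\in M$. So the claim that the tree belongs to $M$ with the same branches is unjustified.

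Your fallback is the right route, but its key step should be stated (Martin's argument):
\begin{enumerate}
\item If $B$ witnesses the statement at $i_+(A)$, then $T_{i_+(A),B}$ is well-founded of some rank $\alpha<\lambda^+$.
\item Fix a well-ordering $W$ of $V_\lambda$ of length at least $\alpha$.
\item By (2), $i_+(W)$ is a well-ordering, and $x\mapsto i(x)$ embeds $W$ into it, so $i_+(W)$ also has length at least $\alpha$.
\item Hence the $\Sigma^1_1$ statement ``there are $B'$ and an order-preserving map from $T_{\cdot,B'}$ into $i_+(W)$'' holds at $(i_+(A),i_+(W))$.
\item $\Sigma^1_1$-correctness pulls this back to $(A,W)$.
\item Well-foundedness of $W$ then gives the $\Pi^1_1$ clause for $A$.
\end{enumerate}
This proves $(3)\Rightarrow(4)$ directly, but it still rests on the corrected tree representation above.
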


 Motivated by these equivalences, we also refer to $\Sigma^1_1$-correct I3-embeddings as I2-embeddings.


 The lifting $i_+$ of I3-embeddings $i$ can also be used to iterate such embeddings. If $\map{i}{V_\lambda}{V_\lambda}$ is an I3-embedding with critical sequence $\seq{\kappa_n}{n<\omega}$ and $\map{k}{V_\lambda}{V_\lambda}$ is another I3-embedding, then $k_+(i)$ is  an I3-embedding with critical sequence $\seq{k(\kappa_n)}{n<\omega}$ (see {\cite[Lemma 3.5]{MR3902806}}). 
 Moreover, if $i$ and $k$ are  $\Sigma^1_n$-correct, then both $k\circ i$ and $k_+(i)$ are $\Sigma^1_n$-correct (see {\cite[Theorem 2.4]{La97}}). 
 Given an I3-embedding $\map{i}{V_\lambda}{V_\lambda}$, we now define the \emph{iteration} of $i$ to be the unique system $$\seq{\map{i_{\ell,m}}{V_\lambda}{V_\lambda}}{\ell\leq m<\omega}$$ of elementary embeddings with $i_{0,1}=i$, $i_{\ell,\ell}=\id_{V_\lambda}$, $i_{\ell+1,\ell+2}=i_+(i_{\ell,\ell+1})$ and $i_{\ell,m+1}=i_{m,m+1}\circ i_{\ell,m}$ for all $\ell\leq m<\omega$. 
 We then know that for each $\ell<\omega$, the map $i_{\ell,\ell+1}$ is an I3-embedding with critical sequence $\seq{\kappa_{n+\ell}}{n<\omega}$. Moreover, a short computation then shows that $$i_{\ell+1,\ell+2} ~ = ~ (i_{\ell,\ell+1})_+(i_{\ell,\ell+1})$$ holds for all $\ell<\omega$. Finally, the results mentioned above directly ensure that, if the embedding $i$ is $\Sigma^1_n$-correct for some $n<\omega$, then all embeddings $i_{\ell,m}$ are $\Sigma^1_n$-correct. In particular, Theorem  \ref{theorem:I2Char} allows us to conclude that the existence of an I2-embedding with critical sequence $\seq{\kappa_n}{n<\omega}$ implies that for all $\ell<\omega$, there exists an I2-embedding with critical sequence $\seq{\kappa_{n+\ell}}{n<\omega}$.

 If $\map{j}{V}{M}$ is an I2-embedding with critical point $\kappa$ and least non-trivial fixed point $\lambda$ and $E$ is the $(\kappa,\lambda)$-extender induced by $j$, then the corresponding ultrapower embedding $\map{j_E}{V}{\Ult(V,E)}$  is again an I2-embedding and it agrees with $j$ on $V_{\lambda+1}$ (see {\cite[Lemma 26.1]{MR1994835}}). 
 Standard arguments (see {\cite[Section 3]{MR3902806}}) now show that I2-embeddings given by extenders can be \emph{iterated}; more precisely, if $E$ is a $(\kappa,\lambda)$-extender that induces an  I2-embedding $\map{j}{V}{M}$ with critical point $\kappa$ and least non-trivial fixed point $\lambda$, then there exists a commuting system $$\langle\seq{M_\alpha}{\alpha\in\ord},\seq{\map{j_{\alpha,\beta}}{M_\alpha}{M_\beta}}{\alpha\leq\beta\in\ord}\rangle$$ of inner models $M_\alpha$ and elementary embeddings $j_{\alpha,\beta}$ such that the following statements hold:
 \begin{itemize}
     \item $M_0=V$, $M_1=M$ and $j_{0,1}=j$. 

     \item If $\alpha\in\ord$, then $M_{\alpha+1}=\Ult(M_\alpha,j_{0,\alpha}(E))$ and $j_{\alpha,\alpha+1}$ is the corresponding ultrapower embedding. 

     \item If $\lambda$ is a limit ordinal, then $$\langle M_\lambda,\seq{j_{\alpha,\lambda}}{\alpha<\lambda}\rangle$$ is the direct limit of $$\langle\seq{M_\alpha}{\alpha<\lambda},\seq{\map{j_{\alpha,\beta}}{M_\alpha}{M_\beta}}{\alpha\leq\beta<\lambda}\rangle.$$
 \end{itemize}
 Routine computations then show that, if $\seq{\kappa_n}{n<\omega}$ is the critical sequence of $j$ and $m\leq n<\omega$, then we have $j_{m,n}(\kappa_m)=\kappa_n$,  $\crit(j_{n,n+1})=\kappa_n$, $j_{m,n}(\lambda)=\lambda$ and $V_\lambda\subseteq M_n$. It then follows that $j_{0,\omega}(\kappa)=\lambda$,  $\crit(j_{\omega,\omega+1})=\lambda$ and $V_\lambda\subseteq M_\alpha$ for all $\alpha\in\ord$. Moreover, it can be shown that the system $\seq{j_{\ell,m}\restriction V_\lambda}{\ell\leq m<\omega}$ of I3-embeddings is the iteration of $j\restriction V_\lambda$ (see the proof of {\cite[Proposition 3.8]{MR3902806}}).

 The following observation plays a central role in the consistency proofs of exactingness from I2-embeddings:

\begin{prop}\label{proposition:IterationFixedPoints}
If $\map{j}{V}{M}$ is an I2-embedding with critical point $\kappa$ and least non-trivial fixed point $\lambda$ that is given by a $(\kappa,\lambda)$-extender $E$ and  $$\langle\seq{M_\alpha}{\alpha\in\ord},\seq{\map{j_{\alpha,\beta}}{M_\alpha}{M_\beta}}{\alpha\leq\beta\in\ord}\rangle$$ is the iteration of $V$ and $E$, then the following statements hold:  
 \begin{enumerate}
     \item\label{prop:InvEmb1} We  have $$x\in M_\omega ~ \Longleftrightarrow ~ j(x)\in M_\omega$$ for every set $x$ and $$\map{j\restriction M_\omega}{M_\omega}{M_\omega}$$ is an elementary embedding. 

     \item\label{prop:InvEmb2} We have $$j(j_{0,\omega}(x)) ~ = ~ j_{0,\omega}(x)$$ for every set $x$. 
 \end{enumerate}
\end{prop}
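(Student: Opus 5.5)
The plan is to identify $j(E)$ with $j_{0,1}(E)$, which makes the iteration by $j(E)$ a shifted copy of the iteration by $E$, and then read both statements off this shift.

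\textbf{The shift.} Since $M_1=M$ and $j=j_{0,1}$, the extender $j(E)$ is exactly the one used to form $M_2=\Ult(M_1,j_{0,1}(E))$. By elementarity of $j$, the image under $j$ of the whole iteration system of $V$ by $E$ is the iteration of $M_1$ by $j(E)$, computed inside $M_1$. Iterations of this kind are computed correctly inside inner models: ultrapowers by extenders of the relevant form are absolute, and direct limits along $\omega$ are absolute. Hence
\[ j(M_\alpha)=M_{1+\alpha}\quad\text{and}\quad j(j_{\alpha,\beta})=j_{1+\alpha,1+\beta} \]
for all $\alpha\le\beta$. In particular $j(M_\omega)=M_\omega$ and $j(j_{0,\omega})=j_{1,\omega}$.

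\textbf{Statement \eqref{prop:InvEmb1}.} The equivalence $x\in M_\omega\Leftrightarrow j(x)\in j(M_\omega)=M_\omega$ is elementarity of $j$ applied to the class $M_\omega$. For class-sized $M_\omega$, I would apply this to the set-sized pieces $M_\omega\cap V_\gamma$, using $j(M_\omega\cap V_\gamma)=M_\omega\cap V^{M}_{j(\gamma)}$ and $V^M_{j(\gamma)}\subseteq V_{j(\gamma)}$. Elementarity of $j\restriction M_\omega\colon M_\omega\to M_\omega$ follows similarly. If $M_\omega\models\varphi(x)$, then $M\models$ ``$j(M_\omega)\models\varphi(j(x))$'', that is, $M\models$ ``$M_\omega\models\varphi(j(x))$''. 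Since $M_\omega$ is definable in $M$ as the relevant iterate, and satisfaction in it is absolute between $M$ and $V$ (argue level by level with $V_\gamma$ fragments), this gives $M_\omega\models\varphi(j(x))$.

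\textbf{Statement \eqref{prop:InvEmb2}.} By elementarity, $j(j_{0,\omega}(x))=j(j_{0,\omega})(j(x))=j_{1,\omega}(j_{0,1}(x))=j_{0,\omega}(x)$, using the commutativity of the system.

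\textbf{Main obstacle.} The delicate step is the absoluteness claim that $M$'s internal iteration of $M$ by $j(E)$ coincides with the true iteration $\langle M_{1+\alpha}\rangle$. For successor stages, $\Ult(N,F)$ computed in $M$ equals the true ultrapower, because the functions used (from $V_\kappa$-sized domains into $N$) belong to $N\subseteq M$. For limit stages, the direct limit computed in $M$ agrees with the real one, since the system lies in $M$ and direct limits are defined by a $\Delta_1$ construction over the system. I would verify this by induction on $\alpha$, treating successor and limit stages as just described.
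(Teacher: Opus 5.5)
Your proposal is correct and follows essentially the same route as the paper: you identify the image under $j$ of the iteration of $V$ by $E$ with the iteration of $M_1$ by $j(E)=j_{0,1}(E)$ computed in $M$, shift indices by one, and read off both items using $1+\omega=\omega$ and $j_{1,\omega}\circ j=j_{0,\omega}$. One small correction: elementarity actually gives $j(M_\alpha)=M_{1+j(\alpha)}$ and $j(j_{\alpha,\beta})=j_{1+j(\alpha),1+j(\beta)}$, so your displayed identities hold only for $\alpha\le\beta\le\omega$, where $j$ fixes the indices; this is exactly the range the paper restricts to, and it is all you actually use.
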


\begin{proof}
  First, note that, in $V$, the above system of inner models and elementary embeddings is definable from the paramater $E$. 
  In addition, the system  $$\langle\seq{M_{1+\alpha}}{\alpha\in\ord},\seq{\map{j_{1+\alpha,1+\beta}}{M_{1+\alpha}}{M_{1+\beta}}}{\alpha\leq\beta\in\ord}\rangle$$ is the iteration of $M_1$ and $j(E)$, and this system is definable in $M_1$ by the same formulas and the parameter $j(E)$. 
  Given $\alpha\leq\omega$, this shows that  $$x\in M_\alpha ~ \Longleftrightarrow ~ j(x)\in M_{1+\alpha}$$ holds for every set $x$ and $$\map{j\restriction M_\alpha}{M_\alpha}{M_{1+\alpha}}$$ is an elementary embedding. This directly implies \eqref{prop:InvEmb1}. In addition, given $\alpha\leq\beta\leq\omega$, we have   $$j(j_{\alpha,\beta}(x)) ~ = ~ j_{1+\alpha,1+\beta}(j(x))$$  for every $x\in M_\alpha$. Since $j_{1,\omega}\circ j=j_{0,\omega}$, this directly implies \eqref{prop:InvEmb2}. 
\end{proof}

 For later use, we close this section by proving that it is possible to start in a model of the $\gch$ with an I2-embedding and force $V=\gHOD$ to hold while preserving the existence of an I2-embedding:

\begin{lemma}\label{lemma:I2andVHOD}
    If the $\gch$ holds and $\map{j}{V}{M}$ is an I2-embedding with critical point $\kappa$ and least non-trivial fixed point $\lambda$ that is induced by a $(\kappa,\lambda)$-extender, then there is a cofinality-preserving generic extension $V[G]$ such that $j$ lifts to an I2-embedding in $V[G]$ and $``V=\gHOD$" holds in $V[G]_\lambda$.

    Moreover, $\mathrm{CCA}$ holds in $V[G]_\lambda$.
\end{lemma}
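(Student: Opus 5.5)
We will use a McAloon-style coding forcing. It will be a reverse Easton iteration of length $\lambda$ that codes each set of ordinals into the GCH pattern, with the coding confined below $\lambda$. We then lift $j$ by the standard master-condition/absorption argument for rank-into-rank embeddings, in the style of Hamkins and Dimonte--Friedman.

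\emph{The forcing.} Fix a bookkeeping by a reverse Easton iteration $\PPP_\lambda=\seq{\PPP_\alpha,\dot{\QQQ}_\alpha}{\alpha<\lambda}$. At stage $\alpha$ (say $\alpha$ an infinite cardinal, or in a sparse class of closure points) we force with Easton support products of $\Add{\aleph_{\theta+\beta+2}}{1}$-type posets. These either make $2^{\aleph_{\theta+\beta+1}}=\aleph_{\theta+\beta+2}$ fail (by adding $\aleph_{\theta+\beta+3}$ subsets of $\aleph_{\theta+\beta+1}$) or leave GCH intact there. The effect is to code, in a block of cardinals above $\alpha$ and below the next stage, every subset of $\alpha$ present in $V^{\PPP_\alpha}$; there are $2^\alpha=\alpha^+$ such sets by GCH.

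The stages are chosen so that each $\dot{\QQQ}_\alpha$ is ${<}\alpha$-directed closed and has size below the next inaccessible. Standard arguments then give three facts. $\PPP_\lambda$ preserves cofinalities, since we use only Cohen-type posets at successor cardinals, together with GCH and Easton support. Every set of ordinals in $V[G]_\lambda$ is coded at some stage, and the coding is not disturbed later because later stages act on higher cardinals. Hence $\mathrm{CCA}$, and in particular $V=\HOD$, holds in $V[G]_\lambda$; since the coding uses only the continuum function, which is absolute enough under further forcing, $V=\gHOD$ holds as well. Finally, $\PPP_\lambda\s V_\lambda$ is definable over $V_\lambda$, so $j(\PPP_\lambda)$ is defined in $M$ by the same recipe from $j(\lambda)=\lambda$. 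Because $V_\lambda\s M$, this gives $j(\PPP_\lambda)=\PPP_\lambda$, and we force only with $\PPP_\lambda$ (no tail above $\lambda$).

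\emph{Lifting.} Let $G$ be $\PPP_\lambda$-generic; we want $j[G]\s G$. Write $\langle\kappa_n\rangle$ for the critical sequence of $j$. Then $j\restriction(\PPP_\lambda\cap V_{\kappa_n})$ maps into $\PPP_\lambda\cap V_{\kappa_{n+1}}$, and the naive pointwise image $j[G]$ need not lie in $G$. The remedy is to build $G$ itself so that $j``G\s G$, or to pass to a further generic as in Dimonte--Friedman. Concretely:
\begin{itemize}
\item we factor $\PPP_\lambda$ at the $\kappa_n$;
\item we note that, since $\kappa_0=\crit(j)$, we have $j(G_{\kappa_0})=G_{\kappa_0}$;
\item we inductively construct $G_{\kappa_{n+1}}$ generic for $\PPP_{\kappa_{n+1}}$ over $V$ containing $j``G_{\kappa_n}$ as a master condition.
\end{itemize}
The master condition exists because the iteration on $[\kappa_n,\kappa_{n+1})$ is ${<}\kappa_n$-directed closed and $j``G_{\kappa_n}$ has a lower bound via Easton support. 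The alternative is the standard fact that for forcings of this shape, reverse Easton iterations below an I2 fixed point with $j(\PPP)=\PPP$, one has the ``$\omega$-sequence of generics'' trick: $V[G]$ is in fact a generic extension where $j$ lifts to $j^*:V[G]\to M[G]$ with $j^*(G)=G$.

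Having $j``G\s G$ and $j(\PPP_\lambda)=\PPP_\lambda$, Silver's criterion gives the lift $j^*:V[G]\to M[G]$. Then $V[G]_\lambda=V_\lambda[G]\s M[G]$ and $\lambda$ is still the least nontrivial fixed point, so $j^*$ is an I2-embedding in $V[G]$. Alternatively, we can check $\Sigma^1_1$-correctness of $j^*\restriction V[G]_\lambda$ and invoke Theorem~\ref{theorem:I2Char}.

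\emph{Main obstacle.} The main obstacle is obtaining the generic $G$ with $j``G\s G$ over all of $V$, not just over $M$. Below $\lambda$ there is no room to take a single master condition, since the relevant condition would need support cofinal in $\lambda$. We would try first the Dimonte--Friedman method: genericity over $V$ is obtained by diagonalizing dense sets using closure of the tail iterations at each $\kappa_n$, together with the fact that $\lambda^{+}$ and $2^\lambda=\lambda^+$ give only $\lambda^+$ dense sets to meet. If this fails, we would instead modify the forcing: use lottery sums, or take only a $j$-invariant part of a generic built via an $\omega$-chain of master conditions.
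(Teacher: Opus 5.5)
There is a genuine gap, and it sits at the heart of the lifting. You assert that $\PPP_\lambda\subseteq V_\lambda$, and hence $j(\PPP_\lambda)=\PPP_\lambda$; this is false for your iteration. Since $\cof{\lambda}=\omega$, a reverse Easton iteration takes an inverse limit at $\lambda$. Conditions may therefore have support cofinal in $\lambda$, and $\PPP_\lambda$ is a subset of $V_{\lambda+1}$, not of $V_\lambda$.

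You cannot avoid this by taking a direct limit at $\lambda$, which is what $\PPP_\lambda\subseteq V_\lambda$ would mean. Such a limit collapses $\lambda$ to $\omega$. To see this, let $g_n$ be a Cohen subset of some $\mu_n\geq\kappa_n$ added at a stage in $[\kappa_n,\kappa_{n+1})$. Because supports are bounded, a density argument shows that $n\mapsto\min\Set{\beta}{g_n(\beta)=1}$ maps $\omega$ onto $\lambda$.

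With the inverse limit, $j(\PPP_\lambda)$ is the same iteration computed in $M$, i.e.\ $\PPP_\lambda\cap M$. Since $V_{\lambda+1}\not\subseteq M$ (for example, $j[\lambda]\notin M$), a dense subset of $j(\PPP_\lambda)$ that lies in $M$ need not be dense in $\PPP_\lambda$. So Silver's criterion cannot be applied to $G$ directly.

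What you need is a filter $\bar F$ on $j(\PPP_\lambda)$ that contains $j[F]$ and is generic over $M$. Proving this genericity is the main work in the paper's proof, and it goes as follows.
\begin{enumerate}
\item Every dense open $D\in M$ is written as $j(f)(y)$ with $\dom(f)=V_\lambda$ and $y\in V_{\kappa_{n+1}}$, using that $M$ is an extender ultrapower.
\item $\PPP_{[\kappa_n,\lambda)}$ is ${<}\kappa_n^+$-directed closed over $V[F_n]$ and $|V_{\kappa_n}|=\kappa_n$. Together these yield a single $r\in F$ whose tail reduces every $f(x)$ with $x\in V_{\kappa_n}$ to a set dense below $r\restriction\kappa_n$ in $\PPP_{\kappa_n}$.
\item Elementarity transfers this to $j(r)$ and $D$.
\item Since $j(r)\restriction\kappa_{n+1}=j(r\restriction\kappa_n)\in F$, the filter $F_{n+1}$ meets the reduced set.
\end{enumerate}
Nothing in your proposal replaces this step.

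Your treatment of $j[G]\subseteq G$ is also not a proof. You cannot construct filters generic over $V$ inside $V$. Diagonalizing against $\lambda^+$ dense sets would in any case need far more closure than $\PPP_\lambda$ has.

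The obstacle you name, that a master condition would need support cofinal in $\lambda$, disappears once the limit at $\lambda$ is an inverse limit. The paper builds in $V$ conditions $p_n\in\PPP_{\kappa_{n+1}}$ with $p_{n+1}\restriction\kappa_{n+1}\le p_n$. Here $p_{n+1}\restriction[\kappa_{n+1},\kappa_{n+2})$ is a name for a lower bound of $j_*[H]$, where $H$ is the induced generic on $\PPP_{[\kappa_n,\kappa_{n+1})}$. That lower bound exists because $\PPP_{[\kappa_{n+1},\kappa_{n+2})}$ is ${<}\kappa_{n+1}^+$-directed closed and $|H|\le\kappa_{n+1}$. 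The paper then takes one lower bound $p$ of the $p_n$ by $\sigma$-closure and shows that every generic $F\ni p$ satisfies $j[F]\subseteq F$.

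After lifting to $\map{j}{V[F]}{M[\bar F]}$, you still have to verify $V[F]_\lambda\subseteq M[\bar F]$. This holds because each $F_n$ lies in $M[\bar F]$.
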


\begin{proof}
 Let $\mathcal{I}\s \lambda$ denote the set of all inaccessible cardinals less than $\lambda$. Let $$\PPP_\lambda  ~ = ~ \seq{\PPP_\alpha;\dot{\QQQ}_\alpha}{\alpha<\lambda}$$ denote the Easton support  forcing iteration that is non-trivial only at those   $\min(\mathcal{I})<\alpha\in\mathcal{I}$ such that $\sup(\mathcal{I}\cap \alpha)<\alpha$ holds, in which case $\PPP_\alpha$ forces $\dot{\QQQ}_\alpha$ to be the canonical ${<}\sup(\mathcal{I}\cap \kappa)^+$-directed-closed forcing coding $V[\dot{G}]_\alpha$ into the continuum pattern of $V[\dot{G}]_\alpha$ (see \cite{McAloon}). 
 Then $\PPP_\lambda$ is $\sigma$-closed and, since the $\gch$ holds in the ground model, standard arguments (see {\cite[Section 7]{CummingsHandbook}})  show that forcing with $\PPP_\lambda$ preserves all cofinalities. Moreover, the above definition ensures that $V[H]_\lambda$  satisfy $``V=\gHOD$" whenever $H$ is $\mathbb{P}_\lambda$-generic over $V$.
 Therefore, it suffices to check that $\map{j}{V}{M}$ lifts to an I2-embedding in some $\mathbb{P}_\lambda$-generic extension of $V$.

 Let $\seq{\kappa_n}{n<\omega}$ denote the critical sequence of $j$. Then $\kappa_0$ is the critical point of $j$ and $\lambda=\sup_{n<\omega}\kappa_n$. 
 Since  the partial order $\PPP_\alpha$ is uniformly definable in $V_\lambda$ from the parameter $\alpha<\lambda$, it follows that $j(\PPP_{\kappa_n})=\PPP_{\kappa_{n+1}}$ and $j(\PPP_{[\kappa_n,\kappa_{n+1})})=\PPP_{[\kappa_{n+1},\kappa_{n+2})}$ for all $n<\omega$. This shows that $j[\PPP_\lambda]$ is a subset of $\PPP_\lambda$. 
  Given $n<\omega$, it is also obvious that $|\mathbb{P}_\alpha|<\kappa_n$ holds for all $\alpha<\kappa_n$ and every condition in $\mathbb{P}_{\kappa_n}$ forces $\mathbb{P}_{[\kappa_n,\kappa_{n+1})}$ to be ${<}\kappa_n^+$-directed closed. 
  %

\begin{claim*}
  Let $n<\omega$, let $G$ be $\PPP_{\kappa_{n+1}}$-generic over $V$, let $\bar{G}$ be the filter on $\PPP_{\kappa_n}$ induced by $G$ and let $H$ be the filter on $\PPP_{[\kappa_n,\kappa_{n+1})}^{\bar{G}}$ induced by $G$. If $j[\bar{G}]\subseteq G$ holds and  $\map{j_*}{V[\bar{G}]}{M[G]}$ is the corresponding lifting of $j$, then $j_*[H]$ is a subset of $\PPP_{[\kappa_{n+1},\kappa_{n+2})}^G$ with a lower bound. 
  %
\end{claim*}

\begin{proof}[Proof of claim]
  First, note that, since $\kappa_{n+1}$ is an inaccessible limit of inaccessible cardinals and $\PPP_\lambda$ has Easton support, it follows that $\PPP_{\kappa_{n+1}}$ has cardinality at most $\kappa_{n+1}$ in $V$ and $\PPP_{[\kappa_n,\kappa_{n+1})}^{\bar{G}}$ has cardinality at most $\kappa_{n+1}$ in $V[G]$. In particular, we know that $H$ has cardinality at most $\kappa_{n+1}$ in $V[G]$. 
  Since $j(\PPP_{[\kappa_n,\kappa_{n+1})}) =  \PPP_{[\kappa_{n+1},\kappa_{n+2})}$, we also know that $$j_*[H] ~ \subseteq ~ j_*(\PPP_{[\kappa_n,\kappa_{n+1})}^{\bar{G}}) ~ = ~ \PPP_{[\kappa_{n+1},\kappa_{n+2})}^G.$$ Finally, since $j_*$ is an elementary embedding, the directedness of $H$ in $\PPP_{[\kappa_n,\kappa_{n+1})}^{\bar{G}}$ implies the directedness of $j_*[H]$ in $\PPP_{[\kappa_{n+1},\kappa_{n+2})}^G$. 
  The statement of the claim now follows from the earlier observation that $\PPP_{[\kappa_{n+1},\kappa_{n+2})}^G$ is ${<}\kappa_{n+1}^+$-directed closed in $V[G]$. 
  %
\end{proof}

 We  now use this claim to inductively construct a sequence $\seq{p_n}{n<\omega}$ consisting of conditions $p_n$ in $\PPP_{\kappa_{n+1}}$ satisfying the following statements: 
   \begin{enumerate}
       \item If $m<n<\omega$, then $p_n\restriction\kappa_{m+1}\leq_{\PPP_{\kappa_{m+1}}}p_m$. 

       \item If $n<\omega$, $G$ is $\PPP_{\kappa_{n+1}}$-generic over $V$ with $p_n\in G$, $\bar{G}$ is the filter on $\PPP_{\kappa_n}$ induced by $G$ and $H$ is the filter on $\PPP_{[\kappa_n,\kappa_{n+1})}^{\bar{G}}$ induced by $G$, then $j[\bar{G}]\subseteq G$ and $p_{n+1}\restriction[\kappa_{n+1},\kappa_{n+2})$ is a lower bound of $j_*[H]$ in $\PPP_{[\kappa_{n+1},\kappa_{n+2})}^G$, where $\map{j_*}{V[\bar{G}]}{M[G]}$ is the canonical lifting of $j$. 
   \end{enumerate}
 
 First, define $p_0$ to be the trivial condition in $\PPP_{\kappa_1}$. Since $j\restriction\PPP_{\kappa_0}=\id_{\PPP_{\kappa_0}}$ and $\PPP_{\kappa_0}$ is a direct limit, we obviously have $j[\bar{G}]\subseteq G$ whenever $G$ is $\PPP_{\kappa_1}$-generic over $V$ and $\bar{G}$ is the filter on $\PPP_{\kappa_0}$ induced by $G$. 
 Now, assume that $n<\omega$ and there exists a condition $p_n$ in $\PPP_{\kappa_{n+1}}$ with the desired properties. By the above claim, these properties ensure that there is a $\PPP_{\kappa_{n+1}}$-name $\dot{q}$ for a condition in $\PPP_{[\kappa_{n+1},\kappa_{n+2})}$ with the property that whenever $G$ is $\PPP_{\kappa_{n+1}}$-generic over $V$ with $p_n\in G$,  $\bar{G}$ is the filter on $\PPP_{\kappa_n}$ induced by $G$,  $H$ is the filter on $\PPP_{[\kappa_n,\kappa_{n+1})}^{\bar{G}}$ induced by $G$ and $\map{j_*}{V[\bar{G}]}{M[G]}$ is the canonical lifting of $j$ enabled by the fact that $j[\bar{G}]\subseteq G$ holds, then $\dot{q}^G$ is a lower bound of $j_*[H]$ in $\PPP_{[\kappa_{n+1},\kappa_{n+2})}^G$. 
 Define $p_{n+1}$ to be a condition in $\PPP_{\kappa_{n+2}}$ that gets mapped below $\langle p_n,\dot{q}\rangle$ by the canonical dense embedding of $\PPP_{\kappa_{n+2}}$ into $\PPP_{\kappa_{n+1}}*\PPP_{[\kappa_{n+1},\kappa_{n+2})}$. 
 Then $p_{n+1}\restriction\kappa_{n+1}\leq_{\PPP_{\kappa_{n+1}}}p_n$ and $p_{n+1}\restriction[\kappa_{n+1},\kappa_{n+2})$ is a lower bound of $j_*[H]$ in $\PPP_{[\kappa_{n+1},\kappa_{n+2})}^G$, whenever $G$ is $\PPP_{\kappa_{n+1}}$-generic over $V$ with $p_n\in G$, $\bar{G}$ is the filter on $\PPP_{\kappa_n}$ induced by $G$, $H$ is the filter on $\PPP_{[\kappa_n,\kappa_{n+1})}^{\bar{G}}$ induced by $G$ and $\map{j_*}{V[\bar{G}]}{M[G]}$ is the corresponding lifting of $j$. 
 Finally, let $G$ be $\PPP_{\kappa_{n+2}}$-generic over $V$ with $p_{n+1}\in G$ and let $\bar{G}$ be the filter on $\PPP_{\kappa_{n+1}}$ induced by $G$. Fix a condition $q$ in $\bar{G}$. Since $p_n$ is an element of $\bar{G}$, we know that $j(q)\restriction\kappa_{n+1}=j(q\restriction\kappa_n)\in\bar{G}$. Moreover, our construction also ensures that 
 \begin{equation*}
   \begin{split}
       p_{n+1}\restriction[\kappa_{n+1},\kappa_{n+2}) ~ & \leq_{\PPP_{[\kappa_{n+1},\kappa_{n+2})}^G} ~ \dot{q}^G \\ & \leq_{\PPP_{[\kappa_{n+1},\kappa_{n+2})}^G} ~ j(q\restriction[\kappa_n,\kappa_{n+1})) ~ = ~ j(q)\restriction[\kappa_{n+1},\kappa_{n+2}).
   \end{split}
 \end{equation*}
 This shows that there is a condition $r\in G$ with $r\leq_{\PPP_{\kappa_{n+2}}}j(q)$. In particular, we know that $j[\bar{G}]\subseteq G$ holds in this case.

 Since all of the partial orders in our iteration are countably closed, we can now find a condition $p$ in $\PPP_\lambda$ satisfying $p\restriction\kappa_{n+1}\leq_{\PPP_{\kappa_{n+1}}}p_n$ for all $n<\omega$. Let $F$ be $\PPP_\lambda$-generic over $V$ with $p\in F$ and assume, towards a contradiction, that there is a condition $q$ in $F$ with $j(q)\notin F$. Let $r$ be a common strengthening of $p$ and $q$ in $F$. Then $r\not\leq_{\PPP_\lambda}j(q)$ and we can find a minimal $n<\omega$ with the property that $$r\restriction\kappa_{n+1} ~ \not\leq_{\PPP_{\kappa_{n+1}}} ~ j(q)\restriction\kappa_{n+1} ~ = ~ j(q\restriction\kappa_n).$$ We then know $n>0$, because $r\restriction\kappa_0\leq_{\PPP_{\kappa_0}}q\restriction\kappa_0$ and Easton support ensures that $j(q)\restriction[\kappa_0,\kappa_1)$ is the trivial condition in $\PPP_{[\kappa_0,\kappa_1)}$. 
  The minimality of $n$ then ensures that $r\restriction\kappa_n \leq_{\PPP_{\kappa_n}} j(q)\restriction\kappa_n$ and there exists a filter $G$ on $\PPP_{\kappa_n}$ that is generic over $V$, contains $r\restriction\kappa_n$ and has the property that $r\restriction[\kappa_n,\kappa_{n+1})$ is not a strengthening of $j(q)\restriction[\kappa_n,\kappa_{n+1})$ in $\PPP_{[\kappa_n,\kappa_{n+1})}^G$. 
  Let $\bar{G}$ denote the filter on $\PPP_{\kappa_{n-1}}$ induced by $G$ and let $H$ denote the filter on $\PPP_{[\kappa_{n-1},\kappa_n)}^{\bar{G}}$ induced by $G$.   Since $p_{n-1}$ is an element of $G$, we have $j[\bar{G}]\subseteq G$  and  $p_n\restriction[\kappa_n,\kappa_{n+1})$ is a lower bound of $j_*[H]$ in $\PPP_{[\kappa_n,\kappa_{n+1})}^G$, where $\map{j_*}{V[\bar{G}]}{M[G]}$ is the canonical lifting of $j$. Moreover, since $q\restriction\kappa_n$ is an element of $G$ and therefore $q\restriction[\kappa_{n-1},\kappa_n)$ is an element of $H$, it follows that the condition $j(q\restriction[\kappa_{n-1},\kappa_n))=j(q)\restriction[\kappa_n,\kappa_{n+1})$ is an element of $j_*[H]$. In combination, this shows that $$r\restriction[\kappa_n,\kappa_{n+1}) ~ \leq_{\PPP_{[\kappa_n,\kappa_{n+1})}^G} ~ j(q)\restriction[\kappa_n,\kappa_{n+1}),$$ a contradiction. 
  These computations show that $j[F]$ is contained in $F$.

  Given $\ell<\omega$, we let $F_\ell$ denote the filter on $\PPP_{\kappa_\ell}$ induced by $F$. Define $\bar{F}$ to be the set of all conditions $p$ in $j(\PPP_\lambda)$ with the property that there exists $n<\omega$, $q\in F_{n+1}$ and $r\in F$ such that $$\langle q,j(r)\restriction[\kappa_{n+1},\lambda)\rangle ~ \leq_{\PPP_{\kappa_{n+1}*j(\PPP_{[\kappa_n,\lambda)})}} ~ \langle p\restriction\kappa_{n+1}, ~ p\restriction[\kappa_{n+1},\lambda)\rangle$$ holds in $M$. 
  The fact that $j[F]$ is a subset of $F$ then implies that $\bar{F}$ is a filter on $j(\PPP_\lambda)$ that contains $j[F]$.

  \begin{claim*}
    $\bar{F}$ is $j(\PPP_\lambda)$-generic over $M$. 
  \end{claim*}

  \begin{proof}[Proof of the claim]
    Let $D$ be a dense open subset of $j(\PPP_\lambda)$ in $M$. Since $M$ is an ultrapower of $V$ using a $(\kappa,\lambda)$-extender, we can find a function $f$ with domain $V_\lambda$ and $y\in V_\lambda$ with  $j(f)(y)=D$ (see e.g., \cite[\S3]{PovTwo}). We may assume that $f(x)$ is a dense open subset of $\PPP$ for every $x\in V_\lambda$. 
    Fix $n<\omega$ with $y\in V_{\kappa_{n+1}}$. 
    It is then easy to see that $$\Set{p\restriction[\kappa_n,\lambda)}{p\in f(x), ~ p\restriction\kappa_n\in F}$$ is a dense subset of $\PPP_{[\kappa_n,\lambda)}^{F_n}$ in $V[F_n]$ for every $x\in V_\lambda$. 
    Since $V_{\kappa_n}$ has cardinality $\kappa_n$ in the ground model and $\PPP_{[\kappa_n,\lambda)}^{F_n}$ is ${<}\kappa_n^+$-directed closed in $V[F_n]$, we can find a condition $r$ in $F$ with the property that for every $x\in V_{\kappa_n}$, the set of conditions $q$ in $\PPP_{\kappa_n}$ with the property that there exists $p\in f(x)$ with $$\langle q, ~  r\restriction[\kappa_n,\lambda)\rangle ~ \leq_{\PPP_{\kappa_n}*\PPP_{[\kappa_n,\lambda)}} ~ \langle p\restriction\kappa_n, ~ p\restriction[\kappa_n,\lambda)\rangle$$
    is dense below $r\restriction\kappa_n$ in $\PPP_{\kappa_n}$. 
    Elementarity now implies that, in $M$, the set of conditions $q$ in $\PPP_{\kappa_{n+1}}$ with the property that there exists $p\in D$ with 
    \begin{equation}\label{equation:DenseInUltrapower}
        \langle q, ~  j(r)\restriction[\kappa_{n+1},\lambda)\rangle ~ \leq_{\PPP_{\kappa_{n+1}}*j(\PPP_{[\kappa_n,\lambda)})} ~ \langle p\restriction\kappa_{n+1}, ~ p\restriction[\kappa_{n+1},\lambda)\rangle
    \end{equation}
    is dense below $j(r)\restriction\kappa_{n+1}$ in $\PPP_{\kappa_{n+1}}$. 
    Since the above computations show that $j(r)\restriction\kappa_{n+1}=j(r\restriction\kappa_n)\in F$, we can find  $q\in F_{n+1}$ and $p\in D$ such that \eqref{equation:DenseInUltrapower} holds. Then $p$ is an element of $D\cap\bar{F}$.  
  \end{proof}

  The above claim shows that we can lift $j$ to an elementary embedding  $$\map{j}{V[F]}{M[\bar{F}]}.$$

  \begin{claim*}
      $V[F]_\lambda\subseteq M[\bar{F}]$. 
  \end{claim*}

  \begin{proof}[Proof of the claim]
    Fix $n<\omega$ and $x\in V[F]_{\kappa_n}$. The definition of $\PPP_\lambda$ then ensures that $x$ is of the form $\tau^{F_n}$ for some $\PPP_{\kappa_n}$-name $\tau$ in $V_\lambda$. By the definition of $\bar{F}$, we now have  $F_n\in M[\bar{F}]$ and hence $x\in M[\bar{F}]$. 
  \end{proof}

  We now know that $j$ lifts to an I2-embedding in $V[F]$. 
\end{proof}


\subsection{Exacting and ultraexacting cardinals}\label{sec:Exacting}
In this short section, we recall some basic results about exacting and ultraexacting cardinals from \cite{ABL} and \cite{ABLG} that will be used throughout this paper. 
%
%
The first of these results show that the given properties can also be characterized by the existence of a single embedding. Following \cite{Ba:CC}, for every natural number $n$, we let $C^{(n)}$ denote the class of all $\Sigma_n$-correct cardinals, {i.e.,} the class of all ordinals $\alpha$ with the property that $V_\alpha$ is a $\Sigma_n$-elementary submodel of the set-theoretic universe $V$.

\begin{lemma}[{\cite[Lemma 2.3]{ABL}}]\label{lemma:CharExactingSingleEmb}
   A cardinal $\lambda$ is exacting if and only if there exists   $\lambda<\eta\in C^{(1)}$,   $\lambda<\zeta\in C^{(2)}$, an elementary submodel $X$ of $V_\eta$ with $V_\lambda\cup\{\lambda\}\subseteq X$ and an elementary embedding $\map{j}{X}{V_\zeta}$ satisfying $j\restriction\lambda\neq\id_\lambda$ and $j(\lambda)=\lambda$. 
\end{lemma}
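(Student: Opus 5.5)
We prove the two directions separately. The forward direction is immediate. Assume $\lambda$ is exacting. Fix some $\lambda<\eta\in C^{(1)}$ and $\lambda<\zeta\in C^{(2)}$; such ordinals exist because $C^{(1)}$ and $C^{(2)}$ are closed unbounded classes. Apply the definition of exactness with $\alpha=0$ and with this $\zeta$. This gives an elementary submodel $X$ of $V_\zeta$ with $V_\lambda\cup\{\lambda\}\subseteq X$ and an elementary $\map{j}{X}{V_\zeta}$ with $j\restriction\lambda\neq\id_\lambda$ and $j(\lambda)=\lambda$. To obtain a domain inside $V_\eta$, we may simply take $\eta=\zeta$: every element of $C^{(2)}$ lies in $C^{(1)}$. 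So the single-embedding condition holds with $\eta=\zeta$.

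The converse is the real content. Suppose $\eta,\zeta,X,j$ are as in the statement, and let $\alpha<\lambda<\zeta'$ be given. The plan is to show that the statement "there exist $X'\prec V_{\zeta'}$ with $V_\lambda\cup\{\lambda\}\subseteq X'$ and an elementary $\map{i}{X'}{V_{\zeta'}}$ with $i\restriction\alpha=\id_\alpha$, $i\restriction\lambda\neq\id_\lambda$ and $i(\lambda)=\lambda$" holds for all such $\alpha$ and $\zeta'$. We proceed in two steps.

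\textbf{Step 1: arranging $i\restriction\alpha=\id_\alpha$.} Let $\mu=\crit(j)<\lambda$, which exists since $j\restriction\lambda\neq\id_\lambda$ while $j$ maps ordinals below $\lambda$ into $\lambda$. Then $j(\mu)>\mu$. Since $V_\lambda\subseteq X$ and $j(\lambda)=\lambda$, the restriction $j\restriction V_\lambda$ is an elementary embedding $V_\lambda\to V_\lambda$, and $\lambda$ is a limit of its critical sequence. Iterating $j$ applied to itself, in the spirit of the iteration $i_{\ell,\ell+1}$ of I3-embeddings from \S\ref{subsection:RankIntoRank}, produces embeddings with critical points $\kappa_n$ cofinal in $\lambda$. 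However, these images are defined on $V_\lambda$ only, so it is cleaner to use reflection.

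\textbf{Step 2: reflection via $\Sigma_2$-correctness (the expected main obstacle).} Consider the statement $\Psi(\lambda,\beta)$: "for every $\zeta'>\lambda$ there are $X'\prec V_{\zeta'}$ and $\map{i}{X'}{V_{\zeta'}}$ as required with $\crit(i)\geq\beta$." This is $\Pi_3$, so it is not directly reflected. Instead, argue by contradiction. Suppose exactness fails at $\lambda$, witnessed by some least $\alpha<\lambda$ and some $\zeta'$. The failure "there is $\zeta'$ such that no pair $X',i$ on $V_{\zeta'}$ with $i\restriction\alpha=\id_\alpha$ exists" is $\Sigma_3$ in $\lambda,\alpha$. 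A monotonicity observation helps here. If $X'\prec V_{\zeta''}$ with $\zeta''\in C^{(1)}$ larger, we can cut down to $X'\cap V_{\zeta'}$ provided $\zeta'\in X'$ and $i(\zeta')=\zeta'$. So the failure is witnessed by all larger $\zeta'$ in a suitable definable class, and the failure becomes expressible as $\Sigma_2$ over a correct $V_\eta$. Then $\alpha$ and $\zeta'$ are definable in $V_\zeta$ from $\lambda$, since $\zeta\in C^{(2)}$ and the least failing $\zeta'$ is $\Sigma_2$-definable. By $\eta\in C^{(1)}$ and $X\prec V_\eta$, the corresponding definition holds in $X$, so $\alpha\in X$ and $j(\alpha)=\alpha$, with least-ness forcing $\alpha<\mu$.

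Transport the failure by elementarity of $j$ from $X$ into $V_\zeta$, using $j(\lambda)=\lambda$ and $j(\alpha)=\alpha$. Then exhibit, inside $V_\zeta$, a counterexample built from $j$ itself restricted to $X\cap V_{\zeta'}$: it fixes $\alpha$, moves $\lambda$'s elements, and fixes $\lambda$. This contradicts the transported failure. The delicate point is choosing the complexity bookkeeping so that the failure is genuinely $\Sigma_2$ in parameters fixed by $j$, and so that $X\cap V_{\zeta'}$ is an elementary submodel mapped into $V_{\zeta'}$. This needs $\zeta'\in X$ with $j(\zeta')=\zeta'$, which we obtain by taking $\zeta'$ least, hence definable from $\lambda,\alpha$.
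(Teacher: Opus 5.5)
Your forward direction is correct, and the skeleton of your converse is the right one: take the least failing $\alpha$, then the least failing $\zeta'$, show both are fixed by $j$, and restrict $j$ to $X\cap V_{\zeta'}$. Your complexity bookkeeping is off, but harmlessly. The failure ``there is $\zeta'>\lambda$ admitting no suitable $X',i$'' is already $\Sigma_2$ in $\lambda,\alpha$, because all candidate $X',i$ lie in $V_{\zeta'+\omega}$ and ``$y=V_{\zeta'+\omega}$'' is $\Pi_1$. The ``cutting down'' detour presupposes $i(\zeta')=\zeta'$ and is not needed. The real problem is elsewhere. The step that carries the content is ``the corresponding definition holds in $X$, so $j(\alpha)=\alpha$, with least-ness forcing $\alpha<\mu$'', together with $j(\zeta')=\zeta'$ ``by taking $\zeta'$ least''. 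The reasons you give for these do not work, so there is a genuine gap.

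\emph{Fixing $\alpha$.} $V_\eta$ is only $\Sigma_1$-correct. So the least failing ordinal $\bar\alpha$ computed in $V_\eta$ need not a priori coincide with the true least failing $\alpha$, which $V_\zeta$ computes correctly. ($\bar\alpha$ exists by elementarity, since $j(\lambda)=\lambda$.) Elementarity of $j\colon X\to V_\zeta$ gives only $j(\bar\alpha)=\alpha$. You need a sandwich argument:
\begin{itemize}
\item a failure witnessed in $V_\eta$ is a true failure, so $\alpha\le\bar\alpha$;
\item $\bar\alpha\le j(\bar\alpha)=\alpha$, because $j\restriction\lambda$ is an order-preserving map of $\lambda$ into $\lambda$.
\end{itemize}

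\emph{Getting $\alpha<\mu$.} Minimality of $\alpha$ says nothing about its position relative to $\mu=\crit(j)$. What yields $\alpha<\mu$, and hence $j\restriction\alpha=\id_\alpha$, is Kunen's theorem. Apply it to $j\restriction V_\lambda\colon V_\lambda\to V_\lambda$: this map has no fixed points in $(\mu,\lambda)$, and $j(\mu)>\mu$. Without this, $j$ is not a counterexample to the failure at $\alpha$.

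\emph{Fixing $\zeta'$.} Here monotonicity is unavailable, since $X$ is not transitive above $\lambda$. Instead, let $\bar\zeta$ be the least failure for $\alpha$ in the sense of $V_\eta$; it exists by elementarity now that $j(\alpha)=\alpha$. It is a true failure, so the true least failure $\zeta_0$ satisfies $\zeta_0\le\bar\zeta<\eta$. Hence $V_\eta$ sees $\zeta_0$, so $\bar\zeta=\zeta_0\in X$, and $j(\zeta_0)=\zeta_0$ by elementarity.

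\emph{Concluding.} Since $V_{\zeta_0}\in X$, the Tarski--Vaught criterion gives $X\cap V_{\zeta_0}\prec V_{\zeta_0}$, with $j\restriction(X\cap V_{\zeta_0})$ elementary into $V_{\zeta_0}$. This is the desired contradiction. With these three arguments supplied, your outline becomes a proof; as written, its key step is unjustified.
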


\begin{lemma}[{\cite[Lemma 3.2]{ABL}}]
   A cardinal $\lambda$ is ultraexacting if and only if there exists $\lambda<\eta\in C^{(1)}$, $\lambda<\zeta\in C^{(2)}$, an elementary submodel $X$ of $V_\eta$ with $V_\lambda\subseteq X$ and an elementary embedding $\map{j}{X}{V_\zeta}$ satisfying $j\restriction\lambda\neq\id_\lambda$, $j(\lambda)=\lambda$ and $j\restriction V_\lambda\in X$. 
\end{lemma}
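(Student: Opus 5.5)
The forward direction is immediate. If $\lambda$ is ultraexacting, pick any $\zeta\in C^{(2)}$ above $\lambda$ and apply the definition with $\alpha=0$ and this $\zeta$. Since $C^{(2)}\subseteq C^{(1)}$, taking $\eta=\zeta$ gives the required $X\prec V_\eta$ and $\map{j}{X}{V_\zeta}$. So the work is entirely in the backward direction.

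For the backward direction, fix $\eta$, $\zeta$, $X$ and $j$ as in the statement, and let $\kappa=\crit(j)$. We have $\kappa<\lambda$ because $j\restriction\lambda\neq\id_\lambda$ and $j(\lambda)=\lambda$. Let $\alpha<\lambda<\zeta'$ be given. I would argue by contradiction with a least counterexample. Let $\alpha_0<\lambda$ be the least ordinal $\alpha$ for which some $\zeta'>\lambda$ admits no witness to ultraexactness at the pair $(\alpha,\zeta')$. Failure is upward closed in $\alpha$, and the assertion ``$\alpha$ fails'' is $\Sigma_2$ in the parameters $\alpha,\lambda$: it says there is a $\zeta'$ such that $V_{\zeta'+\omega}$ contains no suitable $X'$ and $j'$. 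Since $\zeta\in C^{(2)}$, $V_\zeta$ computes $\alpha_0$ correctly from $\lambda$, and it also contains a failing $\zeta'_0<\zeta$ for $\alpha_0$.

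The plan then has two steps.

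\emph{Step 1: raise the critical point past any ordinal below $\lambda$.} Use that $k=j\restriction V_\lambda$ is an element of $X$. It is an I3-embedding $V_\lambda\to V_\lambda$, and its critical sequence $\seq{\kappa_n}{n<\omega}$ is cofinal in $\lambda$ by the Kunen Inconsistency. Applying $j$ to $k$, and iterating as in \S\ref{subsection:RankIntoRank}, produces the embeddings $k_{\ell,\ell+1}$ with critical point $\kappa_\ell$. Choose $\ell$ with $\kappa_\ell>\alpha_0$. I would then transport the configuration $(X,j)$ along the iterates: by elementarity of $j$, the set $j(X\cap V_{\eta'})$, for a suitable $\eta'\in X$, together with the corresponding image embedding, witnesses the analogous statement in $V_\zeta$ with critical point $j(\kappa)$. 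Repeating this yields, inside $V_\zeta$, a pair $(X^*,j^*)$ with $\crit(j^*)\ge\kappa_\ell>\alpha_0$, with $j^*(\lambda)=\lambda$, and with $j^*\restriction V_\lambda\in X^*$.

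\emph{Step 2: reach the target height $\zeta'_0$.} Restrict $X^*$ to $X^*\cap V_{\zeta'_0}$. This is legitimate provided $\zeta'_0$ lies in $X^*$ and is sent into $V_{\zeta'_0}$, which can be arranged because $\zeta'_0$ is definable in $V_\zeta$ from $\alpha_0$ and $\lambda$, and both of these are fixed by the embedding. Passing from $V_\eta$ to $V_\zeta$ is exactly where $\eta\in C^{(1)}$ and $\zeta\in C^{(2)}$ are used. The restricted pair witnesses ultraexactness at $(\alpha_0,\zeta'_0)$, which contradicts the choice of $\alpha_0$.

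I expect the main obstacle to be Step 1. The issue is producing, from the single given embedding, a witness whose critical point lies above the prescribed $\alpha$, while keeping both $j(\lambda)=\lambda$ and the condition that the restriction to $V_\lambda$ lies in the domain. I would first try to obtain the new domain as $j(X\cap V_{\eta'})$ for some $\eta'\in X\cap C^{(1)}$, and the new embedding as $j(j\restriction(X\cap V_{\eta'}))$, checking that elementarity is preserved. If that does not work, the fallback is a reflection argument: first show that $V_\zeta$ satisfies ``some $X'\prec V_{\eta'}$ admits such an embedding with critical point $>\alpha_0$'', and then pull this back to $V$ via $\zeta\in C^{(2)}$.
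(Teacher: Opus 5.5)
Your forward direction is fine, and the set-up of the backward direction is the right start: the least failing $\alpha_0$, failure being $\Sigma_2$ in $\alpha,\lambda$, and $V_\zeta$ computing $\alpha_0$ correctly. Step~1, however, does not work.

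\emph{Step~1.} The sets you want to transport, $X\cap V_{\eta'}$ and $j\restriction(X\cap V_{\eta'})$, are not elements of $X$, so $j$ cannot be applied to them. The hypothesis only places $j\restriction V_\lambda$ in $X$. The only thing you can push forward is $k=j\restriction V_\lambda$, and $j(k)=k_+(k)$ is merely an I3-embedding with critical point $\kappa_1$. It does not come with a domain model $X^*\prec V_{\eta'}$ or an embedding acting on sets of rank above $\lambda$, and a witness needs both.

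\emph{The fallback.} It is circular. Saying that $V_\zeta$ contains such a pair with critical point above $\alpha_0$, landing in the right $V_{\zeta'}$, is essentially saying that $\alpha_0$ does not fail. Nothing in the proposal produces this.

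\emph{Step~2.} It has a smaller problem of the same kind. Definability of $\zeta_0'$ in $V_\zeta$ does not by itself put $\zeta_0'$ into the domain model or make it fixed. The domain is elementary only in $V_\eta$, which is merely $\Sigma_1$-correct.

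\emph{The missing idea.} No raising of the critical point is needed, because $\alpha_0$ is already fixed by the given $j$ and hence lies below $\kappa$.
\begin{enumerate}
\item $V_\zeta$ satisfies ``$\alpha_0$ is the least failing $\alpha<\lambda$''. Since $j(\lambda)=\lambda$, elementarity gives $\alpha^*\in X$ that is least failing in the sense of $V_\eta$, with $j(\alpha^*)=\alpha_0$.
\item Failure is $\Sigma_2$, so it is upward absolute from $V_\eta$ to $V$ because $\eta\in C^{(1)}$. Hence $\alpha^*\geq\alpha_0$.
\item Since $j(\beta)\geq\beta$ for all $\beta<\lambda$, we get $\alpha_0=j(\alpha^*)\geq\alpha^*\geq\alpha_0$, i.e.\ $j(\alpha_0)=\alpha_0$.
\item By the Kunen Inconsistency, the I3-embedding $j\restriction V_\lambda$ moves every ordinal in $[\kappa,\lambda)$. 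Otherwise the supremum of its critical sequence would be a fixed point below $\lambda$. So $\alpha_0<\kappa$ and $j\restriction\alpha_0=\id_{\alpha_0}$.
\end{enumerate}

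\emph{Finishing the argument.} Let $\zeta_0'$ be the least $\zeta'>\lambda$ at which $\alpha_0$ fails. Failure at a fixed height is decided inside $V_{\zeta'+\omega}$, so it is absolute for $V_\gamma$ with $\gamma\in C^{(1)}$. Since $V_\eta$ sees failure at $\alpha_0$, we get $\zeta_0'<\eta$, and both $V_\eta$ and $V_\zeta$ compute $\zeta_0'$ correctly. Hence $\zeta_0'\in X$ and $j(\zeta_0')=\zeta_0'$. Now $X\cap V_{\zeta_0'}\prec V_{\zeta_0'}$ contains $V_\lambda$ as a subset and $j\restriction V_\lambda$ as an element. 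So $j\restriction(X\cap V_{\zeta_0'})$ is an elementary embedding into $V_{\zeta_0'}$ witnessing ultraexactness at $(\alpha_0,\zeta_0')$, which is a contradiction.
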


The following result shows that exactingness is a direct strengthening of Axiom I3:

\begin{theorem}[{\cite[Theorem 3.4]{ABLG}}]\label{fact:CharExacting}
    A cardinal $\lambda$ is exacting if and only if for every ordinal definable subset $A$ of $V_{\lambda+1}$, there are $x,y\in A$ and a non-trivial elementary embedding $\map{j}{\langle V_\lambda,\in,x\rangle}{\langle V_\lambda,\in,y\rangle}$ with $\crit(j)<\lambda$. 
\end{theorem}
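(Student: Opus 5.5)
The proof has two directions. The forward direction uses a least-counterexample argument that makes the counterexample definable from $\lambda$ alone. The backward direction codes an elementary submodel of some $V_\zeta$ as a subset of $V_\lambda$ and uses Lemma \ref{lemma:CharExactingSingleEmb}.

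\emph{Forward direction.} Assume $\lambda$ is exacting and, towards a contradiction, that some ordinal definable $A\subseteq V_{\lambda+1}$ is a counterexample.
\begin{enumerate}
\item Fix a definable enumeration of $\OD$ by ordinals. Let $A$ be the counterexample with the least index. Then $A$ is definable in $V$ from $\lambda$ alone, by a formula of some fixed complexity $\Sigma_n$.
\item Choose $\zeta\in C^{(n)}$ large enough that this definition, and the statement that $A$ is such a counterexample, reflect to $V_\zeta$. Then $A$ is definable in $V_\zeta$ from the parameter $\lambda$.
\item Apply exactingness to $\zeta$ (with any $\alpha<\lambda$). This gives $X\prec V_\zeta$ with $V_\lambda\cup\{\lambda\}\subseteq X$ and $\map{j}{X}{V_\zeta}$ with $j(\lambda)=\lambda$ and $j\restriction\lambda\neq\id_\lambda$.
\item Since $A$ is definable in $V_\zeta$ from $\lambda\in X$, we get $A\in X$ and $j(A)=A$.
\item $A$ is nonempty, since otherwise it is trivially not a counterexample. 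By elementarity of $X\prec V_\zeta$, pick $x\in A\cap X$, and put $y=j(x)\in A$.
\item The structure $\langle V_\lambda,\in,x\rangle$ is an element of $X$, and its satisfaction relation is computed inside $X$. Also $j\restriction V_\lambda$ maps it into $\langle V_\lambda,\in,y\rangle$.
\item Hence $j\restriction V_\lambda$ is a non-trivial elementary embedding $\langle V_\lambda,\in,x\rangle\to\langle V_\lambda,\in,y\rangle$ with critical point below $\lambda$. This contradicts the choice of $A$.
\end{enumerate}

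\emph{Backward direction.} By Lemma \ref{lemma:CharExactingSingleEmb}, it suffices to find one $X\prec V_\zeta$ with $\zeta\in C^{(2)}$ (so also $\zeta\in C^{(1)}$, and we may take $\eta=\zeta$), together with an embedding $\map{j}{X}{V_\zeta}$ such that $V_\lambda\cup\{\lambda\}\subseteq X$, $j(\lambda)=\lambda$ and $j\restriction\lambda\neq\id_\lambda$.

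Let $\zeta$ be the least element of $C^{(2)}$ above $\lambda$. Let $A$ be the set of all $x\subseteq V_\lambda$ that code a pair $(M,\pi)$ as follows:
\begin{enumerate}
\item $M\prec V_\zeta$ with $V_\lambda\cup\{\lambda\}\subseteq M$, and $\pi$ is a surjection from a subset of $V_\lambda$ onto $M$.
\item $\pi(\langle 1,a\rangle)=a$ for all $a\in V_\lambda$, and $\pi(\emptyset)=\lambda$.
\item $x$ contains the full elementary diagram of $M$ in $V_\zeta$ written in terms of codes. That is, $x$ contains the pairs $(\ulcorner\varphi\urcorner,\vec c)$ such that $V_\zeta\models\varphi(\pi(\vec c))$.
\end{enumerate}

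Such pairs exist: take any elementary submodel of size $|V_\lambda|$ containing $V_\lambda\cup\{\lambda\}$. The set $A$ is definable from $\lambda$, so it is ordinal definable.

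The hypothesis gives $x,y\in A$ and a non-trivial elementary $\map{i}{\langle V_\lambda,\in,x\rangle}{\langle V_\lambda,\in,y\rangle}$ with $\crit(i)<\lambda$. Let $(M_x,\pi_x)$ and $(M_y,\pi_y)$ be the coded pairs, and define $k(\pi_x(c))=\pi_y(i(c))$. Then:
\begin{enumerate}
\item $k$ is well defined and elementary from $M_x$ into $M_y\prec V_\zeta$. This is because the statement "$(\ulcorner\varphi\urcorner,\vec c)\in x$" transfers under $i$ to "$(\ulcorner\varphi\urcorner,i(\vec c))\in y$". Formula codes and the codes $\emptyset$, $1$ lie below $\crit(i)$, so $i$ fixes them.
\item For $a\in V_\lambda$ we get $k(a)=i(a)$, so $k\restriction\lambda\neq\id_\lambda$.
\item $k(\lambda)=\pi_y(i(\emptyset))=\lambda$.
\end{enumerate}
Composing $k$ with the inclusion $M_y\prec V_\zeta$ yields the required embedding, and Lemma \ref{lemma:CharExactingSingleEmb} shows that $\lambda$ is exacting.

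\emph{Main obstacle.} The delicate step is the backward direction. Without a definable well-order we cannot form a canonical Skolem hull of $V_\lambda\cup\{\lambda\}$. The workaround is to quantify over all coded submodels, which keeps $A$ ordinal definable. One must then check carefully that this coding makes the elementarity of $k$, and the equalities $k\restriction V_\lambda=i$ and $k(\lambda)=\lambda$, follow directly from the elementarity of $i$ relative to the predicates $x$ and $y$.
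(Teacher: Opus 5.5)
Your proof is correct. The paper does not prove this statement itself (it is quoted from \cite{ABLG}), so the relevant comparison is with the arguments it does contain.

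\textbf{Forward direction.} This is the least-counterexample argument from the first paragraph of the proof of Theorem~\ref{theorem:ExactingInnerModels}. There, one takes the least counterexample $A$ in the canonical well-order of $\OD$, observes that $\{A\}$ is definable from $\lambda$, deduces $A\in X$ and $j(A)=A$, and uses $y=j(x)$ for some $x\in A\cap X$. The only difference is in how the definition is reflected. You reflect it to some $\zeta\in C^{(n)}$, whereas the paper extracts a $\Sigma_2$-definition via the $\Sigma_2$-definability of initial segments of the $\OD$-order. The paper needs that sharper bound because it only has $\Sigma_2$-correct classes $X_0,X_1$ inside an inner model $N$. In $V$, the definition of exactingness supplies embeddings for every $\zeta$, so your cruder choice is enough.

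\textbf{Backward direction.} This has no counterpart in the paper, and your coding is the right device. You put the whole $V_\zeta$-diagram of the submodel $M$, indexed by codes from $V_\lambda$, into $x$, rather than only the type of $V_\lambda\cup\{\lambda\}$. This is exactly what lets $i$ be transported to an elementary map on $M_x$ without definable Skolem functions or any saturation of $V_\zeta$. Lemma~\ref{lemma:CharExactingSingleEmb} then absorbs the requirement $j\restriction\alpha=\id_\alpha$ and the quantification over arbitrary $\zeta$.

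\textbf{Two small repairs.}
\begin{enumerate}
\item Your remark that an empty $A$ is ``trivially not a counterexample'' is backwards. $A=\emptyset$ trivially violates the right-hand side, so the statement must be read for non-empty $A$, as in the paper's $\ZF$ definition. Your forward direction should therefore range over non-empty ordinal definable sets.
\item $x$ should be exactly the coded diagram: $(\ulcorner\varphi\urcorner,\vec c)\in x$ if and only if $\vec c$ is a tuple from $\dom(\pi)$ with $V_\zeta\models\varphi(\pi(\vec c))$. Merely ``containing'' the diagram is not enough, for two reasons. You need membership in $y$ to certify truth in $V_\zeta$, which gives well-definedness and elementarity of $k$. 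You also need $\dom(\pi_x)=\Set{c}{(\ulcorner v_0=v_0\urcorner,\langle c\rangle)\in x}$ to be readable from $x$, so that $i(c)\in\dom(\pi_y)$.
\end{enumerate}
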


An analogous result characterizes ultraexactingness as a strengthening of Axiom I1:

\begin{theorem}[{\cite[Theorem 3.1]{ABLG}}]\label{fact:CharUltraexacting}
    A cardinal $\lambda$ is ultraexacting if and only if for every ordinal definable subset $A$ of $V_{\lambda+1}$, there is a non-trivial elementary embedding $\map{j}{\langle V_{\lambda+1},\in,A\rangle}{\langle V_{\lambda+1},\in,A\rangle}$ with $\crit(j)<\lambda$. 
\end{theorem}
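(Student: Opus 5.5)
The plan is to prove the two implications separately. The forward direction will use a least-counterexample argument, so that the relevant subset of $V_{\lambda+1}$ is definable from $\lambda$ alone. The backward direction will code a failure of ultraexactingness into a single ordinal definable subset of $V_{\lambda+1}$.

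\emph{Forward direction.} Assume $\lambda$ is ultraexacting and, towards a contradiction, that some ordinal definable $A\subseteq V_{\lambda+1}$ admits no non-trivial elementary $\map{e}{\langle V_{\lambda+1},\in,A\rangle}{\langle V_{\lambda+1},\in,A\rangle}$ with $\crit(e)<\lambda$.
\begin{enumerate}
\item Fix $n$ large. Let $A_0$ be the least such counterexample in the canonical well-order of $\OD$ sets. The failure of the property is expressible with parameters in $V_{\lambda+2}$, so $A_0$ is definable in $V$ from $\lambda$ alone.
\item Choose $\zeta\in C^{(n)}$ above $\lambda$, so that $A_0$ is definable in $V_\zeta$ from $\lambda$ by a fixed formula.
\item Apply ultraexactingness with this $\zeta$ to get $X\prec V_\zeta$ with $V_\lambda\subseteq X$ and $\map{j}{X}{V_\zeta}$ with $j(\lambda)=\lambda$, $\crit(j)<\lambda$ and $k:=j\restriction V_\lambda\in X$. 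Then $A_0\in X$ and $j(A_0)=A_0$.
\item For $B\in V_{\lambda+1}\cap X$, elementarity gives $j(B)\cap j(V_\gamma)=j(B\cap V_\gamma)$, so $j(B)=k_+(B)$. Hence $j\restriction(V_{\lambda+1}\cap X)=k_+\restriction X$.
\end{enumerate}
The remaining and hardest step is to show that $k_+$ is fully elementary for $\langle V_{\lambda+1},\in,A_0\rangle$, not merely on the parameters lying in $X$. I would first try the following. Suppose $k_+$ fails, and let $\varphi$ be the least failing formula. The statement ``$k_+$ fails elementarity for $\langle V_{\lambda+1},\in,A_0\rangle$ via $\varphi$'' is first order over $V_\zeta$ in the parameters $k,A_0\in X$. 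So it holds in $X$, and $j$ transfers it to ``$j(k)=k_+(k)$ fails via $\varphi$'' in $V_\zeta$. Iterating gives the same failure for the iterates $k_{\ell,\ell+1}$. To reach a contradiction I would compare with the composition $k_+\circ k_+$, using the rank-into-rank identity $k\circ k=k_+(k)\circ k$. If this comparison does not close the argument, the fallback is to pass to the characterization of ultraexactingness from \cite{ABL} in terms of $\Sigma^1_n$-correct I3-embeddings. That characterization can then be reflected into $X$, where the needed witness $B$ does lie in $X$.

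\emph{Backward direction.} Assume the $\OD$-condition, and suppose $\lambda$ fails to be ultraexacting, witnessed by a lexicographically least pair $(\alpha,\zeta)$; this pair is ordinal definable.
\begin{enumerate}
\item Let $T\subseteq V_\lambda$ be the set of pairs $(\ulcorner\varphi\urcorner,x)$ with $x\in V_\lambda$ such that $V_\zeta\models\varphi(x,\lambda)$.
\item Let $A$ be the set of all $T'\subseteq V_\lambda$ for which there is $\beta<\alpha$ with $T'=\Set{(\ulcorner\varphi\urcorner,x)}{(\ulcorner\varphi\urcorner,\langle x,\beta\rangle)\in T}$; that is, $A$ consists of the slices of $T$ over ordinals $\beta<\alpha$. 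This set is $\OD$.
\item Take an elementary $e$ for $\langle V_{\lambda+1},\in,A\rangle$ with $\crit(e)<\lambda$. I expect that a more careful choice of $A$ yields $\crit(e)>\alpha$ together with $e_+(T)=T$; this is the fiddly part of this direction.
\item Let $X$ be the set of elements of $V_\zeta$ definable from $\lambda$, parameters in $V_\lambda$, and $k=e\restriction V_\lambda$. Define $j(\tau(x,\lambda,k))=\tau(e(x),\lambda,e_+(e))$. This is well defined and elementary because $e_+$ fixes the coded theory, once $k$ is also coded into $T$.
\end{enumerate}
This $j$ witnesses ultraexactingness at $(\alpha,\zeta)$, which is a contradiction.
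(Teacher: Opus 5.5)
Your forward direction has the right set-up, but you leave its decisive step open. Neither suggested route is shown to close it: transporting the failure to $j(k)=k_+(k)$ and its iterates and comparing with $k_+\circ k_+$, or detouring through $\Sigma^1_n$-correctness. The missing observation is immediate.
\begin{itemize}
\item Since $k_+$ is definable from $k$, we have $k_+\in X$.
\item If $k_+$ failed to be elementary for $\langle V_{\lambda+1},\in,A_0\rangle$, then $X\prec V_\zeta$ would supply a formula $\varphi$ and a witness $B\in V_{\lambda+1}\cap X$ with $\langle V_{\lambda+1},\in,A_0\rangle\models\varphi(B)\not\leftrightarrow\varphi(k_+(B))$.
\item Your step 4 gives $k_+(B)=j(B)$.
\item Elementarity of $j$, together with $j(V_{\lambda+1})=V_{\lambda+1}$ and $j(A_0)=A_0$, gives $\varphi(B)\leftrightarrow\varphi(j(B))$ in that structure, a contradiction.
\end{itemize}
The paper only cites this theorem, but this is exactly the argument in its proof of Theorem~\ref{theorem:UltraexactingInnerModels}.

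The backward direction does not work as written, for two reasons.

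First, coding $k$ into $T$ is circular. The map $k=e\restriction V_\lambda$ comes from $e$, which comes from the $\OD$ set $A$, so $k$ cannot be built into $T$ beforehand. Without it, nothing makes $\tau(x,\lambda,k)\mapsto\tau(e(x),\lambda,e(k))$ well defined or elementary (your $e_+(e)$ should be $e(k)$). The fix is to code the theory of $V_\zeta$ uniformly in a second-order parameter $Y\in V_{\lambda+1}$, and then apply elementarity of $e$ for $\langle V_{\lambda+1},\in,A\rangle$ at $Y=k$. This is the one place where it matters that $e$ acts on all of $V_{\lambda+1}$ rather than on $V_\lambda$, and your argument never uses it.

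Second, the set of elements of $V_\zeta$ definable from parameters in $V_\lambda\cup\{\lambda,k\}$ need not be an elementary submodel of $V_\zeta$, since in $\ZFC$ the structure $V_\zeta$ need not have definable Skolem functions. To keep $A$ ordinal definable, let it range over all well-orders $w$ of $V_\zeta$. For example, let $A$ code all pairs $\langle Y,t_w(Y)\rangle$, where $t_w(Y)$ is the theory of $\langle V_\zeta,\in,w\rangle$ with parameters $Y$, $\lambda$ and elements of $V_\lambda$. Then $e(t_w(k))=t_{w'}(e(k))$ for some well-order $w'$. The map $\tau^w(x,k,\lambda)\mapsto\tau^{w'}(e(x),e(k),\lambda)$ is then an elementary embedding from the Skolem hull of $V_\lambda\cup\{\lambda,k\}$ in $\langle V_\zeta,\in,w\rangle$ into $V_\zeta$, and it restricts to $k$ on $V_\lambda$.

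The critical point issue you call fiddly is in fact easy. It suffices that $A$ forces $e[\alpha]\subseteq\alpha$. A marker fixing $\alpha$ does this, and so do your slices, since $e$ must send the slice at $\beta$ to the slice at $e(\beta)$. Then $\crit(e)<\alpha$ would bound the critical sequence by $\alpha<\lambda$, contradicting Kunen's theorem inside $V_{\lambda+1}$. Hence $\crit(e)\geq\alpha$, and so $j\restriction\alpha=\id_\alpha$.
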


Later in \S\ref{sec: smallforcing}, we will discuss the effect of small forcings upon exacting and ultraexacting cardinals. The first basic observation in that respect is that the Levy--Solovay theorem \cite{LevySolovay} can be extended to these cardinals:

\begin{lemma}\label{lemma:ExactingSmallForcing}
 Let $\lambda$ be a cardinal and $\PPP\in H(\lambda)$ be a partial order. 
  \begin{enumerate}
      \item\label{item:ExactingUltraSmallForcing1} If $\lambda$ is an exacting cardinal, then $$\mathbbm{1}\Vdash_\PPP\textit{$``\check{\lambda}$ is an exacting cardinal }".$$ 

      \item\label{item:ExactingUltraSmallForcing2} If $\lambda$ is an ultraexacting cardinal, then $$\mathbbm{1}\Vdash_\PPP\textit{$``\check{\lambda}$ is an ultraexacting cardinal }".$$
  \end{enumerate}
\end{lemma}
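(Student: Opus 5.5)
Fix a $\PPP$-generic filter $G$ over $V$ and argue in $V[G]$. Since exactingness and ultraexactingness are witnessed by single embeddings (Lemma \ref{lemma:CharExactingSingleEmb} and its ultraexacting analogue), we will lift a ground-model witness. Choose $\lambda<\eta\in C^{(1)}$ and $\lambda<\zeta\in C^{(2)}$ in $V$, an elementary submodel $X\prec V_\eta$ with $V_\lambda\cup\{\lambda\}\subseteq X$ and an elementary $\map{j}{X}{V_\zeta}$ with $j\restriction\lambda\neq\id_\lambda$ and $j(\lambda)=\lambda$. Add $j\restriction V_\lambda\in X$ in the ultraexacting case. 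Since $\PPP\in H(\lambda)$, we may assume $\PPP\in V_\lambda$; up to isomorphism $\PPP\in V_\rho$ for some $\rho<\lambda$. We now use the freedom in the definition: take $\alpha>\rho$ with $\alpha<\lambda$, and fix the witness with $j\restriction\alpha=\id_\alpha$. Then $j(\PPP)=\PPP$, and $j$ fixes every element of $V_\rho$, in particular every condition and every subset of $\PPP$. This uses the general definition (every $\alpha<\lambda$), from which Lemma \ref{lemma:CharExactingSingleEmb}'s single-embedding form is derived; equivalently we rerun its proof keeping $\crit(j)>\rho$.

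Next we lift. Put $X[G]=\Set{\tau^G}{\tau\in X\text{ a }\PPP\text{-name}}$. Since $\PPP\in X$ and $X\prec V_\eta$, the standard argument shows that $X[G]\prec V_\eta[G]=V[G]_\eta$, and $V[G]_\eta$ equals $V_\eta[G]$ because $\PPP$ is small. Also $V_\zeta[G]=V[G]_\zeta$. Define $j^*(\tau^G)=j(\tau)^G$. Because $j$ fixes $\PPP$ and its conditions, $p\Vdash\varphi(\tau)$ in $X$ iff $p\Vdash\varphi(j(\tau))$ in $V_\zeta$. By the forcing theorem applied inside $V_\eta$ and $V_\zeta$ (which are correct enough for the forcing relation for set forcing of small size), $j^*$ is well defined and elementary from $X[G]$ into $V[G]_\zeta$. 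We have $V[G]_\lambda\subseteq X[G]$ since every element of $V[G]_\lambda$ has a name in $V_\lambda\subseteq X$. Moreover $j^*(\lambda)=\lambda$ and $j^*\restriction\lambda=j\restriction\lambda\neq\id$.

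It remains to check the correctness requirements. Small forcing preserves $\Sigma_1$-correctness of $\eta$, since $V[G]_\eta=H(\eta)^{V[G]}$ for cardinals $\eta\in C^{(1)}$ above $|\PPP|$. It also preserves $\Sigma_2$-correctness of $\zeta$, using that $V[G]$'s $\Sigma_2$ statements are $\Sigma_2$ statements about $\PPP$ and names in $V$, reflected to $V_\zeta$. For the ultraexacting case we need $j^*\restriction V[G]_\lambda\in X[G]$. Now $j^*\restriction V[G]_\lambda$ is definable from $j\restriction V_\lambda$ and $G$ as $\tau^G\mapsto j(\tau)^G$ for names $\tau\in V_\lambda$. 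Since $j\restriction V_\lambda\in X$ and $\PPP\in X$, a name for this map lies in $X$, so the map lies in $X[G]$. The lemma then yields exactingness or ultraexactingness of $\lambda$ in $V[G]$.

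The main obstacle is the well-definedness and elementarity of $j^*$. This requires that $j$ moves neither $\PPP$ nor $\Vdash_\PPP$ restricted to the relevant formulas, and that forcing relations computed in $X$ and $V_\zeta$ agree with those in $V$. The first is handled by $\crit(j)>\rho$. The second is handled by working with the structures $V_\eta, V_\zeta$ directly: they satisfy enough of ZFC (being $\Sigma_1$-correct, hence $H(\eta)$) for the forcing theorem for $\PPP$.
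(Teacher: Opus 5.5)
Your proposal is correct and follows essentially the same route as the paper. You choose a witness whose embedding is the identity on a rank-initial segment containing $\PPP$, lift it to $X[G]$ via $\tau^G\mapsto j(\tau)^G$, note that the relevant correctness of the top ordinals survives small forcing, and in the ultraexacting case observe that $X$ contains a name for the lifting of $j\restriction V_\lambda$. The paper does the same with cosmetic differences: it uses a single $\zeta\in C^{(3)}$ with $X\prec V_\zeta$ and an inaccessible $\delta$ with $\PPP\in V_\delta$ and $j\restriction\delta=\id_\delta$, whereas you use separate $\eta\in C^{(1)}$ and $\zeta\in C^{(2)}$ as in Lemma~\ref{lemma:CharExactingSingleEmb}.
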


\begin{proof}
  \eqref{item:ExactingUltraSmallForcing1} Pick $\lambda<\zeta\in C^{(3)}$ and an inaccessible cardinal $\delta<\lambda$ with $\PPP\in V_\delta$. We can now use our assumption to find an elementary submodel $X$ of $V_\zeta$ with $V_\lambda\cup\{\lambda\}\subseteq X$ and an elementary embedding $\map{j}{X}{V_\zeta}$ with $j\restriction \delta=\id_\delta$, $j(\lambda)=\lambda$ and $j\restriction\lambda\neq\id_\lambda$. 
   Let $G$ be $\PPP$-generic over $V$. Since $\mathcal{P}(\PPP)^V\subseteq X$, it follows that the set $$X[G] ~ = ~ \Set{\tau^G}{\textit{$\tau\in X$ is a $\PPP$-name}}$$ is an elementary submodel of $V[G]_\zeta$ and the embedding $j$ can be extended to $\map{j_G}{X[G]}{V[G]_\zeta}$. 
   Since the partial order $\PPP$ is an element of $V_\zeta$, it follows that $\zeta$ is an element of $(C^{(3)})^{V[G]}$ and, by Lemma \ref{lemma:CharExactingSingleEmb}, this shows that the embedding $j_G$ witnesses that $\lambda$ is an exacting cardinal in $V[G]$. 

   \eqref{item:ExactingUltraSmallForcing2} Pick $\delta$, $\zeta$, $X$ and $j$ satisfying the statements listed in \eqref{item:ExactingUltraSmallForcing1} and the additional assumption that $j\restriction V_\lambda$ is an element of $X$. Then $X$ contains a $\PPP$-name for the canonical lifting of $j\restriction V_\lambda$ to the $V_\lambda$ of $\PPP$-generic extensions. Combined with the argument in \eqref{item:ExactingUltraSmallForcing1}, this shows that $\PPP$-forces $\lambda$ to be an ultraexacting cardinal. 
\end{proof}


\subsection{Choiceless settings}\label{sec:LCnoAC}
In some parts of this paper, the ambient universe will only be a model of $\ZF$ and we need to adapt several of our central concepts to this more general setting. 
First, since properties that  characterize the same large cardinal property in $\ZFC$ can be inequivalent in choiceless models (see \cite{MR2310342}), we specify our definitions of  large cardinal notions in the $\ZF$-context following \cite{SEM1}:  
%

\begin{definition}[$\ZF$, {\cite[p. 323]{SEM1}}]\label{def:ZFinaccessible}
    A cardinal $\kappa$ is \emph{strongly inaccessible} if  for each  $\alpha<\kappa$, there is no  $\map{f}{V_\alpha}{\kappa}$ whose range is unbounded in $\kappa$. 
\end{definition}

\begin{definition}[$\ZF$, {{\cite[Definition 220]{SEM1})}}]\label{def:ZFsupercompact}
 A cardinal $\kappa$ is  \emph{supercompact} if for every $\eta>\kappa$, there is $\zeta>\eta$, a transitive set $N$ with ${}^{V_\eta}N\subseteq N$ and a non-trivial embedding $\map{j}{V_\zeta}{N}$ with $\crit(j)=\kappa$ and $j(\kappa)>\eta$.   
\end{definition}

  %

We will make use of these definitions  in \S\ref{sec: exacting from beyond choice} where we show how to get many exacting cardinals from large cardinals beyond choice.

\smallskip

Next, we consider the notions of exactingness and ultraexactingness in the absence of the Axiom of Choice. Following \cite{ABLG}, we use the characterizations presented in \S\ref{sec:Exacting} as definitions in this setting. It should be noted that the principle $\dc_\lambda$ suffices to prove these equivalences for a given infinite cardinal $\lambda$.

\begin{definition}[$\ZF$]
    A cardinal $\lambda$ is \emph{exacting} if for every non-empty ordinal definable subset $A$ of $V_{\lambda+1}$, there exist $x,y\in A$ and a non-trivial elementary embedding $\map{j}{\langle V_\lambda,\in,x\rangle}{\langle V_\lambda,\in,y\rangle}$ with $\crit(j)<\lambda$. 
\end{definition}

\begin{definition}[$\ZF$]\label{definition:ZFultraexacting} 
    A cardinal $\lambda$ is \emph{ultraexacting} if for every ordinal definable subset $A$ of $V_{\lambda+1}$, there is a non-trivial elementary embedding $\map{j}{\langle V_{\lambda+1},\in,A\rangle}{\langle V_{\lambda+1},\in,A\rangle}$ with $\crit(j)<\lambda$. 
\end{definition}

In the following, we discuss some basic structural properties of exacting and ultraexacting cardinals that will be used in our proofs. We start by discussing two settings in which these  properties are downward-absolute from the set-theoretic universe to  certain  models.

\begin{prop}[$\ZF$]\label{prop:ZFExactingDown}\label{prop:ExactingRankSegments}
  Let $\delta$ be an ordinal such that $V_\delta$ is a model of $\ZF$. 
   \begin{enumerate}
       \item If $\lambda<\delta$ is an exacting cardinal, then $\lambda$ is  exacting  in $V_\delta$. 

       \item If $\lambda<\delta$ is an ultraexacting cardinal, then $\lambda$ is  ultraexacting  in $V_\delta$. 
   \end{enumerate}
\end{prop}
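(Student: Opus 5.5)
The proof will work directly with the $\ZF$-definitions of exacting and ultraexacting cardinals, i.e.\ with the characterizations by ordinal definable subsets of $V_{\lambda+1}$. The plan is to show that every subset $A$ of $V_{\lambda+1}$ that is ordinal definable in $V_\delta$ is also ordinal definable in $V$. Then we apply the assumption in $V$ and check that the resulting embedding (and the witnesses $x,y$) lie in $V_\delta$ and satisfy the required properties there.

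First note that $\lambda<\delta$ and $V_\delta\models\ZF$ force $\delta$ to be a limit ordinal above $\lambda+2$. Hence $V_{\lambda+1}^{V_\delta}=V_{\lambda+1}$, and $V_\delta$ is correct about $V_{\lambda+1}$, about elementary embeddings between structures of the form $\langle V_\lambda,\in,x\rangle$ or $\langle V_{\lambda+1},\in,A\rangle$, and about critical points. Next, suppose $A\s V_{\lambda+1}$ is ordinal definable in $V_\delta$, say $A=\Set{z\in V_\delta}{V_\delta\models\varphi(z,\vec\alpha)}$ with $\vec\alpha\in\ord\cap\delta$. Then in $V$ we have
$$A ~ = ~ \Set{z}{z\in V_\delta\wedge V_\delta\models\varphi(z,\vec\alpha)},$$
which is definable in $V$ from the ordinals $\delta,\vec\alpha$, since the satisfaction relation for the set-sized structure $V_\delta$ is definable in $\ZF$. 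Thus $A\in\OD^V$. If $A$ is non-empty, the exactingness of $\lambda$ in $V$ yields $x,y\in A$ and a non-trivial elementary $\map{j}{\langle V_\lambda,\in,x\rangle}{\langle V_\lambda,\in,y\rangle}$ with $\crit(j)<\lambda$. This $j$ is a subset of $V_\lambda\times V_\lambda$, so $j\in V_{\lambda+1}\s V_\delta$. Elementarity is absolute, because it is a statement about a set structure that $V_\delta$ computes correctly. So $x,y,j$ witness the property in $V_\delta$.

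For ultraexactingness the argument is the same. The embedding $\map{j}{\langle V_{\lambda+1},\in,A\rangle}{\langle V_{\lambda+1},\in,A\rangle}$ is a subset of $V_{\lambda+1}\times V_{\lambda+1}$, hence an element of $V_{\lambda+3}\s V_\delta$. Its elementarity is again absolute between $V$ and $V_\delta$, since $V_\delta$ contains $V_{\lambda+2}$ and the satisfaction relation for $\langle V_{\lambda+1},\in,A\rangle$ is computed correctly inside $V_\delta$.

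The only point requiring care is the transfer of ordinal definability from $V_\delta$ to $V$. This uses definability of satisfaction for set models, together with the observation that "$\lambda$ is a cardinal" is downward absolute. I do not expect a real obstacle: the argument is short precisely because the definitions quantify only over the (possibly) smaller family $\OD^{V_\delta}\cap\mathcal{P}(V_{\lambda+1})\s\OD^V$, and the witnesses are of low rank.
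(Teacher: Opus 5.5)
Your proposal is correct and follows the paper's proof: the paper's argument is the one-line observation that every set in $\OD^{V_\delta}$ is ordinal definable in $V$, with $\delta$ as an extra parameter via the definable satisfaction relation for $V_\delta$, after which the witnesses obtained in $V$ lie in $V_\delta$ and are recognized there. The only blemish is a harmless rank miscount in the ultraexacting case: with Kuratowski pairs, $j\subseteq V_{\lambda+3}$, so $j\in V_{\lambda+4}$ rather than $V_{\lambda+3}$, which does not matter since $\delta$ is a limit ordinal above $\lambda$.
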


\begin{proof}
    Both statements follow directly from the above definitions using the fact that every element in $\OD^{V_\delta}$ is ordinal definable in $V$. 
\end{proof}

\begin{prop}[$\ZF$]\label{prop:ExactingDownToL}
    Let $\lambda$ be a cardinal and let $\xi>\lambda$ be an ordinal. 
       \begin{enumerate}
        \item\label{prop:ZFexactL1} If $\lambda$ is an exacting cardinal, then $\lambda$ is an exacting cardinal in $L(V_\xi)$. 

        \item\label{prop:ZFexactL2} If $\lambda$ is an ultraexacting cardinal, then $\lambda$ is an ultraexacting cardinal in $L(V_\xi)$. 
    \end{enumerate}
\end{prop}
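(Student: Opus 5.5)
We argue directly from the $\ZF$-definitions of exactingness and ultraexactingness (Definition~\ref{definition:ZFultraexacting} and the one before it). Work in $V$ and set $N=L(V_\xi)$. Since $\xi>\lambda$, we have $V_{\lambda+1}\subseteq V_\xi\subseteq N$. By induction on rank, $V_\alpha^N=V_\alpha$ for all $\alpha\leq\xi$: every subset of $V_\alpha$ with $\alpha<\xi$ already lies in $V_{\alpha+1}\subseteq V_\xi$. In particular $V_\lambda^N=V_\lambda$ and $V_{\lambda+1}^N=V_{\lambda+1}$. Cardinality of $\lambda$ also goes down to $N$, because $N$ has fewer bijections.

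The key step is to show that every $A\subseteq V_{\lambda+1}$ that is ordinal definable in $N$ is ordinal definable in $V$. Suppose $A$ is defined in $N$ by a formula $\varphi$ from ordinal parameters $\vec\alpha$. The class $N=L(V_\xi)$ is definable in $V$ from the ordinal $\xi$, uniformly via the relativized constructible hierarchy $L_\gamma(V_\xi)$. Hence
$$A=\Set{x\in V_{\lambda+1}}{N\models\varphi(x,\vec\alpha)}$$
is definable in $V$ from the ordinals $\vec\alpha$ and $\xi$. To stay inside first-order $\ZF$ and avoid quantifying over a proper class, we first use reflection in $N$: it gives an ordinal $\gamma$ with $A$ definable over the set $L_\gamma(V_\xi)$ from $\vec\alpha$. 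Then $A$ is definable in $V$ from $\gamma$, $\xi$ and $\vec\alpha$, so $A\in\OD$.

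For (1), let $A\in\OD^N$ be a nonempty subset of $V_{\lambda+1}$. By the key step $A\in\OD$, so exactingness in $V$ gives $x,y\in A$ and a nontrivial elementary $\map{j}{\langle V_\lambda,\in,x\rangle}{\langle V_\lambda,\in,y\rangle}$ with $\crit(j)<\lambda$. Since $j\subseteq V_\lambda\times V_\lambda$, we have $j\in V_{\lambda+1}\subseteq N$. Elementarity is a statement about the set structures $\langle V_\lambda,\in,x\rangle$ and $\langle V_\lambda,\in,y\rangle$, which are identical in $N$, so it is absolute and $j$ witnesses the required instance in $N$.

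For (2), apply ultraexactingness in $V$ to the same $A$. This yields $\map{j}{\langle V_{\lambda+1},\in,A\rangle}{\langle V_{\lambda+1},\in,A\rangle}$ with $\crit(j)<\lambda$. Now $j\subseteq V_{\lambda+1}\times V_{\lambda+1}$ is a subset of $V_{\lambda+2}$, and $V_{\lambda+2}\subseteq V_\xi\subseteq N$ provided $\xi\geq\lambda+2$. Elementarity over the set structure is again absolute, so $j$ works in $N$.

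The main obstacle is the edge case $\xi=\lambda+1$ in (2), where $j$ need not lie in $L(V_{\lambda+1})$. Here we do not take an arbitrary $j$. Instead, $j$ is determined by $j\restriction V_\lambda$ through $j(B)=(j\restriction V_\lambda)_+(B)$, and $j\restriction V_\lambda\in V_{\lambda+1}\subseteq N$. So we apply the key step to the set
$$A'=\Set{k\in V_{\lambda+1}}{k_+ \text{ is elementary on }\langle V_{\lambda+1},\in,A\rangle \text{ with } \crit(k)<\lambda},$$
which is computed identically in $V$ and $N$. Ultraexactingness in $V$ makes $A'$ nonempty, and any $k\in A'$ lies in $N$. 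Then $k_+$ is definable in $N$ from $k$, so it belongs to $N$ and witnesses ultraexactingness there.
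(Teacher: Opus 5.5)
Your proof is correct and follows the paper's route. An ordinal definable subset of $V_{\lambda+1}$ in $L(V_\xi)$ is ordinal definable in $V$, and the witness found in $V$ lies in $L(V_\xi)$; for (2), the embedding is recovered in $L(V_\xi)$ from $j\restriction V_\lambda\in V_{\lambda+1}$ because $\lambda$ is a limit ordinal. The paper applies this last argument uniformly for every $\xi>\lambda$, so your case split is unnecessary, and its rank bookkeeping is off: Kuratowski pairs of elements of $V_{\lambda+1}$ can have rank $\lambda+2$, so $j\not\subseteq V_{\lambda+2}$. This is harmless, since your edge-case argument already covers all $\xi>\lambda$.
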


\begin{proof}
    \eqref{prop:ZFexactL1} Let $A$ be a subset of $V_{\lambda+1}$ that is ordinal definable in $L(V_\xi)$. 
    Then $A$ is ordinal definable in $V$ and there exist $x,y\in A$ with the property that there exists a non-trivial elementary embedding  of $\langle V_\lambda,\in,x\rangle$ into $\langle V_\lambda,\in,y\rangle$ in $V$ and this map is also an element of $L(V_\xi)$. Hence, $L(V_\xi)\models\lambda$ is exacting.
    
   \eqref{prop:ZFexactL2} Let $A$ be a subset of $V_{\lambda+1}$ that is ordinal definable in $L(V_\xi)$. Then $A$ is ordinal definable in $V$ and there exists a non-trivial elementary embedding $j$ of $\langle V_{\lambda+1},\in,A\rangle$ into itself. Then $j\restriction V_\lambda$ is an element of $L(V_\xi)$ and, since $\lambda$ is a limit ordinal, it follows that the map $j$ is also contained in $L(V_\xi)$. We can now conclude that $\lambda$ is an ultraexacting cardinal in $L(V_\xi)$. 
\end{proof}

For later use, we also establish an easy fact regarding the interplay between the above notions and set-theoretic forcings:

\begin{prop}[$\ZF$]\label{prop:PreserveExactingGoodForcing}
    Let $\lambda$ be a cardinal and let $\PPP$ be an ordinal definable,  weakly homogeneous partial order with the property that forcing with $\PPP$ does not add new subsets of $V_\lambda$. 
    \begin{enumerate}
        \item\label{prop:ZFforcingPreserve1} If $\lambda$ is an exacting cardinal, then $\lambda$ is an exacting cardinal in every $\PPP$-generic extension. 

        \item\label{prop:ZFforcingPreserve2} If $\lambda$ is an ultraexacting cardinal, then $\lambda$ is an ultraexacting cardinal in every $\PPP$-generic extension. 
    \end{enumerate}
\end{prop}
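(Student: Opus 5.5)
The plan is to verify the ZF definitions of exactingness and ultraexactingness directly in a $\PPP$-generic extension $V[G]$. Two facts about the forcing do the work. First, since $\PPP$ adds no new subsets of $V_\lambda$, we will argue by induction on rank that $V[G]_\lambda = V_\lambda$ and $V[G]_{\lambda+1} = V_{\lambda+1}$: every subset of $V_\lambda$ in $V[G]$ already lies in $V$. Second, since $\PPP$ is ordinal definable and weakly homogeneous, the standard argument gives the following: if $A \subseteq V_{\lambda+1}$ is ordinal definable in $V[G]$, say $A = \Set{x}{V[G]\models\varphi(x,\vec\alpha)}$, then by homogeneity $\varphi(x,\vec\alpha)$ is decided by $\one$. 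Hence $A = \Set{x\in V_{\lambda+1}}{\one\Vdash_\PPP \varphi(\check x,\check{\vec\alpha})}$. As $\PPP$ is ordinal definable, this set is ordinal definable in $V$, and it lies in $V$ because $V_{\lambda+1}^{V[G]} = V_{\lambda+1}$.

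For \eqref{prop:ZFforcingPreserve1}, take a non-empty $A \subseteq V[G]_{\lambda+1}$ that is $\OD$ in $V[G]$. By the previous paragraph $A \in V$, $A$ is $\OD$ in $V$, and $A \subseteq V_{\lambda+1}$. Exactingness of $\lambda$ in $V$ gives $x,y\in A$ and a non-trivial elementary $j\colon\langle V_\lambda,\in,x\rangle\to\langle V_\lambda,\in,y\rangle$ with $\crit(j)<\lambda$. Since $V[G]_\lambda = V_\lambda$ and satisfaction is absolute for set-sized structures, $j$ witnesses the same property in $V[G]$. We must also check that $\lambda$ is still a cardinal in $V[G]$. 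This holds because no new subsets of $V_\lambda$ are added, so no new bijections between smaller ordinals and $\lambda$ appear: such a bijection codes a subset of $\lambda\times\lambda \subseteq V_\lambda$.

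For \eqref{prop:ZFforcingPreserve2} the argument is identical. The embedding $j\colon\langle V_{\lambda+1},\in,A\rangle\to\langle V_{\lambda+1},\in,A\rangle$ obtained in $V$ is in $V[G]$, and it has the required elementarity there because $V[G]_{\lambda+1} = V_{\lambda+1}$ and $A$ is the same set.

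The main obstacle is the homogeneity step: we must show that $\OD^{V[G]}$ subsets of $V_{\lambda+1}$ are $\OD$ in $V$. To define $A$ in $V$ we need names for its elements, and we will use canonical check names, which is legitimate because every element of $A$ lies in $V_{\lambda+1}\subseteq V$. We also need the forcing relation for $\PPP$ to be definable in $V$ from $\PPP$ and ordinal parameters. For a set forcing this is standard in $\ZF$: we restrict $\varphi$ to a sufficiently large $V[G]_\theta = V_\theta^{\PPP}$-interpretation via reflection, and since $A$ is a set we may take the definition to be evaluated in some $V[G]_\theta$. Everything else is routine absoluteness.
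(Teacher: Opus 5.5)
Your proposal is correct and follows the paper's proof. Both arguments use that $\PPP$ adds no subsets of $V_\lambda$ to get $V[G]_{\lambda+1}=V_{\lambda+1}$. They then use weak homogeneity and the ordinal definability of $\PPP$ to conclude that every $\OD^{V[G]}$ subset of $V[G]_{\lambda+1}$ is an ordinal definable element of $V$, and observe that the ground-model witnessing embeddings, which live on unchanged structures, still work in $V[G]$. Your extra checks that $\lambda$ remains a cardinal and that the relevant forcing relation is definable in $\ZF$ are details the paper leaves implicit.
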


\begin{proof}
 \eqref{prop:ZFforcingPreserve1} Let $G$ be $\PPP$-generic over $V$ and let $A$ be a subset of $V[G]_{\lambda+1}$ that is ordinal definable in $V[G]$. Then $A$ consists of elements of $V$ and the weak homogeneity of $\PPP$ allows us to conclude that $A$ is an element of the ground model $V$. Since $\PPP$ is ordinal definable in $V$, we now know that $A$ is also ordinal definable in $V$. Moreover, the fact that $\lambda$ is an exacting cardinal in $V$ allows us to find $x,y\in A$ with the property that there exists a non-trivial elementary embedding of $\langle V_\lambda,\in,x\rangle$ into $\langle V_\lambda,\in,y\rangle$ in $V$. By our assumptions, this map is also a non-trivial elementary embedding of $\langle V[G]_\lambda,\in,x\rangle$ into $\langle V[G]_\lambda,\in,y\rangle$. These computations show that $\lambda$ is an exacting cardinal in $V[G]$.

 \eqref{prop:ZFforcingPreserve2} Let $G$ be $\PPP$-generic over $V$ and let $A$ be a subset of $V[G]_{\lambda+1}$ that is ordinal definable in $V[G]$. As above, it follows that $A$ is an ordinal definable element of the ground model $V$. Therefore, ultraexactingness yields a non-trivial elementary embedding of $\langle V_{\lambda+1},\in,A\rangle$ into itself. Since $V_{\lambda+1}=V[G]_{\lambda+1}$, this map is also a non-trivial elementary embedding of  $\langle V[G]_{\lambda+1},\in,A\rangle$ into itself. This shows that $\lambda$ is ultraexacting in $V[G]$. 
\end{proof}

In the remainder of this section, we refine ideas from consistency proofs in \cite{ABL} and \cite{ABLG} in order to isolate sufficient conditions for a cardinal to be exacting or ultraexacting in a transitive model of $\ZF$. 
This discussion will play a key role in the proofs of several of our main results. For example, we will use one of these criteria to show that (granted suitable large cardinal assumptions) the axioms of $\ZF$ are consistent with the existence of an exacting cardinal $\lambda$ whose successor is extendible in $\HOD$. 
 The following result extracts the core idea from the proof of   \cite[Theorem 5.4]{ABLG}:

\begin{theorem}[\ZF]\label{theorem:ExactingInnerModels}
  Let $\lambda$ be singular cardinal of countable cofinality and let $N$ be (possibly class-sized) transitive model of $\ZF$  such that $V_\lambda\in N$, $\cof{\lambda}^N=\omega$ and there is a well-ordering of $V_\lambda$ in $N$. 
  If there are subclasses $X_0$ and $X_1$ of $N$ that contain $V_\lambda\cup\{\lambda\}$ and are $\Sigma_2$-correct in $N$ and an elementary embedding $\map{j}{X_0}{X_1}$ with $j\restriction\lambda\neq\id_\lambda$ and $j(\lambda)=\lambda$, then $\lambda$ is an exacting cardinal in $N$. 
\end{theorem}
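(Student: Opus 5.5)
The plan is to argue by contradiction. Suppose $\lambda$ is not exacting in $N$. Then in $N$ we can isolate a \emph{canonical} counterexample $A_0\subseteq V_{\lambda+1}^N$ that is definable in $N$ from the parameter $\lambda$ alone by a $\Sigma_2$-formula. Using the correctness of $X_0$ and $X_1$, we transport this definition through $j$ and obtain a pair $x, j(x)\in A_0$ together with an embedding between the corresponding structures. Finally, an absoluteness argument pulls such an embedding into $N$, contradicting the choice of $A_0$.

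\textbf{Step 1: a $\Sigma_2$ canonical counterexample.} Work in $N$. A set $A\subseteq V_{\lambda+1}$ is a \emph{counterexample} if $A$ is non-empty and there are no $x,y\in A$ with a non-trivial elementary embedding $\map{k}{\langle V_\lambda,\in,x\rangle}{\langle V_\lambda,\in,y\rangle}$ with $\crit(k)<\lambda$. Since such $k$ are subsets of $V_\lambda$, being a counterexample is a $\Delta_1$-property of $A$ and $V_\lambda$.

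Every ordinal definable set is definable in some $V_\alpha^N$ from ordinals. So let $A_0$ be the counterexample that is definable over $V_\alpha^N$ by the formula/parameter pair which is lexicographically least, among all such pairs, with $\alpha$ minimal. Then $z\in A_0$ can be expressed by a $\Sigma_2$-formula $\psi(z,\lambda)$. It asserts the existence of some $V_\alpha$ computing this least definition. Here minimality is checked inside $V_\alpha$, plus a $\Sigma_1$ statement about $V_{\lambda+2}$, which shows that all earlier candidates fail to be counterexamples. I expect some care is needed in bookkeeping so that the minimality clauses stay $\Sigma_2$, but this is routine.

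\textbf{Step 2: transport through $j$.} The statements ``$\exists z\,\psi(z,\lambda)$'' and ``$\exists v\,(v=V_\lambda)$'' are $\Sigma_2$ and true in $N$. Since $\lambda\in X_0$ and $X_0$ is $\Sigma_2$-correct, we get $V_\lambda\in X_0$ and some $x\in X_0$ with $\psi(x,\lambda)$, i.e.\ $x\in A_0$. By elementarity, $X_1\models\psi(j(x),j(\lambda))$, that is, $X_1\models\psi(j(x),\lambda)$. Correctness of $X_1$ then gives $y:=j(x)\in A_0$, and also $j(V_\lambda)=V_\lambda$.

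Because $V_\lambda\subseteq X_0$, the map $k=j\restriction V_\lambda$ sends $V_\lambda$ into $V_\lambda$. For $\gamma<\lambda$ we have $j(x\cap V_\gamma)=y\cap V_{j(\gamma)}$, and $j(\gamma)\geq\gamma$ is cofinal in $\lambda$. Moreover, each first-order statement about $\langle V_\lambda,\in,x\rangle$ is a statement about the elements $V_\lambda, x\in X_0$, which $j$ preserves. Hence $k$ is an elementary embedding $\map{k}{\langle V_\lambda,\in,x\rangle}{\langle V_\lambda,\in,y\rangle}$. It is non-trivial with $\crit(k)<\lambda$, since $j\restriction\lambda\neq\id_\lambda$.

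\textbf{Step 3: absoluteness (the main obstacle).} The map $k$ exists in $V$, but it need not lie in $N$. Here the hypotheses $\cof{\lambda}^N=\omega$ and the well-ordering of $V_\lambda$ in $N$ are used.

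In $N$, fix a cofinal sequence $\seq{\lambda_n}{n<\omega}$ in $\lambda$. Form the tree $T$ of finite sequences $\langle k_0,\dots,k_{n-1}\rangle$, where each $k_m\colon V_{\lambda_m}\to V_\lambda$ and the maps cohere. Each node should satisfy the finitary elementarity requirement that makes a branch union elementary. One convenient form is to require that $k_m$ maps each $V_{\lambda_m}$ into some $V_\beta$ in a way that preserves the partial theory of $\langle V_\lambda,\in,x\rangle$ restricted to parameters in $V_{\lambda_m}$. A further requirement ensures a fixed ordinal $<\lambda$ is moved. Since $V_\lambda$ has a well-ordering in $N$, these nodes form a set in $N$ and $T\in N$.

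The restrictions of $k$ yield a branch through $T$ in $V$, so $T$ is ill-founded. Ill-foundedness is absolute between $N$ and $V$, so $N$ contains a branch, and its union is an embedding in $N$ witnessing that $A_0$ is not a counterexample. This contradicts Step 1.

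The delicate point is choosing the node conditions so that every branch union really is fully elementary and non-trivial. I would first try the approach of \cite[Theorem 5.4]{ABLG}: code elementarity via $\Sigma_n$-types of finite tuples from $V_{\lambda_m}$ over $\langle V_\lambda,\in,x\rangle$ and $\langle V_\lambda,\in,y\rangle$, for $n\leq m$.
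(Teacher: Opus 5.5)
Your proposal is correct and follows essentially the same route as the paper. You take the least counterexample, which is $\Sigma_2$-definable from $\lambda$, and transport it through the $\Sigma_2$-correctness of $X_0,X_1$ and $j$ to obtain $x$, $y=j(x)$ and the map $j\restriction V_\lambda$; a tree in $N$ of partial elementary embeddings along a cofinal $\omega$-sequence then has a branch in $N$. Two small remarks: the well-ordering of $V_\lambda$ in $N$ is needed to extract a branch inside $N$ (the paper takes the leftmost branch), not to make $T$ a set, and since nodes that preserve all formulas with parameters from their domain already make every branch union fully elementary, the ``delicate point'' you flag is not really delicate.
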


\begin{proof}
     Assume, towards a contradiction, that $\lambda$ is not an exacting cardinal in $N$. By definition, this means that, in $N$, there is a non-empty ordinal definable subset $A$ of $V_{\lambda+1}$ with the property that for all $x,y\in A$, there is no non-trivial elementary embedding of $\langle V_\lambda,\in,x\rangle$ into $\langle V_\lambda,\in,y\rangle$. 
     Using the $\Sigma_2$-definability of the class of proper initial segments of the canonical wellordering of ordinal definable sets (see {\cite[Proposition 3.9]{Sigma1Partitions}}), we can find a non-empty subset $A$ of $V_{\lambda+1}$ in $N$ with the property that, in $N$, the set $\{A\}$ is definable by a $\Sigma_2$-formula with parameter $\lambda$ and for all $x,y\in A$, there is no non-trivial elementary embedding of $\langle V_\lambda,\in,x\rangle$ into $\langle V_\lambda,\in,y\rangle$. 
     The $\Sigma_2$-correctness of $X$ in $N$ then ensures that $A$ is an element of $X_0\cap X_1$ with $A\cap X_0\neq\emptyset$ and $j(A)=A$. Pick $x\in A\cap X_0$ and set $y=j(x)\in A$. Since $V_\lambda$ is an element of $X_0$ with $j(V_\lambda)=V_\lambda$, it follows that $j\restriction V_\lambda$ is a non-trivial elementary embedding of $\langle V_\lambda,\in,x\rangle$ into $\langle V_\lambda,\in,y\rangle$.

     Now, fix a  sequence $\seq{\kappa_n}{n<\omega}$ in $N$ that is cofinal in $\lambda$ and satisfies $j(\kappa_0)>\kappa_0$. 
     Define $T$ to be the set of all partial non-trivial elementary embeddings $i$ of $\langle V_\lambda,\in,x\rangle$ into $\langle V_\lambda,\in,y\rangle$ such that $i$ is an element of $V_\lambda$, $\dom(i)=V_{\kappa_n}\cup\{\kappa_n\}$ for some $n<\omega$ and $i(\kappa_0)>\kappa_0$. 
     Then $T$ is an element of $N$ and, if we order $T$ by end-extensions of functions, then we turn this set into a tree of height at most $\omega$.  For each $n<\omega$, the restriction $j\restriction(V_{\kappa_n}\cup\{\kappa_n\})$ is an element of $T$. 
     It follows that $T$ has height $\omega$ and is ill-founded in $V$. 
     %
     %
     Fix a well-ordering $\lhd$ of $V_\lambda$ in $N$. Since $T$ is a subset of $V_\lambda$, the ill-foundedness of $T$ in $V$ ensures the existence of a unique $\lhd$-leftmost branch $b_\lhd$ through $T$. It then follows that  $b_\lhd$ is an element of $N$ and $\bigcup b$ is a non-trivial elementary embedding of $\langle V_\lambda,\in,x\rangle$ into $\langle V_\lambda,\in,y\rangle$ in $N$, contradicting our choice of $A$.  
\end{proof}

The following corollary follows from the above theorem by choosing $N=X_0=X_1$. It will be our main way to utilize the above result.

\begin{cor}[\ZF]\label{corollary:ExactingInnerModel}
    Let $\map{j}{V}{M}$ be a non-trivial elementary embedding with first non-trivial fixed point $\lambda$. Assume that $N\subseteq M$ is a (possibly class-sized) transitive model of $\ZF$ such that $V_\lambda\in N$, $\cof{\lambda}^N=\omega$ and there is a well-ordering of $V_\lambda$ in $N$.    
    If   $j\restriction N$ is an elementary embedding of $N$ into $N$, then $\lambda$ is an exacting cardinal in $N$.  \qed
\end{cor}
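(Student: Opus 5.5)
The plan is to apply Theorem \ref{theorem:ExactingInnerModels} inside the model $N$ itself, with $X_0=X_1=N$ and with the embedding $j\restriction N$. All the work consists in checking that this choice meets each hypothesis of that theorem.

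First, $N$ is trivially $\Sigma_2$-correct in $N$, and by assumption $j\restriction N\colon N\to N$ is elementary. Next, $N$ must contain $V_\lambda\cup\{\lambda\}$. We have $V_\lambda\in N$ and $N$ is transitive, so $V_\lambda\subseteq N$. Also $\lambda\in N$, since $\lambda=V_\lambda\cap\mathrm{Ord}$ is computable in $N$ from $V_\lambda$.

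Second, we verify $j(\lambda)=\lambda$ and $j\restriction\lambda\neq\id_\lambda$. The first holds because $\lambda$ is a fixed point of $j$, and the values of $j\restriction N$ agree with those of $j$. The second holds because $\lambda$ is the \emph{first non-trivial} fixed point: this means $\crit(j)$ exists and lies below $\lambda$, so $j\restriction\lambda$ moves $\crit(j)$.

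Third, we check the remaining assumptions on $\lambda$ and $N$. In $N$ we have $\cof{\lambda}^N=\omega$ and a well-ordering of $V_\lambda$, both by hypothesis. The theorem also asks that $\lambda$ be a singular cardinal of countable cofinality in the ambient universe. Countable cofinality transfers upward from $N$, since an $\omega$-sequence in $N$ cofinal in $\lambda$ is also one in $V$. That $\lambda$ is a cardinal follows from its being a limit of the critical sequence $\seq{\kappa_n}{n<\omega}$ of $j$, each term of which is a cardinal. In fact the argument in the proof of Theorem \ref{theorem:ExactingInnerModels} only uses that $N$ contains a cofinal $\omega$-sequence and a well-ordering of $V_\lambda$. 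With all hypotheses in place, that theorem yields that $\lambda$ is exacting in $N$.

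The only step needing any care is confirming that the restricted map really is an elementary embedding $X_0\to X_1$ between subclasses of $N$ in the sense used there, which is exactly the given hypothesis. So no real obstacle is expected: the corollary is a direct specialization of Theorem \ref{theorem:ExactingInnerModels}.
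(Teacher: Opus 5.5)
Your proposal is correct and matches the paper's argument: the paper obtains the corollary directly from Theorem~\ref{theorem:ExactingInnerModels} by taking $X_0=X_1=N$ and the embedding $j\restriction N$. One step needs a justification you did not give. That each $\kappa_n$ is a cardinal of $V$ is not automatic for an arbitrary $j\colon V\to M$; for instance, $j(\kappa)$ is not a cardinal for a measure ultrapower. Here it follows by induction from $V_\lambda\in N\subseteq M$: any collapse of $\kappa_{n+1}=j(\kappa_n)$ in $V$ would lie in $V_\lambda\subseteq M$, contradicting that $j(\kappa_n)$ is a cardinal in $M$.
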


The previous corollary  has the following  $\ZFC$-application:

\begin{cor}\label{corollary:ExactingModels}
    Let $\map{j}{V}{M}$ be an I2-embedding with critical sequence  $\vec{\kappa}=\seq{\kappa_m}{m<\omega}$ and first non-trivial fixed point $\lambda$ that is given by a $(\kappa_0,\lambda)$-extender $E$. Let $U$ denote the ultrafilter on $\kappa$ induced by $j$ and let $\seq{M_\alpha}{\alpha\in\ord}$ denote  the sequence of inner models obtained by iterating $V$ and $E$. 
    \begin{enumerate}
        \item\label{item:ExactingModels1} The sequence $\vec{\kappa}$ is generic over $M_\omega$ and $\lambda$ is an exacting cardinal in $M_\omega[\vec{\kappa}]$. 

        \item\label{item:ExactingModels2} If $\PPP_U$ is the Prikry forcing using $U$, then $\PPP_U$ forces $\kappa$ to be an exacting cardinal. 

        \item\label{item:ExactingModels3} The class $\bigcap_{n<\omega} M_n$ is a model of $\ZF$ and $\lambda$ is an exacting cardinal in $\bigcap_{n<\omega} M_n$. 
    \end{enumerate} 
\end{cor}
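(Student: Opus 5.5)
The plan is to derive all three items from Corollary \ref{corollary:ExactingInnerModel}, applied to a suitable transitive class $N\subseteq M$ with $j\restriction N\colon N\to N$ elementary. Proposition \ref{proposition:IterationFixedPoints} will supply this self-mapping property. Two standard facts about I2-embeddings given by extenders will be used throughout:
\begin{itemize}
\item each iterate $M_n$ for $n<\omega$ is closed under $\omega$-sequences;
\item $V_{\lambda+1}\subseteq M_n$, because a subset $A$ of $V_\lambda$ is coded by the $\omega$-sequence $\seq{A\cap V_{\kappa_m}}{m<\omega}$ of elements of $V_\lambda\subseteq M_n$.
\end{itemize}
Consequently $\vec{\kappa}\in M_n$, every well-ordering of $V_\lambda$ from $V$ lies in $M_n$, and $\cof{\lambda}^{M_n}=\omega$.

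For \eqref{item:ExactingModels1}, I first show that $\vec{\kappa}$ is generic over $M_\omega$ for the Prikry forcing $j_{0,\omega}(\PPP_U)$ on $\lambda=j_{0,\omega}(\kappa)$. This is a Mathias-style criterion argument. A set in $j_{0,\omega}(U)$ has the form $j_{n,\omega}(Y)$ with $Y\in j_{0,n}(U)$. Since $\crit(j_{n,n+1})=\kappa_n$ and $j_{n,n+1}(\kappa_n)=\kappa_{n+1}$, this gives $\kappa_m\in j_{n,\omega}(Y)$ for all $m\geq n$.

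Now set $N=M_\omega[\vec{\kappa}]$. Then $N\subseteq M$, $N\models\ZFC$, $V_\lambda\in N$, and $\cof{\lambda}^N=\omega$. By Proposition \ref{proposition:IterationFixedPoints}, $j\restriction M_\omega\colon M_\omega\to M_\omega$ is elementary. Moreover $j(\vec{\kappa})=\seq{\kappa_{m+1}}{m<\omega}$ is again generic over $M_\omega$ and generates the same extension $M_\omega[\vec{\kappa}]$. Defining $j(\tau^{\vec{\kappa}})=j(\tau)^{j(\vec{\kappa})}$ therefore yields an elementary $j\restriction N\colon N\to N$. Alternatively, one can observe that $N$ is defined in $V$ from $E$ and $\vec{\kappa}$, so its image in $M$ is $M_{1+\omega}[j(\vec{\kappa})]=N$. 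Corollary \ref{corollary:ExactingInnerModel} now gives that $\lambda$ is exacting in $N$.

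For \eqref{item:ExactingModels2}, I transfer \eqref{item:ExactingModels1} back along $j_{0,\omega}\colon V\to M_\omega$. In $M_\omega$, some condition of $j_{0,\omega}(\PPP_U)$ forces that $\lambda$ is exacting. Prikry forcing is weakly homogeneous, and exactingness refers only to ordinal definable sets of the extension, so the trivial condition already decides the statement and hence forces it. By elementarity of $j_{0,\omega}$, the trivial condition of $\PPP_U$ forces $\kappa$ to be exacting in $V$.

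For \eqref{item:ExactingModels3}, take $N=\bigcap_{n<\omega}M_n$. Since $j$ maps the sequence $\seq{M_n}{n<\omega}$, defined from $E$, to $\seq{M_{1+n}}{n<\omega}$, we have $x\in N$ if and only if $j(x)\in\bigcap_n M_{n+1}=N$. Hence $j\restriction N\colon N\to N$ is elementary, after verifying that $N$ is a model of $\ZF$ and that elementarity restricts appropriately. By the closure facts above, $V_\lambda\in N$, $\vec{\kappa}\in N$ (so $\cof{\lambda}^N=\omega$), and a fixed well-ordering of $V_\lambda$ lies in every $M_n$ and hence in $N$. Corollary \ref{corollary:ExactingInnerModel} then applies.

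I expect the main obstacle to be verifying that $\bigcap_n M_n$ satisfies $\ZF$, in particular Separation and Replacement for a class that is not obviously definable inside itself. For this I would argue that $N$ is the class of $x$ with $x\in M_n$ for all $n$. I would then use the fact that $M_{n+1}$ is definable in $M_n$ from $j_{0,n}(E)$, uniformly in $n$, to show that $N$ is closed under rudimentary operations and is almost universal. Instances of Replacement would be reflected to $V_\theta$-levels using the uniform definability of the iterates. A secondary obstacle is the genericity of $\vec{\kappa}$ in \eqref{item:ExactingModels1}, which requires the Mathias-criterion argument for iterations by extenders rather than by a single normal measure.
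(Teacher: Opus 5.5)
Your overall route is the paper's. You apply Corollary~\ref{corollary:ExactingInnerModel} to $M_\omega[\vec\kappa]$ and to $\bigcap_{n<\omega}M_n$, take the self-embedding from Proposition~\ref{proposition:IterationFixedPoints}, and obtain item (2) by pulling item (1) back along $j_{0,\omega}$; your weak-homogeneity remark supplies a step the paper leaves implicit.

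However, the two ``standard facts'' in your first paragraph are false for every $n\geq 1$. Since $M_n\subseteq M_1=M$, it suffices to refute them for $M$. The Kunen inconsistency gives $j[\lambda]\notin M$, although $j[\lambda]\subseteq\lambda$ is an element of $V_{\lambda+1}$; hence $V_{\lambda+1}\not\subseteq M$. Moreover, every $j\restriction V_{\kappa_m}$ lies in $V_\lambda\subseteq M$. So closure of $M$ under $\omega$-sequences would put $j\restriction V_\lambda$, and with it $j[\lambda]$, into $M$. For the same reason, your claim that every well-ordering of $V_\lambda$ in $V$ belongs to every $M_n$ fails, since such a well-ordering can code $j[\lambda]$.

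The consequences you actually use are nevertheless true, but they need different justifications.
\begin{itemize}
\item The sequence $\vec\kappa$ is definable from $E$, so elementarity shows that $j_{0,n}(\vec\kappa)=\seq{\kappa_{n+m}}{m<\omega}$ is the critical sequence of $j_{n,n+1}$ and an element of $M_n$. Prepending $\kappa_0,\ldots,\kappa_{n-1}$ gives $\vec\kappa\in M_n$ for all $n$. Hence $\vec\kappa\in\bigcap_{n<\omega}M_n$, $\lambda$ has countable cofinality there, and $M_\omega[\vec\kappa]\subseteq M$.
\item For a well-ordering of $V_\lambda$ in $\bigcap_{n<\omega}M_n$, use $M_\omega\subseteq\bigcap_{n<\omega}M_n$ (each $M_n$ computes $M_\omega$ as the $\omega$-th iterate of itself by $j_{0,n}(E)$) together with $V_\lambda\subseteq M_\omega\models\ZFC$. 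This is exactly how the paper argues in item (3).
\end{itemize}
With these repairs the rest of your proposal goes through. That includes your plan of verifying $\ZF$ in $\bigcap_{n<\omega}M_n$ via closure under G\"odel operations and almost universality, using that $\seq{V_\alpha^{M_k}}{k\geq n}\in M_n$.
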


\begin{proof}
  \eqref{item:ExactingModels1} Standard arguments show that the sequence $\vec{x}$ induces a generic filter for the Prikry forcing in $M_\omega$ that uses the image of $U$ under the canonical embedding of $V$ into $M_\omega$. Moreover, it is easy to show that $j\restriction M_\omega$ is an elementary embedding of $M_\omega$ into itself. Since $j(\vec{\kappa})=\seq{\kappa_{m+1}}{m<\omega}$ and therefore $M_\omega[\vec{\kappa}]=M_\omega[j(\vec{\kappa})]$, it follows that $j\restriction M_\omega[\vec{\kappa}]$ is an elementary embedding of $M_\omega[\vec{\kappa}]$ into itself. Finally, since $M_\omega[\vec{\kappa}]$ is an inner model of $\ZFC$ containing $V_\lambda$ and $\vec{\kappa}$ is cofinal in $\lambda$, we can apply Corollary \ref{corollary:ExactingInnerModel} to conclude that $\lambda$ is an exacting cardinal in $M_\omega[\vec{\kappa}]$. 

  \eqref{item:ExactingModels2} Using the elementarity of the canonical embedding of $V$ into $M_\omega$, the given statement follows directly from the above proof of \eqref{item:ExactingModels1}.
  
  \eqref{item:ExactingModels3}  Set $N=\bigcap_{n<\omega} M_n$. Standard arguments then show that $N$ is a model of $\ZF$ that contains $M_\omega$. In particular, it follows that $V_\lambda$ is contained in $N$ and there is a well-ordering of $V_\lambda$ in $N$. Moreover, it is easy to see that the sequence $\vec{\kappa}$ is an element of $N$ and therefore $\lambda$ has countable cofinality in $N$. The desired conclusion now follows from Corollary \ref{corollary:ExactingInnerModel}.  
\end{proof}

We end this section by proving an analog of Theorem \ref{theorem:ExactingInnerModels} for ultraexactingness:

\begin{theorem}[\ZF]\label{theorem:UltraexactingInnerModels}
  Let $\lambda$ be singular cardinal and let $N$ be (possibly class-sized) transitive model of $\ZF$  with the property that  $V_\lambda\in N$ and $\lambda$ is singular in $N$. 
  If there are subclasses $X_0$ and $X_1$ of $N$ that contain $V_\lambda\cup\{\lambda\}$ and are $\Sigma_2$-correct in $N$ and an elementary embedding $\map{j}{X_0}{X_1}$ with $j\restriction\lambda\neq\id_\lambda$, $j\restriction\cof{\lambda}^N=\id_{\cof{\lambda}^N}$,  $j(\lambda)=\lambda$ and $j\restriction V_\lambda\in X_0$, then $\lambda$ is an ultraexacting cardinal in $N$. 
\end{theorem}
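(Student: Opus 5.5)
We argue by contradiction, following the proof of Theorem \ref{theorem:ExactingInnerModels}. Assume that $\lambda$ is not ultraexacting in $N$. Then, in $N$, some ordinal definable $A\subseteq V_{\lambda+1}$ admits no non-trivial elementary embedding $\langle V_{\lambda+1},\in,A\rangle\to\langle V_{\lambda+1},\in,A\rangle$ with critical point below $\lambda$. Using the $\Sigma_2$-definability of proper initial segments of the canonical well-order of $\OD$ (as cited from \cite{Sigma1Partitions}), we choose such an $A$ so that $\{A\}$ is $\Sigma_2$-definable in $N$ from $\lambda$. The $\Sigma_2$-correctness of $X_0$ and $X_1$ in $N$ then gives $A\in X_0\cap X_1$ and $j(A)=A$. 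Also $V_\lambda\in X_0$ and $j(V_\lambda)=V_\lambda$. Set $k=j\restriction V_\lambda$. By hypothesis $k\in X_0$, and hence $k\in N$. Moreover $k$ is a non-trivial elementary embedding of $V_\lambda$ into itself: it is elementary because $V_\lambda\in X_0$, it fixes $\lambda$, and it moves some ordinal below $\lambda$.

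Next we extend $k$ to $k_+\colon V_{\lambda+1}\to V_{\lambda+1}$ inside $N$ by $k_+(B)=\bigcup_{\gamma<\lambda}k(B\cap V_\gamma)$. The goal is to show that $k_+$ is fully elementary for $\langle V_{\lambda+1},\in,A\rangle$; this would contradict the choice of $A$, since $k_+\in N$. The key step is to prove $k_+(B)=j(B)$ for every $B\in V_{\lambda+1}\cap X_0$. For this we use that $X_0$ is $\Sigma_2$-correct in $N$ and contains $V_\lambda\cup\{\lambda\}$, so $B\cap V_\gamma\in X_0$ and $j(B\cap V_\gamma)=j(B)\cap V_{j(\gamma)}$; since $j(\lambda)=\lambda$ and $j$ maps a cofinal set of $\gamma$'s to a cofinal set, the union recovers $j(B)$. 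Consequently, for $B\in X_0$, every first-order statement about $\langle V_{\lambda+1},\in,A\rangle$ with parameter $B$ transfers via $j$ (because $V_{\lambda+1}^N$ and $A$ are elements of $X_0$ fixed by $j$) and therefore transfers via $k_+$.

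The main obstacle is passing from parameters in $X_0$ to arbitrary $B\in V_{\lambda+1}^N$, since $X_0$ need not contain all of $V_{\lambda+1}$. My plan is to exploit the fact that $\lambda$ is singular in $N$ and that $j$ fixes $\mu=\cof{\lambda}^N$ pointwise. Fix a cofinal sequence $\vec{\gamma}=\langle\gamma_\xi\mid\xi<\mu\rangle$ in $\lambda$, obtained in $N$ by a $\Sigma_2$ choice from $\lambda$, so that $\vec{\gamma}\in X_0$ and $j(\vec{\gamma})=\vec{\gamma}$ because $j\restriction\mu=\id$. Any $B\in V_{\lambda+1}$ is then coded by the sequence $\langle B\cap V_{\gamma_\xi}\mid\xi<\mu\rangle$, each entry of which lies in $V_\lambda\subseteq X_0$. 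We then argue by induction on formula complexity: if $\exists B\,\varphi(B,C,A)$ holds in $V_{\lambda+1}$ with $C$ in the relevant range, we reflect the existential statement through $j$. The elementarity of $j$ on $X_0$, applied to the parameter $\langle V_{\lambda+1},A,k\rangle$ and the quantifier over $V_{\lambda+1}$ (using $k\in X_0$ with $j(k)=k_+(k)$), shows that the set of $B$ for which the formula transfers under $k_+$ is definable in $X_0$. This lets us witness existential quantifiers using elements of $X_0$. Alternatively, one can characterize ``$k_+$ is elementary for $\langle V_{\lambda+1},\in,A\rangle$'' as a single statement $\psi(k,A)$ and show that its failure yields, by elementarity of $j$ at the parameter $k\in X_0$, a counterexample already inside $X_0$, where the key-step identity refutes it. Which of these formulations works cleanly is exactly where I expect the real difficulty, and where the hypothesis $j\restriction\cof{\lambda}^N=\id$ must be used.
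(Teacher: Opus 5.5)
Your setup matches the paper's proof: the choice of a $\Sigma_2$-definable $A$, the facts $A\in X_0\cap X_1$ and $j(A)=A$, the map $k=j\restriction V_\lambda\in X_0$, and its extension $k_+$ (the paper's map $i$ is your $k_+$). Your identity $k_+(B)=j(B)$ for $B\in V_{\lambda+1}\cap X_0$ also matches, and your argument for it is sound.

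\textbf{The gap.} The gap is the final step, which you leave open. Neither candidate route is carried out correctly.
\begin{enumerate}
\item The coding route cannot work. Every $B\cap V_{\gamma_\xi}$ lies in $X_0$, but the sequence $\langle B\cap V_{\gamma_\xi}\mid\xi<\mu\rangle$ need not, so $j$ still cannot be applied to it.
\item In your second route, the counterexample inside $X_0$ cannot come from the elementarity of $j$. Elementarity only transports statements from $X_0$ to $X_1$. It never produces witnesses in $X_0$ for facts that are true in $N$.
\end{enumerate}

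\textbf{The missing ingredient} is the downward correctness of $X_0$ in $N$. Consider the assertion that $k_+$ is not elementary: there are a formula $\varphi$ and some $B\in V_{\lambda+1}$ such that $\langle V_{\lambda+1},\in,A\rangle$ satisfies $\varphi(B)$ but not $\varphi(k_+(B))$. This is a $\Sigma_1$-statement, because satisfaction for set-sized structures is $\Delta_1$. Its parameters $V_{\lambda+1}$, $A$ and $k$ all lie in $X_0$. Since $X_0$ is $\Sigma_2$-correct, hence $\Sigma_1$-correct, in $N$, such a counterexample in $N$ would yield one with $B\in X_0$. For such $B$, the elementarity of $j$ together with $j(V_{\lambda+1})=V_{\lambda+1}$, $j(A)=A$ and the $\Sigma_1$-correctness of $X_1$ shows that $\varphi(j(B))$ holds in $\langle V_{\lambda+1},\in,A\rangle$. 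Then $j(B)=k_+(B)$ gives a contradiction. This is exactly how the paper concludes: it observes that $i$ is a non-trivial elementary embedding of $\langle V_{\lambda+1},\in,A\rangle$ into itself in the sense of $X_0$, and transfers this to $N$ by correctness.

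\textbf{The cofinality hypothesis.} Contrary to your expectation, the hypothesis $j\restriction\cof{\lambda}^N=\id$ plays no role in this last step. The paper uses it only to derive $i\restriction(V_{\lambda+1}\cap X_0)=j\restriction(V_{\lambda+1}\cap X_0)$, via a cofinal sequence $\langle\kappa_\gamma\mid\gamma<\cof{\lambda}^N\rangle\in X_0$. Your derivation of that identity, from $j(B\cap V_\gamma)=j(B)\cap V_{j(\gamma)}$ and $j(\gamma)\geq\gamma$, does not use it at all.
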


\begin{proof}
  Assume, towards a contradition, that $\lambda$ is not an ultraexacting cardinal in $N$. Then, in $N$, there is an ordinal definable subset $A$ of $V_{\lambda+1}$ with the property that  there is no non-trivial elementary embedding of $\langle V_\lambda,\in,A\rangle$ into itself. 
     By using the canonical well-ordering of ordinal definable sets, we can find a subset $A$ of $V_{\lambda+1}$ in $N$ with the property that, in $N$, the set $\{A\}$ is definable by a $\Sigma_2$-formula with parameter $\lambda$ and  there is no non-trivial elementary embedding of $\langle V_\lambda,\in,A\rangle$ into itself. 
     The correctness properties of $X_0$ and $X_1$  now imply that $A\in X_0\cap X_1$ and $j(A)=A$. 

     Next, we can find  a  sequence $\seq{\kappa_\gamma}{\gamma<\cof{\lambda}^N}$ in $X_0$ that is cofinal in $\lambda$. Define $$\map{i}{V_{\lambda+1}^N}{V_{\lambda+1}^N}; ~ x\mapsto\bigcup\Set{(j\restriction V_\lambda)(x\cap V_{\kappa_\gamma})}{\gamma<\cof{\lambda}^N}.$$ Then $i$ is an element of $X_0$ and therefore $i[V_{\lambda+1}\cap X_0]\subseteq X_0$. Moreover, since $j(\lambda)=\lambda$, it follows that $j(\cof{\lambda}^N)=\cof{\lambda}^N$ and therefore we have   
     $$j\restriction(\cof{\lambda}^N+1) ~ = ~ \id_{\cof{\lambda}^N+1}.$$ Since this directly implies that $$i\restriction(V_{\lambda+1}\cap X_0) ~ = ~ j\restriction(V_{\lambda+1}\cap X_0),$$ we can conclude that $i$ is a non-trivial elementary embedding of $\langle V_\lambda,\in,A\rangle$ into itself in $X_0$. The correctness properties of $X_0$ in $N$ now ensure that $i$ is a function with these properties in $N$, contradicting our choice of $A$. 
\end{proof}


\subsection{Large cardinals beyond choice}\label{section:LCbeyondChoice}  
 We now review definitions from the theory of large cardinal beyond choice, as initiated in \cite{BagKoelWoo}. In addition, we some implications between these notions that will be need in \S\ref{sec: exacting from beyond choice}. Several of these large cardinal properties are defined through second-order statements. Following \cite{BagKoelWoo}, we will work with these notions in the theory  $\ZF_2$, the second-order version of $\ZF$ that extends the Separation, Collection and Replacement schemes to second-order formulas. Note that the axioms of $\ZF$ prove that, if $\kappa$ is a strongly inaccessible cardinal (in the sense of Definition \ref{def:ZFinaccessible}), then $\langle V_\kappa,V_{\kappa+1},\in\rangle$ is a model of $\ZF_2$. The following notion is the initial example of a large cardinal beyond choice:

\begin{definition}[$\ZF_2$]
   A \emph{Reinhardt embedding} is a non-trivial elementary embedding $\map{j}{V}{V}$. A \emph{Reinhardt cardinal} is the critical point of a Reinhardt embedding. 
\end{definition}

The following weakening of this notion was introduced by Goldberg and Schlutzenberg in \cite{GoSch24}. Many interesting consequences of the existence of Reinhardt cardinals factor through this property. It has the advantage of having a first-order formulation.

\begin{definition}[$\ZF$, {\cite{GoSch24}}] 
 A cardinal $\lambda$ is \emph{rank-Berkeley} if for all ordinals $\alpha<\lambda<\zeta$, there is a non-trivial elementary embedding $\map{j}{V_\zeta}{V_\zeta}$ such that $\alpha<\crit(j)<\lambda$ and $\lambda$ is the first non-trivial fixed point of $j$. 
\end{definition}

A short argument shows that the first non-trivial fixed point of a Rainhardt embedding is a rank-Berkeley cardinal.

\begin{prop}\label{prop:LocalRankBerkeleySupercompact}
    If $\delta$ is a supercompact cardinal and $\lambda<\delta$ is rank-Berkeley in $V_\delta$, then $\lambda$ is rank-Berkeley. 
\end{prop}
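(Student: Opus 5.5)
To show that $\lambda$ is rank-Berkeley in $V$, we fix ordinals $\alpha<\lambda<\zeta$ and must find a non-trivial elementary $\map{j}{V_\zeta}{V_\zeta}$ with $\alpha<\crit(j)<\lambda$ and first non-trivial fixed point $\lambda$. If $\zeta<\delta$ we are done, since $V_\zeta$ is the same set in $V$ and in $V_\delta$, and being elementary, $\crit(j)$ and the least non-trivial fixed point are all computed correctly in $V_\delta$. So the interesting case is $\zeta\geq\delta$. For that case the plan is a standard reflection argument using the supercompactness of $\delta$ in the sense of Definition \ref{def:ZFsupercompact}.

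Given $\zeta\geq\delta$, choose $\eta>\zeta$ and apply Definition \ref{def:ZFsupercompact} to get $\theta>\eta$, a transitive $N$ with ${}^{V_\eta}N\subseteq N$, and an elementary $\map{k}{V_\theta}{N}$ with $\crit(k)=\delta$ and $k(\delta)>\eta$.

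The key steps are as follows.

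\textbf{Step 1.} Since $\lambda<\delta=\crit(k)$, we have $k(\lambda)=\lambda$ and $k(\alpha)=\alpha$. Elementarity then gives that $\lambda$ is rank-Berkeley in $k(V_\delta)=N_{k(\delta)}$.

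\textbf{Step 2.} Since $\zeta<\eta<k(\delta)$, we may instantiate this inside $N_{k(\delta)}$ with the ordinals $\alpha<\lambda<\zeta$. This yields $i\in N$ with $\map{i}{N_\zeta}{N_\zeta}$ elementary, $\alpha<\crit(i)<\lambda$, and $\lambda$ the first non-trivial fixed point of $i$.

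\textbf{Step 3.} Use the closure ${}^{V_\eta}N\subseteq N$ to check that $N_{\eta}=V_\eta$, and hence $N_\zeta=V_\zeta$. Because every element of $V_\eta$ is a subset of $V_{\eta}$ of small rank, an induction on rank shows $V_{\beta}\subseteq N$ for all $\beta\leq\eta$; transitivity of $N$ gives the reverse inclusion. It follows that $i$ is a genuine elementary embedding of $V_\zeta$ into itself, and elementarity is absolute between $N$ and $V$ since it is a $\Delta_0$ property of the set $V_\zeta$ together with its satisfaction relation. The critical point and the fixed points of $i$ are likewise computed the same way in $V$.

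The only step needing care is Step 3, the identification $N_\zeta=V_\zeta$ in the choiceless setting. I would argue it by induction on $\beta\leq\eta$: if $V_\beta\subseteq N$ and $x\subseteq V_\beta$, then the function $V_\eta\to N$ sending elements of $x$ to themselves and everything else to a fixed element of $x$ (when $x\neq\emptyset$) lies in $N$. Hence its range, which is $x$, lies in $N$ by replacement in $N$; alternatively, one can use the closure of $N$ under $V_\eta$-sequences directly. Everything else is routine elementarity.
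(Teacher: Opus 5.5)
Your proof is correct and follows the same route as the paper. You take a supercompactness embedding $k\colon V_\theta\to N$ with critical point $\delta$, use elementarity to see that $\lambda$ is rank-Berkeley in $k(V_\delta)$, and use the closure of $N$ under $V_\eta$-sequences to conclude that the embedding found inside $N$ is a genuine elementary embedding of $V_\zeta$ into itself. Your version is, if anything, more careful than the paper's: you explicitly track the requirement $\alpha<\crit(i)$ via $k(\alpha)=\alpha$, you handle the case $\zeta<\delta$ separately, and you verify the identification $N_\zeta=V_\zeta$.
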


\begin{proof}
  Fix an ordinal $\eta\geq\delta$. Then there is $\zeta>\eta$, a transitive set $N$ with ${}^{V_\eta}N\subseteq N$ and a non-trivial elementary embedding $\map{j}{V_\zeta}{N}$ with $\crit(j)=\delta$ and $j(\delta)>\eta$. Since $\lambda$ is rank-Berkeley in $j(V_\delta)$ and $V_\eta\in N$, there exists an elementary embedding $\map{i}{V_\eta}{V_\eta}$ with critical point below $\lambda$ and first non-trivial fixed point $\lambda$.  
\end{proof}

Next, we consider a global strengthening of Reinhardtness:

\begin{definition}[$\ZF_2$, {\cite{BagKoelWoo}}]
 Given a proper class $A$, a cardinal $\kappa$ is  \emph{$A$-super Reinhardt} if for every $\lambda>\kappa$, there is a Reinhardt embedding $\map{j}{V}{V}$ satisfying $\crit(j)=\kappa$, $j(\kappa)>\lambda$ and $j^+(A)=A$, where $$j^+(A) ~ = ~ \bigcup_{\alpha\in\ord} j(A\cap V_\alpha).$$ 
A cardinal is  \emph{super Reinhardt} if it is $V$-super Reinhardt. 
\end{definition}

\begin{definition}[$\ZF$, {\cite{BagKoelWoo}}]
 A cardinal $\kappa$ is  \emph{totally Reinhardt} if $$\langle V_\kappa,V_{\kappa+1},\in\rangle\models \ZF_2  +  \textit{``There exists an $A$-super Reinhardt cardinal"}$$ holds for every  $A\in V_{\kappa+1}\setminus A$. 
\end{definition}

\begin{lemma}[$\ZF_2$]\label{lemma:SuperReinhardtExacting}
    If $\delta$ is a super Reinhardt cardinal, then $\delta$ is a limit of  rank-Berkeley cardinals $\zeta<\delta$ that are limits of rank-Berkeley cardinals $\lambda$ with $V_\lambda\prec V_\delta$. 
\end{lemma}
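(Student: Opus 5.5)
Fix a super Reinhardt cardinal $\delta$ and an ordinal $\alpha<\delta$. The plan is to show that there is an ordinal $\zeta$ with $\alpha<\zeta<\delta$ such that $\zeta$ is rank-Berkeley and is a limit of rank-Berkeley cardinals $\lambda$ with $V_\lambda\prec V_\delta$. Everything is built from one Reinhardt embedding, together with the observation already noted in the paper that the first non-trivial fixed point of a Reinhardt embedding is rank-Berkeley.

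Choose a Reinhardt embedding $\map{j}{V}{V}$ with $\crit(j)=\delta$ and $j(\delta)>\delta$. Let $\mu$ denote the first non-trivial fixed point of $j$, i.e. $\mu=\sup_n j^n(\delta)$. By the remark above, $\mu$ is rank-Berkeley, but $\mu>\delta$, so it is not yet the cardinal we want. The intended use of $\mu$ is as follows.

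\begin{enumerate}
\item First I verify that $\mu$ is a limit of rank-Berkeley cardinals $\lambda$ with $V_\lambda\prec V_\delta$.
\item Then I reflect the resulting configuration below $\delta$ by elementarity of $j$.
\end{enumerate}

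For the first step, I use that super Reinhardtness supplies, for each $\lambda'>\delta$, a Reinhardt embedding $k$ with critical point $\delta$ and $k(\delta)>\lambda'$. Let $\lambda_k$ be the first non-trivial fixed point of such a $k$; it is rank-Berkeley. I would like $V_{\lambda_k}\prec V_\delta$, but this fails literally, since $\lambda_k>\delta$. So the correct target is rank-Berkeley cardinals $\lambda<\delta$ with $V_\lambda\prec V_\delta$. I obtain these by a second reflection. Since $V_\delta\prec V_{j(\delta)}$ and, more generally, $V_{j^n(\delta)}\prec V_{j^{n+1}(\delta)}$, the elementary chain gives $V_\delta\prec V_\mu$.

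The key claim is that $\mu$ itself satisfies, in $V$, the statement
\[
\text{``}\mu \text{ is rank-Berkeley and a limit of rank-Berkeley } \lambda \text{ with } V_\lambda\prec V_\mu\text{''}.
\]
The plan for this claim is:
\begin{itemize}
\item Consider the ordinals $\lambda$ that are first non-trivial fixed points of Reinhardt embeddings with critical point above a given $\beta<\mu$.
\item Use the embeddings $j_+(j)$ and other iterates, whose critical sequences are shifted, to see that such fixed points accumulate in a controlled way.
\end{itemize}

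I expect this accumulation to be the main obstacle. Fixed points of different embeddings need not be elementary in each other, so some organising device is needed. My first attempt will be to define
\[
C=\{\gamma : V_\gamma\prec V_\mu\},
\]
which is club in $\mu$ because $\mu$ is a strong limit and $V_\mu$ is a set. I then argue that rank-Berkeleyness of $\mu$, applied with embeddings $\map{i}{V_\zeta}{V_\zeta}$ for $\zeta$ large and $\crit(i)>\beta$, produces $\lambda\in C$ with $\lambda>\beta$. For this, I take $\lambda=\sup_n i^n(\crit(i))$ computed for an $i$ that fixes $C$. The existence of such an $i$ is the sticking point. Here I would use super Reinhardtness relative to the class $A=C$ (or relative to $V$), and then use the facts that $i(\mu)=\mu$ and $C$ is definable from $\mu$ to see that $i$ maps $C$ to $C$.

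Granting the key claim, the second step reflects it below $\delta$. Since the property of $\mu$ holds in $V$ and $\mu=j(\mu)$, I argue as follows. Choose $\zeta$ beyond $\mu$, and consider the statement
\[
\text{``there is } \zeta'<j(\delta) \text{ above } \alpha \text{ with the property''},
\]
witnessed by $\mu$ once I replace the iterate suitably. More precisely, I apply the argument with $j_+(j)$ in place of $j$, or use an $\Ult$-style iterate, so that the fixed point lands below $j(\delta)$. Elementarity of $j$ then pulls this back to some $\zeta<\delta$ above $\alpha$. Rank-Berkeleyness is first-order in $V$, so it is preserved. The condition $V_\lambda\prec V_\zeta$ upgrades to $V_\lambda\prec V_\delta$ once I also arrange $V_\zeta\prec V_\delta$, by choosing $\zeta$ in the analogous club. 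Since $\alpha<\delta$ was arbitrary, this shows that $\delta$ is a limit of such $\zeta$, which is the statement of the lemma.
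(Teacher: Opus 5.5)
\textbf{There is a genuine gap.} Your outline has the same shape as the paper's: show that the first fixed point $\mu$ of $j$ is a limit of rank-Berkeley $\lambda$ with $V_\lambda\prec V_\mu$, then reflect this below $\delta$. But both steps lack the mechanism that makes them work, and the substitutes you propose fail.

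In the key claim, any $i$ witnessing the rank-Berkeleyness of $\mu$ has $\mu$ as its first non-trivial fixed point, so $\sup_n i^n(\crit(i))=\mu$. Making $i$ fix $C$ does not change this. Likewise, $j_+(j)$ and the other iterates of $j$ have critical points $j^n(\delta)$ but first fixed point $j(\mu)=\mu$. So nothing in your plan produces a rank-Berkeley cardinal strictly below $\mu$.

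In the reflection step, $j$ fixes $\mu$, so elementarity of $j$ cannot move the configuration at $\mu$ below $\delta$. Passing to $j_+(j)$ or an ultrapower iterate never gives a fixed point below $j(\delta)$, since all of these have first fixed point $\mu>j(\delta)$.

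The missing idea is to use super Reinhardtness a second time. Take $k\colon V\to V$ with $\crit(k)=\delta$ and $k(\delta)>\mu$. Also use $V_\delta\prec V$, which follows from super Reinhardtness. Then $V_\mu\prec V$ and $V_{k(\delta)}\prec V$, hence $V_\mu\prec V_{k(\delta)}$. The argument then runs in the opposite order from yours:
\begin{enumerate}
\item For each $\beta<\delta$, $\mu$ witnesses ``there is a rank-Berkeley $\lambda\in(\beta,k(\delta))$ with $V_\lambda\prec V_{k(\delta)}$''. Pulling this back along $k$ shows that $\delta$ itself is a limit of rank-Berkeley $\lambda$ with $V_\lambda\prec V_\delta$.
\item Applying $j$ transfers this to every $j^n(\delta)$. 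Since $V_{j^n(\delta)}\prec V_\mu$ and $\mu=\sup_n j^n(\delta)$, this gives your key claim.
\item All these $\lambda$ satisfy $V_\lambda\prec V_{k(\delta)}$. So for $\alpha<\delta$, $\mu$ witnesses ``there is a rank-Berkeley $\zeta\in(\alpha,k(\delta))$ that is a limit of rank-Berkeley $\lambda$ with $V_\lambda\prec V_{k(\delta)}$''.
\item Elementarity of $k$, not $j$, pulls this below $\delta$. The condition $V_\lambda\prec V_\delta$ comes out directly, with no auxiliary club needed.
\end{enumerate}
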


\begin{proof}
 Fix $\alpha<\delta$ and pick an elementary embedding  $\map{j}{V}{V}$ with critical point $\delta$. Let $\eta$ denote the first non-trivial fixed point  of $j$. Since $V_\delta\prec V$, it follows that $\eta$ is a rank-Berkeley cardinal with $V_\eta\prec V$. Then there exists an elementary embedding $\map{i}{V}{V}$ with $\crit(i)=\delta$ and $i(\delta)>\eta$. We now have $V_\eta\prec V_{i(\delta)}$. It follows that $\delta$ is a limit of rank-Berkeley cardinals $\lambda$ with $V_\lambda\prec V_\delta$. The elementarity of $j$ then ensures that $\eta$ is a limit of rank-Berkeley cardinals $\lambda$ with $V_\lambda\prec V_\eta$. Since $V_\eta\prec V_{i(\delta)}$, we can now use the elementarity of $i$ to find a rank-Berkeley cardinal $\alpha<\zeta<\delta$ that is a limit of rank-Berkeley cardinals $\lambda$ with $V_\lambda\prec V_\delta$.  
\end{proof}



\subsection{Magidor products of Prikry forcings}

In the following, we review arguments due to Magidor showing that a product of Prikry forcings on a sufficiently sparse set of measurable cardinals can be constructed so as to ensure that the resulting partial order possesses the Prikry property. This construction will later allow us to prove Theorem \ref{thm: a proper class
of exactings}.

Let us fix a discrete\footnote{In the sense that $D\cap\delta$ is a bounded
subset of $\delta$ for every cardinal $\delta$ in $D$.} set $D$ of measurable
cardinals and a sequence $\seq{\calU_\delta}{\delta\in D}$ with the property
that $\calU_\delta$ is a normal ultrafilter on $\delta$ for all $\delta\in D$.
Given $\delta\in D$, we let $\PPP_{\calU_\delta}$ denote the Prikry forcing using $\calU_\delta$.

\begin{definition}\label{def:Magidorproduct2}\hfill
 \begin{enumerate}
     \item Given $E\subseteq D$, a condition in the \emph{Magidor product}
     $$\PPP_E ~ = ~ \prod^{\mathrm{Mag}}_{\delta\in E}\PPP_{\calU_\delta}$$
     of the sequence $\seq{\PPP_{\calU_\delta}}{\delta\in E}$ is a sequence
     $\seq{p_\delta}{\delta\in E}$ with the property that $p_\delta$ is a
     condition in $\PPP_{\calU_\delta}$ for all $\delta\in E$ and the stem of
     $p_\delta$ is empty for all but finitely-many $\delta\in E$.

     \item Given conditions $\vec{p}=\seq{p_\delta}{\delta\in E}$ and
     $\vec{q}=\seq{q_\delta}{\delta\in E}$ in $\PPP_E$, we define
     $\vec{p}\leq_{\PPP_E}\vec{q}$ to hold if
     $p_\delta\leq_{\PPP_{\calU_\delta}}q_\delta$ holds for all $\delta\in E$ and
     $p_\delta\leq_{\PPP_{\calU_\delta}}^*q_\delta$ holds for all but finitely-many
     $\delta\in E$.
     In addition, we define $\vec{p}\leq_{\PPP_E}^*\vec{q}$ to hold if
     $p_\delta\leq_{\PPP_{\calU_\delta}}^*q_\delta$ holds for all $\delta\in E$.
 \end{enumerate}
\end{definition}

In particular, given $\delta\in D$, the above definition provides the partial
orders $\PPP_{D\cap\delta}$, $\PPP_{D\setminus\delta}$ and
$\PPP_{D\setminus(\delta+1)}$, as well as $\PPP_D$ itself.
Similar arguments to those utilized by Magidor in \cite{HowlargeMagidor} show that
$\langle\PPP_D,\leq_{\PPP_D},\leq_{\PPP_D}^*\rangle$ is of Prikry-type.

\begin{lemma}\label{MagProdProps}
The following statements hold for all $\delta\in D$:
\begin{enumerate}
  \item\label{MagProdProp21} The partial order $\PPP_D$ is isomorphic to
  $$
    \PPP_{D\cap\delta}\times\PPP_\delta\times\PPP_{D\setminus(\delta+1)},
  $$
  and $\PPP_{D\setminus\delta}$ is isomorphic to
  $\PPP_\delta\times\PPP_{D\setminus(\delta+1)}$. Both isomorphisms also
  preserve the direct extension orders.
 
  \item\label{MagProdProp22} In every $\PPP_{D\cap\delta}$-generic extension
  $V[G]$ of the ground model $V$, the partial order $\PPP_{D\setminus\delta}$
  is weakly homogeneous, $\langle\PPP_{D\setminus\delta},
  \leq_{\PPP_{D\setminus\delta}},\leq_{\PPP_{D\setminus\delta}}^*\rangle$ is
  of Prikry-type and $\leq_{\PPP_{D\setminus\delta}}^*$ is
  ${<}\delta$-directed-closed.
 
  \item\label{MagProdProp23} In every $\PPP_{D\cap\delta}$-generic extension
  $V[G]$ of the ground model $V$, forcing with $\PPP_{D\setminus\delta}$ adds
  no new subsets of ordinals less than $\delta$. In particular, if $\xi$ is an
  ordinal with $|V_\xi|<\delta$, then $V_\xi$ is computed the same way in
  $V[G]$ and in every $\PPP_{D\setminus\delta}$-generic extension of $V[G]$.
\end{enumerate}
\end{lemma}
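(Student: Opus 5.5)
The plan is to prove the three items in order, each feeding the next. Throughout we use the notation of Definition~\ref{def:Magidorproduct2}. We also use the standard facts about a single Prikry forcing $\PPP_{\calU_\delta}$: the Prikry property, the ${<}\delta$-closure of $\leq^*_{\PPP_{\calU_\delta}}$ (coming from the $\delta$-completeness of $\calU_\delta$), and weak homogeneity (any two stems of equal length can be swapped by an automorphism).

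\eqref{MagProdProp21} is a bookkeeping step. Send $\vec p$ to the triple $\langle\vec p\restriction(D\cap\delta),\,p_\delta,\,\vec p\restriction(D\setminus(\delta+1))\rangle$. The condition ``empty stem for all but finitely many coordinates'' and the clause ``$\leq^*$ on all but finitely many coordinates'' both split across a finite partition of $D$. Hence this map is an order isomorphism onto the product, and it also respects the direct extension orders. The second isomorphism is the same computation applied to $D\setminus\delta$.

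\eqref{MagProdProp22} is the heart of the argument, and the Prikry property is the expected main obstacle. Work in $V[G]$ with $G$ generic for $\PPP_{D\cap\delta}$. Since $D$ is discrete, $D\cap\delta$ is bounded in $\delta$, and $\PPP_{D\cap\delta}$ has size below the next element of $D$ above $\sup(D\cap\delta)$; more precisely, every $\varepsilon\in D\setminus\delta$ exceeds $|\PPP_{D\cap\delta}|$. By Levy--Solovay, each $\calU_\varepsilon$ with $\varepsilon\in D\setminus\delta$ therefore still generates a normal ultrafilter in $V[G]$. It follows that $\PPP_{D\setminus\delta}$ is dense in the Magidor product computed in $V[G]$ from the generated measures, so we may argue inside $V[G]$ as if $D\cap\delta=\emptyset$.

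Directed closure of $\leq^*$ is then immediate coordinatewise. Given a directed family of fewer than $\delta$ conditions, all of their stems agree, so we intersect the measure-one sets in each coordinate using the completeness of $\calU_\varepsilon$ for $\varepsilon\geq\delta$. Weak homogeneity comes from taking products of the coordinatewise automorphisms. Only finitely many coordinates have nonempty stems, so such a product of finitely many stem swaps moves a condition to one compatible with any given condition.

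For the Prikry property, fix a statement $\sigma$ and a condition $\vec p$, and follow Magidor's argument by induction on the finitely many coordinates with nonempty stems, processing $D\setminus\delta$ from the top down. The key sublemma is as follows: for $\varepsilon\in D$, view $\PPP_{D\setminus\delta}$ as $\PPP_{D\cap[\delta,\varepsilon)}\times\PPP_\varepsilon\times\PPP_{D\setminus(\varepsilon+1)}$. The lower part has size less than $\varepsilon$, and the upper part has $\varepsilon^+$-closed direct extension order. So we first take a direct extension in the upper part that decides, for every lower condition and every stem at $\varepsilon$, whether some further direct extension forces $\sigma$ or $\neg\sigma$. The fusion here has length less than the closure of the upper part, which makes this possible. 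Then we use the Rowbottom-type homogeneity of $\calU_\varepsilon$ to shrink the $\varepsilon$-coordinate so that the decision depends only on the length of the added stem. We then apply a ${<}\omega$-length induction, together with a diagonal intersection across the $\varepsilon$'s (possible because of normality and discreteness), to obtain a single direct extension that decides $\sigma$. I expect the delicate point to be organizing this into one global direct extension rather than finitely many local ones. I would handle it by first proving that $\leq^*$-dense open sets meet below any condition after finitely many stem extensions (a ``strong Prikry property''), and then deriving the decision statement from that.

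\eqref{MagProdProp23} then follows in the standard way. Given a name for a subset of an ordinal $\mu<\delta$, use the Prikry property and the ${<}\delta$-closure of $\leq^*$ to build a $\leq^*$-decreasing sequence of length $\mu$ deciding each statement ``$\check\alpha\in\dot x$''. A lower bound of this sequence then forces $\dot x$ to lie in $V[G]$. Finally, if $|V_\xi|<\delta$, then $V_\xi$ is coded by a subset of some ordinal below $\delta$, so by induction on $\xi$ it is computed the same way in both models.
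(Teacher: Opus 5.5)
Your items (1) and (3) follow the paper. The gap is in (2), at the sentence ``It follows that $\PPP_{D\setminus\delta}$ is dense in the Magidor product computed in $V[G]$.'' This is the one step of (2) that needs real work, and the sentence glosses over it in two ways.

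First, plain density is too weak. To transfer the Prikry property and the ${<}\delta$-directed closure of $\leq^*$ from the $V[G]$-product $\PPP^G_{D\setminus\delta}$ (built from the generated measures $\bar{\calU}_\eta$) back to $\PPP_{D\setminus\delta}$, you need $\leq^*$-density: every condition of $\PPP^G_{D\setminus\delta}$ must have a $\leq^*$-stronger condition in $\PPP_{D\setminus\delta}$. Otherwise the decider you obtain need not be a direct extension of the condition you started with. Your closure argument depends on this too. Intersecting coordinatewise inside $V[G]$ yields a sequence in $V[G]$ with entries in $\bar{\calU}_\eta$, which is a lower bound in $\PPP^G_{D\setminus\delta}$, not in $\PPP_{D\setminus\delta}$.

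Second, $\leq^*$-density does not follow from Levy--Solovay applied coordinatewise. A condition $\langle\langle s_\eta,A_\eta\rangle\mid\eta\in D\setminus\delta\rangle$ of $\PPP^G_{D\setminus\delta}$ is a sequence in $V[G]$ indexed by an infinite set (in the application, one that is unbounded in $\kappa$). By the definition of $\bar{\calU}_\eta$, for each $\eta$ separately there is some $B_\eta\in\calU_\eta$ with $B_\eta\subseteq A_\eta$. But a sequence of such choices is an element of $V[G]$, and you need one in $V$.

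The missing idea is a ground-model covering of this choice sequence, which the paper supplies as follows.
\begin{itemize}
\item Let $\sigma=\sup(D\cap\delta)<\delta$. Conditions of $\PPP_{D\cap\delta}$ with equal stems are compatible, and there are at most $\sigma$ stem sequences, so $\PPP_{D\cap\delta}$ has the $\sigma^+$-chain condition.
\item Fix a name $\dot{B}$ for $\langle B_\eta\mid\eta\in D\setminus\delta\rangle$. In $V$, let $\calC_\eta$ be the set of $C\in\calU_\eta$ such that some condition forces $\dot{B}(\check{\eta})=\check{C}$.
\item Then $|\calC_\eta|\leq\sigma<\eta$, so $\bigcap\calC_\eta\in\calU_\eta$ and $\bigcap\calC_\eta\subseteq B_\eta\subseteq A_\eta$.
\item The stems are finitely supported, so the stem sequence lies in $V$. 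Hence the condition with stems $s_\eta$ and measure one sets $\bigcap\calC_\eta$ (trimmed at the finitely many coordinates with nonempty stem) lies in $\PPP_{D\setminus\delta}$ and $\leq^*$-extends the given one.
\end{itemize}
Running the Levy--Solovay partition argument uniformly in $V$, from a name for the whole sequence, also works.

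Once $\leq^*$-density is in place, the paper simply cites Magidor for the Prikry property of $\PPP^G_{D\setminus\delta}$, and you may do the same. If you want to sketch it yourself, two parts of your outline do not work as written. A recursion ``from the top down'' over $D\setminus\delta$ is unavailable when that set has no maximum, as in the application. A ``diagonal intersection across the $\varepsilon$'s'' is not meaningful for measures on different cardinals. The workable organization is an increasing fusion along $D\setminus\delta$ in which stage $\varepsilon$ shrinks only coordinates $\geq\varepsilon$, so each coordinate $\eta$ is shrunk fewer than $\eta$ times. This is followed by a finite top-down induction over the coordinates where a given deciding extension adds points.
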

 
\begin{proof}
\eqref{MagProdProp21} The natural decomposition map serves as an isomorphism
in both cases: splitting a condition according to the given partition of $D$
preserves both $\leq$ and $\leq^*$, and the requirement that only finitely
many coordinates carry a non-empty stem is equivalent to the conjunction of
the corresponding requirements for the factors.
 
\smallskip
 
\eqref{MagProdProp22} Set $\sigma=\sup(D\cap\delta)$, so that $\sigma<\delta$
because $D$ is discrete. Two conditions of $\PPP_{D\cap\delta}$ with the same
stems are compatible, and the number of possible sequences of stems is at most
$\sigma$. Consequently, $\PPP_{D\cap\delta}$ satisfies the $\sigma^+$-chain
condition in $V$. Moreover $|\PPP_{D\cap\delta}|\leq 2^\sigma<\delta$, because
$\delta$ is inaccessible. Let $G$ be $\PPP_{D\cap\delta}$-generic over $V$.
 
Since $\PPP_{\calU_\eta}$ is weakly homogeneous in $V$ for every $\eta\in D$, and weak
homogeneity is preserved by Magidor products, the partial order
$\PPP_{D\setminus\delta}$ is weakly homogeneous in $V$. As the isomorphisms
witnessing this are elements of $V$ and compatibility of conditions is
absolute, it follows that $\PPP_{D\setminus\delta}$ is also weakly homogeneous
in $V[G]$.
 
Next, since $|\PPP_{D\cap\delta}|<\delta\leq\eta$ for every
$\eta\in D\setminus\delta$, the Levy--Solovay theorem implies that
$$
  \bar{\calU}_\eta ~ = ~
  \Set{X\in\mathcal{P}^{V[G]}(\eta)}{\exists A\in\calU_\eta ~ A\subseteq X}
$$
is a normal ultrafilter on $\eta$ in $V[G]$. Working in $V[G]$, define
$$
  \PPP^G_{D\setminus\delta} ~ = ~
  \prod_{\eta\in D\setminus\delta}^{\mathrm{Mag}}\PPP_{\bar{\calU}_\eta}.
$$
Results of Magidor in \cite{HowlargeMagidor} then show that
$\langle\PPP^G_{D\setminus\delta},\leq_{\PPP^G_{D\setminus\delta}},
\leq^*_{\PPP^G_{D\setminus\delta}}\rangle$ is of Prikry-type in $V[G]$.
Moreover, the ordering $\leq^*_{\PPP^G_{D\setminus\delta}}$ is ${<}\delta$-directed-closed
in $V[G]$: the members of a directed family all have the same stems,
and, since each $\bar{\calU}_\eta$ is ${<}\eta$-complete in $V[G]$ and
$\delta\leq\eta$, fewer than $\delta$-many of the corresponding measure one
sets can be intersected coordinatewise.
 
Both properties are inherited by $\leq^*$-dense subposets. Indeed, assume that
$\PPP_{D\setminus\delta}$ is a $\leq^*$-dense subposet of
$\PPP^G_{D\setminus\delta}$, {i.e.,}\ that every condition of
$\PPP^G_{D\setminus\delta}$ has a $\leq^*$-stronger condition in
$\PPP_{D\setminus\delta}$. Since $\leq^*$ refines $\leq$, the partial order
$\PPP_{D\setminus\delta}$ is then also dense in $\PPP^G_{D\setminus\delta}$,
so both partial orders have the same generic extensions and the same forcing
relation. Given a statement $\varphi$ of the forcing language and a condition
$\vec{p}$ in $\PPP_{D\setminus\delta}$, the Prikry property of
$\PPP^G_{D\setminus\delta}$ provides $\vec{p}^{\,*}\leq^*\vec{p}$ in
$\PPP^G_{D\setminus\delta}$ deciding $\varphi$, and $\leq^*$-density yields
$\vec{q}\leq^*\vec{p}^{\,*}$ in $\PPP_{D\setminus\delta}$, which then also
decides $\varphi$. Similarly, a $\leq^*$-directed family of size less than
$\delta$ in $\PPP_{D\setminus\delta}$ has a $\leq^*$-lower bound in
$\PPP^G_{D\setminus\delta}$, and any condition of $\PPP_{D\setminus\delta}$
that is $\leq^*$-stronger than this bound is a $\leq^*$-lower bound of the
family in $\PPP_{D\setminus\delta}$.
 
It therefore suffices to show that $\PPP_{D\setminus\delta}$ is a
$\leq^*$-dense subposet of $\PPP^G_{D\setminus\delta}$. Let
$\vec{p}=\seq{p_\eta}{\eta\in D\setminus\delta}$ be a condition in
$\PPP^G_{D\setminus\delta}$ and, given $\eta\in D\setminus\delta$, let
$s_\eta$ denote the stem of $p_\eta$ and let $A_\eta\in\bar{\calU}_\eta$
denote the second component of $p_\eta$. By the definition of
$\bar{\calU}_\eta$, working in $V[G]$ we may pick a sequence
$\seq{B_\eta\in\calU_\eta}{\eta\in D\setminus\delta}$ with the property that
$B_\eta\subseteq A_\eta$ holds for all $\eta\in D\setminus\delta$.
 
Fix a $\PPP_{D\cap\delta}$-name $\dot{B}$ for the sequence
$\seq{B_\eta}{\eta\in D\setminus\delta}$ and, in $V$, set
$$
  \calC_\eta ~ = ~
  \Set{C\in\calU_\eta}{\exists p\in\PPP_{D\cap\delta} ~
  p\Vdash\dot{B}(\check{\eta})=\check{C}}.
$$
Conditions forcing distinct values of $\dot{B}(\check\eta)$ are incompatible,
so the $\sigma^+$-chain condition ensures that $|\calC_\eta|\leq\sigma$ holds
for all $\eta\in D\setminus\delta$; hence the sequence
$\seq{\calC_\eta\in[\calU_\eta]^{\leq\sigma}}{\eta\in D\setminus\delta}$
belongs to $V$ and $B_\eta\in\calC_\eta$ for all $\eta\in D\setminus\delta$.
 
By the definition of the Magidor product, the set
$\Set{\eta\in D\setminus\delta}{s_\eta\neq\emptyset}$ is finite, and each
$s_\eta$ is a finite sequence of ordinals; in particular, it follows that the
sequence $\seq{s_\eta}{\eta\in D\setminus\delta}$ is an element of $V$. We
then know that $V$ contains the sequence
$\vec{q}=\seq{q_\eta}{\eta\in D\setminus\delta}$ with the property that
$$
  q_\eta ~ = ~ \bigl\langle s_\eta,\textstyle\bigcap\calC_\eta\bigr\rangle
$$
holds for all $\eta\in D\setminus\delta$. Since $\calU_\eta$ is
${<}\eta$-complete in $V$ and $|\calC_\eta|\leq\sigma<\delta\leq\eta$, we have
$\bigcap\calC_\eta\in\calU_\eta$; after removing from $\bigcap\calC_\eta$ its
finitely many elements below $\max(s_\eta)$, which affects only the finitely
many coordinates with $s_\eta\neq\emptyset$, it follows that $\vec{q}$ is a
condition in $\PPP_{D\setminus\delta}$ in $V$ and hence also a condition in
$\PPP^G_{D\setminus\delta}$ in $V[G]$. Finally, $B_\eta\in\calC_\eta$ implies
$\bigcap\calC_\eta\subseteq B_\eta\subseteq A_\eta$ for all
$\eta\in D\setminus\delta$, and $\vec{q}$ and $\vec{p}$ have the same stems.
Therefore $\vec{q}$ is $\leq^*$-stronger than $\vec{p}$ in $V[G]$.
 
\smallskip
 
\eqref{MagProdProp23} Let $\rho<\delta$, let $\vec{p}$ be a condition in
$\PPP_{D\setminus\delta}$ and let $\dot{X}$ be a
$\PPP_{D\setminus\delta}$-name for a subset of $\rho$. Using
\eqref{MagProdProp22}, recursively construct a $\leq^*$-decreasing sequence
$\seq{\vec{p}_\alpha}{\alpha\leq\rho}$ of conditions below $\vec{p}$ such that
$\vec{p}_{\alpha+1}$ decides the statement ``$\check{\alpha}\in\dot{X}$'',
using the Prikry property at successor steps and the
${<}\delta$-directed-closure of $\leq^*$ at limit steps. Then $\vec{p}_\rho$
decides all statements ``$\check{\alpha}\in\dot{X}$'' with $\alpha<\rho$ and
therefore forces $\dot{X}$ to be equal to a set in $V[G]$. The second
assertion follows by induction on $\xi$, coding the elements of $V_\xi$ by
subsets of $|V_\xi|<\delta$.
\end{proof}

\begin{cor}\label{corollary:MagidorSupportPreserveInaccessible}
   Let $\nu=\sup(D)$ and assume that $\nu$ is an
   inaccessible cardinal. Then forcing with $\PPP_D$ preserves the
   inaccessibility of $\nu$.
\end{cor}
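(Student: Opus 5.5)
We need to show that $\nu=\sup(D)$ stays inaccessible after forcing with $\PPP_D$. The plan is to verify regularity and strong limitness separately, using the factorization of Lemma \ref{MagProdProps} at each $\delta\in D$. There are two cases. If $\nu\in D$, then $\nu$ is the maximum of $D$ and is measurable, and Prikry forcing at $\nu$ would make it singular. So I read the corollary as intended for the case $\nu\notin D$, i.e. $D\cap\nu=D$ is unbounded in $\nu$. This matches discreteness, since then $D\cap\nu$ is cofinal and $\nu$ is not in $D$. I will assume this case throughout.

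\emph{Strong limit.} Fix a cardinal $\mu<\nu$ and pick $\delta\in D$ with $2^\mu<\delta$; such $\delta$ exists since $D$ is cofinal in $\nu$ and $\nu$ is strong limit. By Lemma \ref{MagProdProps}\eqref{MagProdProp21}, we can write $\PPP_D\cong\PPP_{D\cap\delta}\times\PPP_{D\setminus\delta}$. As computed in the proof of \eqref{MagProdProp22}, $\PPP_{D\cap\delta}$ has size less than $\delta$. By \eqref{MagProdProp23}, in the extension by $\PPP_{D\cap\delta}$, forcing with $\PPP_{D\setminus\delta}$ adds no subsets of $\mu$. Hence every subset of $\mu$ in $V[G]$ already lies in the small-forcing extension $V[G\restriction(D\cap\delta)]$. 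There $2^\mu\le(|\PPP_{D\cap\delta}|^{\mu})^V<\delta<\nu$, using that $\delta$ is inaccessible. Thus $\nu$ remains a strong limit, and in particular no cardinals between $\delta$ and $\nu$ collapse in a way that affects this bound.

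\emph{Regularity.} This is the main step. Suppose some $\mu<\nu$ and a name $\dot f$ are given, with a condition forcing $\dot f\colon\mu\to\nu$ cofinal. Choose $\delta\in D$ with $\mu<\delta$ and factor as above. Work in $V[G_0]$, where $G_0$ is $\PPP_{D\cap\delta}$-generic. This is a small forcing, so $\nu$ is still inaccessible there. Over $V[G_0]$, the tail order $\PPP_{D\setminus\delta}$ is of Prikry type and its direct extension order is ${<}\delta$-directed closed, by \eqref{MagProdProp22}. The naive plan is to build a $\le^*$-decreasing sequence of length $\mu$ deciding the values $\dot f(\alpha)$, but the Prikry property only decides statements, not values in $\nu$. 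I will therefore use the standard refinement: for each $\alpha$, the Prikry property at the Magidor product gives a direct extension $\vec p_\alpha$ and a bound $\beta_\alpha<\nu$ such that $\vec p_\alpha\Vdash\dot f(\alpha)<\beta_\alpha$.

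To get this bound I need a finer fact, which I expect to be the obstacle. Below a direct extension, the possible values of $\dot f(\alpha)$ are determined by finitely many stem extensions at finitely many coordinates. Concretely, I split $\PPP_{D\setminus\delta}\cong\PPP_{D\cap[\delta,\eta)}\times\PPP_{D\setminus\eta}$, where $\eta\in D$ is large. Then $\PPP_{D\cap[\delta,\eta)}$ has size less than $\nu$, and the tail adds no subsets of $\eta$ and has ${<}\eta$-closed direct extensions. I first strengthen directly so that the tail part decides a value for each lower-part condition, and then use that the lower part has size less than $\nu$ to bound the decided values below $\nu$, using regularity of $\nu$ in $V[G_0]$.

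The delicate point is that $\eta$ must be chosen uniformly. To handle this I will instead apply the Prikry property to the statement ``$\dot f(\alpha)<\check\eta$'' for $\eta$ ranging over $D$. I will argue that some direct extension forces $\dot f(\alpha)<\eta$ for some $\eta\in D$, since otherwise the $\le^*$-closure lets me push every tail to force $\dot f(\alpha)\ge\eta$ for all $\eta$, a contradiction. Iterating over $\alpha<\mu<\delta$ with ${<}\delta$-closure yields a single condition forcing $\ran{\dot f}\subseteq\sup_\alpha\beta_\alpha<\nu$, using regularity of $\nu$ in $V[G_0]$. This contradicts the assumption that $\dot f$ is cofinal.
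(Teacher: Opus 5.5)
Your strong-limit argument is the paper's own: factor $\PPP_D$ at some $\delta\in D$ above $\mu$, use Lemma~\ref{MagProdProps}\eqref{MagProdProp23} to put every subset of $\mu$ into the extension by the small forcing $\PPP_{D\cap\delta}$, and count. You are also right that the statement needs $\nu\notin D$; the paper's proof tacitly uses that $D$ is cofinal in $\nu$. For regularity the paper only says it ``follows similarly'', so there is no detailed argument to match. Your attempt to supply one has a genuine gap at the step you flagged.

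The first problem is that direct extensions decide statements, not ordinal values. So you cannot ``strengthen directly so that the tail part decides a value for each lower-part condition''. For example, if $\dot f(\alpha)$ is the first Prikry point at a coordinate $\eta'\ge\eta$ where the stem is empty, no direct extension decides it, even though it is bounded.

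The second problem is that your fallback is invalid. The Prikry property gives, for each $\eta\in D$, some $\vec q_\eta\le^*\vec p$ deciding ``$\dot f(\alpha)<\check\eta$''. If all of them decide negatively, you need one condition below $\nu$-many such $\vec q_\eta$, but $\le^*$ is only ${<}\delta$-closed. A fusion in which requirement $\eta$ only shrinks coordinates $\ge\eta$ would have a lower bound. However, deciding ``$\dot f(\alpha)<\check\eta$'' by a direct extension generally requires shrinking measure one sets at coordinates below $\eta$ as well, so ``pushing tails'' does not achieve it.

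What is missing is a bounding lemma: for every name $\dot\beta$ for an ordinal below $\nu$ and every $\vec p$, there are $\vec q\le^*\vec p$ and $\gamma<\nu$ with $\vec q\Vdash\dot\beta<\check\gamma$. One way to prove it takes three steps.

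(1) Run a fusion along $D$ in which stage $\eta$ only shrinks coordinates $\ge\eta$. The limit exists because coordinate $\eta'$ is shrunk at fewer than $\eta'$ stages and its measure is $\eta'$-complete. Since $|\PPP_{D\cap\eta}|<\eta$, stage $\eta$ can treat all lower parts. The result is $\vec q^0\le^*\vec p$ such that whenever some $r\in\PPP_{D\cap\eta}$ together with some direct extension of $\vec q^0\restriction[\eta,\nu)$ forces a bound on $\dot\beta$, then $r$ together with $\vec q^0\restriction[\eta,\nu)$ already does.

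(2) At each coordinate $\eta_1\in D$, shrink only the measure one set at $\eta_1$. Use normality (a diagonal intersection) and Rowbottom's theorem, for each of the fewer than $\eta_1$ lower parts and each $n<\omega$, so that ``forces a bound'' is homogeneous over all $n$-point stem extensions at $\eta_1$. Call the result $\vec q^1$.

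(3) Suppose $\vec s\le\vec q^1$ forces a bound, and let $\eta_1$ be the largest coordinate where its stem properly extends that of $\vec q^1$. By (1) and (2), every $n$-point extension at $\eta_1$ with lower part $\vec s\restriction\eta_1$ forces a bound. There are at most $\eta_1$ many such bounds, and these extensions are predense below the condition that leaves the stem at $\eta_1$ unextended. By regularity of $\nu$, that condition also forces a bound. Induction on the number of extended stems then yields a direct extension of $\vec q^1$ forcing a bound.

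Apply this lemma in $V[G_0]$, where by the proof of Lemma~\ref{MagProdProps}\eqref{MagProdProp22} the tail is $\le^*$-dense in the Magidor product computed there. With it, your final ${<}\delta$-closure iteration over $\alpha<\mu$ goes through.
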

 
\begin{proof}
  Let $G$ be $\PPP_D$-generic over $V$ and let $\rho<\nu$. Fix $\delta\in D$
  with $\rho<\delta$ and let $G_{{<}\delta}\times G_{{\geq}\delta}$ denote the
  filter on $\PPP_{D\cap\delta}\times\PPP_{D\setminus\delta}$ induced by $G$
  through Lemma~\ref{MagProdProps}.\eqref{MagProdProp21}. By
  Lemma~\ref{MagProdProps}.\eqref{MagProdProp23}, we have
  $\mathcal{P}(\rho)^{V[G]}=\mathcal{P}(\rho)^{V[G_{{<}\delta}]}$ and, since
  $|\PPP_{D\cap\delta}|<\delta<\nu$ and $\nu$ is inaccessible in $V$, this set
  has size less than $\nu$. Hence $\nu$ is a strong limit cardinal in $V[G]$.

  The regularity of $\nu$ in $V[G]$ follows similarly.
\end{proof}


\subsection{Supercompact Prikry forcing}\label{sec: Supercompact Prikry}
In this section, we review the classical \emph{Supercompact Prikry forcing}  with an eye toward proving   Theorem~\ref{theorem:ExactingInHOD}. 

Suppose that $\kappa<\lambda$ are cardinals with $\kappa$  supercompact  and $\lambda$ inaccessible. Fix   a normal, fine, ${<}\kappa$-complete ultrafilter $\mathcal{U}$ on $\mathcal{P}_\kappa(\lambda)$. Given $s,t\in \mathcal{P}_\kappa(\lambda)$, we write $s\prec t$ whenever $s\s t$ and $\otp(s)<\otp(t\cap \kappa)$.  A condition in the \emph{Supercompact Prikry forcing with respect to $\mathcal{U}$}, $\mathbb{P}_{\mathcal{U}}$, is a pair $\langle s, A\rangle$ such that $s=\langle s_0,\dots, s_{n-1}\rangle$ is a $\prec$-increasing sequence of members of $\mathcal{P}_\kappa(\lambda)$, $A\in \mathcal{U}$ and $s_{n-1}\prec x$ for all $x\in A$. Given conditions $\langle s,A\rangle$ and $\langle t, B\rangle$ in $\PPP_\calU$, we write $\langle s, A\rangle\leq \langle t, B\rangle$ whenever $t\sqsubseteq s$, $A\s B$ and $t\setminus s\s B$, and $\langle s, A\rangle\leq^*\langle t,B\rangle$ whenever  $\langle s, A\rangle\leq\langle t,B\rangle$ and $s=t$. 
If $G$ is a $\mathbb{P}_{\mathcal{U}}$-generic filter, then   the sequence $\vec{x}_G=\langle x_n^G\mid n<\omega\rangle$ given by $x_n^G=s^p_n$ for some (equivalently, every) condition $p\in G$ is $\prec$-increasing and it satisfies that the filter 
$$G(\vec{x}_G) ~ = ~ \Set{\langle s, A\rangle}{s\sq \vec{x}_G, ~ \vec{x}_G\setminus s\s A}$$
is equal to $G$. In particular, we have $V[G]=V[\vec{x}_G]$ in this case.

The following lemma list the basic properties of $\PPP_\calU$. The proofs of these statements can be found in {\cite[\S1]{Gitik-handbook}}. 
\begin{lemma}
    \begin{enumerate}
        \item $\langle \mathbb{P}_{\mathcal{U}},\leq,\leq^*\rangle$ is a Prikry-type forcing. 
        
        \item $\mathbb{P}_{\mathcal{U}}$ preserves cardinals $\leq \kappa$.
        \item $\mathbb{P}_{\mathcal{U}}$ forces $``\cof{\kappa}=\omega$".
        \item Forcing with $\mathbb{P}_{\mathcal{U}}$ collapses all cardinals $\theta\in (\kappa,\mu]$, where $\mu=\lambda$ whenever $\cof{\lambda}\geq \kappa$ and $\mu=\lambda^+$ whenever $\cof{\lambda}<\kappa$. \qed
    \end{enumerate}
\end{lemma}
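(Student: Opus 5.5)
The plan is to derive all four clauses from two features of $\PPP_\calU$: a Prikry property for the direct-extension order $\leq^*$, and the ${<}\kappa$-closure of $\leq^*$. With these in hand, the cofinality and collapsing clauses follow from density arguments that use normality and fineness of $\calU$. This follows the treatment in {\cite[\S1]{Gitik-handbook}}.

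\textbf{Clause (1).} First I will note that $\leq^*$ is ${<}\kappa$-closed. Conditions that are $\leq^*$-related share the same stem, and $\calU$ is ${<}\kappa$-complete, so fewer than $\kappa$ measure-one sets can be intersected. For the Prikry property I will first prove a Rowbottom-type partition lemma: for every $n<\omega$ and every colouring $c$ of the $\prec$-increasing $n$-tuples from $\mathcal{P}_\kappa(\lambda)$ into two colours, there is $A\in\calU$ on which $c$ is constant. The proof is by induction on $n$, combining normality of $\calU$ with diagonal intersections $\triangle_{x}A_x=\Set{y}{\forall x\prec y\ (y\in A_x)}$, which lie in $\calU$ by normality. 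Now fix a condition $\langle s,A\rangle$ and a sentence $\varphi$. For each $n$, colour an $n$-tuple $t$ according to whether some $\langle s^\frown t,B\rangle$ forces $\varphi$, forces $\neg\varphi$, or neither. Shrinking $A$ to a diagonal intersection of homogeneous sets gives $\langle s,A^*\rangle$. If some extension of it decides $\varphi$, homogeneity transfers that decision down to $\langle s,A^*\rangle$ itself.

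\textbf{Clause (2).} This follows from (1). Let $\rho<\kappa$ and let $\dot X$ name a subset of $\rho$. Build a $\leq^*$-decreasing sequence of length $\rho$, deciding $\alpha\in\dot X$ at successor steps via the Prikry property and using closure at limit steps. The result is that no new bounded subsets of $\kappa$ are added, so every cardinal below $\kappa$ is preserved. Since $\kappa$ is a limit of preserved cardinals, $\kappa$ remains a cardinal as well.

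\textbf{Clause (3).} Normality and fineness give that $x\cap\kappa$ is an inaccessible ordinal below $\kappa$ for $\calU$-almost all $x$. Hence $\langle x^G_n\cap\kappa\mid n<\omega\rangle$ is strictly increasing, because $\prec$ compares $\otp$ with $\otp(t\cap\kappa)$. A density argument, namely removing from the measure-one set those $x$ with $x\cap\kappa\leq\beta$, shows that this sequence is cofinal in $\kappa$.

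\textbf{Clause (4).} By fineness and density, every $\alpha<\lambda$ lies in some $x^G_n$, so $\lambda=\bigcup_n x^G_n$. This is a countable union of sets of size ${<}\kappa$, so $|\lambda|\leq\kappa$ in $V[G]$, and every cardinal in $(\kappa,\lambda]$ is collapsed.

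For the case $\cof{\lambda}<\kappa$ the goal is to collapse $\lambda^+$ as well, and I expect this to be the main obstacle. In $V$ fix, for each $\alpha<\lambda^+$, a surjection $e_\alpha\colon\lambda\to\alpha$. I will try to show by a density argument that every $\alpha<\lambda^+$ is determined by countably many traces $x^G_n$ together with the ordinals $\otp(x^G_n\cap\alpha)$, computed after passing through the codings $e_\alpha$. The key step is an ultrapower computation. Because $\cof{\lambda}<\kappa$, we have $\sup j``\lambda<j(\lambda)$ and $j``\lambda^+$ is bounded in $j(\lambda^+)$. This should let us map $\lambda^+$ injectively into the countable product $\prod_n\otp(x^G_n)$. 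The target has size at most $\kappa^{\omega}=\kappa$ in $V[G]$, because $\kappa$ is singular of countable cofinality there and strong limit. I would first try to extract this injection directly from the function $x\mapsto\otp(x\cap\lambda^+)$ on $\mathcal{P}_\kappa(\lambda^+)$, projected to $\calU$. If that fails, I would fall back on the argument in Gitik's handbook.
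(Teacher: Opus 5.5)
Your arguments for clauses (1)--(3), and for the collapse of the cardinals in $(\kappa,\lambda]$ in clause (4), are the standard ones from Gitik's handbook. The paper itself gives no proof here; it only cites that handbook. The genuine gap is the case $\cof{\lambda}<\kappa$ of clause (4). Under the paper's standing assumption that $\lambda$ is inaccessible this case is vacuous, but it is part of the statement.

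Your sketch for this case rests on two false claims.
\begin{itemize}
\item $j$ is continuous at ordinals of cofinality below $\crit(j)=\kappa$. So $\cof{\lambda}<\kappa$ gives $\sup j[\lambda]=j(\lambda)$, not $\sup j[\lambda]<j(\lambda)$.
\item In $V[G]$ the cardinal $\kappa$ is a strong limit of countable cofinality, so K\"onig's theorem gives $\kappa^{\aleph_0}>\kappa$; in fact $\kappa^{\aleph_0}=2^\kappa$. Since $\otp(x^G_n)\geq x^G_n\cap\kappa$ and these ordinals are cofinal in $\kappa$, the product $\prod_n\otp(x^G_n)$ has size $2^\kappa$ in $V[G]$. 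An injection of $\lambda^+$ into it would therefore say nothing about $\lambda^+$ being collapsed.
\end{itemize}
The proposed map $x\mapsto\otp(x\cap\lambda^+)$ does not help either. $\calU$ concentrates on subsets of $\lambda$, so the generic sequence leaves no traces on $[\lambda,\lambda^+)$. Falling back on the handbook is not an argument.

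The missing idea is elementary and needs no ultrapower analysis.
\begin{itemize}
\item Let $a\in([\lambda]^{<\kappa})^V$. By fineness and ${<}\kappa$-completeness, $\Set{x\in\mathcal{P}_\kappa(\lambda)}{a\subseteq x}\in\calU$. A density argument then shows $a\subseteq x^G_n$ for all sufficiently large $n$.
\item Hence $([\lambda]^{<\kappa})^V\subseteq\bigcup_{n<\omega}\mathcal{P}(x^G_n)^V$. This is a countable union of sets, each of size less than $\kappa$ because $\kappa$ is inaccessible in $V$.
\item So the ground-model cardinal $\lambda^{<\kappa}$ has size at most $\kappa$ in $V[G]$.
\item If $\cof{\lambda}<\kappa$, K\"onig's theorem in $V$ gives $\lambda^{<\kappa}\geq\lambda^{\cof{\lambda}}>\lambda$. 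So $\lambda^+\leq\lambda^{<\kappa}$ is collapsed to $\kappa$ as well.
\end{itemize}
The same argument handles $(\kappa,\lambda]$ uniformly. Combined with the $(\lambda^{<\kappa})^+$-chain condition (conditions with the same stem are compatible), it shows that exactly the $V$-cardinals in $(\kappa,\lambda^{<\kappa}]$ are collapsed.
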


 For each regular cardinal $\alpha\in [\kappa,\lambda)$, let $\map{\pi_{\alpha}}{\mathcal{P}_\kappa(\lambda)}{\mathcal{P}_\kappa(\alpha)}$ be the projection map given by $x\mapsto x\cap \alpha$ and $\mathcal{U}_\alpha$  the Rudin--Keisler image of $\mathcal{U}$ via $\pi_\alpha$.\footnote{That is, $\mathcal{U}_\alpha =\{X\s \mathcal{P}_\kappa(\alpha)\mid (\pi_\alpha)^{-1}[X]\in \mathcal{U}\}.$} Suppose that $\vec{x}$ is $\mathbb{P}_{\mathcal{U}}$-generic. Then,  the \emph{Mathias criterion of genericity} for $\mathbb{P}_{\mathcal{U}}$ (see \cite[\S1]{Gitik-handbook}) ensures that the sequence $$\vec{x}_\alpha ~ = ~ \seq{x_n\cap \alpha}{n<\omega}$$  is generic for $\mathbb{P}_{\mathcal{U}_\alpha}$.

\section{A proper class of exacting cardinals from an I2-cardinal}\label{section:ProperClassExacting}

In \cite{ABLG}, it is shown that Prikry forcing with the critical point of an
I2-embedding makes the cardinal exacting in the generic extension (see Corollary \ref{corollary:ExactingModels}.\eqref{item:ExactingModels2}). In this
section, we leverage this observation to prove Theorem~\ref{thm: a proper class
of exactings}. The basic idea is to replace Prikry forcing by a Magidor product
of Prikry forcings in the argument from \cite{ABLG} and then cut off the
universe at an appropriate cardinal.
Later, we will show that
this naïve approach is somewhat optimal, in the sense that it cannot be
upgraded to produce a model with an exacting cardinal that is a limit of
exactings.


\subsection{The consistency of many exacting cardinals}
In the following, we let $\map{j}{V}{M}$ denote an I2-embedding, {i.e.,} $j$ is
a non-trivial elementary embedding with $V_\lambda\subseteq M$, where $\lambda$
is the first non-trivial fixed point of $j$.
We set $\kappa=\crit(j)$, we let $\seq{\kappa_n}{n<\omega}$ denote the critical
sequence of $j$ and we let
$$\calU ~ = ~ \Set{A\subseteq\kappa}{\kappa\in j(A)}$$
denote the normal ultrafilter on $\kappa$ derived from $j$.


\begin{lemma}\label{lemma:BelowKappa}
  If $A$ denotes the set of all cardinals $\rho<\kappa$ with the property that
  there exists a normal ultrafilter $\calF$ on $\rho$ such that $\rho$ is an
  exacting cardinal in $V_\lambda[H]$ whenever $H$ is $\PPP_\calF$-generic
  over $V_\lambda$, then $A\in\calU$.
\end{lemma}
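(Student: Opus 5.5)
The plan is the usual reflection argument: $A\in\calU$ holds exactly when $\kappa\in j(A)$. So I will show that, in $M$, there is a normal ultrafilter $\calF$ on $\kappa$ such that $\kappa$ is exacting in $j(V_\lambda)[H]=V_\lambda^M[H]=V_\lambda[H]$ whenever $H$ is $\PPP_\calF$-generic over $V_\lambda$. Note that $j(V_\lambda)=V_\lambda$, since $j(\lambda)=\lambda$ and $V_\lambda\subseteq M$. I will take $\calF=\calU$, which belongs to $M$ because $\calU\subseteq\mathcal{P}(\kappa)\subseteq V_\lambda\subseteq M$. The property ``$\rho$ is exacting in $V_\lambda[H]$ for every $\PPP_\calF$-generic $H$ over $V_\lambda$'' only involves $V_\lambda$, $\calF$ and generics for a forcing in $V_\lambda$. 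Since $V_{\lambda+1}\subseteq M$ (an I2 embedding from an extender has $V_{\lambda+1}$-agreement by the facts quoted), this property is absolute between $M$ and $V$. It therefore suffices to show, in $V$, that $\PPP_\calU$ forces $\kappa$ to be exacting in $V_\lambda[H]$.

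For this I use Corollary~\ref{corollary:ExactingModels}. Item~\eqref{item:ExactingModels1} states that the critical sequence $\vec\kappa$ is $M_\omega$-generic for the Prikry forcing with $j_{0,\omega}(\calU)$, and that $\lambda=j_{0,\omega}(\kappa)$ is exacting in $M_\omega[\vec\kappa]$. I need a local version, namely that $\lambda$ is exacting in $V_\lambda^{M_\omega}[\vec\kappa]$ or at least in a rank segment of it, and then I pull this back along $j_{0,\omega}$. Here $V_\lambda\subseteq M_\omega$ is available, but I actually want the rank initial segment at $j_{0,\omega}(\lambda)$, not at $\lambda$. So I will argue directly with Theorem~\ref{theorem:ExactingInnerModels}. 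Let $N=M_\omega[\vec\kappa]\cap V_{\theta}$, where $\theta=j_{0,\omega}(\lambda)$, which is a fixed point of $j$ by Proposition~\ref{proposition:IterationFixedPoints}\eqref{prop:InvEmb2}. By Proposition~\ref{proposition:IterationFixedPoints}, $j$ restricts to an elementary embedding of $M_\omega$, and it sends $\vec\kappa$ to its tail. Hence $j$ maps $N=V_\theta^{M_\omega[\vec\kappa]}$ elementarily into itself, with $j(\lambda)=\lambda$ and $j\restriction\lambda\neq\id$. Applying Theorem~\ref{theorem:ExactingInnerModels} with $X_0=X_1=N$ (or Corollary~\ref{corollary:ExactingInnerModel} relativized) gives that $\lambda$ is exacting in $N=j_{0,\omega}(V_\lambda)[\vec\kappa]$. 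One must check that $N$ models $\ZF$; this holds because $\lambda$ is inaccessible in $V$, so $\theta$ is inaccessible in $M_\omega$ and the small Prikry extension preserves this.

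By elementarity of $j_{0,\omega}$ and the homogeneity of Prikry forcing, the statement ``$\PPP_{j_{0,\omega}(\calU)}$ forces $j_{0,\omega}(\kappa)$ to be exacting in $j_{0,\omega}(V_\lambda)[\dot H]$'' holds in $M_\omega$. Homogeneity lets me pass from the single generic $\vec\kappa$ to $\one$ forcing it. Pulling back along $j_{0,\omega}$ then gives the same statement in $V$ for $\PPP_\calU$, $\kappa$ and $V_\lambda$. That is exactly what is needed, and the reflection step finishes the proof.

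The main obstacle is the homogeneity step. A single generic only provides a condition forcing the statement. I would handle this via weak homogeneity of $\PPP_\calU$: the statement is definable from the check names $\check\kappa$ and $\check V_\lambda$, so it is decided by $\one$. A secondary check is the $\Sigma_2$-correctness needed in Theorem~\ref{theorem:ExactingInnerModels}, which is trivial when $X_0=X_1=N$.
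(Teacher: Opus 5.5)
Your outer frame matches the paper's: show $\kappa\in j(A)$ with witness $\calF=\calU$, using $j(V_\lambda)=V_\lambda$ and the fact that the forcing statement is computed inside $V_\lambda$. The middle step is different. The paper cites Corollary~\ref{corollary:ExactingModels}\eqref{item:ExactingModels2} to get $\kappa$ exacting in the full extension $V[H]$, and then cuts down to $V[H]_\lambda=V_\lambda[H]$ using Proposition~\ref{prop:ExactingRankSegments}. You cut down \emph{before} pulling back. You apply Theorem~\ref{theorem:ExactingInnerModels}, with $X_0=X_1=N$, to the set $N=V^{M_\omega}_\theta[\vec\kappa]$ for $\theta=j_{0,\omega}(\lambda)$. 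This is legitimate because $j(\theta)=\theta$ by Proposition~\ref{proposition:IterationFixedPoints}. You then finish with homogeneity and elementarity of $j_{0,\omega}$. In effect this is a local re-proof of item~\eqref{item:ExactingModels2}. It avoids the downward-absoluteness proposition at the cost of the fixed-point computation, and both routes ultimately rest on $V_\lambda\models\ZFC$.

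Two of your justifications are false and must be replaced.

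First, $V_{\lambda+1}\not\subseteq M$. The set $j[\lambda]\subseteq\lambda$ is an element of $V_{\lambda+1}$, and Kunen's argument shows $j[\lambda]\notin M$. The fact you recall says only that $j_E$ and $j$ \emph{agree} on $V_{\lambda+1}$. You do not need the false claim. Since $\PPP_\calU\in V_\lambda$, every subset of $\PPP_\calU$ lies in $V_\lambda$, so genericity over $V_\lambda$ is the same as genericity over $V$. Hence ``$\one\Vdash_{\PPP_\calU}$ $\kappa$ is exacting in $V_\lambda[\dot H]$'' is a first-order statement about $V_\lambda\in M$ with parameters in $V_\lambda$, and it is evaluated identically in $V$ and $M$. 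Also note that $\calU\in V_{\kappa+2}\subseteq M$; your chain of inclusions only yields $\calU\subseteq M$.

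Second, $\lambda$ is not inaccessible. It is the supremum of the critical sequence and has countable cofinality, which Theorem~\ref{theorem:ExactingInnerModels} in fact requires. So $\theta$ is not inaccessible in $M_\omega$. The correct reason that $N\models\ZFC$ is the one the paper gives:
\begin{itemize}
\item $V_{\kappa_0}\prec V_{\kappa_1}$, because $j\restriction V_{\kappa_0}=\id$ and $j(V_{\kappa_0})=V_{\kappa_1}$.
\item Applying $j$ repeatedly gives $V_{\kappa_n}\prec V_{\kappa_{n+1}}$ for all $n$, so $V_{\kappa_0}\prec V_\lambda$ and therefore $V_\lambda\models\ZFC$.
\item By elementarity, $V^{M_\omega}_\theta\models\ZFC$.
\item Since $\theta$ is a limit of inaccessibles in $M_\omega$ and the Prikry forcing lies in $V^{M_\omega}_\theta$, the set $N=V^{M_\omega[\vec\kappa]}_\theta=V^{M_\omega}_\theta[\vec\kappa]$ is a generic extension of a model of $\ZFC$, hence itself a model of $\ZFC$.
\end{itemize}
With these two repairs your argument goes through.
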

 
\begin{proof}
 Let $H$ be $\PPP_{\calU}$-generic over $V_\lambda$. Every dense subset of
 $\PPP_\calU$ in $V$ has rank less than $\lambda$ and is therefore an element
 of $V_\lambda$, so $H$ is also $\PPP_{\calU}$-generic over $V$ and
 Corollary \ref{corollary:ExactingModels}.\eqref{item:ExactingModels2}  shows that $\kappa$ is an exacting cardinal in
 $V[H]$. Since $\PPP_\calU$ is an element of $V_\lambda$, we have
 $V[H]_\lambda=V_\lambda[H]$, and this is a model of $\ZFC$ because
 $V_{\kappa_n}$ is an elementary submodel of $V_\lambda$ for all $n<\omega$.
 Proposition~\ref{prop:ExactingRankSegments} therefore shows that $\kappa$ is also
 an exacting cardinal in $V_\lambda[H]$.
 Since $j(\lambda)=\lambda$ and $V_\lambda\subseteq M$ imply
 $j(V_\lambda)=V_\lambda$, the set $j(A)$ is the set of all cardinals
 $\rho<\kappa_1$ for which there exists a normal ultrafilter $\calF$ on $\rho$
 in $M$ such that $\rho$ is an exacting cardinal in $V_\lambda[H]$ whenever
 $H$ is $\PPP_\calF$-generic over $V_\lambda$. As $\calU$ is an element of
 $V_\lambda\subseteq M$ and all of the above computations take place in
 $V_\lambda$ and in its generic extensions, and are therefore performed the
 same way in $V$ and in $M$, these observations show that $\kappa$ is an
 element of $j(A)$ and this means that $A$ is an element of $\calU$.
\end{proof}

Now, let $A$ denote the subset of $\kappa$ given by
Lemma~\ref{lemma:BelowKappa} and fix a sequence $\seq{\calU_\rho}{\rho\in A}$
with the property that, for all $\rho\in A$, the set $\calU_\rho$ is a normal
ultrafilter on $\rho$ such that forcing with $\PPP_{\calU_\rho}$ over
$V_\lambda$ turns $\rho$ into an exacting cardinal.
   Set $D=A\setminus\Lim(A)$. Since $A$ is an element of $\calU$, the set $A$ is unbounded
   in $\kappa$ and hence so is $D$. Then $D$ is a discrete set of measurable
   cardinals with $\sup D=\kappa$, and we let $\PPP_D$ denote the Magidor
   product of Prikry forcings given by the sequence
   $\seq{\calU_\delta}{\delta\in D}$, as in
   Definition~\ref{def:Magidorproduct2}. Note that $\PPP_D$ is an element of
   $V_\lambda$, so that $V_\lambda[G]=V[G]_\lambda$ holds for every
   $\PPP_D$-generic filter $G$.

 \smallskip

\begin{lemma}\label{lemma:ManyExacting}
  Assume that  $V_\lambda$ is a model of $V=\gHOD$.\footnote{Recall that, by Lemma~\ref{lemma:I2andVHOD}, this assumption is compatible with the existence of an I2-cardinal.}
  If $G$ is $\PPP_D$-generic over $V$ and $\delta\in D$, then $\delta$ is an
  exacting cardinal in $V_\lambda[G]$.
\end{lemma}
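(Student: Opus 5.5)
Fix $\delta\in D$ and factor $G$, via Lemma~\ref{MagProdProps}.\eqref{MagProdProp21}, as $G_{<\delta}\times H_\delta\times G_{>\delta}$, generic for $\PPP_{D\cap\delta}\times\PPP_{\calU_\delta}\times\PPP_{D\setminus(\delta+1)}$. The plan is to prove that $\delta$ is exacting in $V_\lambda[G_{<\delta}][H_\delta]$ and then in $V_\lambda[G]$, handling the two outer factors separately: the lower factor is small forcing, the upper factor adds no new subsets of $V_\delta$.

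\textbf{Step 1: the middle factor.} By Lemma~\ref{MagProdProps}, $H_\delta$ and $G_{<\delta}$ are mutually generic. So $H_\delta$ is $\PPP_{\calU_\delta}$-generic over $V$, hence over $V_\lambda$, because every dense set lies in $V_\lambda$. By the choice of $\calU_\delta$ from Lemma~\ref{lemma:BelowKappa}, $\delta$ is exacting in $V_\lambda[H_\delta]$.

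\textbf{Step 2: the lower factor.} $\PPP_{D\cap\delta}$ has size $<\delta$ and lies in $V_\lambda$, so it belongs to $H(\delta)^{V_\lambda[H_\delta]}$. Since $V_\lambda[H_\delta]$ is a model of $\ZFC$, Lemma~\ref{lemma:ExactingSmallForcing}, applied inside it, shows that $\delta$ is exacting in $V_\lambda[H_\delta][G_{<\delta}]=V_\lambda[G_{<\delta}\times H_\delta]$.

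\textbf{Step 3: the upper factor.} Let $W=V_\lambda[G_{<\delta}\times H_\delta]$. I would apply Proposition~\ref{prop:PreserveExactingGoodForcing} in $W$ to $\PPP_{D\setminus(\delta+1)}$ (with its Magidor product over the lifted ultrafilters $\bar\calU_\eta$, which is $\leq^*$-dense equivalent). Three properties are needed:
\begin{enumerate}
\item \emph{Weak homogeneity.} This follows as in Lemma~\ref{MagProdProps}.\eqref{MagProdProp22}.
\item \emph{No new subsets of $V_\delta$.} Over $W$, the partial order $\PPP_{D\setminus(\delta+1)}$ is Prikry-type with $\leq^*$ being ${<}\delta'$-closed, where $\delta'=\min(D\setminus(\delta+1))>\delta$. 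The argument of Lemma~\ref{MagProdProps}.\eqref{MagProdProp23} therefore shows it adds no new subsets of $V_\delta$. That argument was stated over $V[G_{<\delta'}]$, but the same Levy--Solovay and $\leq^*$-density argument works over $W$, since $\PPP_{D\cap\delta'}$ is still small relative to $\delta'$.
\item \emph{Ordinal definability in $W$.} This is the main obstacle. $\PPP_{D\setminus(\delta+1)}$ is defined from the sequence $\seq{\calU_\eta}{\eta\in D\setminus(\delta+1)}$ and $D$, which lie in $V_\lambda$.
\end{enumerate}

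\textbf{The main obstacle: ordinal definability.} This is where $V_\lambda\models V=\gHOD$ is used. Every element of $V_\lambda$ is in $\HOD$ of every set-generic extension of $V_\lambda$. In particular the parameter $\seq{\calU_\eta}{\eta\in D\setminus(\delta+1)}$ is ordinal definable in $W$, because $W$ is a set-generic extension of $V_\lambda$ by the product $\PPP_{D\cap(\delta+1)}$. So $\PPP_{D\setminus(\delta+1)}$ is $\OD$ in $W$.

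I would also double-check one point. Proposition~\ref{prop:PreserveExactingGoodForcing} assumes the forcing is OD and homogeneous in the model where exactingness holds. Here that model is $W$, with the ultrafilters $\calU_\eta$ replaced by the lifted $\bar\calU_\eta$, which are definable in $W$ from $\calU_\eta$. Then $\delta$ is exacting in $W[G_{>\delta}]=V_\lambda[G]$, which completes the argument.
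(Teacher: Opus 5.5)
Your proposal is correct and follows the paper's proof step for step. The paper likewise gets exactingness in $V_\lambda[H_\delta]$ from the choice of $\calU_\delta$ and transfers it to $W=V_\lambda[G_{<\delta}\times H_\delta]$ with Lemma~\ref{lemma:ExactingSmallForcing}. It then treats $\PPP_{D\setminus(\delta+1)}$ using $\gHOD$ for ordinal definability in $W$, Lemma~\ref{MagProdProps}.\eqref{MagProdProp22} for weak homogeneity, and Lemma~\ref{MagProdProps}.\eqref{MagProdProp23} at $\delta_*=\min(D\setminus(\delta+1))$ for no new subsets of $V_\delta$. The only difference is that it writes out the argument of Proposition~\ref{prop:PreserveExactingGoodForcing} inline instead of citing it. Your hedge in item 2 is unnecessary: $G_{<\delta}\times H_\delta$ is itself $\PPP_{D\cap\delta_*}$-generic over $V$ and $W=V[G_{<\delta}\times H_\delta]_\lambda$, so Lemma~\ref{MagProdProps}.\eqref{MagProdProp23} applies verbatim.
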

 
\begin{proof}
  Fix $\delta\in D$ and let $\delta_*=\min(D\setminus(\delta+1))$ denote the
  least element of $D$ above $\delta$. Since $\delta_*$ is inaccessible and
  greater than $\delta$, we have $|V_{\delta+\omega}|<\delta_*$.
  Let $G_{{<}\delta}\times g\times G_{{>}\delta}$ be the filter on
  $\PPP_{D\cap\delta}\times\PPP_\delta\times\PPP_{D\setminus(\delta+1)}$
  induced by $G$ through the isomorphism provided by
  Lemma~\ref{MagProdProps}.\eqref{MagProdProp21} and set
  $W=V_\lambda[G_{{<}\delta}\times g]$, so that $V_\lambda[G]$ is a
  $\PPP_{D\setminus(\delta+1)}$-generic extension of $W$.
  Our
  setup new ensures that $\delta$ is an exacting cardinal in $V_\lambda[g]$.
  Moreover, since $D$ is discrete, it follows that the partial order
  $\PPP_{D\cap\delta}$ is an element of $H(\delta)^{V_\lambda[g]}$ and we can
  use Lemma~\ref{lemma:ExactingSmallForcing} in $V_\lambda[g]$ to conclude
  that $\delta$ is exacting in $W$.
 
  Assume, towards a contradiction, that $\delta$ is not an exacting cardinal in
  $V_\lambda[G]$. By Lemma \ref{fact:CharExacting}, this means that, in
  $V_\lambda[G]$, there exists a non-empty ordinal definable subset $B$ of
  $V_{\delta+1}$ with the property that for all $x,y\in B$, there is no
  non-trivial elementary embedding of $\langle V_\delta,\in,x\rangle$ into
  $\langle V_\delta,\in,y\rangle$.
  Since $\PPP_{D\setminus(\delta+1)}=\PPP_{D\setminus\delta_*}$ and
  $G_{{<}\delta}\times g$ is $\PPP_{D\cap\delta_*}$-generic over $V$,
  Lemma~\ref{MagProdProps}.\eqref{MagProdProp23} applied at $\delta_*$ shows
  that forcing with $\PPP_{D\setminus(\delta+1)}$ over $W$ does not add new
  elements of $V_{\delta+\omega}$. In particular, we have
  $V_{\delta+1}^{V_\lambda[G]}=V_{\delta+1}^{W}$ and $B\in W$.
  Since $V_\lambda$ is a model of "$V=\gHOD$", we now know that
  $\PPP_{D\setminus(\delta+1)}$ is an element of $\HOD^{W}$. Moreover, the fact
  that Lemma~\ref{MagProdProps}.\eqref{MagProdProp22} ensures that
  $\PPP_{D\setminus(\delta+1)}$ is weakly homogeneous in $W$ allows us to
  conclude that $\OD^{V_\lambda[G]}\cap W$ is contained in $\OD^{W}$ and hence
  that $B$ is a non-empty ordinal definable subset of $V_{\delta+1}$ in $W$.
  Since $\delta$ is an exacting cardinal in $W$, we can use
   Lemma \ref{fact:CharExacting} to find $x,y\in B$ and a non-trivial elementary
  embedding of $\langle V_\delta,\in,x\rangle$ into
  $\langle V_\delta,\in,y\rangle$ in $W$. But, since $W\subseteq V_\lambda[G]$,
  this embedding is also a non-trivial elementary embedding of
  $\langle V_\delta,\in,x\rangle$ into $\langle V_\delta,\in,y\rangle$ in
  $V_\lambda[G]$, contradicting our choice of $B$.
\end{proof}

We are now ready to show that the existence of an I2-embedding implies the existence of a transitive model of $\ZFC$ with a proper class of exacting cardinals:

\begin{proof}[Proof of Theorem \ref{thm: a proper class of exactings}]
  Assume that $V$ is a model of $\ZFC$ containing an I2-embedding.
  Then there exists a class forcing extension $N$ of $V$ which is a model of
  $$\ZFC ~ + ~ \textit{``}V=\gHOD\textit{''} ~ + ~
  \textit{``There is an I2-embedding''}$$
  and in which, moreover, $V_\lambda$ is a model of $V=\gHOD$ for the first
  non-trivial fixed point $\lambda$ of that embedding.
  Work in $N$, let $\kappa<\lambda$, $D$ and $\PPP_D$ be as above and let $G$
  be $\PPP_D$-generic over $N$.
  By Lemma \ref{lemma:ManyExacting}, every $\delta\in D$ is an exacting
  cardinal in $N[G]_\lambda$, and $D$ is unbounded in $\kappa$. Since
  $\kappa=\sup(D)$ is inaccessible in $N$, Corollary
  \ref{corollary:MagidorSupportPreserveInaccessible} shows that $\kappa$ is
  inaccessible in $N[G]$ and therefore $N[G]_\kappa$ is a model of $\ZFC$. By
  Proposition~\ref{prop:ExactingRankSegments}, every $\delta\in D$ is then an
  exacting cardinal in $N[G]_\kappa$.
  Consequently, $N[G]_\kappa$ is a model of $\ZFC$ in which unboundedly many
  ordinals are exacting cardinals, {i.e.,} a set-sized model of the theory
  $$\ZFC ~ + ~ \textit{``There is a proper class of exacting cardinals''}.$$
  By the L\"owenheim--Skolem theorem and Mostowski collapsing, it
  follows that $N[G]$ contains a countable transitive model of this theory.
  Since the existence of such a model can be expressed by a
  $\Sigma^1_2$-sentence, \emph{Shoenfield absoluteness} ensures that such a
  model exists in $V$.
\end{proof}


\subsection{Limits of exacting cardinals}\label{sec:ExactingLimits}
In the model constructed in the above proof, no exacting cardinal is a limit of exacting cardinals. Theorem \ref{theorem:ExactingLimit} shows that there might be no way around this limitation. We now present the short proof of this result. It should be noted that this argument also works for limits of exacting cardinals with weaker large cardinal properties.

\begin{proof}[Proof of Theorem \ref{theorem:ExactingLimit}]
   \eqref{item:ExactingLimit1} Assume that $\eta$ is an exacting limit of exacting cardinals. Since $\HOD^{V_\eta}$ is a subset of $\HOD$ and exacting cardinals are regular in $\HOD$, it follows that, in $V_\eta$, there is a proper class of
    singular cardinals that are regular in $\HOD$. Since $\eta$ is exacting,  there is an I3-embedding $\map{i}{V_\eta}{V_\eta}$ and therefore, in    $V_\eta$, there is an extendible cardinal below a huge cardinal. By
    Woodin's HOD Dichotomy (Theorem \ref{thm:HODdichotomy}), this contradicts the Weak HOD Conjecture. 

   \eqref{item:ExactingLimit2} Assume, towards a contradiction, that there is a limit cardinal $\kappa$ of uncountable cofinality with the property that the set $E$ of exacting
    cardinals below $\kappa$ is stationary in $\kappa$. Then $E$ intersects    the set of all limit points of $E$ in $\kappa$ and hence there exists an exacting limit of exacting cardinals. This contradicts \eqref{item:ExactingLimit1}. 
\end{proof}

In the next section, we show that the consistency of $\ZF$ with sufficiently strong \emph{large cardinals beyond choice} ({i.e.,} in a scenario where the \emph{HOD Conjecture} is false, see \cite{BagKoelWoo}) implies the consistency of $\ZFC$ with the statements listed in Theorem \ref{theorem:ExactingLimit}.


\section{Exacting cardinals from large cardinals beyond choice}\label{sec: exacting from beyond choice}

Throughout this section, we work with ground models satisfying $\ZF$ and containing large cardinals that cause the Axiom of Choice to fail.


\subsection{Ultraexacting limits of ultraexacting cardinals}
Given a supercompact cardinal $\delta$ (see Definition \ref{def:ZFsupercompact}),  we let $\QQQ_\delta$ denote the partial order constructed by Woodin in the proof of {\cite[Theorem 226]{SEM1}}. Key properties of this partial order that were proven in \cite{SEM1} will be listed in the proof of the next lemma that is a variations of {\cite[Theorem 226]{SEM1}} and {\cite[Lemma 6.7]{ABL}}. Remember that, in the absence of the Axiom of Choice, we define ultraexactingness as in Definition \ref{definition:ZFultraexacting}.

\begin{lemma}[ZF]\label{lemma:WoodinForcingExacting}
  If $\lambda$ is a rank-Berkeley cardinal and $\delta>\lambda$ is a supercompact cardinal with $V_\lambda\prec {V_\delta}$, then ${\one}\Vdash_{\QQQ_\delta}\textit{$``\lambda$ is an ultraexacting cardinal }"$. 
\end{lemma}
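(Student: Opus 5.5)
We will argue inside a $\QQQ_\delta$-generic extension $V[G]$ and use the ZF-definition of ultraexactingness (Definition \ref{definition:ZFultraexacting}), exactly as in the proofs of {\cite[Theorem 226]{SEM1}} and {\cite[Lemma 6.7]{ABL}}. We will use the following features of Woodin's forcing, which are established in \cite{SEM1}. First, $\QQQ_\delta$ is ordinal definable from $\delta$ and weakly homogeneous. Second, it adds no new elements of $V_\delta$, so $V[G]_\delta=V_\delta$, and in particular $V[G]_{\lambda+1}=V_{\lambda+1}$. Third, $V[G]_\delta$ is a model of $\ZFC$; more precisely, $\QQQ_\delta$ forces $\ZFC$ to hold in $V[G]_\delta$ while $V[G]_\delta=V_\delta$ is computed correctly. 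Fourth, the supercompactness of $\delta$ ensures that the forcing behaves well with respect to ordinal definability; that is, the ordinal definable subsets of $V_{\lambda+1}$ in $V[G]$ are ordinal definable in $V$.

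The key steps are as follows. Fix, in $V[G]$, an ordinal definable $A\subseteq V_{\lambda+1}$. By weak homogeneity and ordinal definability of $\QQQ_\delta$, the argument of Proposition \ref{prop:PreserveExactingGoodForcing} shows that $A\in V$ and that $A$ is ordinal definable in $V$. The task is then to find, in $V$, a non-trivial elementary $\map{i}{\langle V_{\lambda+1},\in,A\rangle}{\langle V_{\lambda+1},\in,A\rangle}$ with $\crit(i)<\lambda$; since $V[G]_{\lambda+1}=V_{\lambda+1}$, this map will witness the required property in $V[G]$ too.

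To produce $i$, we use rank-Berkeleyness. Let $A$ be defined in $V$ by a formula $\varphi$ with ordinal parameters $\vec\beta$. Pick $\zeta$ above $\lambda$ and $\vec\beta$ with $V_\zeta$ sufficiently correct, say $\zeta\in C^{(n)}$ for $n$ large enough that $A$ is definable in $V_\zeta$ from $\vec\beta$. Rank-Berkeleyness gives $\map{k}{V_\zeta}{V_\zeta}$ with $\crit(k)<\lambda$ and $k(\lambda)=\lambda$. Then $k\restriction V_{\lambda+1}$ is elementary on $V_{\lambda+1}$, but $k$ need not fix $\vec\beta$. To handle this, replace ordinal parameters by a minimal choice. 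Using the canonical well-order of $\OD$, $A$ is definable in $V_\zeta$ from $\lambda$ alone together with a bounded ordinal, or rather, pick $A$ as the least counterexample. Concretely, argue by contradiction: if $\lambda$ fails to be ultraexacting in $V[G]$, then (pulling back to $V$) there is an $A$ that is $\OD$ in $V$ without such a self-embedding, and the least such $A$ in the $\OD$-order is definable in $V_\zeta$ from $\lambda$ alone, for $\zeta$ in a suitable $C^{(n)}$. For this to work, the property ``no self-embedding of $\langle V_{\lambda+1},\in,A\rangle$'' must be absolute to $V_\zeta$, which it is since it is a statement about $V_{\lambda+2}$. Then $k(A)=A$, and $k\restriction V_{\lambda+1}$ is a nontrivial self-embedding of $\langle V_{\lambda+1},\in,A\rangle$, contradicting the choice of $A$.

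The main obstacle is the definability step. Rank-Berkeleyness gives embeddings of $V_\zeta$ only for arbitrary $\zeta$, with no control over where ordinal parameters go. Moreover, the pull-back of ``$\OD$ in $V[G]$'' to ``$\OD$ in $V$'' must keep the parameters under control, so that the least bad $A$ is definable from $\lambda$ (and perhaps $\delta$) only. The parameter $\delta$ is handled by the hypothesis $V_\lambda\prec V_\delta$ and by working inside $V_\delta$, where $\QQQ_\delta$-related definitions are reflected. Alternatively, choose $\zeta$ with $k(\delta)=\delta$ by taking $\lambda<\delta<\zeta$ and noting that $k$ fixes all ordinals in the interval $[\lambda,\zeta)$ that are definable from $\lambda$ in $V_\zeta$. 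I expect this parameter bookkeeping, rather than the forcing facts, to be where the care is needed.
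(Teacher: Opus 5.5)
There is a genuine gap. Your reduction to $V$ rests on a false description of $\QQQ_\delta$. You assume that $\QQQ_\delta$ adds no new elements of $V_\delta$ and also forces $V[G]_\delta\models\ZFC$, but under the hypotheses of the lemma these two claims are incompatible. Rank-Berkeleyness with $\zeta=\lambda+2$ gives a non-trivial elementary $\map{k}{V_{\lambda+2}}{V_{\lambda+2}}$. This $k$ is an element of $V_\delta$, so by Kunen's theorem $V_\delta$ is not a model of $\ZFC$.

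Woodin's forcing exists precisely to add new sets so that choice holds in $V[G]_\delta$, and this happens already below $\lambda$. In the decomposition $\QQQ_\delta\simeq\QQQ_\lambda*\dot{\PPP}$ from \cite{SEM1}, the factor $\QQQ_\lambda$ is a non-trivial initial segment of the iteration, defined over $V_{\lambda+1}$. It adds new subsets of $V_\lambda$ and forces $\dc_\lambda$; this is exactly why the paper needs a master condition for it. Consequently $V[G]_{\lambda+1}\neq V_{\lambda+1}$, and an ordinal definable subset of $V[G]_{\lambda+1}$ will in general have elements outside $V$. So the argument of Proposition~\ref{prop:PreserveExactingGoodForcing} cannot be applied to $\QQQ_\delta$. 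What your argument actually proves is that a rank-Berkeley cardinal is ultraexacting in $V$ in the sense of Definition~\ref{definition:ZFultraexacting}, which is immediate and not the content of the lemma. The parameter bookkeeping you flag as the main obstacle is not where the difficulty lies.

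The missing idea is to lift the rank-Berkeley embedding through the part of the forcing that adds subsets of $V_\lambda$. The paper does this as follows.
\begin{enumerate}
\item Take $\zeta$ large and $\map{j}{V_\zeta}{V_\zeta}$ with $\crit(j)<\lambda$ and first non-trivial fixed point $\lambda$. Since $\QQQ_\lambda$ is definable over $V_{\lambda+1}$ without parameters, $j(\QQQ_\lambda)=\QQQ_\lambda$.
\item Woodin's analysis gives a master condition $p_j\in\QQQ_\lambda$ such that every generic $G\ni p_j$ satisfies $p\in G\Leftrightarrow j(p)\in G$. This yields a lifting $\map{j_G}{V[G]_\zeta}{V[G]_\zeta}$.
\item Theorem~\ref{theorem:UltraexactingInnerModels}, applied with $N=V[G]$ and $X_0=X_1=V[G]_\zeta$, shows that $\lambda$ is ultraexacting in $V[G]$. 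Your least-counterexample argument is essentially this step, and it works once you run it in $V[G]$ with $j_G$ instead of in $V$.
\item Only now does your preservation idea apply, and only to the tail. In $V[G]$, $\dot{\PPP}^G$ is ${<}\lambda^+$-closed and $\dc_\lambda$ holds, so no new subsets of $V_\lambda$ are added. Since $\dot{\PPP}^G$ is also weakly homogeneous and ordinal definable, Proposition~\ref{prop:PreserveExactingGoodForcing} carries ultraexactingness to $V[G*H]$.
\item Weak homogeneity of $\QQQ_\delta$ then upgrades the conclusion from generics containing $p_j$ to $\one\Vdash_{\QQQ_\delta}$ ``$\lambda$ is ultraexacting''.
\end{enumerate}
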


\begin{proof}
  Assume, towards a contradiction, that the statement of the lemma fails.     We may then assume that $\delta$ is the minimal supercompact cardinal above $\lambda$ satisfying $V_\lambda\prec {V_\delta}$ for which the given conclusion fails. Since the partial order $\QQQ_\delta$ is weakly homogeneous and definable in $V_\delta$ by a formula without parameters, our assumption ensures that the cardinal $\delta$ is definable by a formula with parameter $\lambda$. The analysis of \cite{SEM1} now shows that $\QQQ_\delta$ is forcing equivalent to a two-step iteration $\QQQ_\lambda*\dot{\PPP}$ satisfying the following: 
    \begin{enumerate}
        \item The partial order $\QQQ_\lambda$ is definable in $V_{\lambda+1}$ by a formula without parameters and forcing with $\QQQ_\lambda$ preserves unboundedly many regular cardinals below $\lambda$. 

        \item\label{item:WoodinForcing2} Given a non-trivial elementary embedding $\map{j}{V_{\lambda+1}}{V_{\lambda+1}}$ with first non-trivial fixed point $\lambda$,   there exists a condition $p_j$ in $\QQQ_\lambda$ with the property that $$G ~ = ~ \Set{p\in\QQQ_\lambda}{j(p)\in G}$$ holds whenever $G$ is $\QQQ_\lambda$-generic over $V$ with $p_j\in G$. 

        \item\label{item:WoodinForcing3} If $G$ is $\QQQ_\lambda$-generic over $V$, then, in $V[G]$, the partial order $\dot{\PPP}^G$ is ${<}\lambda^+$-closed, weakly homogeneous and definable in $V_\delta$ without parameters. 

        \item\label{item:WoodinForcing4} If $G$ is $\QQQ_\lambda$-generic over $V$, then $\dc_\lambda$ holds in $V[G]$. 
    \end{enumerate}

  Pick $\zeta>\delta$ such that $V_\zeta$ is sufficiently elementary in $V$. Then there exists an elementary embedding $\map{j}{V_\zeta}{V_\zeta}$ such that $\crit(j)<\lambda$ and $\lambda$ is the first non-trivial fixed point of $j$. We then have $\cof{\lambda}=\omega$,  $j(\delta)=\delta$ and $j(\QQQ_\lambda)=\QQQ_\lambda$. Let $p_j$ be the condition given by \eqref{item:WoodinForcing2} and let $G*H$ be $(\QQQ_\lambda*\dot{\PPP})$-generic over $V$ with $p_j\in G$. Then $V_\zeta[G]=V[G]_\zeta$, $\zeta\in(C^{(2)})^{V[G]}$ and $j[G]\subseteq G$. In particular, there is a canonical lifting $$\map{j_G}{V[G]_\zeta}{V[G]_\zeta}$$ of $j$ to $V[G]_\zeta$. 
  We can now apply Theorem \ref{theorem:UltraexactingInnerModels} in $V[G]$ with $N=V[G]$ and $X_0=X_1=V[G]_\zeta$ to conclude that $\lambda$ is ultraexacting in $V[G]$. 

  Now, by \eqref{item:WoodinForcing3} and \eqref{item:WoodinForcing4}, forcing with $\dot{\PPP}^G$ over $V[G]$ does not add new subsets of $V_\lambda$. Moreover, \eqref{item:WoodinForcing3} ensures that the partial order $\dot{\PPP}^G$ is weakly homogeneous and ordinal definable in $V[G]$. This allows us to apply Lemma \ref{prop:PreserveExactingGoodForcing}.\eqref{prop:ZFforcingPreserve2} to conclude that $\lambda$ is an ultraexacting cardinal in $V[G,H]$. 

  These computations contradict our initial assumption,  because, in $V$,  the partial order $\QQQ_\delta$ is weakly homogeneous and forcing equivalent to $\QQQ_\lambda*\dot{\PPP}$. 
\end{proof}

\begin{cor}[$\mathrm{ZF}^2$]
    If there exists a super Reinhardt cardinal, then there exists a set-sized transitive model of $\ZFC$  with  a proper class of ultraexacting cardinals that are limits of ultraexacting cardinals. 
\end{cor}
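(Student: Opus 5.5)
Let $\delta$ be super Reinhardt. I will build the model as a rank initial segment of a Woodin-forcing extension $V[G]$, where $G$ is $\QQQ_\theta$-generic for a suitable supercompact $\theta$, and then cut off at a cardinal $\zeta$ below which ultraexacting limits of ultraexacting cardinals are unbounded.

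First fix the cardinals. By Lemma~\ref{lemma:SuperReinhardtExacting}, $\delta$ is a limit of rank-Berkeley cardinals $\zeta<\delta$, each a limit of rank-Berkeley cardinals $\lambda$ with $V_\lambda\prec V_\delta$. Pick one such $\zeta$. By elementarity of the witnessing embeddings, I want $V_\zeta\prec V_\delta$ and a proper class in $V_\zeta$ of rank-Berkeley $\lambda$ that are limits of rank-Berkeley cardinals. If the lemma does not give this directly, I would rerun its proof with $i$ and $j$: $\eta$ is a limit of such $\lambda$ and $V_\eta\prec V_{i(\delta)}$, so the elementarity of $i$ transfers this below $\delta$.

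Next choose $\theta$. A super Reinhardt cardinal is supercompact in the sense of Definition~\ref{def:ZFsupercompact}, since restrictions of the Reinhardt embeddings $j$ with $j(\delta)>\eta$ witness it. So take $\theta=\delta$ and $\QQQ_\delta$. For any $\lambda<\delta$ with $V_\lambda\prec V_\delta$ that is rank-Berkeley, Lemma~\ref{lemma:WoodinForcingExacting} gives
\[
\one\Vdash_{\QQQ_\delta}\text{``$\lambda$ is ultraexacting''}.
\]
In the extension, $\ZFC$ holds in $V[G]_\delta$ by Woodin's analysis: $\QQQ_\delta$ forces $\dc$ and choice in $V[G]_\delta$, and $\delta$ remains inaccessible. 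Ultraexactingness then passes to $V[G]_\delta$ by Proposition~\ref{prop:ZFExactingDown}. So every rank-Berkeley $\lambda<\delta$ with $V_\lambda\prec V_\delta$ is ultraexacting in $V[G]_\delta$, a model of $\ZFC$.

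It remains to get limits. Every rank-Berkeley limit of rank-Berkeley cardinals $\lambda$ with $V_\lambda\prec V_\delta$ is itself such a $\lambda$, is ultraexacting, and is a limit of ultraexacting cardinals in $V[G]_\delta$. These points are unbounded in $\delta$, since by Lemma~\ref{lemma:SuperReinhardtExacting} the $\zeta$'s are unbounded and I choose them with $V_\zeta\prec V_\delta$. Hence $V[G]_\delta$ itself is the required set-sized transitive model, and cutting at $\zeta$ is unnecessary. If instead I only know $V_\zeta$-relativized facts, I can cut at an inaccessible limit $\zeta$ in $V[G]$ and use Proposition~\ref{prop:ZFExactingDown} again.

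The main obstacle is checking that $V_\lambda\prec V_\delta$ holds for the relevant $\lambda$, including the limit ones, so that Lemma~\ref{lemma:WoodinForcingExacting} applies to them. A second obstacle is that $V[G]_\delta\models\ZFC$ must be extracted from the properties of $\QQQ_\delta$ in \cite{SEM1}. For the first, I would argue that the set $\{\lambda<\delta : V_\lambda\prec V_\delta\}$ is closed, and that a limit of rank-Berkeley cardinals in this club is rank-Berkeley in $V_\delta$. It is then rank-Berkeley in $V$ by Proposition~\ref{prop:LocalRankBerkeleySupercompact}, applied with the supercompact $\delta$.
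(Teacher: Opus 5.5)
Your argument inside the extension is the paper's. $\delta$ is supercompact. Lemma~\ref{lemma:SuperReinhardtExacting} supplies unboundedly many rank-Berkeley $\zeta<\delta$ that are limits of rank-Berkeley $\lambda$ with $V_\lambda\prec V_\delta$. Lemma~\ref{lemma:WoodinForcingExacting} makes all of these ultraexacting after forcing with $\QQQ_\delta$, and Proposition~\ref{prop:ZFExactingDown} carries this down to the $\ZFC$-model $V[G]_\delta$.

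You correctly note that each such $\zeta$ satisfies $V_\zeta\prec V_\delta$, because $\Set{\lambda<\delta}{V_\lambda\prec V_\delta}$ is closed. This is exactly what lets you apply Lemma~\ref{lemma:WoodinForcingExacting} to $\zeta$ itself.

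Your further claim that a limit of rank-Berkeley cardinals in this club is rank-Berkeley is false in general. Rank-Berkeley cardinals are first non-trivial fixed points and so have countable cofinality. The claim is also unnecessary, since Lemma~\ref{lemma:SuperReinhardtExacting} already says the $\zeta$'s are rank-Berkeley in $V$.

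The genuine gap is the last step. You conclude that ``$V[G]_\delta$ itself is the required set-sized transitive model''. But $V[G]_\delta$ exists only in the generic extension $V[G]$. The corollary asserts that such a model exists in $V$, where there is no generic filter $G$. The statement ``there is a transitive model of $T$'' is only $\Sigma_1$, so it is upward but not downward absolute and does not transfer from $V[G]$ to $V$ for free.

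The paper closes this in two moves:
\begin{enumerate}
\item Inside $V[G]$, it passes to a \emph{countable} transitive model of $\ZFC$ plus ``there is a proper class of ultraexacting cardinals that are limits of ultraexacting cardinals'', by taking a countable elementary submodel of $V[G]_\delta$ and Mostowski-collapsing it.
\item The existence of such a countable transitive model is a $\Sigma^1_2$ statement, so Shoenfield absoluteness gives one in $V$.
\end{enumerate}
You need to add this step for the proof to establish the corollary as stated.
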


\begin{proof}
  Let $\delta$ be a super Reinhardt cardinal. Then $\delta$ is supercompact and therefore a combination of results in \cite{SEM1} with Proposition \ref{prop:ZFExactingDown}, Lemma \ref{lemma:SuperReinhardtExacting} and Lemma \ref{lemma:WoodinForcingExacting} shows that if $G$ is $\QQQ_\delta$-generic over $V$, then $V[G]_\delta$ is a model of $\ZFC$  with a proper class of ultraexacting cardinals that are limits of ultraexacting cardinals. In particular, there is a generic extension of the ground model $V$ that contains a countable transitive model of $\mathrm{ZFC}$ with a proper class of ultraexacting cardinals that are limits of ultraexacting cardinals. Shoenfield absoluteness then ensures that such a model already exists in $V$. 
\end{proof}


\subsection{Stationary limits of ultraexacting cardinals}
Given the above, it is natural to ask whether large cardinals beyond choice can provide richer patterns of exacting and ultraexacting cardinals in the universe of sets. Specifically, in the light of Theorem \ref{theorem:ExactingLimit}.\eqref{item:ExactingLimit2}, one might ask whether these axioms can be used to construct models of $\ZFC$ with stationary limits of exacting cardinals. We address this issue in this section.

\begin{lemma}[$\ZF$]\label{lemma:StatUltra}
    If $\delta$ is a cardinal that is both supercompact and totally Reinhardt, $G$ is $\QQQ_\delta$-generic over $V$ and $H$ is $\Col({<}\delta,V_\delta)$-generic over $V[G]$, then, in $V[G,H]$,  every closed unbounded subset of $\delta$ contains an ultraexacting cardinal. 
\end{lemma}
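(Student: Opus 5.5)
The plan is to combine three ingredients: the reflection of rank-Berkeley cardinals furnished by total Reinhardtness, Lemma~\ref{lemma:WoodinForcingExacting} to make such cardinals ultraexacting after forcing with $\QQQ_\delta$, and a preservation argument through the collapse $\Col({<}\delta,V_\delta)$.

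\textbf{Step 1: reduce to names in $V$.} Fix a $\QQQ_\delta*\dot{\Col}({<}\delta,V_\delta)$-name $\dot C$ for a club in $\delta$. The goal is to find a rank-Berkeley $\lambda<\delta$ with $V_\lambda\prec V_\delta$ such that some condition forces $\lambda\in\dot C$, and such that $\lambda$ stays ultraexacting. Since $\QQQ_\delta$ and the collapse are weakly homogeneous and definable in $V_\delta$, it is enough to show that the set of such $\lambda$ meets every club of $\delta$ in $V$ that is definable over $\langle V_\delta,V_{\delta+1}\rangle$. Here I would use $\delta^+$-c.c.\ or $\delta$-c.c.\ style bounding. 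Note that $\Col({<}\delta,V_\delta)$ is $\delta$-c.c.\ if $\delta$ is inaccessible, which it is in the $\ZF$ sense. With this, every club in the extension contains a ground-model club in $V[G]$. For $\QQQ_\delta$ I would rely on the properties from \cite{SEM1}: it adds no bounded subsets that destroy the relevant structure, and $\delta$ stays regular, indeed inaccessible. So I reduce to a club $C\subseteq\delta$ lying in $V_{\delta+1}$, coded together with the name.

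\textbf{Step 2: total Reinhardtness.} Apply total Reinhardtness with $A$ coding $C$. We get an $A$-super Reinhardt $\kappa<\delta$ in $\langle V_\delta,V_{\delta+1}\rangle$. Take $j:V_\delta\to V_\delta$ with $\crit(j)=\kappa$ and $j^+(A)=A$. Let $\lambda$ be its first nontrivial fixed point. Then $\lambda$ is rank-Berkeley in $V_\delta$, hence rank-Berkeley by Proposition~\ref{prop:LocalRankBerkeleySupercompact}. Arguing as in Lemma~\ref{lemma:SuperReinhardtExacting} gives $V_\lambda\prec V_\delta$. Since $j$ preserves $C$, the critical sequence moves elements of $C$ to elements of $C$. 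Because $C$ is club, $\lambda=\sup_n j^n(\kappa')\in C$ for any $\kappa'\in C$ above $\kappa$ below $j(\kappa)$, or more simply from $C\cap\lambda$ unbounded.

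\textbf{Step 3: ultraexactingness survives both forcings.} Lemma~\ref{lemma:WoodinForcingExacting} gives that $\lambda$ is ultraexacting in $V[G]$. For the collapse, factor $\Col({<}\delta,V_\delta)$ as $\Col({<}\lambda,V_\lambda)\times\Col([\lambda,\delta),\dots)$. The tail is ${<}\lambda^+$-closed and weakly homogeneous, and $\dc_\lambda$ holds in $V[G]$, so by Proposition~\ref{prop:PreserveExactingGoodForcing} it adds no subsets of $V_\lambda$ and preserves ultraexactingness. The lower part is the main obstacle. It is not small relative to $\lambda$, so Lemma~\ref{lemma:ExactingSmallForcing} does not apply. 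Instead I would redo the lifting argument. The embedding $j_G$ witnessing ultraexactingness in $V[G]$ (Theorem~\ref{theorem:UltraexactingInnerModels} with $X_0=X_1=V[G]_\zeta$) maps $\Col({<}\lambda,V_\lambda)$ to itself, since it is definable from $\lambda$. I would build a master condition or generic $H_\lambda$ with $j_G[H_\lambda]\subseteq H_\lambda$. The natural route is to show that a generic of the form $H\restriction\lambda$ is compatible with such a lifting on a dense set, using that $\Col({<}\lambda,V_\lambda)$ is a direct limit and that $j_G$ fixes it pointwise below $\crit$. Then Theorem~\ref{theorem:UltraexactingInnerModels} is reapplied in $V[G,H]$.

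Finally, since $\lambda\in C$, every club of $\delta$ in $V[G,H]$ contains an ultraexacting cardinal. The remaining nontrivial checks are the club reduction in Step 1 and the lifting through the lower collapse in Step 3. The latter is what I expect to be hardest.
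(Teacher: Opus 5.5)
\textbf{There is a genuine gap, in the club reduction of Step~1.} This step carries the whole stationarity content of the lemma, and as written it fails.

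As the paper uses it, $\Col({<}\delta,V_\delta)$ is the poset with conditions of size ${<}\delta$ that adds a surjection $h\colon\delta\to V_\delta$. In the proof of the theorem following this lemma, the paper notes it is ${<}\delta$-closed in $V[G]$. It is not $\delta$-c.c.: the conditions $\{\langle 0,x\rangle\}$ for $x\in V_\delta$ are pairwise incompatible. The conclusion you want from a chain condition is in fact false. The set $\Set{\alpha<\delta}{h[\alpha]=V_\alpha}$ is a club in $V[G,H]$. Yet for any club $E$ of $V[G]$ and any condition $p$, you can pick $\gamma$ above $\dom(p)$ and $\alpha\in E$ above $\gamma$. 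Extending $p$ by $\gamma\mapsto\alpha$ forces $\alpha$ out of that club, so it contains no ground-model club.

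For $\QQQ_\delta$ you invoke no concrete property. What the paper takes from \cite{SEM1} is only that $\QQQ_\delta$ is equivalent to $\QQQ*\dot{\QQQ}$, with $\QQQ\in V_\delta$ forcing $\dc$ and $\dot{\QQQ}$ forced to be $\sigma$-closed. Homogeneity does not help either. It shows that the set of $\lambda$ forced to be ultraexacting lies in the ground model, not that its stationarity survives the forcing. So your Step~2, which gives $\lambda\in C$ for a club $C$ of the ground model, never produces a condition forcing $\lambda\in\dot C$.

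\textbf{The missing idea.} Let the super Reinhardt embedding see the forcing relation for $\dot C$, not a ground club, and use $\sigma$-closure at the countable-cofinality point $\lambda$. Let $G_0$ be the $\QQQ$-generic part. Over $V[G_0]$, the model $V[G,H]$ is an extension by a $\sigma$-closed $\CCC\subseteq V[G_0]_\delta$.
\begin{enumerate}
\item Given a $\CCC$-name $\dot C$ and a condition $p_*$, code $p_*$, $\leq_\CCC$ and $D=\Set{\langle p,\beta\rangle}{p\Vdash\beta\in\dot C}$ into $A$.
\item Take an $A$-super Reinhardt $\kappa$ and $j\colon V[G_0]_\delta\to V[G_0]_\delta$ with $\crit(j)=\kappa$, fixing $A$, such that $j(\kappa)$ exceeds a bound obtained from the strong inaccessibility of $\delta$.
\item Then each $V_{\kappa_n}$ is closed under extending conditions to force new ordinals into $\dot C$.
\item Using $\dc$, build a descending sequence $p_n\in V_{\kappa_n}$ with $p_0=p_*$ that forces points of $\dot C$ into each interval $[\kappa_n,\kappa_{n+1})$.
\item A lower bound of the $p_n$ forces $\lambda=\sup_n\kappa_n\in\dot C$.
\end{enumerate}
Equivalently, you could keep your Step~2 inside $V[G_0]$ and then prove that $\sigma$-closed forcings contained in $V_\delta$ preserve stationary subsets of $\delta$ consisting of ordinals of countable cofinality. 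But that proof is the same fusion argument.

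\textbf{Step~3 rests on a misreading.} The collapse has no lower part $\Col({<}\lambda,V_\lambda)$ adding subsets of $V_\lambda$. Since $\dc_{{<}\delta}$ holds in $V[G]$ and the collapse is ${<}\delta$-closed, weakly homogeneous and ordinal definable, it adds no new elements of $V_\delta$. So Lemma~\ref{lemma:WoodinForcingExacting} followed by Proposition~\ref{prop:PreserveExactingGoodForcing} finishes the argument, with no master condition needed. A forcing that really collapsed cardinals cofinally below $\lambda$ would in general destroy the Woodin cardinals below $\lambda$ that exacting cardinals require, so your lifting plan could not have succeeded under that reading.
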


\begin{proof}
    The results of \cite{SEM1} show that, in $V$, the partial order $\QQQ_\delta$ is forcing equivalent to a two-step iteration $\QQQ*\dot{\QQQ}$ such that $\QQQ$ is a partial order in $V_\delta$ that forces $\dc$ to hold and $\dot{\QQQ}$ to be a $\sigma$-closed partial order that  is a subset of $V_\delta$. Let $G_0*G_1$ be $(\QQQ*\dot{\QQQ})$-generic over $V$ with $V[G]=V[G_0,G_1]$. We then know that $\delta$ is supercompact and totally Reinhardt in $V[G_0]$. 
    Moreover, the results of \cite{SEM1}  show that every element of $V[G]_\delta$ has a $\dot{\QQQ}^{G_0}$-name in $V[G_0]_\delta$. 
    Therefore, it follows that $V[G,H]$ is a forcing extension of $V[G_0]$ using a partial order $\CCC$ that is a subset of $V[G_0]_\delta$ and $\sigma$-closed in $V[G_0]$. 

    Fix a $\CCC$-name $\dot{C}\in V[G_0]$ for a closed unbounded subset of $\delta$ and a condition $p_*$ in $\CCC$. We can now find  $A\in V[G_0]_{\delta+1}$ coding the condition $p_*$, the ordering $\leq_\CCC$ of $\CCC$ and the subset $$D ~ = ~ \Set{\langle p,\beta\rangle\in\CCC\times\delta}{p\Vdash^{V[G_0]}_\CCC"\beta\in\dot{C}\hspace{0.8pt}"}$$ of $V_\delta$. Since $\delta$ is totally Reinhardt in $V[G_0]$, we can find $\kappa<\delta$ that is $A$-super Reinhardt in $\langle V[G_0]_\delta,V[G_0]_{\delta+1},\in\rangle$. Our setup then ensures that the condition $p_*$ is an element of $\CCC\cap V[G_0]_\kappa$.

    Given $p\in\CCC\cap V[G_0]_\kappa$ and $\alpha<\kappa$, there exists $\gamma_{\alpha,p}<\delta$ with the property that there exist $\alpha<\beta<\gamma_{\alpha,p}$ and  $q\in\CCC\cap V[G_0]_{\gamma_{\alpha,p}}$ with $q\leq_\CCC p$ and $\langle q,\beta\rangle\in D$. Since $\delta$ is strongly inaccessible in $V[G_0]$, we can find $\rho<\delta$ with $\gamma_{\alpha,p}<\rho$ for all $p\in\CCC\cap V[G_0]_\kappa$ and $\alpha<\kappa$. 
    Therefore, we can find a non-trivial elementary embedding $\map{j}{V[G_0]_\delta}{V[G_0]_\delta}$  with $\crit(j)=\kappa$, $j(\kappa)>\rho$ as well as $$j({\leq_\CCC}\cap V_\alpha) ~ = ~ {\leq_\CCC} \cap V_{j(\alpha)}$$ and  $$j(D  \cap  V_\alpha) ~ = ~ D  \cap  V_{j(\alpha)}$$ for all $\alpha<\kappa$. 
    The elementarity of $j$ then ensures that for every condition  $p\in\CCC\cap V[G_0]_\kappa$ and every $\alpha<\kappa$, there exists a condition  $q\in\CCC\cap V[G_0]_\kappa$ and $\alpha\leq\beta<\kappa$ with $q\leq_\CCC p$ and $\langle q,\beta\rangle\in D$. 
    
    Let $\seq{\kappa_n}{n<\omega}$ denote the critical sequence of $j$ and set $\lambda=\sup_{n<\omega}\kappa_n$. Since $\delta$ is a regular cardinal in $V[G_0]$, we have $\lambda<\delta$.  
    Elementarity then ensures that for every $n<\omega$, every $p\in\CCC\cap V[G_0]_{\kappa_n}$ and every $\alpha<\kappa_n$, there exist $q\in\CCC\cap V[G_0]_{\kappa_n}$ and $\alpha\leq\beta<\kappa_n$ with the property that  $q\leq_\CCC p$ and $\langle q,\beta\rangle\in D$. This allows us to use $\dc$ in $V[G_0]$ to find a $\leq_\CCC$-descending sequence $\seq{p_n\in\CCC\cap V[G_0]_{\kappa_n}}{n<\omega}$ with $p_0=p_*$ and the property that for all $n<\omega$, there exists $\kappa_n\leq\beta_n<\kappa_{n+1}$ with $\langle q_{n+1},\beta_n\rangle\in D$. Since $\CCC$ is $\sigma$-closed in $V[G_0]$, we can find a condition $p_\omega$ in $\CCC$ with $p_\omega\leq_\CCC p_n$ for all $n<\omega$. 
    
    Let $F$ be $\CCC$-generic over $V[G_0]$ with $p_\omega\in F$. Then $\dot{C}^F\cap[\kappa_n,\kappa_{n+1})\neq\emptyset$ holds for all $n<\omega$ and therefore we know that $\lambda$ is an element of $\dot{C}^F$. 
    Earlier arguments show that $\lambda$ is a rank-Berkeley cardinal in $V[G_0]_\delta$ and $V[G_0]_\lambda\prec V[G_0]_\delta$ holds. By Proposition \ref{prop:LocalRankBerkeleySupercompact}, this means that $\lambda$ is a rank-Berkeley cardinal in $V[G_0]$. Moreover, the analysis in \cite{SEM1} shows that $\dot{\QQQ}^{G_0}=\QQQ_\delta^{V[G_0]}$ holds, {i.e.,} $\dot{\QQQ}^{G_0}$ satisfies the definition of Woodin's partial order in $V[G_0]$. 
    Let $F_0$ denote the filter on $\dot{\QQQ}^{G_0}$ induced by $F$. Then Lemma \ref{lemma:WoodinForcingExacting} shows that $\lambda$ is an ultraexacting cardinal in $V[G_0,F_0]$. Next, Proposition \ref{prop:PreserveExactingGoodForcing} ensures that $\lambda$ is ultraexacting in $V[G_0,F]$. 

    The above computations show that below every condition $p_*$ in $\CCC$, there exists a condition that forces $\dot{C}$ to contain an ultraexacting cardinal. A density argument now yields the statement of the lemma. 
\end{proof}

Putting together the previous results, we obtain the following theorem  that directly implies the statement of Theorem \ref{thm:SimplifiedConsStatLimitUltra}:

\begin{theorem}[$\ZF$]
        If $\delta$ is a cardinal that is both supercompact and totally Reinhardt, $G$ is $\QQQ_\delta$-generic over $V$ and $H$ is $\Col({<}\delta,V_\delta)$-generic over $V[G]$, then $L(V[G]_\delta)[H]$ is a model of $\ZFC$ in which $\delta$ is a regular cardinal that is a stationary limit of ultraexacting cardinals. 
\end{theorem}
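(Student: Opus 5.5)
We set $N=L(V[G]_\delta)[H]$ and check, in order, that $N$ satisfies $\ZFC$, that $\delta$ is regular in $N$, and that in $N$ every club in $\delta$ contains an ultraexacting cardinal. The last point is what makes $\delta$ a stationary limit of ultraexacting cardinals; it already forces the ultraexacting cardinals to be unbounded in $\delta$.

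\textbf{Choice.} By the analysis of $\QQQ_\delta$ in \cite{SEM1}, $\delta$ is strongly inaccessible in $V[G]$ and $V[G]_\delta\models\ZFC$. The collapse $\Col({<}\delta,V_\delta)$, formed in $V[G]$, adds a generic that, in $L(V[G]_\delta)[H]$, collapses every $V[G]_\alpha$ with $\alpha<\delta$ to be countable. Hence every element of $V[G]_\delta$ is well-orderable in $N$. Since $N$ is built from $V[G]_{\delta+1}\cap L(V[G]_\delta)$ together with ordinals and the generic, a well-ordering of $V[G]_\delta$ in $N$ then yields a well-ordering of all of $N$.

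I expect this to be the main obstacle. The worry is that subsets of $V[G]_\delta$ in $L(V[G]_\delta)$ might not be well-orderable. Two routes are available. The first is to argue, following Woodin's $\ZFC$-model constructions in \cite{SEM1}, that $H$ codes a well-ordering of $V[G]_\delta$, so that $N=L(V[G]_\delta)[H]$ is of the form $L(\text{well-orderable set})[H]$, where $\ZFC$ is standard. The second is to show that $\delta$ becomes $\omega_1$ or remains regular, and then use $\dc$ together with the fact that each element of $V[G]_\delta$ is collapsed at some stage.

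\textbf{Regularity.} The collapse is $\delta$-cc in $V[G]$ because $\delta$ is inaccessible there. So $\delta$ stays regular in $V[G][H]$, and in fact becomes $\omega_1$. Regularity then passes down to the inner model $N$, since any cofinal sequence in $N$ would also lie in $V[G][H]$.

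\textbf{Stationarity.} Let $C\in N$ be a club in $\delta$. Since $C\in V[G,H]$, Lemma~\ref{lemma:StatUltra} gives an ultraexacting cardinal $\lambda\in C$, ultraexacting in the sense of $V[G,H]$. To bring this down to $N$, we note that $N$ and $V[G,H]$ have the same $V_{\lambda+1}$: $H\restriction\lambda$ lies in $N$, and every element of $V[G][H]_{\lambda+1}$ is in $V[G][H\restriction\lambda']$ for some $\lambda'<\delta$, hence in $N$.

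It then remains to pass from ``ultraexacting in $V[G,H]$'' to ``ultraexacting in $N$''. We use the $\ZF$-definition via $\OD$ subsets of $V_{\lambda+1}$. An $\OD^N$ subset $A$ of $V_{\lambda+1}$ is definable in $V[G,H]$ from ordinals and the parameter $V[G]_\delta$. That parameter is not ordinal definable, so we first check that it is $\OD$ in $V[G,H]$; alternatively, we run Lemma~\ref{lemma:StatUltra} relativized to the parameter $V[G]_\delta$, which is harmless since its proof handles arbitrary $A\in V[G_0]_{\delta+1}$. Either way we obtain an embedding $j\colon\langle V_{\lambda+1},\in,A\rangle\to\langle V_{\lambda+1},\in,A\rangle$. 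This embedding is determined by $j\restriction V_\lambda\in V_{\lambda+1}$, so it lies in $N$, and therefore $\lambda$ is ultraexacting in $N$.
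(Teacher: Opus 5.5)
\paragraph{Misreading of the collapse.} You have misread $\Col({<}\delta,V_\delta)$, and this breaks the argument. As the paper's proof uses, this poset is ${<}\delta$-closed in $V[G]$. Lemma~\ref{lemma:StatUltra}, which you rely on, likewise treats the whole forcing over $V[G_0]$ as a $\sigma$-closed subset of $V[G_0]_\delta$; a collapse to $\omega_1$ could not be $\sigma$-closed. Closure together with $\dc_{{<}\delta}$ in $V[G]$ gives three things:
\begin{itemize}
\item $V[G,H]_\delta=V[G]_\delta$;
\item no new ${<}\delta$-sequences of ordinals are added, so $\delta$ stays regular;
\item $H$ is a surjection of $\delta$ onto $V[G]_\delta$, which is what makes $N$ a model of $\ZFC$ (your first route).
\end{itemize}
Your claims that $H$ makes every $V[G]_\alpha$ countable and that $\delta$ becomes $\omega_1$ are fatal, not merely inaccurate. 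If $\delta=\omega_1^N$, then $N$ has no uncountable cardinals below $\delta$, hence no ultraexacting cardinals below $\delta$. So your Regularity paragraph contradicts your Stationarity paragraph. Regularity should come from closure plus $\dc_{{<}\delta}$, not from a chain condition.

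\paragraph{Transferring ultraexactingness to $N$.} With the correct reading, $V^N_{\lambda+1}=V[G]_{\lambda+1}=V[G,H]_{\lambda+1}$ is immediate. But your step ``an $\OD^N$ subset of $V_{\lambda+1}$ is definable in $V[G,H]$ from ordinals and $V[G]_\delta$'' is unjustified. The model $N=L(V[G]_\delta)[H]$ is defined from $H$ as well, so such a set is a priori only definable in $V[G,H]$ using the generic, non-$\OD$ parameter $H$. Nothing in Lemma~\ref{lemma:StatUltra} gives embeddings for such sets. The set $A$ in its proof only serves to find an $A$-super Reinhardt $\kappa$ that reflects $\dot{C}$. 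The ultraexactingness itself comes from Lemma~\ref{lemma:WoodinForcingExacting} and Proposition~\ref{prop:PreserveExactingGoodForcing}, both of which concern parameter-free $\OD$ sets. So relativizing that lemma does not address $H$.

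The missing ingredient is the weak homogeneity of the collapse over $L(V[G]_\delta)$, and the paper proceeds in two steps:
\begin{enumerate}
\item Since $V[G]_\delta=V[G,H]_\delta$, Proposition~\ref{prop:ExactingDownToL} applied in $V[G,H]$ shows that $\lambda$ is ultraexacting in $L(V[G]_\delta)$.
\item Proposition~\ref{prop:PreserveExactingGoodForcing}, applied in $L(V[G]_\delta)$, lifts this to $L(V[G]_\delta)[H]$. The hypotheses hold there: the collapse is $\OD$, weakly homogeneous, and adds no subsets of $V_\lambda$.
\end{enumerate}
Equivalently, homogeneity puts every $\OD^N$ subset of $V_{\lambda+1}$ into $L(V[G]_\delta)$ as a set that is $\OD$ there. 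Such a set is then $\OD$ in $V[G,H]$, because $V[G]_\delta=V_\delta^{V[G,H]}$. Only after this does your final step go through.
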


\begin{proof}
    Since $\dc_{{<}\delta}$ holds in $V[G]$ and the partial order $\Col({<}\delta,V_\delta)$ is ${<}\delta$-closed in $V[G]$, we have    $V[G]_\delta=V[G,H]_\delta$ and the fact that $H$ codes a well-ordering of $V[G]_\delta$ ensures that $L(V[G]_\delta)[H]$ is a model of $\ZFC$. 
    Let $C$ be a closed unbounded subset of $\delta$ in $L(V[G]_\delta)[H]$. Then Lemma \ref{lemma:StatUltra} shows that $C$ contains an ordinal $\lambda$ that is an ultraexacting cardinal in $V[G,H]$. In this situation, the fact that $V[G]_\delta=V[G,H]_\delta$ allows us to use Proposition \ref{prop:ExactingDownToL} to see that $\lambda$ is an ultraexacting cardinal in $L(V[G]_\delta)$. 
    Finally, since $V[G]_\delta=V[G,H]_\delta$,     we can apply Proposition \ref{prop:PreserveExactingGoodForcing} in $L(V[G]_\delta)$ to conclude that $\lambda$ is also an ultraexacting cardinal in $L(V[G]_\delta)[H]$. 
\end{proof}


\section{The Levy--Solovay phenomenon}\label{sec: smallforcing}

In this section, we investigate the Levy--Solovay phenomenon in the context of exacting and ultraexacting cardinals by proving Theorem \ref{theorem:LevySolovayBeyondHOD} and deriving consequences of the combination of this result with Lemma \ref{lemma:ExactingSmallForcing}.  %
 The below proof heavily relies on Woodin's  stationary tower forcing (see \cite{stationarytower}) and {\cite[Lemma 147]{SEM1}} that allows us to compute the restrictions of lifted elementary embeddings to generic ultrapowers of the ground model.

\begin{proof}[Proof of Theorem \ref{theorem:LevySolovayBeyondHOD}]
  In the following, assume that $\lambda$ is a limit of Woodin cardinals and $\PPP\in V_\lambda$ is a partial order that forces $\lambda$ to have countable cofinality. Fix a Woodin cardinal $\delta<\lambda$ with $\PPP\in V_\delta$ and  an inaccessible cardinal $\delta<\rho<\lambda$. 
    Let $\QQQ_{{<}\delta}$ denote Woodin's countable stationary tower forcing (see \cite{stationarytower} for details). 

   Now, let $g$ be $\QQQ_{<\delta}$-generic over $V$ and let    $$\map{j_g}{V}{M_g\s V[g]}$$ denote the generic elementary embedding induced by $g$. Standard stationary tower forcing arguments (see \cite{stationarytower}) then show that the following statements hold: 
   \begin{itemize}
       \item $\crit(j_g)=\omega_1^V$ and $j_g(\omega_1^V)=\delta$. 
       
       \item $M_g = \Set{j_g(f)(\alpha)}{\map{f}{\omega_1^V}{V}, ~ \alpha <\delta}$. 
       
       \item  $({}^\omega M_g)^{V[g]}\s M_g$.
   \end{itemize}

   Since $\mathcal{P}(\PPP)^V$ is countable in $V[g]$, we can find a filter $G$ on $\PPP$ in $V[g]$ that is generic over $V$. 
  %
    %
    In addition, results of Hamkins, Laver and Woodin (see \cite{MR2364192})  show that there is a parameter $p\in V_\rho$ with the property that the class $V$ is definable in $V[g]$ by a formula with parameter $p$. This directly implies that the class $M_g$ is definable in $V[g]$ from the parameters $g$ and $p$. 
    Finally, the construction of $M_g$ as an ultrapower ensures that all sufficiently large inaccessible cardinals below $\lambda$ are fixed points of $j_g$. Since the embedding $j_g$ is continuous at ordinal of countable cofinality in $V$, this directly yields the following statements:

   \begin{claim*}
    \begin{enumerate}
        \item If $\lambda$ has countable cofinality in $V$, then $j_g(\lambda)=\lambda$. 

        \item If $\lambda$ has uncountable cofinality in $V$ and $C$ is a closed unbounded subset of $\lambda$ in $V$, then $\lambda\in j_g(C)$. \qed 
    \end{enumerate}
   \end{claim*}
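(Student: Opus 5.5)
The plan is to derive both items from three facts stated just before the claim. First, $\crit(j_g)=\omega_1^V$. Second, $({}^\omega M_g)^{V[g]}\subseteq M_g$. Third, all sufficiently large inaccessible cardinals $\mu<\lambda$ satisfy $j_g(\mu)=\mu$; I take this as given, as the paper does. From it I extract the auxiliary fact I will actually use. Let $F$ be the set of inaccessible $\mu<\lambda$ above the relevant bound with $j_g(\mu)=\mu$. Since $\lambda$ is a limit of Woodin cardinals, $F$ is cofinal in $\lambda$. For every $\xi<\mu\in F$ we have $j_g(\xi)<j_g(\mu)=\mu$. Hence $\sup j_g[\lambda]=\lambda$, and $j_g[\lambda]$ is cofinal in $\lambda$.

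For (1), I avoid any general continuity argument and use the critical point directly. Fix in $V$ a strictly increasing sequence $s=\seq{\mu_n}{n<\omega}$ of elements of $F$ that is cofinal in $\lambda$. Since $\crit(j_g)>\omega$, we get $j_g(s)=\seq{j_g(\mu_n)}{n<\omega}=s$. Elementarity applied to ``$\lambda=\sup\ran{s}$'' then gives $j_g(\lambda)=\sup\ran{j_g(s)}=\lambda$.

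For (2), let $C\in V$ be closed unbounded in $\lambda$. The plan has two steps.
\begin{enumerate}
\item Show that $\lambda\in j_g(C)$ reduces to $j_g(\lambda)>\lambda$. For $\gamma\in C$ we have $j_g(\gamma)\in j_g(C)$, and $j_g[C]\subseteq\lambda$ is cofinal in $\lambda$ by the auxiliary fact. So $\lambda$ is a limit point of $j_g(C)$. If $\lambda<j_g(\lambda)$, closedness of $j_g(C)$ in $j_g(\lambda)$, which holds in $M_g$ by elementarity, yields $\lambda\in j_g(C)$.
\item Prove $j_g(\lambda)\neq\lambda$, which is where the hypothesis on $\PPP$ enters. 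Let $\kappa=\cof{\lambda}^V$, which is uncountable. Since $\PPP\in V_\delta$ forces $\cof{\lambda}=\omega$, we get $\kappa\leq|\PPP|<\delta$. By elementarity, $\cof{j_g(\lambda)}^{M_g}=j_g(\kappa)\geq j_g(\omega_1^V)=\delta$, so this cofinality is uncountable in $M_g$. On the other hand, $\delta=\omega_1^{V[g]}$ for the countable tower, so $\kappa<\delta$ has countable cofinality in $V[g]$. Hence $\lambda$ has countable cofinality in $V[g]$. Using $({}^\omega M_g)^{V[g]}\subseteq M_g$, a cofinal $\omega$-sequence in $\lambda$ lies in $M_g$. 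Therefore $\cof{\lambda}^{M_g}=\omega$, and so $\lambda\neq j_g(\lambda)$.
\end{enumerate}

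The main obstacle is the tempting shortcut in (2) of computing $j_g(\lambda)$ via ``continuity'' from a representation of $M_g$ by functions on $\omega_1$. That route suggests $j_g(\lambda)=\lambda$ whenever $\cof{\lambda}>\omega_1$, which would contradict the cofinality computation in step 2. I would therefore rely only on the cofinality-in-$M_g$ argument and the $\omega$-closure of $M_g$, which are robust. I would double-check the precise bound beyond which inaccessibles are fixed, and that $\delta=\omega_1^{V[g]}$.
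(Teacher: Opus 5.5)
Your argument is correct. Item (1) is the paper's reasoning: cofinally many inaccessibles below $\lambda$ are fixed by $j_g$, and $j_g$ is continuous at ordinals of countable $V$-cofinality. Your computation $j_g(s)=s$ via $\crit(j_g)>\omega$ is exactly that continuity argument.

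For item (2), the paper only says that the same two facts ``directly yield'' the conclusion. Continuity at countable cofinalities does not address (2). The real content of (2) is $j_g(\lambda)>\lambda$, since otherwise $j_g(C)\subseteq\lambda$ and $\lambda\notin j_g(C)$. This genuinely uses the standing hypothesis that $\PPP\in V_\delta$ forces $\cof{\lambda}=\omega$, because $j_g$ is continuous at ordinals of $V$-cofinality at least $\delta$. So your step 1 makes explicit what the paper leaves implicit, and your step 2 supplies the discontinuity argument the paper omits.

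Your suspicion about the displayed representation of $M_g$ is justified. With functions $f\colon\omega_1^V\to V$ evaluated at $\alpha<\delta$, the embedding $j_g$ would be continuous at $\omega_2^V$. Since $\omega_2^V<\delta=\omega_1^{V[g]}$, the same cofinality reasoning you use rules this out. The correct representation uses functions on $\mathcal{P}_{\omega_1}(V_\alpha)$, for $\alpha<\delta$, evaluated at $j_g[V_\alpha]$. From it one checks directly that every inaccessible cardinal in $(\delta,\lambda)$ is fixed, which settles the bound you wanted to double-check.

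Finally, your step 2 can be shortened. The paper fixes a $\PPP$-generic filter $G\in V[g]$ over $V$, so $\lambda$ already has countable cofinality in $V[G]\subseteq V[g]$. The $\omega$-closure of $M_g$ then gives $\cof{\lambda}^{M_g}=\omega$. On the other hand, $\cof{j_g(\lambda)}^{M_g}=j_g(\cof{\lambda}^V)$ is uncountable by elementarity. This route avoids both the bound $\cof{\lambda}^V\leq|\PPP|$ and the identity $\delta=\omega_1^{V[g]}$.
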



   \begin{claim*}
     If $\map{j}{V[g]_\lambda}{V[G]_\lambda}$ is an I3-embedding with $\crit(j)>\rho$ in $V[g]$, then the map $j\restriction(M_g)_\lambda$ is an element of $M_g$. 
   \end{claim*}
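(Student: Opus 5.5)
The plan is to compute $j\restriction(M_g)_\lambda$ from $j_g$ and ground-model data, and then use the closure of $M_g$. Throughout I read the hypothesis as "$\map{j}{V[g]_\lambda}{V[g]_\lambda}$ is an I3-embedding in $V[g]$ with $\crit(j)>\rho$". Write $\seq{\kappa_n}{n<\omega}$ for its critical sequence, so $\kappa_0>\rho>\delta$.

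\emph{Step 1: $j$ restricts to an I3-embedding of $V_\lambda$.} The parameters $p$ and $g$ lie in $V[g]_\rho$, so $j$ fixes them. By the Hamkins--Laver--Woodin definability, $V_\lambda$ is definable in $V[g]_\lambda$ from $p$; the defining formula only uses data of rank below $\rho$ and works inside every $V[g]_\kappa$ with $\rho<\kappa<\lambda$ inaccessible. Hence $j[V_\lambda]\subseteq V_\lambda$, and $i=j\restriction V_\lambda$ is elementary from $V_\lambda$ to $V_\lambda$. For $n<\omega$ set $i_n=i\restriction V_{\kappa_n}$.

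\emph{Step 2: each $i_n$ lies in $V$.} This is the step I expect to be the main obstacle. I would use the $\delta$-approximation and cover properties of $V\subseteq V[g]$, since $\QQQ_{{<}\delta}$ has size $\delta$. It then suffices to show $i_n\cap a\in V$ for every $a\in V$ with $|a|^V<\delta$.

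To do this, write $i_n\cap a$ as $j(a')\cap\ldots$ for suitable small ground-model objects. The natural first attempt is to note that any $a\in V$ of size $<\delta<\crit(j)$ is enumerated in $V$ by some $e$ with domain $<\delta$. Since $j$ fixes ordinals below $\kappa_0$, we get $j(e)(\xi)=i(e(\xi))$, and so $i\restriction\ran{e}$ is computed from $j(e)\in V$. If this route fails, my fallback is Laver's argument from \cite{La97}: transfer I3-embeddings of $V[g]_\lambda$ to ground-model embeddings via the small forcing, here applied to $\QQQ_{{<}\delta}\in V_\rho$ below $\crit(j)$.

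\emph{Step 3: commutation with $j_g$.} Every $x\in(M_g)_\lambda$ has the form $j_g(f)(\alpha)$ with $\alpha<\delta$ and $f\in V$. Since $\lambda$ is a fixed point of $j_g$ (first claim above, or at least for the relevant ranks), we may take $f\in V_\lambda$, indeed $f\in V_{\kappa_n}$ for some $n$.

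Since $j$ fixes $g$ and $p$, and $j_g$ (restricted to rank-initial segments) is definable from them inside $V[g]_\lambda$, elementarity gives
$$j(j_g(f)(\alpha)) ~ = ~ j_g(i(f))(\alpha).$$
On the other hand, $j_g(i_m)$ is an elementary map with critical point $j_g(\kappa_0)>\delta$, so it fixes $\alpha$. Therefore
$$j_g(i_m)(j_g(f)(\alpha)) ~ = ~ j_g(i_m(f))(\alpha) ~ = ~ j(x).$$
Thus $j\restriction(M_g)_\lambda=\bigcup_{m<\omega}j_g(i_m)\restriction(M_g)_{\kappa_m'}$ for the appropriate ranks $\kappa_m'$.

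\emph{Step 4: membership in $M_g$.} By Step 2 each $j_g(i_m)$ is an element of $M_g$. Since $({}^\omega M_g)^{V[g]}\subseteq M_g$, the sequence $\seq{j_g(i_m)}{m<\omega}$ belongs to $M_g$. The restriction $j\restriction(M_g)_\lambda$ is then definable from this sequence and $(M_g)_\lambda$, so it is an element of $M_g$.
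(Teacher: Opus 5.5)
Your proposal is correct, but its key step takes a different route from the paper's.

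\textbf{Steps 1--2.} These recover what the paper imports from \cite[Lemma 4]{MR2364192}, namely that $j\restriction V_{\kappa_m}\in V$. Your enumeration argument is sound, with one small correction. The size bound $|\QQQ_{{<}\delta}|=\delta$ only directly yields the $\delta^+$-approximation property, with test sets of size $\leq\delta$, not the $\delta$-approximation property. This is harmless, because your argument handles every $a\in V$ of size ${<}\kappa_0$.

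\textbf{The paper's route.} From $j\restriction V_{\kappa_n}\in V$ the paper derives a $\rho$-complete $(\kappa_0,\kappa_n)$-extender $E\in V$ and lifts it to $E'$ on $V[g]$. It checks via names that $i_{E'}$ agrees with $j$ on $V[g]_{\kappa_{n-1}}$. It then appeals to Woodin's \cite[Lemma 147]{SEM1} to identify $i_{E'}\restriction M_g$ with the internal ultrapower $i_{j_g(E)}$ of $M_g$.

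\textbf{Your route.} You avoid extenders and Lemma 147 entirely. You use the seed representation $x=j_g(f)(\alpha)$ with $\alpha<\delta$, together with the uniform definability of $\kappa\mapsto j_g\restriction V_\kappa$ in $V[g]_\lambda$ from $g$ and $p$, both of which $j$ fixes. This gives the explicit identity $j\restriction(M_g)_{\kappa_m}=j_g(j\restriction V_{\kappa_m})$. Membership in $M_g$ then follows from $\omega$-closure exactly as in the paper.

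\textbf{Comparison.} Your version is shorter and self-contained. The paper's version packages the commutation of $j$ with the generic ultrapower into a general lemma about sufficiently complete extenders. That lemma does not require tracking how $j$ acts on the definition of $j_g$.

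\textbf{One point to tighten.} Do not justify the choice $f\in V_{\kappa_m}$ by $j_g(\lambda)=\lambda$. The cardinal $\lambda$ may have uncountable cofinality in $V$, and then it need not be fixed by $j_g$. What you need, and what holds, is $j_g(\kappa_m)=\kappa_m$ (the paper uses this too). That lets you truncate $f$ to take values in $V_{\kappa_m}$, so your hedge ``for the relevant ranks'' is the correct reading.
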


   \begin{proof}[Proof of the claim]
     Let $\seq{\kappa_n}{n<\omega}$ denote the critical sequence of $j$. 
     
     Fix some $0<n<\omega$. Since $\QQQ_{{<}\delta}$ is an element of $V_{\kappa_0}$, we can apply {\cite[Lemma 4]{MR2364192}} to see that $j\restriction V_{\kappa_n}$ is an element of the ground model $V$ and we let $E$ denote the $(\kappa_0,\kappa_n)$-extender derived from this embedding in $V$. Then $E$ is $\rho$-complete in $V$ (as defined in {\cite[Definition 146]{SEM1}}). 
     Let $\map{i_E}{V}{M_E}$ denote the  ultrapower embedding given by $E$ in $V$. The definition of $E$ then ensures that $i_E\restriction V_{\kappa_{n-1}}=j\restriction V_{\kappa_{n-1}}$. Let $E^\prime$ denote the canonical $(\kappa_0,\kappa_n)$-extender induced by $E$ in $V[g]$ and let $\map{i_{E^\prime}}{V[g]}{M_{E^\prime}}$ denote the corresponding ultrapower embedding constructed in $V[g]$. Then $i_{E^\prime}$ is the canonical lifting of $i_E$ to the $\QQQ_{{<}\delta}$-generic extension and this directly implies that 
       \begin{equation}\label{equation:ExactingEmbeddingsCoincides}
           i_{E^\prime}\restriction V[g]_{\kappa_{n-1}} ~ = ~ j\restriction V[g]_{\kappa_{n-1}},
       \end{equation} because for each $z\in V[g]_{\kappa_{n-1}}$, there is a $\QQQ_{{<}\delta}$-name $\tau\in V_{\kappa_{n-1}}$ with $\tau^g=x$ and $$j(x) ~ = ~ j(\tau^g) ~ = ~ j(\tau)^g ~ = ~ i_E(\tau)^g ~ = ~ i_{E^\prime}(x).$$

     Set $F=j_g(E)$. Note that our setup ensures that $j_g(\kappa_\ell)=\kappa_\ell$ holds for all $\ell<\omega$ and therefore we know that $F$ is a $(\kappa_0,\kappa_n)$-extender in $M_g$. Let $\map{i_F}{M_g}{M_F}$ denote the corresponding ultrapower embedding constructed in $M_g$. Earlier observations now show that the extender $E$ and the embedding $j_g$ satisfy the assumptions of {\cite[Lemma 147]{SEM1}} and this result then shows that $i_{E^\prime}\restriction M_g  =  i_F$ holds. In combination with \eqref{equation:ExactingEmbeddingsCoincides}, this shows that $$j\restriction(M_g)_{\kappa_{n-1}} ~ = ~ i_F\restriction(M_g)_{\kappa_{n-1}} ~ \in ~ M_g.$$

    The above computations show that $j\restriction(M_g)_{\kappa_n}$ is an element of $M_g$ for every $n<\omega$. Since $M_g$ is closed under countable sequences in $V[g]$, we can now conclude that $j\restriction(M_g)_\lambda$ is also an element of $M_g$.  
   \end{proof}

   \begin{claim*}
       If $\lambda$ is an exacting cardinal in $V[G]$, then $\lambda$ is an exacting cardinal in $M_g$. 
   \end{claim*}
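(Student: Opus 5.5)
We aim to verify the characterization of Theorem \ref{fact:CharExacting} inside $M_g$. The key point is that $\lambda$ is already exacting in $V[g]$: since $V[g]$ is obtained from $V[G]$ by further forcing over $V[G]$ with a partial order of size less than $\lambda$ (the quotient of $\PPP\times\QQQ_{{<}\delta}$, or rather of the completion of $\QQQ_{{<}\delta}$, by $G$), Lemma \ref{lemma:ExactingSmallForcing} shows that $\lambda$ is exacting in $V[g]$. So it suffices to transfer exactingness from $V[g]$ down to $M_g$.

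Fix an ordinal definable (in $M_g$) non-empty set $A\subseteq V_{\lambda+1}^{M_g}$, say $A=\Set{x\in V^{M_g}_{\lambda+1}}{M_g\models\varphi(x,\vec{\alpha})}$. Since $M_g$ is definable in $V[g]$ from $g$ and $p$, the set $A$ is definable in $V[g]$ from $g$, $p$ and ordinals; it is not obviously ordinal definable in $V[g]$. I would handle this by absorbing the parameters: consider in $V[g]$ the set $A'$ of all $x\in V[g]_{\lambda+1}$ that code, in a fixed definable way, a triple $\langle g', p', y\rangle$ with $g',p'\in V[g]_\rho$ such that $y$ lies in the set defined by $\varphi(\cdot,\vec\alpha)$ in the class defined from $g',p'$ as $M_g$ is from $g,p$. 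Then $A'$ is ordinal definable in $V[g]$ and non-empty, so exactingness yields $x=\langle g_0,p_0,y_0\rangle,\ x'=\langle g_1,p_1,y_1\rangle\in A'$ and a non-trivial elementary $j:\langle V[g]_\lambda,\in,x\rangle\to\langle V[g]_\lambda,\in,x'\rangle$. The trouble is pinning $g_0=g_1=g$ and $p_0=p_1=p$. To do this, I would use the stronger form of exactingness furnished by Lemma \ref{lemma:CharExactingSingleEmb} (or the proof of Theorem \ref{theorem:ExactingInnerModels}) giving embeddings with $j\restriction\rho=\id_\rho$ and $\crit(j)>\rho$: since parameters in $V[g]_\rho$ are fixed, $j$ maps the code of $\langle g,p,y\rangle$ to the code of $\langle g,p,j_+(y)\rangle$, and we may restrict $A'$ to codes with first coordinates $g,p$ by quantifying over the (set-many) possibilities and using that the pair $\langle g, p\rangle$ is fixed. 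Concretely, argue as in Theorem \ref{theorem:ExactingInnerModels}: take $X\prec V[g]_\zeta$ and $j:X\to V[g]_\zeta$ with $j\restriction\rho=\id$, $V_\lambda\cup\{\lambda\}\cup V[g]_\rho\subseteq X$; then $j(g)=g$, $j(p)=p$, hence $j(M_g\cap V[g]_\zeta)=M_g\cap V[g]_\zeta$ and $j(A)=A$ for $A$ picked $\Sigma_2$-minimally in $M_g$ from $\lambda$.

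With that, pick $y\in A\cap X$, set $y'=j(y)\in A$, and $j\restriction V[g]_\lambda$ is an I3-embedding of $V[g]_\lambda$ with $\crit>\rho$ carrying $y$ to $y'$. By the previous claim (applied with $V[G]$ replaced appropriately), $j\restriction(M_g)_\lambda\in M_g$, and it is a non-trivial elementary embedding $\langle (M_g)_\lambda,\in,y\rangle\to\langle (M_g)_\lambda,\in,y'\rangle$ in $M_g$, with $\crit<\lambda$. This verifies the characterization in $M_g$, using the Claim that $j_g(\lambda)=\lambda$ in the countable cofinality case so that $\lambda$ is a cardinal of $M_g$ with $V^{M_g}_\lambda=(M_g)_\lambda$. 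The main obstacle is the parameter issue: ensuring the witnessing embedding fixes $g$ and $p$ so that $M_g$ and the chosen $A$ are mapped to themselves; I expect the critical-point-above-$\rho$ version of exactingness in $V[g]$ (obtained since $\lambda$ exacting allows $j\restriction\alpha=\id_\alpha$ for any $\alpha<\lambda$) to resolve it.
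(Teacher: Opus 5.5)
Your argument, once you drop the $A'$-coding detour, is the paper's proof. You take the least ordinal definable counterexample $A$ in $M_g$, so that $\{A\}$ is definable in $V[g]$ from $\lambda$, $g$, $p$. You then use exactingness of $\lambda$ in $V[g]$ (via Lemma~\ref{lemma:ExactingSmallForcing}) to get $j\colon X\to V[g]_\zeta$ with $j\restriction\rho=\id_\rho$, which fixes $g$ and $p$ and hence fixes $A$ and $(M_g)_\lambda$. Finally, the preceding claim gives $j\restriction(M_g)_\lambda\in M_g$, a contradiction.

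There are two small imprecisions. First, $A$ must be this minimal counterexample: an arbitrary ordinal definable $A$ with ordinal parameters need not be fixed by $j$, so the argument is by contradiction rather than a direct verification. Second, your appeal to $j_g(\lambda)=\lambda$ is unnecessary, since the claim does not depend on the cofinality of $\lambda$ in $V$.
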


   \begin{proof}[Proof of the claim]
       Assume, towards a contradiction, that $\lambda$ is not an exacting cardinal in $M_g$. By Theorem \ref{fact:CharExacting}, this shows that, in $M_g$, there is a non-empty ordinal definable subset $A$ of $V_{\lambda+1}$ with the property that for all $x,y\in A$, there is no non-trivial elementary embedding of $\langle V_\lambda,\in,x\rangle$ into $\langle V_\lambda,\in,y\rangle$. Let $A$ be the least such subset in $M_g$ with respect to the canonical well-ordering of ordinal definable sets in $M_g$. Then the set $\{A\}$ is definable in $M_g$ by a formula with parameter $\lambda$. In this situation, our earlier observations show that the set $\{A\}$ is definable in $V[g]$ by a formula with parameters $\lambda$, $g$ and $p$.

       Since $V[g]$ is a forcing extension of $V[G]$ using a partial order of size less than $\rho$, our assumption allows us to use Lemma \ref{lemma:ExactingSmallForcing}.\eqref{item:ExactingUltraSmallForcing1} to show that $\lambda$ is an exacting cardinal in $V[g]$. 
       Pick   $\zeta>\lambda$ such that $V[g]_\zeta$ is sufficiently elementary in $V[g]$ and use the exactingness of $\lambda$ in $V[g]$ to  find  an elementary submodel $X$ of $V[g]_\zeta$ in $V[g]$ containing $V[g]_\lambda\cup\{\lambda\}$  as well as   an elementary embedding $\map{j}{X}{V[g]_\zeta}$ in $V[g]$ satisfying $j\restriction\rho=\id_\rho$, $j\restriction\lambda\neq\id_\lambda$ and $j(\lambda)=\lambda$. 
       The fact that $\lambda$, $g$ and $p$ are elements of $X$ that are fixed by $j$ then ensures that $A$ and $(M_g)_\lambda$ are  elements of $X$ that are fixed by $j$. 
       By elementarity, there exists an $x\in A\cap X$. Set $y=j(x)\in A$. Then $j\restriction(M_g)_\lambda$ is a non-trivial  elementary embedding of $\langle(M_g)_\lambda,\in,x\rangle$ into $\langle(M_g)_\lambda,\in,y\rangle$. 
       Since $j\restriction V[g]_\lambda$ is an I3-embedding with critical point greater than $\rho$ in $V[g]$, our previous claim shows that the map $j\restriction(M_g)_\lambda$ is an element of $M_g$, contradicting our choice of $A$. 
   \end{proof}

      \begin{claim*}
       If $\lambda$ is an ultraexacting cardinal in $V[G]$, then $\lambda$ is an ultraexacting cardinal in $M_g$. 
   \end{claim*}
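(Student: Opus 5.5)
We argue exactly as in the previous claim, now with the ultraexacting characterization (Theorem \ref{fact:CharUltraexacting}) in place of Theorem \ref{fact:CharExacting}. Assume, towards a contradiction, that $\lambda$ is not ultraexacting in $M_g$. Then, in $M_g$, there is an ordinal definable subset $A$ of $V_{\lambda+1}$ such that no non-trivial elementary embedding of $\langle (M_g)_{\lambda+1},\in,A\rangle$ into itself has critical point below $\lambda$. Take $A$ to be the least such set in the canonical well-ordering of $\OD^{M_g}$. Then $\{A\}$ is definable in $M_g$ from $\lambda$. Since $M_g$ is definable in $V[g]$ from $g$ and $p$, the set $\{A\}$ is definable in $V[g]$ from $\lambda$, $g$ and $p$.

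Next we transfer ultraexactingness to $V[g]$. Because $V[g]$ is a small generic extension of $V[G]$, Lemma \ref{lemma:ExactingSmallForcing}.\eqref{item:ExactingUltraSmallForcing2} shows that $\lambda$ is ultraexacting in $V[g]$. Choose $\zeta$ so that $V[g]_\zeta$ is sufficiently elementary in $V[g]$. Ultraexactingness then gives $X\prec V[g]_\zeta$ with $V[g]_\lambda\subseteq X$, together with an elementary $\map{j}{X}{V[g]_\zeta}$ such that $j\restriction\rho=\id_\rho$, $j\restriction\lambda\neq\id_\lambda$, $j(\lambda)=\lambda$ and $j\restriction V[g]_\lambda\in X$. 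We also want $\lambda\in X$. Either we use the variant with $V_\lambda\cup\{\lambda\}\subseteq X$, which the single-embedding characterization supplies, or we note that $j\restriction V[g]_\lambda\in X$ already yields $\lambda\in X$ because $\lambda$ is definable from this map. The parameters $\lambda$, $g$ and $p$ all lie in $X$: $g$ and $p$ have rank below $\rho$ and are fixed by $j$. Hence $A$, $(M_g)_\lambda$ and $M_g\cap V[g]_{\lambda+1}$ belong to $X$ and are fixed by $j$.

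Now we build the embedding. By the previous claim, $k=j\restriction(M_g)_\lambda$ belongs to $M_g$, since $j\restriction V[g]_\lambda$ is an I3-embedding with critical point above $\rho$. Inside $M_g$, let $k_+$ be the canonical extension $x\mapsto\bigcup_{n}k(x\cap V_{\kappa_n})$ of $k$ to $(M_g)_{\lambda+1}$, using a cofinal $\omega$-sequence that lies in $M_g$; such a sequence exists because $\lambda$ has countable cofinality there. The argument of Theorem \ref{theorem:UltraexactingInnerModels} then applies verbatim. Since $j\restriction V[g]_\lambda\in X$, the corresponding extension $i$ of $j\restriction V[g]_\lambda$ lies in $X$. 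Moreover, $i$ agrees with $j$ on $V[g]_{\lambda+1}\cap X$, because $j$ fixes $\omega$ and the cofinal sequence can be chosen in $X$. Therefore $X$ satisfies ``$i$ is a non-trivial elementary embedding of $\langle V_{\lambda+1},\in,A'\rangle$ into itself preserving the class $M_g\cap V_{\lambda+1}$'', where $A'$ is $A$. By elementarity, $i$ restricted to $(M_g)_{\lambda+1}$, which equals $k_+$, is an elementary embedding of $\langle (M_g)_{\lambda+1},\in,A\rangle$ into itself. The reason is that $(M_g)_{\lambda+1}$ is a definable structure fixed by $j$, and $X\prec V[g]_\zeta$ reflects the statement that $i\restriction(M_g)_{\lambda+1}$ is elementary for this fixed structure. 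Its critical point lies below $\lambda$, and $k_+\in M_g$. This contradicts the choice of $A$.

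The main obstacle is this last step: certifying that $k_+$ is fully elementary on $\langle (M_g)_{\lambda+1},\in,A\rangle$, rather than merely on $(M_g)_\lambda$. Our plan is to check, in $X$, that $i$ maps $(M_g)_{\lambda+1}$ into itself and fixes $A$. For elements of $X$ this follows from $i\restriction X=j\restriction X$ and $j(A)=A$, and the elementarity of $X$ in $V[g]_\zeta$ transfers these facts to $V[g]_\zeta$. One further point needs care: ensuring that $k_+$ computed in $M_g$ coincides with $i\restriction(M_g)_{\lambda+1}$. This uses the fact that both are determined by $k$, by $\Sigma_0$-uniqueness of the extension.
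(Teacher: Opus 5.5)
Your proposal is correct and follows the paper's proof essentially step for step. You take the least counterexample $A$ (definable in $V[g]$ from $\lambda,g,p$), transfer ultraexactingness to $V[g]$ via Lemma \ref{lemma:ExactingSmallForcing}, use the earlier claim to get $j\restriction(M_g)_\lambda\in M_g$, and then pull elementarity of the canonical extension on $\langle (M_g)_{\lambda+1},\in,A\rangle$ back through $X\prec V[g]_\zeta$. The only difference is cosmetic: where the paper cites {\cite[Lemma 3.7]{ABL}} for the agreement of $j$ with $(j\restriction V[g]_\lambda)_+$ on $V[g]_{\lambda+1}\cap X$, you re-derive it by the argument of Theorem \ref{theorem:UltraexactingInnerModels}.
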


   \begin{proof}[Proof of the claim]
       Assume, towards a contradiction, that $\lambda$ is not an ultraexacting cardinal in $M_g$.  Theorem \ref{fact:CharUltraexacting} then implies that, in $M_g$, there is an ordinal definable subset $A$ of $V_{\lambda+1}$ with the property that there is no non-trivial elementary embedding of $\langle V_{\lambda+1},\in,A\rangle$ into itself. Let $A$ be the least such subset in $M_g$ with respect to the canonical well-ordering of ordinal definable sets in $M_g$. Since the set $\{A\}$ is again definable in $M_g$ by a formula with parameter $\lambda$, it follows that the set $\{A\}$ is definable in $V[g]$ by a formula with parameters $\lambda$, $g$ and $p$. 

      Next, we apply Lemma \ref{lemma:ExactingSmallForcing}.\eqref{item:ExactingUltraSmallForcing2} to see that $\lambda$ is an ultraexacting cardinal in $V[g]$. 
       Fix  $\zeta>\lambda$ with the property that $V[g]$ is sufficiently elementary in $V[g]$,  an elementary submodel $X$ of $V[g]_\zeta$ in $V[g]$ containing $V[g]_\lambda$  and an elementary embedding $\map{j}{X}{V[g]_\zeta}$ in $V[g]$ satisfying $j\restriction\rho=\id_\rho$, $j\restriction\lambda\neq\id_\lambda$, $j(\lambda)=\lambda$ and $j\restriction V[g]_\lambda\in X$. 
       Then $j\restriction V[g]_\lambda$ is an I3-embedding in $V[g]$ and an earlier claim shows that $j\restriction(M_g)_\lambda$ is an element of $M_g$. Define $$\map{j_* ~ = ~ (j\restriction V[g]_\lambda)_+^{V[g]}}{V[g]_{\lambda+1}}{V[g]_{\lambda+1}}$$ (see \S\ref{subsection:RankIntoRank}). An application of  {\cite[Lemma 3.7]{ABL}} then shows that $j_*$ is an I1-embedding that is an element of $X$ and satisfies  $$j\restriction(V[g]_{\lambda+1}\cap X) ~ = ~ j_*\restriction(V[g]_{\lambda+1}\cap X).$$
       Since our setup ensures that $A$ and $(M_g)_{\lambda+1}$ are elements of $X$ that are fixed by $j$, it follows that, in $X$, the map $j_*\restriction(M_g)_{\lambda+1}$ is an elementary embedding of $\langle(M_g)_{\lambda+1},\in,A\rangle$ into itself. The correctness properties of $X$ then ensure that $j_*\restriction(M_g)_{\lambda+1}$ has the same property in $V[g]$.

       Finally, the fact that $j\restriction(M_g)_\lambda$ is an I3-embedding in $M_g$, allows us to define $$\map{i ~ = ~ (j\restriction(M_g)_\lambda)_+^{M_g}}{(M_g)_{\lambda+1}}{(M_g)_{\lambda+1}}.$$ 
       Since $i\restriction(M_g)_\lambda=j\restriction(M_g)_\lambda=j_*\restriction(M_g)_\lambda$,  it follows that $$j_*\restriction(M_g)_{\lambda+1} ~ = ~ i ~ \in ~ M_g$$ and therefore we can conclude that $M_g$ contains a non-trivial elementary embedding of $\langle(M_g)_{\lambda+1},\in,A\rangle$ into itself, contradicting our choice of $A$. 
   \end{proof}

  We are now ready to prove the two parts of the theorem:

  \eqref{item:LevySolovayBeyondHOD1} Assume that $\lambda$ is a cardinal with the property that $$\one\Vdash_\PPP``\textit{$\lambda$ is an exacting cardinal }"$$ holds for some partial order $\PPP\in V_\lambda$. 
  Since exacting cardinals are limits of Woodin cardinals,  {\cite[Corollary 25]{MR2063629}} shows that $\lambda$ is a limit of Woodin cardinals in $V$. In particular, our assumption allows us to  carry out the above constructions and conclude that $\lambda$ is an exacting cardinal in the model $M_g$. 
  If $\lambda$ has countable cofinality in $V$, then an earlier claim shows that $j_g(\lambda)=\lambda$ and the elementarity of $j_g$ implies that $\lambda$ is an exacting cardinal in $V$. Moreover, if $\lambda$ has uncountable cofinality and $C$ is a closed unbounded subset of $\lambda$ in $V$, then an earlier claim shows that $\lambda$ is an element of $j_g(C)$ and we can conclude that $C$ contains an exacting cardinal in $V$.

  \eqref{item:LevySolovayBeyondHOD2} Assume that $\lambda$ is a cardinal with the property that $$\one\Vdash_\PPP``\textit{$\lambda$ is an ultraexacting cardinal }"$$ holds for some partial order $\PPP\in V_\lambda$. 
    As above, this assumption allows us to carry out the above constructions and conclude that $\lambda$ is an ultraexacting cardinal in the model $M_g$. In addition, we can again conlcude that, if $\lambda$ has countable cofinality in $V$, then $\lambda$ is an ultraexacting cardinal in $V$, and, if $\lambda$ has uncountable cofinality in $V$, then every closed unbounded subset of $\lambda$ in $V$ contains a cardinal that is ultraexacting in $V$.  
\end{proof}

By combining Theorem \ref{theorem:LevySolovayBeyondHOD} with Lemma \ref{lemma:ExactingSmallForcing}, we immediately obtain the following Levy--Solovay result that is restricted to countable cofinalities:

\begin{cor}\label{cor:LevySolovayCountableCof}
 Let $\lambda$ be a cardinal of countable cofinality. 
 \begin{enumerate}
     \item The following statements are equivalent: 
        \begin{enumerate}
      \item $\lambda$ is an exacting cardinal. 

      \item ${\one}\Vdash_\PPP\textit{$``\lambda$ is an exacting cardinal }"$ holds for every partial order $\PPP\in H(\lambda)$.  
      
      \item ${\one}\Vdash_\PPP\textit{$``\lambda$ is an exacting cardinal }"$ holds for some partial order $\PPP\in H(\lambda)$.
  \end{enumerate}

       \item The following statements are equivalent: 
        \begin{enumerate}
      \item $\lambda$ is an ultraexacting cardinal. 

      \item ${\one}\Vdash_\PPP\textit{$``\lambda$ is an ultraexacting cardinal }"$ holds for every partial order $\PPP\in H(\lambda)$.  
      
      \item ${\one}\Vdash_\PPP\textit{$``\lambda$ is an ultraexacting cardinal }"$ holds for some partial order $\PPP\in H(\lambda)$. \qed
  \end{enumerate}
 \end{enumerate} 
\end{cor}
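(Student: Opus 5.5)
The plan is to obtain each of the two equivalences by chaining Lemma \ref{lemma:ExactingSmallForcing} with Theorem \ref{theorem:LevySolovayBeyondHOD}. I treat the exacting case; the ultraexacting case is word for word the same, using the second clauses of both results.

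\emph{(a) implies (b).} Assume $\lambda$ is exacting and let $\PPP\in H(\lambda)$ be arbitrary. Lemma \ref{lemma:ExactingSmallForcing}.\eqref{item:ExactingUltraSmallForcing1} gives $\one\Vdash_\PPP``\check{\lambda}$ is an exacting cardinal$"$ directly. The lemma is stated for partial orders in $H(\lambda)$, so no extra work is needed here.

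\emph{(b) implies (c).} This is trivial: pick any partial order in $H(\lambda)$, for example the trivial one.

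\emph{(c) implies (a).} Assume $\one\Vdash_\PPP``\lambda$ is exacting$"$ for some $\PPP\in H(\lambda)$. Since $\lambda$ has countable cofinality in $V$ by hypothesis, clause (a) of Theorem \ref{theorem:LevySolovayBeyondHOD}.\eqref{item:LevySolovayBeyondHOD1} applies and yields that $\lambda$ is exacting.

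The only point needing attention is a bookkeeping mismatch. Theorem \ref{theorem:LevySolovayBeyondHOD} is stated for $\PPP\in H(\lambda)$, but its proof works with $\PPP\in V_\lambda$. I will reconcile this as follows. In case (c), $\lambda$ is exacting in the extension, hence a strong limit there and therefore also in $V$, so $H(\lambda)=V_\lambda$. Alternatively, one can replace $\PPP$ by an isomorphic copy in $V_\lambda$, since its size is below $\lambda$.

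I therefore expect no real obstacle: the corollary is an immediate consequence of the two cited results.
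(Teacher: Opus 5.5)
Your proposal is correct and is exactly the paper's argument: (a)$\Rightarrow$(b) is Lemma \ref{lemma:ExactingSmallForcing}, (b)$\Rightarrow$(c) is trivial, and (c)$\Rightarrow$(a) is clause (a) of Theorem \ref{theorem:LevySolovayBeyondHOD}; the ultraexacting case is identical. Your bookkeeping worry does not arise, since $H(\lambda)\subseteq V_\lambda$ holds for every infinite cardinal $\lambda$. Also, your first reconciliation is not quite right as stated: being a strong limit does not by itself give $H(\lambda)=V_\lambda$ (that requires $\lambda=\beth_\lambda$). It does hold here, because $\lambda$ is a limit of inaccessibles, but it is not needed.
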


Next, we now prove that the weak HOD Conjecture yields  Levy-Solovay theorems for exacting and ultraexacting cardinals:

\begin{proof}[Proof of Theorem \ref{theorem: WeakHODConjecture and LevySolovay}]
  Assume that the weak HOD Conjecture is true. By Theorem \ref{theorem:ExactingLimit}, we then know that exacting cardinals are not limits of exacting cardinals. An application of Theorem \ref{theorem:LevySolovayBeyondHOD} then shows that no cardinal $\lambda$ of uncountable cofinality is forced to be exacting by a partial order in $H(\lambda)$. In particular, since exacting cardinals have countable cofinality and ultraexacting cardinals are exacting, the equivalences stated in the theorem directly follow from the ones provided by Corollary \ref{cor:LevySolovayCountableCof}.  
\end{proof}

We end this section by stating another  consequence of Theorem \ref{theorem:LevySolovayBeyondHOD}:

\begin{cor}\label{cor:LevySolovayAbove}
 Let $\kappa$ be a cardinal. 
 \begin{enumerate}
     \item The following statements are equivalent: 
      \begin{enumerate}
          \item There is an exacting cardinal greater than $\kappa$. 

          \item There is a partial order of cardinality at most $\kappa$ that forces a cardinal greater than $\kappa$ to be exacting. 
      \end{enumerate}

     \item The following statements are equivalent: 
      \begin{enumerate}
          \item There is an ultraexacting cardinal greater than $\kappa$. 

          \item There is a partial order of cardinality at most $\kappa$ that forces a cardinal greater than $\kappa$ to be ultraexacting. \qed
      \end{enumerate} 
 \end{enumerate}
\end{cor}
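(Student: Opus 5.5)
Both parts of Corollary~\ref{cor:LevySolovayAbove} are obtained in the same way, by combining Lemma~\ref{lemma:ExactingSmallForcing} with Theorem~\ref{theorem:LevySolovayBeyondHOD}. We treat the exacting case; the ultraexacting case is word-for-word the same, using clauses \eqref{item:ExactingUltraSmallForcing2} and \eqref{item:LevySolovayBeyondHOD2} in place of \eqref{item:ExactingUltraSmallForcing1} and \eqref{item:LevySolovayBeyondHOD1}.

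For (a)$\Rightarrow$(b), let $\lambda>\kappa$ be exacting and take the trivial partial order, which has cardinality $1\leq\kappa$. It forces $\lambda$ to be exacting. (Equally, Lemma~\ref{lemma:ExactingSmallForcing} shows that every partial order of size at most $\kappa$ works, since such a partial order is isomorphic to one in $H(\kappa^+)\subseteq H(\lambda)$.)

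For (b)$\Rightarrow$(a), let $\PPP$ have cardinality at most $\kappa$ and let $p\in\PPP$ force that some cardinal above $\kappa$ is exacting.
\begin{itemize}
\item \emph{Fixing the cardinal and the condition.} First pick a condition $q\leq p$ and a ground-model ordinal $\lambda>\kappa$ such that $q$ forces ``$\check\lambda$ is an exacting cardinal''. Passing to the cone below $q$ and replacing $\PPP$ by an isomorphic copy on a subset of $\kappa$, we get a partial order $\PPP'\in H(\kappa^+)$ with $\one\Vdash_{\PPP'}``\lambda$ is exacting''.
\item \emph{$\lambda$ is a cardinal in $V$.} This holds because $\lambda$ is a cardinal in the extension and forcing cannot turn a non-cardinal into a cardinal.
\item \emph{Placing $\PPP'$ in $H(\lambda)$.} Since $\lambda>\kappa$ is a cardinal, we have $\kappa^+\leq\lambda$, so $\PPP'\in H(\lambda)$.
\end{itemize}

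Now apply Theorem~\ref{theorem:LevySolovayBeyondHOD}\eqref{item:LevySolovayBeyondHOD1}.
\begin{itemize}
\item If $\cof{\lambda}=\omega$, then $\lambda$ itself is exacting in $V$ and exceeds $\kappa$.
\item If $\lambda$ has uncountable cofinality, then every club in $\lambda$ contains an exacting cardinal. Applying this to the club $(\kappa,\lambda)\cap\Lim$ gives an exacting cardinal strictly between $\kappa$ and $\lambda$.
\end{itemize}
In either case (a) holds.

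The only delicate step is the bookkeeping in (b)$\Rightarrow$(a). The hypothesis of Theorem~\ref{theorem:LevySolovayBeyondHOD} requires that a fixed ground-model $\lambda$ be forced exacting by $\one$, so we must restrict to a condition deciding the witness and then check that the restricted partial order lies in $H(\lambda)$. Once these two points are in place, everything else is routine.
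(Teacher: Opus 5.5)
Your proposal is correct and follows the paper's route. The paper states this corollary as an immediate consequence of Theorem~\ref{theorem:LevySolovayBeyondHOD}, with the easy direction witnessed by the trivial forcing, and your bookkeeping supplies the details it leaves implicit: passing to a condition that decides the witness $\lambda$, noting that $\lambda$ is a cardinal in $V$, and checking that the restricted poset lies in $H(\kappa^+)\subseteq H(\lambda)$.
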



\section{Exacting cardinals and  HOD}\label{sec:ExactingInHOD}

 In this section, we analyze the extent of large cardinal properties that exacting cardinals provably possess in $\HOD$. This is a line of research initiated by Cheng--Hamkins--Friedman in \cite{ChengFriedmanHamkins} and continued by Apter--Friedman--Fuchs \cite{ApterFriedmanFuchs} and Goldberg--Osinski--Poveda \cite{GOP}.
 
 The section is divided in  \S\S\ref{sec: exactings in HOD} and \S\S\ref{sec: successors of exactings}. In \S\S\ref{sec: exactings in HOD} we show that exacting cardinals $\lambda$ are always subtle in HOD and show that if $V_\lambda\models ``\HOD$ hypothesis'' then $\lambda$ possesses strong large cardinal properties in $\HOD$. In  \S\S\ref{sec: successors of exactings} we prove (assuming appropriate large cardinals) the consistency with $\mathrm{ZF}$ of $``\lambda$ is exacting and $\lambda^+$ is extendible in $\HOD_x$ for all $x\s \lambda$".


\subsection{Exacting cardinals in HOD}\label{sec: exactings in HOD}
Recall that an infinite cardinal $\kappa$ is \emph{subtle} (see \cite{jensennotes}) if for every sequence $\seq{E_\alpha}{\alpha<\kappa}$ with $E_\alpha\subseteq\alpha$ for all $\alpha<\kappa$ and every closed unbounded subset $C$ of $\kappa$, there exist $\alpha,\beta\in C$ with $\alpha<\beta$ and $E_\alpha=E_\beta\cap\alpha$. Since subtle cardinals are Mahlo, the next observation slightly strengthens {\cite[Theorem 2.10]{ABL}}.

\begin{prop}\label{prop:Subtle}
    Exacting cardinals are subtle in $\HOD$. 
\end{prop}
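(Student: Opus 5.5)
We argue by contradiction, using the exactingness of $\lambda$ to reflect a counterexample in $\HOD$ to itself. Assume that $\lambda$ is exacting but not subtle in $\HOD$. Since exacting cardinals are regular in $\HOD$, the failure of subtlety in $\HOD$ is witnessed by a sequence $\vec{E}=\seq{E_\alpha}{\alpha<\lambda}\in\HOD$ with $E_\alpha\subseteq\alpha$ and a club $C\subseteq\lambda$ in $\HOD$ such that $E_\alpha\neq E_\beta\cap\alpha$ for all $\alpha<\beta$ in $C$. We choose the pair $\langle\vec{E},C\rangle$ to be least in the canonical well-ordering of $\HOD$. This makes $\{\langle\vec{E},C\rangle\}$ definable in $V$ from $\lambda$; in fact, as in the proof of Theorem \ref{theorem:ExactingInnerModels}, it is $\Sigma_2$-definable from $\lambda$.

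Next we apply the definition of exactingness with $\zeta\in C^{(2)}$ (or larger). This gives an elementary $X\prec V_\zeta$ with $V_\lambda\cup\{\lambda\}\subseteq X$ and an elementary $j\colon X\to V_\zeta$ with $j(\lambda)=\lambda$ and $j\restriction\lambda\neq\id_\lambda$. By definability and correctness, $\vec{E},C\in X$, and $j(\vec{E})=\vec{E}$ and $j(C)=C$. Let $\kappa=\crit(j)<\lambda$ and let $\seq{\kappa_n}{n<\omega}$ be the critical sequence of $j\restriction V_\lambda$. The standard I3 argument gives $\sup_n\kappa_n=\lambda$.

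The key step is to show that $\kappa\in C$. For every $\beta<\kappa$, the ordinal $\min(C\setminus\beta)$ is computed from $C$ and $\beta$, and it is fixed by $j$ because $j(C)=C$ and $j(\beta)=\beta$. If this minimum were at least $\kappa$, it would be some ordinal $\gamma\in[\kappa,\lambda)$ with $j(\gamma)=\gamma$. To rule this out, I would instead note that $C\cap\kappa$ is unbounded in $\kappa$: for $\beta<\kappa$, elementarity applied to ``there is an element of $C$ above $\beta$ below $\kappa_1$'' is awkward. A cleaner route is to use that $j(C\cap\kappa)=C\cap\kappa_1$, since $C\cap\kappa\in V_\lambda\subseteq X$, and that $j$ is the identity below $\kappa$. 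Then $C\cap\kappa_1\cap\kappa=C\cap\kappa$, and if $C\cap\kappa$ were bounded by some $\beta<\kappa$, elementarity would force $C\cap\kappa_1$ to be bounded by $\beta$. Applying the same reasoning along the critical sequence would make $C$ bounded in $\lambda$, which is a contradiction. Hence $\kappa$ is a limit point of $C$, and so $\kappa\in C$. By the same argument, $\kappa_1=j(\kappa)\in C$.

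Finally, $E_\kappa\in V_\lambda\subseteq X$ and
$$j(E_\kappa)=j(\vec{E})_{j(\kappa)}=E_{\kappa_1}.$$
Since $E_\kappa\subseteq\kappa=\crit(j)$, we have $j(E_\kappa)\cap\kappa=E_\kappa$. Therefore $E_{\kappa_1}\cap\kappa=E_\kappa$, where $\kappa<\kappa_1$ are both in $C$. This contradicts the choice of $\langle\vec{E},C\rangle$.

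The main obstacle is to justify carefully that $j(\vec{E})=\vec{E}$ and $j(C)=C$. This requires the $\Sigma_2$-definability of the least counterexample in the $\OD$ well-order, as in \cite[Proposition 3.9]{Sigma1Partitions}, together with the fact that $X$ and $V_\zeta$ are sufficiently correct. It also requires computing $j(\vec{E})(j(\kappa))$ via elementarity of $j\restriction X$ rather than through $V_\lambda$ alone. The same approach gives Mahloness, and the statement in the proposition is obtained exactly as outlined.
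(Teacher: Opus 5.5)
Your proof is correct and follows the paper's argument. Both take the least counterexample $\langle C,\vec{E}\rangle$ in $\HOD$, use its definability from $\lambda$ to get $j(C)=C$ and $j(\vec{E})=\vec{E}$, show $\kappa=\crit(j)\in C$, and obtain the contradiction $E_{j(\kappa)}\cap\kappa=E_\kappa$. The one difference is the step $\kappa\in C$: you propagate a bound on $C\cap\kappa$ along the critical sequence, while the paper notes that otherwise $\min(C\setminus\kappa)$ would be a fixed point of $j$ in $(\kappa,\lambda)$, which is impossible — so your abandoned first attempt would in fact have gone through.
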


\begin{proof}
    Assume, towards a contradicting that there is an exacting cardinal $\lambda$ that is not subtle in $\HOD$. Let $\langle C,\vec{E}\rangle$ be the least pair in the canonical well-ordering of $\HOD$ with the property that $C$ is a closed unbounded subset of $\lambda$ and $\vec{E}=\seq{E_\alpha}{\alpha<\lambda}$ is a sequence with $E_\alpha\subseteq\alpha$ for all $\alpha<\lambda$ and $E_\alpha\neq E_\beta\cap\alpha$ for all $\alpha,\beta\in C$ with $\alpha<\beta$. Since the class of all proper initial segments of the canonical well-ordering of $\HOD$ is definable by a $\Sigma_2$-formula without parameters, it follows that the sets $\{C\}$ and $\{\vec{E}\}$ are definable by a $\Sigma_3$-formula with parameter $\lambda$. 

    Pick $\lambda<\zeta\in C^{(3)}$. Our assumptions now yield an elementary submodel $X$ of $V_\zeta$ with $V_\lambda\cup\{\lambda\}\subseteq X$ and an elementary embedding $\map{j}{X}{V_\zeta}$ with $j(\lambda)=\lambda$ and $j\restriction\lambda\neq\id_\lambda$. We then know that $C,\vec{E}\in X$ with $j(C)=C$ and $j(\vec{E})=\vec{E}$. Let $\kappa$ denote the minimal ordinal below $\lambda$ with $j(\kappa)>\kappa$.

    \begin{claim*}
       $\kappa\in C$. 
    \end{claim*}

    \begin{proof}[Proof of the Claim]
      Assume, towards a contradiction, that $\kappa$ is not an element of $C$ and set $\alpha=\min(C\setminus\kappa)>\kappa$. We then know that the set $\{\alpha\}$ is definable by a $\Sigma_3$-formula with parameters in $\kappa\cup\{\lambda\}$ and hence it follows that $j(\alpha)=\alpha$. But, this contradicts the fact that $\map{j\restriction V_\lambda}{V_\lambda}{V_\lambda}$ is an elementary embedding and hence, by Kunen's Theorem, no ordinal in the interval $(\kappa,\lambda)$ is fixed by $j$. 
    \end{proof}

    Elementarity now ensures that $j(\kappa)>\kappa$ is also an element of $C$ and $j(E_\kappa)=E_{j(\kappa)}$ holds. Since our setup  implies that $E_{j(\kappa)}\cap\kappa=j(E_\kappa)\cap\kappa=E_\kappa$, we have derived a contradiction.  
\end{proof}

Next, we use Woodin's notion of $\omega$-strong measurability to prove that exacting cardinals have strong large cardinal properties in $\HOD$. At the core of this analysis lies the following result:

\begin{theorem}\label{theorem:ExactingInHOD}
  Let  $\lambda$ be an exacting cardinal with the property that unboundedly many regular cardinals below $\lambda$ are not $\omega$-strongly measurable in $\HOD$. 
  Then, in $\HOD$, for every $A\in V_{\lambda+1}$ and every closed unbounded subset $C$ of $\lambda$, there is $\eta\in C$ with the property that $\langle V_\eta,\in,A\cap V_\eta\rangle$ is an elementary substructure of $\langle V_\lambda,\in,A\rangle$ 
  and there is a non-trivial elementary embedding of $\langle V_\eta,\in,A\cap V_\eta\rangle$ into itself whose critical sequence consists of elements of $C$. 
\end{theorem}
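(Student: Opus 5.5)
The plan is a minimal-counterexample argument in $\HOD$. Assume the conclusion fails. Let $\langle A,C\rangle$ be the least pair in the canonical well-ordering of $\HOD$ such that $A\in V_{\lambda+1}^{\HOD}$, $C$ is club in $\lambda$, and no $\eta\in C$ satisfies the conclusion in $\HOD$. As in the proof of Proposition \ref{prop:Subtle}, $\{\langle A,C\rangle\}$ is $\Sigma_3$-definable from $\lambda$.

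Using the hypothesis, fix a regular $\delta<\lambda$ that is not $\omega$-strongly measurable in $\HOD$. Pick $\lambda<\zeta\in C^{(3)}$, and use exactingness to get $X\prec V_\zeta$ with $V_\lambda\cup\{\lambda\}\subseteq X$ and $\map{j}{X}{V_\zeta}$ with $j\restriction(\delta^++1)=\id$, $j(\lambda)=\lambda$ and $j\restriction\lambda\neq\id_\lambda$. Then $A,C\in X$, $j(A)=A$ and $j(C)=C$. With $\seq{\kappa_n}{n<\omega}$ the critical sequence of $j\restriction V_\lambda$, the Claim in the proof of Proposition \ref{prop:Subtle} gives $\kappa_0\in C$. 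Elementarity together with $j(C)=C$ then gives $\kappa_n\in C$ for all $n$.

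The main step, and the one I expect to be the obstacle, is showing that $\bar{\jmath}=j\restriction V_\lambda^{\HOD}$ lies in $\HOD$. I would use Woodin's stationary-partition argument. Since $\delta$ is not $\omega$-strongly measurable in $\HOD$, for every $\eta$ with $(2^\eta)^{\HOD}<\delta$ there is a partition of $E^\delta_\omega$ into $\eta$ stationary sets lying in $\HOD$. For a regular $\gamma$ between consecutive $\kappa_n$ I would pass to such partitions of $E^\gamma_\omega$, taking $\gamma$ large enough that they have many pieces. Then $\bar{\jmath}$ on ordinals below $\gamma$ should be recoverable in $\HOD$: $j(\alpha)$ is the unique index $\beta$ such that the $\beta$-th piece of the $j$-image partition reflects at $\sup j[\gamma]$ wherever the $\alpha$-th piece did. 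This uses that $j$ is continuous at points of countable cofinality and fixes $\delta$.

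This yields that $\bar{\jmath}\restriction\ord$, restricted to each $\kappa_n$, lies in $\HOD$. Since $\HOD$ is coded by sets of ordinals via its canonical well-order, which $j$ preserves, each $\bar{\jmath}\restriction V_{\kappa_n}^{\HOD}$ lies in $\HOD$. The harder part of this step is getting the whole $\omega$-sequence into $\HOD$. I would try to handle it by noting that the pieces are canonically (ordinal-)definable from the partitions, which are $j$-fixed, and then applying the leftmost-branch trick from Theorem \ref{theorem:ExactingInnerModels} to a tree of partial embeddings in $\HOD$. Once this is done, $\HOD$ contains a non-trivial elementary embedding of $\langle V^{\HOD}_\lambda,\in,A\rangle$ into itself whose critical sequence lies in $C$.

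Finally, reflect this witness at $\lambda$ down to some $\eta\in C$, as in the proof of {\cite[Proposition 5.9]{ABLG}}. Work in $\HOD$ with $\bar{\jmath}$ and its extension $\bar{\jmath}_+$, and iterate or apply $\bar{\jmath}$ to itself. For each $n$, the statement ``there is $\eta\in C\cap\kappa_{n+1}$ with $\langle V_\eta,\in,A\cap V_\eta\rangle\prec\langle V_\lambda,\in,A\rangle$ carrying a self-embedding with critical sequence in $C$'' only involves $V_{\kappa_{n+1}}$-bounded data. It is witnessed in the $\bar{\jmath}$-image by a restriction of $\bar{\jmath}$ together with a suitable elementary substructure obtained from the closure points of the critical sequence. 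Pulling this back along $\bar{\jmath}$ yields such an $\eta<\lambda$ in $C$, contradicting the choice of $\langle A,C\rangle$.
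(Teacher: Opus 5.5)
Your plan has a genuine gap where you aim to put the whole embedding $j\restriction\HOD_\lambda$ into $\HOD$. This target is impossible, not merely hard. Exacting cardinals are regular in $\HOD$ (indeed subtle, by Proposition~\ref{prop:Subtle}). Suppose $\HOD$ contained a non-trivial elementary embedding $k$ of $\langle\HOD_\lambda,\in,A\rangle$ into itself. If the supremum $\eta$ of the critical sequence of $k$ were below $\lambda$, then $k(\eta)=\eta$ and $k\restriction\HOD_{\eta+2}$ would contradict Kunen's theorem inside $\HOD$. So that critical sequence, which lies in $\HOD$, would be cofinal in $\lambda$ and give $\cof{\lambda}^{\HOD}=\omega$, a contradiction. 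For the same reason, the leftmost-branch trick of Theorem~\ref{theorem:ExactingInnerModels} does not transfer. It uses a cofinal $\omega$-sequence in the inner model to fix the domains $V_{\kappa_n}\cup\{\kappa_n\}$ of the nodes, and here $\seq{\kappa_n}{n<\omega}\notin\HOD$. Consequently your last paragraph, which reflects a witness living at $\lambda$ down to some $\eta\in C$, starts from an object that cannot exist (and is in any case too vague to check).

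The missing idea is that only the finite pieces of $j$ are needed, with the witnessing ordinals left free. In $\HOD$, let $T$ be the tree of partial elementary maps $i$ of $\langle\HOD_\lambda,\in,A\rangle$ into itself satisfying the following: there are $\mu_0<\dots<\mu_{n+1}$ in $C$ with $\dom(i)=\HOD_{\mu_n}\cup\{\mu_n\}$, $i\restriction\mu_0=\id_{\mu_0}$, $i(\mu_m)=\mu_{m+1}$, and $\langle\HOD_{\mu_m},\in,A\cap V_{\mu_m}\rangle\prec\langle\HOD_\lambda,\in,A\rangle$ for all $m\leq n+1$. You need three facts: $\kappa_n\in C$; $\langle\HOD_{\kappa_n},\in,A\cap V_{\kappa_n}\rangle\prec\langle\HOD_\lambda,\in,A\rangle$, which follows from $j(A)=A$ and $j(\HOD_\eta)=\HOD_{j(\eta)}$ and which you do not address; and $j\restriction\HOD_{\kappa_n}\in\HOD$ for every $n$, which your partition argument gives. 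Given these, the maps $j\restriction(\HOD_{\kappa_n}\cup\{\kappa_n\})$ form an infinite branch through $T$ in $V$. By absoluteness of well-foundedness, $T$ then has a branch $b\in\HOD$. Set $\eta=\sup_m\mu_m$. Then $\bigcup b$ is a non-trivial self-embedding in $\HOD$ of $\langle\HOD_\eta,\in,A\cap V_\eta\rangle\prec\langle\HOD_\lambda,\in,A\rangle$, with critical sequence in $C$. Regularity of $\lambda$ in $\HOD$ forces $\eta<\lambda$, so $\eta\in C$, and the minimality of $\langle A,C\rangle$ is contradicted directly; no reflection step is needed.

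Two smaller corrections to your partition step. First, a $\delta$ fixed by $j$ is useless, since then $\sup j[\delta]=\delta$ and every piece reflects. Instead, for each $n$, use the hypothesis again to pick a regular $\gamma\in((2^{\kappa_n})^{\HOD},\lambda)$ that is not $\omega$-strongly measurable in $\HOD$, and a partition $\vec{S}\in\HOD$ of $E^\gamma_\omega$ into $\kappa_n$ stationary sets. Writing $j(\vec{S})=\seq{T_\beta}{\beta<j(\kappa_n)}$, Solovay's identity gives $j[\kappa_n]=\Set{\beta<j(\kappa_n)}{T_\beta\cap\sup j[\gamma]\text{ is stationary in }\sup j[\gamma]}$. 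This makes $j\restriction\kappa_n$ ordinal definable as the inverse collapse of this set. Second, these partitions are moved by $j$, not fixed by it.
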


\begin{proof}
  Assume, towards a contradiction, that the above conclusion fails and let $\langle A,C\rangle$ be the least pair in the canonical well-ordering of $\HOD$ with the property that $A\in \HOD_{\lambda+1}$, $C$ is a closed unbounded subset of $\lambda$ in $\HOD$ and for all $\eta\in C$ with the property that $\langle\HOD_\eta,\in,A\cap V_\eta\rangle$ is an elementary substructure of $\langle \HOD_\lambda,\in,A\rangle$, there is no  non-trivial elementary embedding of  $\langle\HOD_\eta,\in,A\cap V_\eta\rangle$ into itself  in $\HOD$ whose critical sequence consists of elements of $C$.

    By our assumption, there is an ordinal $\zeta>\lambda$ such that $V_\zeta$ is sufficiently elementary in $V$, an elementary submodel $X$ of $V_\zeta$ with $V_\lambda\cup\{\lambda\}\subseteq X$ and an elementary embedding $\map{j}{X}{V_\lambda}$ with $j(\lambda)=\lambda$ and $j\restriction\lambda\neq\id_\lambda$. 
    Then $A,C\in X$ with $j(A)=A$ and $j(C)=C$. 
    Let $\seq{\kappa_n}{n<\omega}$ denote the critical sequence of $j$. 

    \begin{claim*}
        $\kappa_n\in C$ for all $n<\omega$.
    \end{claim*}

    \begin{proof}[Proof of the claim]
      Since there exists an $n<\omega$ with $C\cap\kappa_n\neq\emptyset$, we can use the fact that $j(C)=C$ holds to conclude that $C\cap\kappa_0\neq\emptyset$. Then $\kappa_0\in C$, because otherwise we would have $\sup(C\cap\kappa_n)=\sup(C\cap\kappa_0)<\kappa_0$ for all $n<\omega$ and it would follow that $C$ is bounded in $\lambda$. We can now inductively conclude that $\kappa_n$ is an element of $C$ for all $n<\omega$. 
    \end{proof}

  \begin{claim*}
      If $n<\omega$, then $\langle\HOD_{\kappa_n},\in,A\cap V_{\kappa_n}\rangle$ is an elementary substructure of $\langle\HOD_\lambda,\in,A\rangle$. 
  \end{claim*}

  \begin{proof}[Proof of the claim]
    Note that, since $V_\lambda\cup\{\lambda\}\subseteq X$ and $V_\zeta$ was chosen to be  sufficiently elementary in $V$, we have $\HOD_\eta\in X$ with $j(\HOD_\eta)=\HOD_{j(\eta)}$ for all $\eta\leq\lambda$. This implies that for all $\eta\leq\lambda$, the map $j\restriction \HOD_\eta$ is an elementary embedding of $\langle \HOD_\eta,\in,A\cap V_\eta\rangle$ into $\langle \HOD_{j(\eta)},\in,A\cap V_{j(\eta)}\rangle$. In particular, it follows that $\langle\HOD_{\kappa_0},\in,A\cap V_{\kappa_0}\rangle$ is an elementary substructure of $\langle\HOD_{\kappa_1},\in,A\cap V_{\kappa_1}\rangle$ and, in combination with our first observation,  this inductively implies that $\langle\HOD_{\kappa_n},\in,A\cap V_{\kappa_n}\rangle$ is an elementary substructure of $\langle\HOD_{\kappa_{n+1}},\in,A\cap V_{\kappa_{n+1}}\rangle$ for all $n<\omega$. This conclusion implies the statement of the claim. 
   \end{proof}

    \begin{claim*}
     $j\restriction\eta\in\HOD$ for all $\eta<\lambda$. 
    \end{claim*}

    \begin{proof}[Proof of the claim]
      Fix a cardinal $\eta<\lambda$. By our assumption, there exists a regular cardinal $(2^\eta)^+<\delta<\lambda$ that is not $\omega$-strongly measurable in $\HOD$. Since $(2^\eta)^\HOD<\delta$, it follows that $\HOD$ contains a partition $\vec{S}=\seq{S_\gamma}{\gamma<\eta}$ of $E^\delta_\omega$ into $\eta$-many  sets that are all stationary in $V$. Then $\vec{S}\in V_\lambda\subseteq X$. Set $j(\vec{S})=\seq{T_\gamma}{\gamma<j(\delta)}$. Then $j(\delta)$ is a regular cardinal below $\lambda$ and $j(\vec{S})\in\HOD$ is a partition of $E^{j(\delta)}_\omega$ into $j(\eta)$-many stationary sets.  

      Now, set $\rho=\sup(j[\delta])<j(\delta)$. A result of Solovay in \cite{MR0379200} then shows that $$j[\eta] ~ = ~ \Set{\gamma<j(\eta)}{\textit{$T_\gamma\cap\rho$ is stationary in $\rho$}}.$$ 
      Since $j(\vec{S})$ is an element of $\HOD$, it follows that the set $j[\eta]$ is ordinal definable and this implies that $j\restriction\eta$ is ordinal definable, because this function is the inverse of the transitive collapse of $j[\eta]$.  
      %
    \end{proof}

    \begin{claim*}
     $j\restriction\HOD_\eta\in\HOD$ for all $\eta<\lambda$. 
    \end{claim*}

    \begin{proof}[Proof of the claim]
      Let $\lhd$ denote the ordering of $\HOD_\lambda$ that is obtained by first  ordering sets by their rank and then ordering  sets of the same rank using the canonical well-ordering of $\HOD$. Then $\lhd$ is an element of $\HOD$. Moreover, if $\delta<\lambda$ is  an inaccessible cardinal, then $\HOD_\delta$ is the initial segment of $\lhd$ of order-type $\delta$. For each such cardinal $\delta$, we let $$\map{\pi_\delta}{\langle \HOD_\delta,\lhd\rangle}{\langle\delta,<\rangle}$$ denote the corresponding transitive collapse. Then $\pi_\delta\in\HOD\cap X$ with $j(\pi_\delta)=\pi_{j(\delta)}$ and $$j\restriction \HOD_\delta ~ = ~ \pi_{j(\delta)}^{{-}1} ~ \circ ~ (j\restriction\delta) ~ \circ ~ \pi_\delta$$ for all inaccessible $\delta<\lambda$. In combination, this shows that $j\restriction \HOD_\delta$ is an element of $\HOD$.  
    \end{proof}

  Working in $\HOD$, we define $T$ to be set of all partial elementary embeddings $$\pmap{i}{\langle V_\lambda,\in,A\rangle}{\langle V_\lambda,\in,A\rangle}{part}$$ with the property that there exists a natural number $n$ and a strictly increasing sequence $\seq{\mu_m}{m\leq n+1}$ of elements of $C$ such that $\dom(i)=V_{\mu_n}\cup\{\mu_n\}$, $i\restriction\mu_0=\id_{\mu_0}$,  $i(\mu_m)=\mu_{m+1}$ for all $m\leq n$ and $\langle V_{\mu_m},\in,A\cap V_{\mu_m}\rangle$ is an elementary submodel of $\langle V_\lambda,\in,A\rangle$ for all $m\leq n+1$. By ordering $T$ under extensions, we can view this set as a tree of height at most $\omega$. 

  Since $j\restriction\HOD_\lambda$ is an elementary embedding of $\langle\HOD_\lambda,\in,A\rangle$ into itself, it follows that for all $n<\omega$, the sequence $\seq{\kappa_m}{m\leq n+1}$ witnesses that the function $j\restriction(\HOD_{\kappa_n}\cup\{\kappa_n\})$ is an element of $T$. 
  In particular, there is a cofinal branch through $T$ in $V$ and we can use a well-foundedness argument to conclude that there is such a  branch $b$ in $\HOD$. The definition of $T$ now ensures that there is $\eta\in C$ with the property that $\langle\HOD_\eta,\in,A\cap V_\eta\rangle$ is an elementary substructure of $\langle\HOD_\lambda,\in,A\rangle$ and   $\bigcup b$ is a non-trivial  elementary embedding of $\langle \HOD_\eta,\in,A\cap V_\eta\rangle$ into itself in $\HOD$ whose critical sequence consists of elements of $C$, contradicting our choice of $A$ and $C$. 
\end{proof}

\begin{cor}\label{cor:ExactingStationaryLimitsHOD}
    Exacting cardinals are stationary limits of measurable cardinals in $\HOD$. 
\end{cor}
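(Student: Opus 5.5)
The plan is to argue by contradiction, in the same way as the proof of Theorem \ref{theorem:ExactingInHOD}, but this time with no assumption about $\omega$-strong measurability; the only thing we need from $\HOD$ is the measure derived from the critical point of an exactingness embedding. Suppose $\lambda$ is exacting and is not a stationary limit of measurable cardinals in $\HOD$. Let $C$ be the least closed unbounded subset of $\lambda$, in the canonical well-ordering of $\HOD$, that contains no cardinal which is measurable in $\HOD$. Then $\{C\}$ is definable by a $\Sigma_3$-formula from the parameter $\lambda$. Fix $\lambda<\zeta\in C^{(3)}$, an elementary submodel $X\prec V_\zeta$ with $V_\lambda\cup\{\lambda\}\subseteq X$, and $\map{j}{X}{V_\zeta}$ with $j(\lambda)=\lambda$ and $j\restriction\lambda\neq\id_\lambda$. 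Then $C\in X$ and $j(C)=C$. Let $\kappa$ be the critical point of $j$. As in the claims in the proofs of Proposition \ref{prop:Subtle} and Theorem \ref{theorem:ExactingInHOD}, we get $\kappa\in C$: otherwise $\min(C\setminus\kappa)$ would be a fixed point of $j$ lying in $(\kappa,\lambda)$, which Kunen's theorem rules out for $j\restriction V_\lambda$.

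It remains to show that $\kappa$ is measurable in $\HOD$, which contradicts the choice of $C$. Define
$$\calU ~ = ~ \Set{Y\in\mathcal{P}(\kappa)\cap\HOD}{\kappa\in j(Y)}.$$
This is well defined because $\mathcal{P}(\kappa)\subseteq V_\lambda\subseteq X$. We also need $j(Y)\in\HOD$ for $Y\in\HOD$. Since $\zeta\in C^{(3)}$, membership in $\HOD$ for elements of $V_\lambda$ is computed correctly in $V_\zeta$, so $j$ maps $\HOD\cap V_\lambda$ into $\HOD$. Given this, $\calU$ is a $\HOD$-ultrafilter on $\kappa$ that is $\kappa$-complete for sequences lying in $\HOD$, and it is non-principal. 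The remaining point, and the main obstacle, is to show that $\calU\in\HOD$.

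For this step I would use ordinal definability directly. Pick $\delta<\lambda$ inaccessible with $\kappa<\delta$, and let $\pi_\delta$ be the collapse of $\HOD_\delta$ under the rank-then-canonical ordering $\lhd$ from the proof of Theorem \ref{theorem:ExactingInHOD}. Then $\calU$ is determined by the set $\Set{\pi_{j(\delta)}(Y)}{Y\in\calU}$, which is a set of ordinals below $j(\delta)$. It therefore suffices to see that $\calU$ is ordinal definable, and here the first attempt would be to use the $\Sigma_3$-correctness of $X$. If that fails, I would fall back on the Kunen-style argument \emph{that normal measures only need ordinal-definable witnesses}. Consider the set of all $\HOD$-ultrafilters $W$ on $\kappa$ that are realized by some partial embedding $\map{i}{V_{\kappa_1}}{V_{\kappa_1}}$ as in the tree construction of Theorem \ref{theorem:ExactingInnerModels}. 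This set is ordinal definable from $\kappa$ and $\kappa_1$ and it is non-empty. Every element of it is a $\kappa$-complete ultrafilter on $\mathcal{P}(\kappa)\cap\HOD$, and $\HOD$ can verify such a property only up to its own sets. So instead one chooses, within $\HOD$, the $\lhd$-least subset $\mathcal{W}\subseteq\mathcal{P}(\kappa)\cap\HOD$ that is the restriction of some such ultrafilter. This is OD. Since $\mathcal{P}(\kappa)^{\HOD}$ is a set in $\HOD$ of size $(2^\kappa)^{\HOD}$, the OD set $\mathcal{W}$ lies in $\HOD$ and witnesses that $\kappa$ is measurable there.

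Having placed a measurable cardinal $\kappa\in C$ in $\HOD$, we contradict the choice of $C$. Hence every closed unbounded $C\in\HOD$ contains a cardinal that is measurable in $\HOD$. In other words, $\lambda$ is a stationary limit of measurables in $\HOD$.
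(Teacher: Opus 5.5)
\textbf{There is a genuine gap.} Your reduction is fine up to the step you flag yourself. With $C$ the least bad club, the exactingness embedding $j$ satisfies $j(C)=C$ and $\kappa=\crit(j)\in C$, so everything hinges on showing that $\kappa$ is measurable in $\HOD$. Neither of your routes shows that $\calU$, or any other measure on $\kappa$, belongs to $\HOD$.

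The correctness of $X$ does not help, because $\calU$ is defined from $j$, and $j$ is neither an element of $X$ nor ordinal definable.

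The fallback fails as well. The family of $\HOD$-ultrafilters derived from partial embeddings is ordinal definable. However, its members are subsets of $\mathcal{P}(\kappa)^{\HOD}$, i.e.\ essentially subsets of the ordinal $(2^\kappa)^{\HOD}$, and these need not lie in $\HOD$. The canonical well-ordering $\lhd$ only orders $\HOD$, so there is no ``$\lhd$-least'' member to select. You are implicitly using the principle that every non-empty $\OD$ set of sets of ordinals has an $\OD$ element, and this is false in general. For example, in $L[c]$ with $c$ Cohen over $L$, the set of reals Cohen-generic over $L=\HOD^{L[c]}$ is $\OD$ and non-empty, but it has no $\OD$ member.

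This is not a technicality. Derived measures of elementary embeddings are in general not in $\HOD$; indeed a measurable cardinal need not even be weakly compact in $\HOD$. So some genuinely new information about $\HOD$ has to enter the argument.

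\textbf{The missing idea} is Woodin's notion of $\omega$-strong measurability, which the paper uses through a case split.
\begin{enumerate}
\item Suppose unboundedly many regular cardinals below $\lambda$ are not $\omega$-strongly measurable in $\HOD$. Then Solovay's argument, applied to a $\HOD$-partition of $E^\delta_\omega$ into stationary sets, shows $j\restriction\eta\in\HOD$ and hence $j\restriction\HOD_\eta\in\HOD$ for all $\eta<\lambda$. In this case your $\calU$ really is in $\HOD$. The paper instead invokes Theorem~\ref{theorem:ExactingInHOD} to get an embedding in $\HOD$ whose critical point lies in $C$.
\item Otherwise, all sufficiently large regular cardinals below $\lambda$ are $\omega$-strongly measurable in $\HOD$. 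Every $\kappa_n$ in the critical sequence of $j$ is regular in $V$, and it lies in $C$ because $j(C)=C$. Hence some $\kappa_n\in C$ is $\omega$-strongly measurable in $\HOD$, and therefore measurable in $\HOD$ by Woodin's theorem.
\end{enumerate}
Elementarity of $j$ then transfers measurability in $\HOD$ down from $\kappa_n$ to $\kappa_0$. So your intermediate claim that $\crit(j)$ is measurable in $\HOD$ is true, but it is obtained only via this dichotomy, not from a direct definability argument.
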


\begin{proof}
    Let $\lambda$ be an exacting cardinal. Assume, towards a contradiction that the statement of the corollary fails and let $C$ be the least closed unbounded subset of $\lambda$ in the canonical well-ordering of $\HOD$ whose elements are not measurable cardinals in $\HOD$. 

    First, assume that unboundedly many regular cardinals below $\lambda$ are not $\omega$-strongly measurable in $\HOD$. Then Theorem \ref{theorem:ExactingInHOD} yields $\eta\in C$ and a non-trivial elementary embedding of $\langle\HOD_\eta,\in\rangle$ into itself in $\HOD$ whose critical sequence consists of elements of $C$. We then know that $\crit(j)\in C$ is a measurable cardinal in $\HOD$. 

    Now, assume that eventually all regular cardinals below $\lambda$ are $\omega$-strongly measurable in $\HOD$. Pick $\zeta>\lambda$ such that $V_\zeta$ is sufficiently elementary in $V$ and use the exactingness of $\lambda$ to find an elementary submodel $X$ of $V_\zeta$ with $V_\lambda\cup\{\lambda\}\subseteq X$ and an elementary embedding $\map{j}{X}{V_\zeta}$ with $j\restriction\lambda\neq\id_\lambda$ and $j(\lambda)=\lambda$. 
    Let $\seq{\kappa_n}{n<\omega}$ denote the critical sequence of $j$. 
    Our choice of $C$ then ensures that $C$ is an element of $X$ with $j(C)=C$ and we can repeat an argument from the proof of Theorem \ref{theorem:ExactingInHOD} to see that $\kappa_n\in C$ holds for all $n<\omega$. Since the critical sequence of $j$ consists of cardinals that are regular in $V$, our assumption yields an $n<\omega$ with the property that $\kappa_n$ is $\omega$-strongly measurable in $\HOD$. In particular, it follows that $\kappa_n$ is an element of $C$ that is a measurable cardinal in $\HOD$. 
\end{proof}

We now observe that the assumption of Theorem \ref{theorem:ExactingInHOD} are consequences of the validity of the $\HOD$ Conjecture:

\begin{cor}\label{corollary:ExactingInHOD}
    Let $\lambda$ be an exacting cardinal with the property that the $\HOD$ Hypothesis holds in $V_\lambda$, let $A\in\HOD_{\lambda+1}$ and let $C$ be a closed unbounded subset of $\lambda$ in $\HOD$. 
      Then, in $\HOD$, for every $A\in V_{\lambda+1}$ and every closed unbounded subset $C$ of $\lambda$, there is $\eta\in C$ with the property that $\langle V_\eta,\in,A\cap V_\eta\rangle$ is an elementary substructure of $\langle V_\lambda,\in,A\rangle$ 
  and there is a non-trivial elementary embedding of $\langle V_\eta,\in,A\cap V_\eta\rangle$ into itself whose critical sequence consists of elements of $C$. 
\end{cor}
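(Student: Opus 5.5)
The plan is to reduce the statement to Theorem \ref{theorem:ExactingInHOD}. Its conclusion is exactly the desired one, so it suffices to verify its single hypothesis: unboundedly many regular cardinals below $\lambda$ are not $\omega$-strongly measurable in $\HOD$.

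First, fix the correct model. The assumption is that the $\HOD$ Hypothesis holds in $V_\lambda$. Since $\lambda$ is exacting, it is a limit of cardinals $\eta$ with $V_\eta\prec V_\lambda$, for example the critical sequence of an I3-embedding $V_\lambda\to V_\lambda$. Hence $V_\lambda$ is a model of $\ZFC$, and the hypothesis gives unboundedly many $V_\lambda$-regular cardinals $\delta<\lambda$ that are not $\omega$-strongly measurable in $\HOD^{V_\lambda}$. Regularity is absolute between $V_\lambda$ and $V$ for $\delta<\lambda$, so these $\delta$ are regular in $V$.

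Next, transfer the failure of $\omega$-strong measurability from $\HOD^{V_\lambda}$ to $\HOD$. Fix such a $\delta$ together with a witness. For every $\eta$ with $(2^\eta)^{\HOD^{V_\lambda}}<\delta$, $\HOD^{V_\lambda}$ contains a partition of $E^\delta_\omega$ into $\eta$-many sets that are stationary in $V_\lambda$. Stationarity of subsets of $\delta<\lambda$ is computed the same way in $V_\lambda$ and in $V$. Moreover $\HOD^{V_\lambda}\subseteq\HOD$: $\lambda$ is definable from itself, so anything ordinal definable in $V_\lambda$ is ordinal definable in $V$. Thus every such partition lies in $\HOD$.

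The main obstacle is the cardinal arithmetic mismatch. Non-$\omega$-strong measurability in $\HOD$ requires partitions into $\eta$-many stationary pieces for every $\eta$ with $(2^\eta)^{\HOD}<\delta$. The information from $V_\lambda$ only covers $\eta$ with $(2^\eta)^{\HOD^{V_\lambda}}<\delta$, and $\HOD$ has at least as many subsets of $\eta$. So $(2^\eta)^{\HOD^{V_\lambda}}\le(2^\eta)^{\HOD}$, and the inequality we need goes the wrong direction. To fix this, I would use the standard fact from \cite{SEM1} that a regular $\delta$ is not $\omega$-strongly measurable exactly when, for every $\eta<\delta$, there is an ordinal-definable partition of $E^\delta_\omega$ into $\eta$ stationary sets. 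Equivalently, for each $\eta<\delta$ one can refine to $\eta$ stationary pieces, since a stationary set splits once a nontrivial partition exists in the model. With this monotone formulation, the $V_\lambda$-partitions into arbitrarily many (fewer than $\delta$) pieces transfer verbatim into $\HOD$. If that refinement is unavailable, the fallback is to use the proof of Theorem \ref{theorem:ExactingInHOD} directly: the only place the hypothesis enters is the claim that $j\restriction\eta\in\HOD$. That claim needs just one $\HOD$-partition of $E^\delta_\omega$ into $\eta$ stationary sets for some regular $\delta>\eta$ below $\lambda$, and the transferred partitions from $\HOD^{V_\lambda}$ provide this. Applying the argument of Theorem \ref{theorem:ExactingInHOD}, or its patched version, to the given $A$ and $C$ then yields the required $\eta\in C$ and the self-embedding of $\langle V_\eta,\in,A\cap V_\eta\rangle$ in $\HOD$.
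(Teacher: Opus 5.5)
Your reduction to Theorem~\ref{theorem:ExactingInHOD} via $\HOD^{V_\lambda}\subseteq\HOD$ is exactly the paper's argument, and your preliminary observations are correct: regularity and stationarity of subsets of $\delta<\lambda$ are absolute between $V_\lambda$ and $V$, and $\OD^{V_\lambda}\subseteq\OD$.

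However, the ``main obstacle'' you identify does not exist, because you have the direction backwards. What must be covered are the $\eta$ with $(2^\eta)^{\HOD}<\delta$, and every such $\eta$ already satisfies $(2^\eta)^{\HOD^{V_\lambda}}<\delta$. To see this, note that $\delta$ is regular in $V$, hence a cardinal of $\HOD$. Suppose $(2^\eta)^{\HOD^{V_\lambda}}\geq\delta$. Then $\HOD^{V_\lambda}$ contains an injection of $\delta$ into $\mathcal{P}(\eta)\cap\HOD^{V_\lambda}$. That injection lies in $\HOD$ and maps into $\mathcal{P}(\eta)\cap\HOD$, so $(2^\eta)^{\HOD}\geq\delta$. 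So the fact that $\HOD$ has more subsets of $\eta$ is precisely what makes the transfer work. Every $\eta$ that matters in $V$ already matters in $V_\lambda$, and the partition supplied by $\HOD^{V_\lambda}$ is a partition in $\HOD$ into $V$-stationary sets. This is the content of the paper's one-line remark, and with it Theorem~\ref{theorem:ExactingInHOD} applies as a black box.

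You should discard your proposed repair. The equivalence you attribute to \cite{SEM1} is not something you can cite. It says that $\delta$ fails to be $\omega$-strongly measurable iff $\HOD$ has partitions of $E^\delta_\omega$ into $\eta$ stationary pieces for \emph{every} $\eta<\delta$. The definition only demands partitions for $\eta$ with $(2^\eta)^{\HOD}<\delta$, and nothing gives you more unless $\delta$ is a strong limit in $\HOD$. Your justification is also wrong in general. The club filter restricted to $\HOD$ can have atoms: $\HOD$-stationary sets that admit no splitting in $\HOD$ into two stationary pieces. This is exactly the phenomenon behind $\omega$-strong measurability, so one nontrivial partition does not yield finer ones.

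Your fallback, by contrast, is sound. The proof of Theorem~\ref{theorem:ExactingInHOD} uses the hypothesis only to obtain, for each cardinal $\eta<\lambda$, a regular $\delta\in(\eta,\lambda)$ with a $\HOD$-partition of $E^\delta_\omega$ into $\eta$ stationary sets. Choosing $\delta>(2^\eta)^{\HOD^{V_\lambda}}$ that is not $\omega$-strongly measurable in $\HOD^{V_\lambda}$ (as computed in $V_\lambda$) supplies this. So your proposal does yield a proof, but through the fallback rather than the route you present as primary. Once the direction of the inequality is corrected, even the fallback is unnecessary.
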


\begin{proof}
    If $\kappa<\lambda$ is a cardinal that is not $\omega$-strongly measurable in $\HOD$ in $V_\lambda$, then the fact that $\HOD^{V_\lambda}\subseteq\HOD$ holds ensures that $\kappa$ is not $\omega$-strongly measurable in $\HOD$ in $V$. In particular, if the $\HOD$ Hypothesis holds in $V_\lambda$, then there are  unboundedly many regular cardinals below $\lambda$ that are not $\omega$-strongly measurable in $\HOD$. The desired conclusion now follows from an application of Theorem \ref{theorem:ExactingInHOD}. 
\end{proof}

Corollary \ref{corollary:ExactingInHOD} directly implies the related result stated in  \S\ref{sec:Intro}:

\begin{proof}[Proof of Theorem \ref{theorem:ExactingInHODIntro}]
  Assume that the Weak $\HOD$ Conjecture is true and let $\lambda$ be an exacting cardinal. Then $\lambda$ is an inaccessible cardinal in $\HOD$. Moreover, since $V_\lambda$ is a model of $\ZFC$ with a huge cardinal above an extendible cardinal. Our assumption then implies that the $\HOD$ Hypothesis holds in $V_\lambda$ and this directly yields \eqref{item:ExactingInHODIntro2} of the theorem. 

    For \eqref{item:ExactingInHODIntro1}, let $\Gamma\in \HOD_{\lambda+1}\setminus V_\lambda$ be a set of graphs and let $C\in\HOD$ be the closed unbounded subset of $\lambda$ consisting of the limits of ranks of graphs in $\Gamma$. An application of Corollary \ref{corollary:ExactingInHOD} now yields a non-trivial elementary embedding $j$ of $\langle \HOD_\eta,\in,\Gamma\cap V_\eta\rangle$ into itself in $\HOD$ for some $\eta\in C$. By the definition of $\Gamma$, there exists a graph $\mathcal{G}$ in $\Gamma\cap V_\eta$ whose rank is greater than the critical point of $j$. Since $\eta$ is the least non-trivial fixed point of $j$, it follows that $\mathcal{G}\neq j(\mathcal{G})\in \Gamma$ and $j$ induced an embedding of $\mathcal{G}$ into $j(\mathcal{G})$. Since $\lambda$ is inaccessible in $\HOD$, these computations show that $\lambda$ is a Vop\v{e}nka cardinal in $\HOD$. 
%
\end{proof}

The concepts used in the above proof can also be applied to derive similar conclusions for rank-into-rank embeddings using inverse limits: 


\begin{theorem}\label{theorem:RankIntoRankInHOD}
    Let $\map{j}{V_\lambda}{V_\lambda}$ be a $\Sigma^1_4$-correct I3-embedding and let $A\in\HOD_{\lambda+1}$. If the $\HOD$ Hypothesis holds in $V_{\bar{\lambda}}$ for all $\bar{\lambda}<\lambda$ with the property that there exists an I3-embedding from $V_{\bar{\lambda}}$ to itself, then $\lambda$ is a limit of ordinals $\alpha$ such that, in $\HOD$, there is a non-trivial elementary embedding of the structure $\langle V_\alpha,\in,A\cap V_\alpha\rangle$ into itself. 
\end{theorem}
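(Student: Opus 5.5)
The plan is to run the argument of Theorem~\ref{theorem:ExactingInHOD} not at $\lambda$ itself but at suitable smaller cardinals $\bar\lambda<\lambda$, and to use the $\Sigma^1_4$-correctness of $j$ to pull back ``local exactingness'' data. The notion that makes this work is local: $\bar\lambda$ is exacting relative to $A$ in the sense of Theorem~\ref{fact:CharExacting}, restricted to sets ordinal definable in $V_\lambda$ from parameters in $V_{\bar\lambda+1}$.

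\textbf{Step 1: find the right embedding on a smaller cardinal.} Let $\seq{\kappa_n}{n<\omega}$ be the critical sequence of $j$, and consider the iterates $j_{\ell,m}$. An inverse limit construction over the iteration, in the spirit of Laver's inverse-limit reflection, should give the following for every $\alpha<\lambda$. Inside $V_\lambda$ there is some $\bar\lambda<\lambda$ with $\alpha<\bar\lambda$ and a partial embedding $\pmap{i}{V_{\bar\lambda}}{V_{\bar\lambda}}{}$ extending to a set $X\prec V_{\bar\zeta}$ (with $\bar\zeta<\lambda$, $V_{\bar\zeta}\prec V_\lambda$ sufficiently), such that $X$ contains $V_{\bar\lambda}\cup\{\bar\lambda\}$ and $i(\bar\lambda)=\bar\lambda$. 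The statement that such data exist above $\alpha$ is $\Sigma^1_n$ over $V_\lambda$ for small $n$, since it asks for an inverse-limit sequence and a well-foundedness-type witness. The $\Sigma^1_4$-correctness of $j$ lets us transfer it between $V_\lambda$ and its $j_+$-images, and so get unboundedly many $\bar\lambda$. Each such $\bar\lambda$ carries an I3-embedding of $V_{\bar\lambda}$, so by hypothesis the HOD Hypothesis holds in $V_{\bar\lambda}$.

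\textbf{Step 2: carry out the proof of Theorem~\ref{theorem:ExactingInHOD} at $\bar\lambda$.} Because the HOD Hypothesis holds in $V_{\bar\lambda}$, and $\HOD^{V_{\bar\lambda}}\subseteq\HOD$, there are unboundedly many regular cardinals below $\bar\lambda$ that are not $\omega$-strongly measurable in $\HOD$; this is the argument of Corollary~\ref{corollary:ExactingInHOD}. The Solovay stationary-partition claim then gives $i\restriction\eta\in\HOD$, and hence $i\restriction\HOD_\eta\in\HOD$, for all $\eta<\bar\lambda$.

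We then form in $\HOD$ the tree $T$ of partial embeddings of $\langle V_{\bar\lambda},\in,A\cap V_{\bar\lambda}\rangle$, now without the club requirement. The branch given by $i$ shows that $T$ is ill-founded in $V$. Absoluteness of well-foundedness then yields a branch in $\HOD$, which is an embedding of $\langle\HOD_\alpha,\in,A\cap V_\alpha\rangle$ into itself for some $\alpha\le\bar\lambda$ above the chosen bound.

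For $A$ to be fixed by $i$, we need $A\cap V_{\bar\lambda}$ to be suitably definable. To arrange this, I would replace $A$ by its least counterexample in the canonical order of $\HOD$, exactly as in the earlier proof, and build the ordinal-definability of $A$ into the statement reflected in Step 1.

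\textbf{Main obstacle.} The hard step is Step 1: producing, at unboundedly many $\bar\lambda<\lambda$, embeddings that fix $\bar\lambda$ and $A$. These must be domain-restricted in the way needed for the Solovay argument, namely with $\HOD$-partitions in the domain, and the problem is to get them from a mere $\Sigma^1_4$-correct I3-embedding. My first attempt would be Laver's inverse limits $\langle j_n\rangle$ with $j_{n+1}$ factoring through $j_n$, using $\Sigma^1_4$-correctness to certify that the inverse-limit configuration reflects below $\lambda$. If that fails, I would fall back on the square-root/reflection trick: let $j$ witness a $\Sigma^1_4$ statement about $V_\lambda$, $A$ and $\HOD_\lambda$, and pull it back below the critical point.
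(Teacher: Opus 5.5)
Your Step 2 matches the second half of the paper's proof: Solovay's partition argument for $k\restriction\alpha\in\HOD$, the collapses $\pi_\delta$ for $k\restriction\HOD_\alpha\in\HOD$, and the tree argument. Step 1, however, has a genuine gap, and the device you propose for fixing $A$ is exactly what fails.

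An I3-embedding $j:V_\lambda\to V_\lambda$, even a $\Sigma^1_4$-correct one, acts only on $V_{\lambda+1}$ and has no elementarity with respect to $V$. So there is no reason for $j_+$ to fix $A$, or the least counterexample in the canonical order of $\HOD$ (which is defined in $V$ using $V$'s $\HOD$), or even $\HOD\cap V_\lambda$. The least-counterexample trick worked in Theorem \ref{theorem:ExactingInHOD} only because there $j$ lives on $X\prec V_\zeta$ with $V_\zeta$ sufficiently elementary in $V$ and $j(\lambda)=\lambda$, so everything definable from $\lambda$ is fixed. Nothing like that is available here.

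Your fallback fails for the same reason. Pulling a $\Sigma^1_4$-statement about $V_\lambda$, $A$ and $\HOD_\lambda$ below $\crit(j)$ requires the parameters to be fixed by $j_+$. An exacting-style configuration at $\bar\lambda$ inside $V_\lambda$ (an $X\prec V_{\bar\zeta}$ with $i(\bar\lambda)=\bar\lambda$) would not help even if you could produce it. It only fixes sets definable in $V_{\bar\zeta}$, such as $\HOD^{V_{\bar\zeta}}$, not $A\cap V_{\bar\lambda}$ or $\HOD\cap V_{\bar\lambda}$. Without an embedding whose lift fixes $\HOD\cap V_{\bar\lambda}$, and in fact its canonical well-order, Step 2 cannot start. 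You need $k$ to send $\HOD$-partitions to $\HOD$-partitions and to satisfy $k(\pi_\delta)=\pi_{k(\delta)}$.

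The missing idea is how to manufacture fixed points, and the paper does it in three moves.

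\begin{enumerate}
\item Fix $\xi<\lambda$ and take a $\Sigma^1_4$-correct $j'$ with $\crit(j')=\kappa_0>\xi$. Extend it to an I2-embedding given by a $(\kappa_0,\lambda)$-extender and iterate.
\item Set $A'=j'_{0,\omega}(A\cap V_{\kappa_0})$ and ${\blacktriangleleft}=j'_{0,\omega}(\lhd)$, where $\lhd$ orders $\HOD_{\kappa_0}$ by rank and then by the canonical well-order of $\HOD$. By Proposition \ref{proposition:IterationFixedPoints}\eqref{prop:InvEmb2}, both are fixed by $j'_+$, while they agree with $A$ and $\lhd$ below $\kappa_0$.
\item Only now is $\Sigma^1_4$-correctness spent. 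Iterated applications of \cite[Theorem 2.4]{La97} give $\Sigma^1_2$-correct $i_m$ with the following properties: $(i_m)_+$ fixes $A'$ and $\blacktriangleleft$, the critical points increase to some $\bar\lambda\in(\xi,\kappa_0)$, and the critical sequences are $\langle\crit(i_m)\rangle^\smallfrown\vec{\kappa}$. Their inverse limit $i:V_{\bar\lambda}\to V_\lambda$ satisfies $i_+(A\cap V_{\bar\lambda})=A'$ and $i_+(\lhd\cap V_{\bar\lambda})={\blacktriangleleft}$. Then \cite[Theorem 3.3]{La97} converts this into an I3-embedding $k$ of $V_{\bar\lambda}$ whose lift fixes $A\cap V_{\bar\lambda}$ and $\lhd\cap V_{\bar\lambda}$, hence fixes $\HOD_{\bar\lambda}$.
\end{enumerate}

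From there the hypothesis gives the $\HOD$ Hypothesis in $V_{\bar\lambda}$, and your Step 2 goes through with $k$. Neither a least counterexample nor any fixed point of $\bar\lambda$ inside an elementary submodel is needed.
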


\begin{proof}
  Fix $\xi<\lambda$. Using {\cite[Theorem 2.4]{La97}}, we can find a $\Sigma^1_4$-correct I3-embedding $\map{j^\prime}{V_\lambda}{V_\lambda}$ with $\crit(j^\prime)>\xi$. Let $\vec{\kappa}=\seq{\kappa_n}{n<\omega}$ denote the critical sequence of $j^\prime$. We then know that $j^\prime$ can be extended to an I2-embedding that is given by a $(\kappa_0,\lambda)$-extender $E$. Let $$\langle\seq{M_\alpha}{\alpha\in\ord},\seq{\map{j^\prime_{\alpha,\beta}}{M_\alpha}{M_\beta}}{\alpha\leq\beta\in\ord}\rangle$$ denote the iteration of $V$ and $E$ and set $A^\prime=j^\prime_{0,\omega}(A\cap V_{\kappa_0})$. Then $A\cap\kappa_0=A^\prime\cap\kappa_0$ and Proposition \ref{proposition:IterationFixedPoints}\eqref{prop:InvEmb2} ensures that $j^\prime_+(A^\prime)=j^\prime_{0,1}(A^\prime)=A^\prime$. 
  In addition, let $\lhd$ denote the ordering of $\HOD_{\kappa_0}$ that is obtained by first  ordering sets by their rank and then ordering  sets of the same rank  using the canonical well-ordering of $\HOD$. Set ${\blacktriangleleft}=j_{0,\omega}(\lhd)$. Then ${\blacktriangleleft}\cap V_{\kappa_0}=\lhd$ and $j^\prime_+({\blacktriangleleft})=j^\prime_{0,1}({\blacktriangleleft})={\blacktriangleleft}$. 
  
  Iterated applications of {\cite[Theorem 2.4]{La97}} now allow us to find a sequence $\seq{\map{i_m}{V_\lambda}{V_\lambda}}{m<\omega}$ of $\Sigma^1_2$-correct I3-embeddings with the property that the sequence $\seq{\crit(i_m)}{m<\omega}$ is strictly increasing in the interval $(\xi,\kappa_0)$ and for all $m<\omega$, the critical sequence of $i_m$ is equal to $\langle\crit(i_n)\rangle^\smallfrown\vec{\kappa}$,  $(i_m)_+(A^\prime)=A^\prime$, $(i_m)_+({\blacktriangleleft})={\blacktriangleleft}$, $i_m(\crit(i_m))=\kappa_0$ and $i_m(\kappa_n)=\kappa_{n+1}$ for all $n<\omega$. Define  $$\bar{\lambda} ~ = ~ \sup_{n<\omega}\crit(i_n) ~ \in ~ (\xi,\kappa_0).$$ 
  
  An inverse limit construction (see {\cite[Section 3]{La97}}) now yields an elementary  embedding $\map{i}{V_{\bar{\lambda}}}{V_\lambda}$ with critical sequence $\seq{\crit(i_n)}{n<\omega}$ and $i(\crit(i_n))=\kappa_n$ for all $n<\omega$. We can then extend $i$ to a map $\map{i_+}{V_{\bar{\lambda}+1}}{V_{\lambda+1}}$ by setting $$i_+(B) ~ = ~ \bigcup\Set{i(B\cap V_{\crit(i_n)})}{n<\omega}$$ for all $B\in V_{\bar{\lambda}+1}$. Our setup then ensures that $$i_+(A\cap V_{\bar{\lambda}}) ~ = ~ i_+(A^\prime\cap V_{\bar{\lambda}}) ~ = ~ A^\prime$$ and $$i_+({\lhd}\cap V_{\bar{\lambda}}) ~ = ~ i_+({\blacktriangleleft}\cap V_{\bar{\lambda}}) ~ = ~ {\blacktriangleleft}.$$ 
  Using {\cite[Theorem 3.3]{La97}}, we can now find an I3-embedding $\map{k}{V_{\bar{\lambda}}}{V_{\bar{\lambda}}}$ with critical sequence $\seq{\crit(i_n)}{n<\omega}$ satisfying $$k_+(A\cap V_{\bar{\lambda}}) ~ = ~ A\cap V_{\bar{\lambda}}$$ and $$k_+({\lhd}\cap V_{\bar{\lambda}}) ~ = ~ {\lhd}\cap V_{\bar{\lambda}}.$$ 
  
  Note that, since $\HOD_{\bar{\lambda}}$ is the initial segment of $\lhd$ of order-type $\bar{\lambda}$, we also know that $k_+(\HOD_{\bar{\lambda}}) = \HOD_{\bar{\lambda}}$. In particular, it follows that for all $\alpha<\bar{\lambda}$, the map  $k\restriction \HOD_\alpha$ is a non-trivial elementary embedding of the structure $\langle \HOD_\alpha,\in,A\cap V_\alpha\rangle$ into the structure $\langle \HOD_{k(\alpha)},\in,A\cap V_{k(\alpha)}\rangle$. Given $n<\omega$, we then know that $\langle \HOD_{\crit(i_n)},\in,A\cap V_{\crit(i_n)}\rangle$ is an elementary substructure of $\langle \HOD_{\bar{\lambda}},\in,A\cap V_\lambda\rangle$. 

  Since our assumptions ensure that the $\HOD$ Hypothesis holds in $V_{\bar{\lambda}}$, we can repeat the corresponding argument from the proof of Theorem \ref{theorem:ExactingInHOD} to obtain the following statement: 

 \begin{claim*}
     $k\restriction\alpha\in\HOD$ for all $\alpha<\bar{\lambda}$. \qed 
 \end{claim*}  

 If $\delta<\bar{\lambda}$ is an inaccessible cardinal, then $\HOD_\delta$ is the initial segment of $\lhd$ or order-type $\delta$. Given such a cardinal $\delta$, we let $$\map{\pi_\delta}{\langle \HOD_\delta,\lhd\rangle}{\langle\delta,<\rangle}$$ denote the corresponding transitive collapse. The above computations now that $k(\pi_\delta)=\pi_{k(\delta)}$ holds. This allows us to repeat another argument from the proof of Theorem \ref{theorem:ExactingInHOD} and derive the following statement: 

  \begin{claim*}
     $k\restriction \HOD_\alpha\in\HOD$ for all $\alpha<\bar{\lambda}$. \qed 
 \end{claim*}  

 Now, in $\HOD$, we define $T$ to be set of all partial elementary embeddings $$\pmap{\ell}{\langle V_{\bar{\lambda}},\in,A\cap V_{\bar{\lambda}}\rangle}{\langle V_{\bar{\lambda}},\in,A\cap V_{\bar{\lambda}}\rangle}{part}$$ with the property that there exists a natural number $n$ and a strictly increasing sequence $\seq{\mu_m}{m\leq n+2}$ of ordinals below $\bar{\lambda}$ such that $\dom(\ell)=V_{\mu_{n+1}}$, $\ran{\ell}=V_{\mu_{n+2}}$, $\ell\restriction\mu_0=\id_{\mu_0}$,  $\ell(\mu_m)=\mu_{m+1}$ for all $m\leq n$ and the structure $\langle V_{\mu_m},\in,A\cap V_{\mu_m}\rangle$ is an elementary substructure of $\langle V_{\bar{\lambda}},\in,A\cap V_{\bar{\lambda}}\rangle$ for all $m\leq n+2$. 
 Ordering the elements of  $T$ under extensions then turns this set into a tree of height at most $\omega$. Moreover, for all $n<\omega$, the function $k\restriction \HOD_{\crit(n+1)}$ is an element of $T$. This shows that $T$ has a cofinal branch in $V$ and hence this tree has a cofinal branch $b$ in $\HOD$. In this situation, there exists an ordinal $\xi<\alpha\leq\bar{\lambda}$ with the property that $\bigcup b$ is a non-trivial elementary embedding of the structure $\langle \HOD_\alpha,\in,A\cap V_\alpha\rangle$ into itself in $\HOD$.  
\end{proof}


\subsection{Successors of exacting cardinals in HOD}\label{sec: successors of exactings}
We now investigate the possible large cardinal properties of the successors of exacting cardinals in $\HOD$. Since every model of $\ZFC$ with an I2-embedding has a class forcing extension that is a model of $\ZFC+\gch$ and contains an I2-embedding (see {\cite[Theorem 2]{MR2374763}}), Lemma~\ref{lemma:I2andVHOD} shows that Theorem \ref{theorem: exacting + extendible} is a consequence of the following result:

\begin{theorem}\label{theorem:ConstructionSuccOfExactingExtendibleInZF}
  Assume that $\map{j}{V}{M}$ is an I2-embedding with least non-trivial fixed point $\lambda$ and the $\mathrm{CCA}$ holds in  $V_\lambda$. Then there is a transitive set $N$ such that $V_\lambda\in N$ and the following statements hold: 
 \begin{enumerate}
   \item  $N$ is a model of $\mathrm{ZF}+\dc_{{<}\lambda}$. 
   
     \item  $\lambda$ is an exacting cardinal in $N$. 
     \item $\lambda^+$ is an extendible cardinal in $\HOD_x^N$ for all  $x\in \mathcal{P}(\lambda)^N$.
 \end{enumerate}
\end{theorem}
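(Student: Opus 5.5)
The model will be a choiceless inner model of an iterate, chosen so that Corollary~\ref{corollary:ExactingInnerModel} yields exactingness and so that the structure of $\HOD_x$ reflects the large cardinal properties of the ground model above $\lambda$. We may assume $j$ is given by a $(\kappa_0,\lambda)$-extender $E$; let $\seq{M_\alpha}{\alpha\in\ord}$ be the iteration of $V$ by $E$. The natural candidate, generalizing Corollary~\ref{corollary:ExactingModels}\eqref{item:ExactingModels3} and \cite[Corollary 5.6]{ABLG}, is
$$N_0 ~ = ~ \bigcap_{n<\omega} M_n,$$
cut down to a set. The choice is $N=(N_0)_\theta$ or $N=L(V_\lambda,\ldots)\cap(N_0)_\theta$ for a suitable $\theta$ with $j(\theta)=\theta$. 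A good choice is the least non-trivial fixed point above $\lambda$ of a relevant iterate, or an ordinal of the form $j_{0,\omega}(\eta)$, which Proposition~\ref{proposition:IterationFixedPoints}\eqref{prop:InvEmb2} guarantees is fixed by $j$.

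The argument then proceeds in three steps.

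\textbf{Step 1: exactingness.} As in the proof of Corollary~\ref{corollary:ExactingModels}\eqref{item:ExactingModels3}, $j\restriction N_0$ maps $N_0$ elementarily into $N_0$. This holds because $x\in M_n\iff j(x)\in M_{n+1}$, the proof of Proposition~\ref{proposition:IterationFixedPoints}. Moreover, $V_\lambda\in N_0$, $\vec\kappa\in N_0$, and $N_0$ well-orders $V_\lambda$. Corollary~\ref{corollary:ExactingInnerModel} therefore gives (2). Choosing $\theta$ fixed by $j$ and with $(N_0)_\theta\prec_{\Sigma_n}N_0$ transfers this to the set model. Alternatively, we apply Theorem~\ref{theorem:ExactingInnerModels} directly, with $X_0=X_1=(N_0)_\theta$.

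\textbf{Step 2: $\dc_{<\lambda}$.} Every $M_n$ is closed under $\omega$-sequences, in fact under ${<}\kappa_n$-sequences in the relevant sense, and $\lambda$-sequences of elements of $N_0$ with countable-cofinality structure can be handled coordinatewise. Concretely, for $\mu<\lambda$ and $n$ with $\kappa_n>\mu$, each $M_m$ with $m\ge n$ is closed under $\mu$-sequences in $M_n$. A dependent-choice construction performed in $M_n$ using the well-order of $M_n$ therefore produces sequences lying in $\bigcap_m M_m$. This yields (1).

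\textbf{Step 3: extendibility of $\lambda^+$ in $\HOD_x^N$.} This is the main obstacle, and it is where CCA enters. Fix $x\subseteq\lambda$ in $N$. Since CCA holds in $V_\lambda$ and hence in each $M_n$ below $\lambda$, every bounded subset of $\lambda$ is coded in the continuum pattern. It follows that $x$, which is a limit of coded pieces, together with $\vec\kappa$-type data, makes $\HOD_x^N$ contain $M_\omega$-like structure.

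The plan is to show that $\HOD_x^N$ is sandwiched as $M_\omega[\text{small}]\subseteq\HOD^N_x$. The key consequence is that $(\lambda^+)^N$ is the critical point $\lambda=\crit(j_{\omega,\omega+1})$ shifted. More precisely, the successor of $\lambda$ in $N$ should equal $j_{0,\omega}(\kappa_0^{+})$-type images. The hope is that $N$ computes $\lambda^+$ as an ordinal that is inaccessible, indeed extendible, in $M_\omega$, because $M_\omega$ is far from $N$ above $\lambda$ (choice fails).

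Extendibility would then be witnessed by restrictions of the later iteration maps $j_{\omega,\alpha}$, or of ground-model embeddings transported into $\HOD_x^N$. To get there, first prove $\HOD_x^N\subseteq M_\omega[x]$ by homogeneity of the tail iteration. Then identify $(\lambda^+)^N$ with a cardinal $\theta'$ that $j$ and the iterates witness to be extendible in $M_\omega[x]$, transferring the witnesses down.

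I expect this identification of $(\lambda^+)^N$, and the proof that the relevant embeddings are ordinal definable from $x$ in $N$, to be the hardest part. If it fails, I would fall back on choosing $N$ as a symmetric collapse of $\lambda^+$-many ground cardinals, so that $\lambda^+$ becomes a former large cardinal of $V$ (e.g.\ an extendible in $M_\omega$ produced by reflecting the I2-embedding). In that case the verification would be a homogeneity argument showing that $\HOD_x^N\subseteq M_\omega[x]$ and that extendibility survives small forcing there.
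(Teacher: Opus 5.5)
Your Step 1 is essentially Corollary~\ref{corollary:ExactingModels}\eqref{item:ExactingModels3} and is fine. Steps 2 and 3 do not go through, and the gap in Step 3 is the whole content of the theorem.

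The model $\bigcap_n M_n$ gives you no control over $\lambda^+$. Take any $\eta<\lambda$. The sequence $\seq{j_{n,\omega}[j_{0,n}(\eta)]}{n<\omega}$ lies in every $M_k$: its tail from $k$ on is definable in $M_k$ from the iteration of $M_k$ by $j_{0,k}(E)$. For $n<k$ the term equals $j_{k,\omega}[j_{n,k}[j_{0,n}(\eta)]]$ with $j_{n,k}[j_{0,n}(\eta)]\in V_\lambda$. Its union is $j_{0,\omega}(\eta)$, and each term has order type $j_{0,n}(\eta)<\lambda$. Hence in $\bigcap_n M_n$ every $j_{0,\omega}(\eta)$ with $\eta<\lambda$ has cardinality at most $\lambda$. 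This includes $j_{0,\omega}(\kappa^+)=(\lambda^+)^{M_\omega}$, so your proposed identification of $(\lambda^+)^N$ is false there. (A successor cardinal of $M_\omega$ could never be extendible in a $\HOD_x$ containing the relevant part of $M_\omega$ anyway.) It also includes $j_{0,\omega}(\mu)$ for every $\mu<\lambda$ that is extendible in $V_\lambda$. You have no argument that whatever ordinal ends up as $\lambda^+$ of this model is large in any $\HOD^N_x$, nor that $\HOD^N_x$ is a small extension of anything.

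Your fallback, a symmetric collapse turning $\lambda^+$ into a former large cardinal, has the right shape. It misses the decisive constraint, though. The collapsing generic cannot be chosen externally, because $j\restriction N$ must still map $N$ onto itself for Corollary~\ref{corollary:ExactingInnerModel} to apply. So the generic has to be manufactured from the iteration and be shifted by $j$.

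That is exactly what the paper does.
\begin{itemize}
\item Fix $\mu$ with $\kappa<\mu<\kappa_1$ extendible in $V_\lambda$, and let $\calU$ be the normal fine measure on $\mathcal P_\kappa(\mu)$ derived from $j$.
\item Put $x_n=j_{n,\omega}[j_{0,n}(\mu)]$, which is one of the sequences above. By Mathias' criterion, $\vec x$ is generic over $M_\omega$ for the supercompact Prikry forcing $j_{0,\omega}(\PPP_\calU)$. By the argument of Proposition~\ref{proposition:IterationFixedPoints}, $j(x_n)=x_{n+1}$.
\item Let $\bar M=(M_\omega)_{j_{0,\omega}(\lambda)}$, which is fixed by $j$. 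Take Kafkoulis' model $N=\bar M\bigl(\bigcup_{\alpha\in\mathcal R}\mathcal P^{\bar M[\vec x_\alpha]}(\lambda)\bigr)$, where $\mathcal R$ is the set of $\bar M$-regular cardinals in $[\lambda,j_{0,\omega}(\mu))$ and $\vec x_\alpha=\seq{x_n\cap\alpha}{n<\omega}$.
\end{itemize}
Kafkoulis' analysis gives $\ZF+\dc_{{<}\lambda}$. It also shows that every set of ordinals of $N$ lies in some $\bar M[\vec x_\alpha]$, and that $(\lambda^+)^N=j_{0,\omega}(\mu)$. The equation $j(N)=N$ gives exactingness. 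Finally, CCA transfers to $\bar M$ by elementarity and is unaffected by the small forcing. This yields $\bar M\subseteq\HOD^N_x\subseteq\bar M[\vec x_\alpha]$ for some $\alpha\in\mathcal R$. That is a forcing extension by a poset small relative to $j_{0,\omega}(\mu)$, so extendibility survives.

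Your Step 2 rests on a false premise: $M$ is not closed under $\omega$-sequences. The sequence $\seq{j[\kappa_n]}{n<\omega}$ consists of elements of $V_\lambda\subseteq M$, while its union $j[\lambda]$ is not in $M$ by Kunen. Relativizing the same argument, $M_{n+1}$ is not closed under $\omega$-sequences in $M_n$. So your derivation of $\dc_{{<}\lambda}$ in $\bigcap_n M_n$ fails; in the paper, $\dc_{{<}\lambda}$ is one of Kafkoulis' results.
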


\begin{proof}
 Let $\seq{\kappa_n}{n<\omega}$ denote the critical sequence of $j$. Then $\kappa=\kappa_0$ and $\lambda=\sup_{n<\omega}\kappa_n$. 
 Without loss of generality, we may also assume that $j$ is given by a $(\kappa,\lambda)$-extender $E$ and, as outlined above, we can consider the iteration $$\langle \seq{M_\alpha}{\alpha\in\ord},\seq{\map{j_{\alpha,\beta}}{M_\alpha}{M_\beta}}{\alpha\leq\beta\in\ord}\rangle$$ of $V$ and $E$. Let $\kappa <\mu <j(\kappa)$ be an extendible cardinal in $V_\lambda$ and let $\calU$ be the normal, ${<}\kappa$-complete, fine ultrafilter on $\mathcal{P}_\kappa(\mu)$ induced by $j$. In addition, let $\mathbb{P}_{\mathcal{U}}$ be the corresponding {Supercompact Prikry forcing} (see \S\ref{sec: Supercompact Prikry}).

\smallskip

Define $\vec{x}=\seq{x_n}{n<\omega}$ to be the unique sequence with $$x_n ~  = ~ j_{n,\omega}[j_{0,n}(\mu)]$$ for all $n<\omega$.\footnote{Note that $j_{n,n+1}[j_{0,n}(\mu)]$ equals $[\id]_{j_{0,n}(\mathcal{U})}$.}

\begin{claim*}
    The sequence $\vec{x}$ is $\prec$-increasing in $\mathcal{P}_\lambda(j_{0,\omega}(\mu))$ and for each $A\in j_{0,\omega}(\calU)$, there is an $n_*<\omega$ with the property that $x_n\in A$ for all $n\geq n_*$.\qed
\end{claim*}

By the Mathias criterion of genericity for Supercompact Prikry forcing (see \cite[\S1]{Gitik-handbook}), the sequence $\vec{x}$ induces a $j_{0,\omega}(\PPP_{\calU})$-generic filter $$G(\vec{x}) ~ = ~ \Set{\langle s, A\rangle\in j_{0,\omega}(\PPP_{\calU})}{s\sqsubseteq \vec{x}, ~ \vec{x}\setminus s\s A}$$ over $M_\omega$. 
Clearly, we have $M_\omega[\vec{x}]=M_\omega[G(\vec{x})]$ and  $|j_{0,\omega}(\PPP_{\calU})|^{M_\omega}<j_{0,\omega}(\lambda)$. 
In particular, it follows that $(M_{\omega})_{j_{0,\omega}(\lambda)}[\vec{x}]=M_\omega[\vec{x}]_{j_{0,\omega}(\lambda)}$ is a model of $\ZFC$. 
Define $$\bar{M} ~ = ~ (M_\omega)_{j_{0,\omega}(\lambda)}.$$ Elementarity then ensures that $\bar{M}$ is a model of $\ZFC+``V=\gHOD"$ and $j_{0,\omega}(\mu)$ is extendible in $\bar{M}$.\footnote{In \cite{ABLG}, the authors  looked at the generic extension of $\bar{M}$ via the Prikry sequence induced by $j$. The natural analogue here would be to look at the generic extension $\bar{M}[\vec{x}]$. The caveat, though, is that the $\bar{M}$-cardinal $j_{0,\omega}(\mu)$ is collapsed to $\lambda$ when passing from $\bar{M}$  to $\bar{M}[\vec{x}]$. So instead, we will look at the $\ZF$-submodel of $\bar{M}[\vec{x}]$ given by all the objects that are  definable (in $\bar{M}[\vec{x}]$) from parameters in $\bar{M}$ and from subsets of $\lambda$ in the various sub-generic extensions of $\bar{M}[\vec{x}]$.} Moreover, we have $V_\lambda\in\bar{M}$.

Define  $\mathcal{R}$ to be the collection of all $\bar{M}$-regular cardinals in the interval $[\lambda,j_{0,\omega}(\mu))$  and for each  $\alpha\in \mathcal{R}$, define $$\map{\pi_\alpha}{\mathcal{P}_\lambda(j_{0,\omega}(\mu))}{\mathcal{P}_\lambda(\alpha)}; ~ x\mapsto x\cap \alpha$$ to be the map  that induces a Rudin--Keisler projection of $j_{0,\omega}(\calU)$ that we denote by  $j_{0,\omega}(\mathcal{U})_\alpha$. 
By the Mathias criterion of genericity, the sequence  $$\vec{x}_\alpha  ~ = ~ \seq{x_n\cap\alpha}{n<\omega}$$ is $\PPP_{j_{0,\omega}(\calU)_\alpha}$-generic over $\bar{M}$. 

\smallskip

Following arguments of Kafkoulis in \cite{Kafkoulis}, our intended model is $$N ~ = ~ \bar{M}\big(\bigcup\Set{\mathcal{P}^{\bar{M}[\vec{x}_\alpha]}(\lambda)}{\alpha\in \mathcal{R}}\big).$$

The following statements are all proven by Kafkoulis in \cite{Kafkoulis}:

\begin{claim*}
    \begin{enumerate}
        \item\label{item:Kafkoulis1} $N$ is a model of $\ZF+\dc_{{<}\lambda}$. 
        
        \item\label{item:Kafkoulis2} Every set of ordinals in $N$ belongs to $\bar{M}[\vec{x}_\alpha]$ for some $\alpha\in \mathcal{R}$. 
        
        \item\label{item:Kafkoulis3} $\lambda$ is a strong limit cardinal of countable cofinality in $N$ and $j_{0,\omega}(\mu)=(\lambda^+)^N$. 
        
        \item\label{item:Kafkoulis4} The following statements are equivalent for every $\theta <\lambda$: 
          \begin{enumerate}
              \item $\theta$ is an $N$-cardinal. 

              \item $\theta$ is an $\bar{M}[\vec{x}]$-cardinal. 

              \item $\theta$ is an $\bar{M}$-cardinal.
          \end{enumerate} 
    \end{enumerate}
\end{claim*}

\begin{proof}[Proof of the claim]
    \eqref{item:Kafkoulis1}  is {\cite[Corollary 4.1.13]{Kafkoulis}},  
    \eqref{item:Kafkoulis2}  is {\cite[Corollary 2.1.12]{Kafkoulis}},   \eqref{item:Kafkoulis3} is \cite[Corollary 2.1.13]{Kafkoulis} and \eqref{item:Kafkoulis4} follows from the fact that forcing with $j_{0,\omega}(\PPP_{\calU})$ does not add bounded subsets to $\lambda$. 
\end{proof}

\begin{claim*}
    $j\restriction N$ is an elementary embedding of $N$ into itself. 
\end{claim*}

\begin{proof}[Proof of claim]
  Since $N$ is a set-sized model and $\map{j}{V}{M}$ is an elementary embedding, it is clear that $\map{j\restriction N}{N}{j(N)}$ is also an elementary embedding. Thus, it suffices to argue that $j(N)=N$ holds.  To show this, we first observe the  following: 
  \begin{itemize}
    \item $j(\bar{M})=j((M_\omega)_{j_{0,\omega}(\lambda)})=(M_\omega)_{j_{0,\omega}(\lambda)}$. 
    
    \item $j(\mathcal{R})=\mathcal{R}$. 
    
    \item $j(\vec{x})_\alpha=\seq{j(x_n)\cap\alpha}{n<\omega}=\seq{x_{n+1}\cap\alpha}{n<\omega}$.\footnote{In particular, we have $\bar{M}[\vec{x}_\alpha]=\bar{M}[j(\vec{x})_\alpha]$, because $x_0=x_{1}\cap \sup(x_0)$.}
  \end{itemize}

 The set $N$ is the smallest $\ZF$-model containing $\bar{M}$ that has the set $$\bigcup\Set{\mathcal{P}^{\bar{M}[\vec{x}_\alpha]}(\lambda)}{\alpha\in \mathcal{R}}$$ as an element. By elementarity, in $M$, the set $j(N)$ is  the smallest $\ZF$-model containing $j\big(\bar{M})=\bar{M}$ that has $$j(\bigcup\Set{\mathcal{P}^{\bar{M}[\vec{x}_\alpha]}(\lambda)}{\alpha\in \mathcal{R}}\big) ~ = ~ \bigcup\Set{\mathcal{P}^{\bar{M}[\vec{x}_\alpha]}(\lambda)}{\alpha\in \mathcal{R}}$$ as an element. In particular, we have $j(N)=N$, as needed.
\end{proof}

We now know that $N\subseteq M$ is a model of $\ZF$, $V_\lambda\in N$, $\cof{\lambda}^N=\omega$ and $j\restriction N$ is an elementary embedding of $N$ into $N$. Moreover, since $V_\lambda\in N$ is a model of $\mathrm{CCA}$ and therefore $"V=\HOD"$ holds in $V_\lambda$,  there is a well-ordering of $V_\lambda$ in $N$. 
Invoking Corollary~\ref{corollary:ExactingInnerModel}, we can  conclude that  $\lambda$ is an  exacting cardinal in $N$.

\smallskip

Finally, we analyze $\HOD^N_x$ for an arbitrary set $x\in \mathcal{P}^N(\lambda)$:

\begin{claim*}
    For each $x\in \mathcal{P}^N(\lambda)$ there is $\alpha\in \mathcal{R}$ such that $$\bar{M}\s \HOD^N_x\s \bar{M}[\vec{x}_\alpha].$$
\end{claim*}

\begin{proof}[Proof of claim]
   To prove the first inclusion, it suffices to show that every set of ordinals in $\bar{M}$ belongs to $\HOD^N_x$. Since every set of ordinals in $\bar{M}$ was coded into the continuum-function-pattern cofinally often  below $j_{0,\omega}(\lambda)$ and $|j_{0,\omega}(\mathbb{P}_{\mathcal{U}})|<j_{0,\omega}(\lambda)$, this coding is absolute between $\bar{M}$ and $N$. In particular, we have  $$\bar{M} ~ \s  ~ \HOD^N ~ \s ~ \HOD^N_x.$$ 

   For the second  inclusion, since $\bar{M}\s \HOD^N_x\s \bar{M}[\vec{x}]$, the intermediate model theorem hands us a generic filter $H\in\bar{M}[\vec{x}]$ for a subforcing of $j_{0,\omega} (\mathbb{P}_{\mathcal{U}})$ such that $\bar{M}[H]=\HOD^N_x$. Since the latter is a model of choice, there is a set of ordinals $A\in N$ such that $\bar{M}[H]=\bar{M}[A]$. Invoking \eqref{item:Kafkoulis2} from our second claim,  we find $\alpha\in \mathcal{R}$ such that $A\in \bar{M}[\vec x_\alpha]$, hence $\HOD^N_x\s \bar{M}[\vec{x}_\alpha]$, as needed. 
\end{proof}

By the intermediate model theorem $\HOD^N_x$ is a generic extension by a subforcing of $\mathrm{RO}(j_{0,\omega}(\mathbb{P})_{\pi_\alpha(j_{0,\omega}(\mathcal{U}))})$, hence by a forcing of size less than $j_{0,\omega}(\mu)$. As a result, the latter is an extendible cardinal in $\HOD^N_x$. 
\end{proof}

The following observation  sharpens \cite[Corollary~5.6]{ABLG}:

\begin{theorem}\label{thm:generalizingABLG}
   Assume the  hypothesis of Theorem \ref{theorem:ConstructionSuccOfExactingExtendibleInZF}. Then there is a transitive, set-sized model $\bar{M}$ of $\ZFC$ with $V_\lambda\in\bar{M}$ and an ordinal $\lambda<\bar{\mu}\in\bar{M}$ with the property that for every $\lambda<\alpha<\bar{\mu}$, there exists a transitive set $M(\alpha)$ such that $M(\alpha)$ is a forcing extension of $\bar{M}$, $\lambda$ is an exacting cardinal in $M(\alpha)$, $\bar{\mu}$ is an extendible cardinal in $M(\alpha)$ and $\alpha<(\lambda^+)^{M(\alpha)}$.  
\end{theorem}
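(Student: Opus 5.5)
We reuse the setup of the proof of Theorem \ref{theorem:ConstructionSuccOfExactingExtendibleInZF}: the $(\kappa,\lambda)$-extender $E$ inducing $j$, the iteration $\seq{M_\alpha}{\alpha\in\ord}$, the extendible cardinal $\kappa<\mu<j(\kappa)$ of $V_\lambda$, the normal fine ultrafilter $\calU$ on $\mathcal{P}_\kappa(\mu)$ derived from $j$, and the sequence $\vec{x}=\seq{x_n}{n<\omega}$ with $x_n=j_{n,\omega}[j_{0,n}(\mu)]$. We take $\bar{M}=(M_\omega)_{j_{0,\omega}(\lambda)}$ and $\bar{\mu}=j_{0,\omega}(\mu)$. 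That proof already shows that $\bar{M}$ is a transitive set model of $\ZFC+``V=\gHOD"$ with $V_\lambda\in\bar{M}$, and that $\bar\mu$ is extendible in $\bar{M}$. Given $\lambda<\alpha<\bar{\mu}$, we pick an $\bar{M}$-regular cardinal $\alpha'\in\mathcal{R}$ with $\alpha\leq\alpha'<\bar\mu$; such $\alpha'$ exists because $\bar\mu$ is inaccessible in $\bar M$. We then set $M(\alpha)=\bar{M}[\vec{x}_{\alpha'}]$, which is the Supercompact Prikry extension of $\bar{M}$ by $\PPP_{j_{0,\omega}(\calU)_{\alpha'}}$. By the Mathias criterion, $\vec{x}_{\alpha'}$ is generic over $\bar M$, so $M(\alpha)$ is a forcing extension of $\bar M$.

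There are three things to verify. First, $\bar\mu$ stays extendible in $M(\alpha)$. The forcing $\PPP_{j_{0,\omega}(\calU)_{\alpha'}}$ has size at most $2^{\alpha'}<\bar\mu$ in $\bar M$, since $\bar\mu$ is inaccessible there, and extendibility is preserved by small forcing by the usual lifting argument. Second, $\alpha<(\lambda^+)^{M(\alpha)}$. By the basic properties of Supercompact Prikry forcing listed in \S\ref{sec: Supercompact Prikry}, the forcing adds the cofinal $\omega$-sequence $\vec x_{\alpha'}$ of sets $x_n\cap\alpha'$, each of size less than $\lambda$, whose union is $\alpha'$. Hence $\alpha'$ is collapsed to $\lambda$ in $M(\alpha)$. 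Since $\alpha'$ is regular in $\bar M$, which gives $\cof{\alpha'}\geq\lambda$, the successor $(\lambda^+)^{M(\alpha)}$ is at least $(\alpha')^{+\bar M}>\alpha$.

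Third, and this is the main step, $\lambda$ is exacting in $M(\alpha)$. We apply Corollary \ref{corollary:ExactingInnerModel} with $N=M(\alpha)$. We have $V_\lambda\in M(\alpha)$, the sequence $\seq{\min(x_n\setminus\lambda\cap\lambda)}{n}$ (or simply the sequence $\seq{\sup(x_n\cap\lambda)}{n<\omega}$) witnesses $\cof{\lambda}=\omega$, and $M(\alpha)$ satisfies choice, so $V_\lambda$ is well-orderable. It remains to see that $j\restriction M(\alpha)$ maps $M(\alpha)$ elementarily into itself, i.e.\ $j(M(\alpha))=M(\alpha)$. Proposition \ref{proposition:IterationFixedPoints} gives $j(\bar M)=\bar M$ and $j(j_{0,\omega}(y))=j_{0,\omega}(y)$ for every $y$, so $j$ fixes $\bar\mu$ and $j_{0,\omega}(\calU)$. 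The ordinal $\alpha'$ may move, so we should choose $\alpha'$ in advance to be a fixed point of $j$, for instance $\alpha'=j_{0,\omega}(\beta)$ for a suitable $\beta$. This is possible: the ordinals $j_{0,\omega}(\beta)$ for $\beta<\mu$ regular are cofinal in $\bar\mu$, since $j_{0,\omega}$ is continuous at ordinals of small cofinality and each $x_n$ is cofinal. As in the footnote of the previous proof, $j(\vec x_{\alpha'})=\seq{x_{n+1}\cap\alpha'}{n<\omega}$, which is a tail of $\vec x_{\alpha'}$ and therefore generates the same extension. So $j(M(\alpha))=\bar M[j(\vec x_{\alpha'})]=M(\alpha)$. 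The delicate points are exactly this fixed-point bookkeeping for $\alpha'$ and the computation of $j(\vec{x})$. If no fixed point of the form $j_{0,\omega}(\beta)$ is regular in $\bar M$ above $\alpha$, I would instead use $\alpha'=\bar\mu$'s projection at a suitable $j_{0,\omega}(\beta)$ with $\beta$ regular in $V$, which $j_{0,\omega}$ sends to an $\bar M$-regular cardinal.
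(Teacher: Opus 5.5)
\textbf{There is a genuine gap in your third step: a $j$-fixed $\bar M$-regular $\alpha'\geq\alpha$ need not exist.} Your cofinality claim is false. The set $\{j_{0,\omega}(\beta) : \beta<\mu\}$ is just $x_0$. Like every $x_n$, it belongs to $\mathcal{P}_\lambda(\bar\mu)^{M_\omega}$, so it is bounded in the $M_\omega$-regular cardinal $\bar\mu$; in particular $j_{0,\omega}$ is discontinuous at $\mu$. Only the union $\bigcup_n x_n$ equals $\bar\mu$. Worse, no $j$-fixed choice exists for large $\alpha$:
\begin{itemize}
\item Since $j(x_n)=x_{n+1}$, we get $j(\sup x_n)=\sup x_{n+1}$.
\item The $\sup x_n$ are nondecreasing and cofinal in $\bar\mu$. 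So every $\gamma\in[\sup x_0,\bar\mu)$ lies in some interval $[\sup x_n,\sup x_{n+1})$, and then $j(\gamma)\geq\sup x_{n+1}>\gamma$.
\end{itemize}
Hence for $\alpha\geq\sup x_0$, every candidate $\alpha'$ is moved by $j$. For such $\alpha'$, the model $j(M(\alpha))=\bar M[\seq{x_{m+1}\cap j(\alpha')}{m<\omega}]$ collapses $j(\alpha')$ to $\lambda$. On the other hand, $j(\alpha')\geq\sup x_{n+1}$, which is a limit of $\bar M$-inaccessibles. So $j(\alpha')$ remains a cardinal above $\lambda$ in $M(\alpha)$, an extension by a forcing with the $((\alpha')^{<\lambda})^+$-chain condition in $\bar M$. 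Therefore $j\restriction M(\alpha)$ does not map $M(\alpha)$ into itself, and Corollary \ref{corollary:ExactingInnerModel} is unavailable. Your fallback sentence is the same idea and fails for the same reason. For $\alpha<\sup x_0$ your argument is fine. You were right to flag this point: the paper's one-line proof picks an arbitrary $\beta\in\mathcal{R}$ above $\alpha$ and cites Corollary \ref{corollary:ExactingInnerModel} without addressing it.

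\textbf{The missing idea is to replace $j$ by $j_{n,n+1}$, for some $n$ with $\alpha<\sup x_n$.} Apply Proposition \ref{proposition:IterationFixedPoints} inside $M_n$ to the extender $j_{0,n}(E)$, whose iteration is $\seq{M_{n+\gamma}}{\gamma\in\ord}$ with $M_{n+\omega}=M_\omega$. This shows that $j_{n,n+1}$ maps $M_\omega$ elementarily into itself and fixes every $j_{n,\omega}(y)$. In particular it fixes $\bar M$, $\bar\mu$, $j_{0,\omega}(\calU)$ and every ordinal in $x_n$. It sends $x_m$ to $x_{m+1}$ for $m\geq n$. It fixes $x_m$ for $m<n$, because $x_m\subseteq x_n$ and $|x_m|<\kappa_n=\crit(j_{n,n+1})$. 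Now choose $\alpha'\in x_n$ above $\alpha$ of the form $j_{n,\omega}(\gamma)$ with $\gamma$ regular in $M_n$; such ordinals are cofinal in $\sup x_n$. Then $j_{n,n+1}(\vec x_{\alpha'})$ is $\vec x_{\alpha'}$ with the single entry $x_n\cap\alpha'\in\bar M$ deleted, so $j_{n,n+1}(M(\alpha))=M(\alpha)$. This embedding moves $\kappa_n$ and fixes $\lambda$. Theorem \ref{theorem:ExactingInnerModels}, with $X_0=X_1=N=M(\alpha)$, then shows that $\lambda$ is exacting in $M(\alpha)$. Your remaining verifications are fine: preservation of extendibility, $\alpha<(\lambda^+)^{M(\alpha)}$, and $\cof{\lambda}^{M(\alpha)}=\omega$ via $x_n\cap\lambda=\kappa_n$.
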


\begin{proof}
    We use the notations of the proof of Theorem~\ref{theorem: exacting + extendible} and again define $\bar{M}=(M_\omega)_{j_{0,\omega}(\lambda)}$. Set $\bar{\mu}=j_{0,\omega}(\mu)$. Given $\lambda<\alpha<\bar{\mu}$, pick $\alpha\leq\beta\in\mathcal{R}$ and define $M(\alpha)=\bar{M}[\vec{x}_\beta]$. 
    The above arguments then show that $\bar{\mu}$ is an extendible cardinal in $M(\alpha)$. Moreover, Corollary~\ref{corollary:ExactingInnerModel} ensures that $\lambda$ is exacting in $M(\alpha)$. 
\end{proof}

The formulation of Theorem \ref{theorem: exacting + extendible} was motivated by {\cite[Theorem 3.14]{ABL}} showing that the successors of ultraexacting cardinals are measurable cardinals in $\HOD$. 
The following result shows that $\ZFC$ does not prove that these successors are extendible in $\HOD$.

\begin{prop}
 If $\mathrm{ZFC}$ is consistent with the existence of an ultraexacting cardinal, then the resulting theory does not prove that a proper class of ordinals are measurable in $\HOD$. 
\end{prop}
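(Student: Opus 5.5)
We have to show: if ZFC + "there is an ultraexacting cardinal" is consistent, then that theory does not prove "there is a proper class of ordinals measurable in HOD". The strategy is to start from a model of ZFC with an ultraexacting cardinal $\lambda$ and cut the universe off at a suitable height. The cut-off model should still see $\lambda$ as ultraexacting, but contain only boundedly many HOD-measurables.

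The natural candidate is $V_\theta$ for an inaccessible $\theta$ above $\lambda$, or alternatively the least ordinal at which the relevant structure reflects. We need:
\begin{itemize}
\item a model $V_\delta\models\ZFC$ with $\delta>\lambda$ in which $\lambda$ is still ultraexacting;
\item above some bound $\beta<\delta$, no ordinal of $V_\delta$ is measurable in $\HOD^{V_\delta}$.
\end{itemize}

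\textbf{Step 1: making $V$ agree with $\HOD$ above $\lambda$.} First I would arrange, by forcing, that the universe has no measurables in $\HOD$ above some point. Take a model with an ultraexacting cardinal. By Lemma~\ref{lemma:ExactingSmallForcing} and the ZF-definition route through Proposition~\ref{prop:PreserveExactingGoodForcing}, ultraexactingness survives weakly homogeneous, ordinal definable forcings that add no subsets of $V_\lambda$. Such forcings are, for instance, highly closed McAloon-style coding forcings above $\lambda$. So force with a class Easton-support iteration above $\lambda^{+}$ that codes the universe into the GCH pattern, so that above $\lambda^{+}$ the extension has $V=\HOD$ relative to $V_{\lambda+1}$-level information.

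Then, at the least inaccessible $\delta>\lambda$ of the extension (if there is none, $V$ itself works), restrict to $V_\delta$. By Proposition~\ref{prop:ExactingRankSegments}, $\lambda$ remains ultraexacting in $V_\delta$. Choosing the least inaccessible means $V_\delta$ has no inaccessibles above $\lambda$. Also $\HOD^{V_\delta}\subseteq\HOD$, and I want to argue that a measurable in $\HOD^{V_\delta}$ above $\lambda^{+}$ would be inaccessible in $V_\delta$. That is the key point.

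\textbf{Step 2 (the main obstacle).} Show that any $\kappa>(2^\lambda)^{V}$ measurable in $\HOD^{V_\delta}$ is inaccessible in $V_\delta$. Measurability in $\HOD$ gives inaccessibility in $\HOD$, but $\HOD$ may be far from $V$. To close this gap, I would code every set into the continuum function above $\lambda^{+}$, using the coding of Step 1, which is definable in $V_\delta$. Then every subset of an ordinal $\alpha>\lambda^{+}$ is ordinal definable from a subset of $V_{\lambda+1}$, so $V_\delta=\HOD_{V_{\lambda+1}}^{V_\delta}$.

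Now $\HOD$ and $V$ differ only by $V_{\lambda+1}$, which is "small" (size $2^\lambda$). By a Levy--Solovay style argument, strong limitness and regularity of $\kappa>2^\lambda$ transfer from $\HOD$ to $V$, since $V$ is obtained from $\HOD$ by adjoining a set of size $<\kappa$ via Vopěnka forcing. So $\kappa$ would be inaccessible in $V_\delta$, contradicting the choice of $\delta$.

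\textbf{Conclusion.} Hence in $V_\delta$ the HOD-measurables are bounded by $(2^\lambda)^+$, while $\lambda$ is ultraexacting. Therefore $V_\delta$ witnesses that the theory does not prove there is a proper class of ordinals measurable in $\HOD$.
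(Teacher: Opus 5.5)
Your Step~1 is where the argument fails, and it cannot be repaired in the form you propose.

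\textbf{The coding forcing does not preserve ultraexactingness.} Neither Lemma~\ref{lemma:ExactingSmallForcing} (which concerns posets in $H(\lambda)$) nor Proposition~\ref{prop:PreserveExactingGoodForcing} covers a McAloon-style coding. The argument in the proof of Proposition~\ref{prop:PreserveExactingGoodForcing} shows that, for an ordinal definable and weakly homogeneous $\PPP$, a ground-model set of ordinals that is ordinal definable in $V[G]$ was already ordinal definable in $V$. A forcing that codes sets into the continuum function exists precisely to violate this. If the coding is applied to all sets, as in McAloon's construction, then a set of ordinals encoding $V_{\lambda+1}$ together with a well-ordering of it becomes ordinal definable. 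So $V_{\lambda+1}\subseteq\HOD$ in the extension and $\lambda$ is singular in $\HOD$, whereas exacting cardinals are regular in $\HOD$.

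\textbf{The intended end state is impossible.} The weaker target you actually aim for is $V_\delta=\HOD^{V_\delta}_{V_{\lambda+1}}$, with parameters from $V_{\lambda+1}$ as your appeal to the size $2^\lambda$ requires. This is outright incompatible with $\lambda$ being ultraexacting in a model of $\ZFC$, so no cleverer coding can reach it. By the Erd\H{o}s--Hajnal theorem, $V_\delta$ contains an $\omega$-J\'onsson function $F\colon[\lambda]^\omega\to\lambda$, i.e.\ $F[[A]^\omega]=\lambda$ for all $A\in[\lambda]^\lambda$. Suppose $F$ is defined by a formula $\psi$ from ordinals $\vec\alpha$ and some $x_0\in V_{\lambda+1}$. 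Write $F_x$ for what $\psi$ defines from $x$ and $\vec\alpha$. Let $P\subseteq V_{\lambda+1}$ code the ordinal definable set of triples $\langle x,t,\gamma\rangle$ such that $F_x$ is an $\omega$-J\'onsson function with $F_x(t)=\gamma$.

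Ultraexactingness yields a non-trivial elementary $j\colon\langle V_{\lambda+1},\in,P\rangle\to\langle V_{\lambda+1},\in,P\rangle$ with $\kappa=\crit(j)<\lambda$. Then $F_{j(x_0)}$ is $\omega$-J\'onsson, and $F_{j(x_0)}(j(t))=j(F_{x_0}(t))$ for all $t\in[\lambda]^\omega$, where $j(t)=j[t]$ for countable $t$. Applying the J\'onsson property to $j[\lambda]$ gives a countable $t\subseteq\lambda$ with $\kappa=F_{j(x_0)}(j[t])=j(F_{x_0}(t))\in\ran{j}$. This is impossible, since $\kappa$ is the critical point. It is just Kunen's argument run on $V_{\lambda+1}$ with the predicate $P$.

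\textbf{What is missing.} You need a model of $\ZFC$ in which $\lambda$ is ultraexacting and which is a set-forcing extension of its own $\HOD$ with only boundedly many measurable cardinals. The argument above shows that the parameter restoring choice has to lie outside $V_{\lambda+1}$. The paper uses $L(V_{\lambda+1})[G]$ with $G$ being $\Add{\lambda^+}{1}$-generic over an I0 model (available up to consistency by \cite[Theorem 4.5]{ABLG}); there \cite[Corollary 3.31]{ABL} gives ultraexactingness. This model is of the form $L[a]$ for a set $a$ of ordinals, so it has no measurable cardinals above some bound. By Vop\v{e}nka's theorem it is a set-forcing extension of its $\HOD$. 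Your Levy--Solovay step then bounds the measurables of $\HOD$, provided the bound is taken above the size of the Vop\v{e}nka algebra rather than $(2^\lambda)^+$. No cut-off at an inaccessible is needed.

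You could also reach such a model directly from your starting point with the tools you cite. By Proposition~\ref{prop:ExactingDownToL}, $\lambda$ remains ultraexacting in $L(V_{\lambda+1})$. Over $L(V_{\lambda+1})$ the poset $\Add{\lambda^+}{1}$ is ordinal definable, weakly homogeneous and, using $\dc_\lambda$ there, adds no subsets of $V_\lambda$. So Proposition~\ref{prop:PreserveExactingGoodForcing} applies.
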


\begin{proof}
   By \cite[Theorem 4.5]{ABLG}, the consistency of $\mathrm{ZFC}$ with an ultraexacting cardinal implies the consistency of $\mathrm{ZFC}$ with an I0-embedding. Work in a model of $\mathrm{ZFC}$ with an I0-embedding $\map{j}{V_{\lambda+1}}{V_{\lambda+1}}$ and let $G$ be $\Add{\lambda^+}{1}$-generic over $V$. Then $L(V_{\lambda+1})[G]$ is a model of $\mathrm{ZFC}$ and \cite[Corollary 3.31]{ABL} shows that $\lambda$ is an ultraexacting cardinal in this model. Classical results of Vop\v{e}nka (see \cite[Theorem~6]{MR3821636}) now show that $L(V_{\lambda+1})[G]$ is a forcing extension of $\HOD^{L(V_{\lambda+1})[G]}$ using a set-sized partial order and, since $L(V_{\lambda+1})[G]$ does not contain a proper class of measurable cardinals, it follows that $\HOD^{L(V_{\lambda+1})[G]}$ does not contain a proper class of measurable cardinals. 
\end{proof}


\section{Open questions}\label{sec:openproblems}

The results of this paper leave all four far-reaching questions posed in  \S\ref{sec:Intro} unresolved. Below, we present open problems whose solutions should provide substantial progress towards answering these questions. 

\smallskip

In Theorem~\ref{theorem: WeakHODConjecture and LevySolovay}, we showed that the Weak HOD Conjecture answers Question \ref{Q03} in the negative, {i.e.,} it implies that no partial order $\mathbb{P}\in H(\lambda)$ can make a cardinal $\lambda$ of uncountable cofinality exacting. Motivated by Theorems \ref{thm:SimplifiedConsStatLimitUltra}  and \ref{theorem:LevySolovayBeyondHOD}, we ask whether  assumptions --whose consistency is unclear-- can be used to obtain settings in which new exacting cardinals are produced by small forcings:

\begin{question}\label{que:isWHODHnecessary}
  Assuming the consistency of $\ZF$ with some large cardinal axioms that is not known to be inconsistent, is it possible to show that $\ZFC$ is consistent with the existence of a cardinal  $\lambda$ of uncountable cofinality  with the property that $\one\Vdash_\PPP\textit{$``\check{\lambda}$ is an exacting cardinal }"$ holds for some  partial order $\PPP\in H(\lambda)$? 
\end{question}

Next, we consider ramifications of Question \ref{Q04}. Theorem \ref{theorem:ExactingInHODIntro} and Proposition \ref{prop:Subtle} lead us to the following open problem:

\begin{question}
    If the $\HOD$ Hypothesis is true, is every exacting cardinal weakly compact in $\HOD$?
\end{question}

In addition, we can again ask if some version of the $\HOD$ Conjecture is necessary for the conclusion of Theorem \ref{theorem:ExactingInHOD}:

\begin{question}
  Does the existence of an exacting cardinal imply the existence of an I3-embedding in $\HOD$? 
\end{question}

On a similar vein, we may ask about variations of Theorem \ref{theorem:RankIntoRankInHOD} that do not rely on the $\HOD$ Conjecture:

\begin{question}
 Does the existence of an I1-embedding imply the existence of an I3-embedding in $\HOD$? If so, does this conclusion also follow from the existence of an I2- or I3-embedding? 
\end{question}

The reader might be surprised that the model $N$ obtained in Theorem~\ref{theorem: exacting + extendible}  is not a  model of $\ZFC$. This raises the following question:

\begin{question}
    Is there a version of  Theorem~\ref{theorem: exacting + extendible}  where $N$ is a model of $\ZFC$?
\end{question}

A natural strategy for circumventing this issue would be to replace Supercompact Prikry forcing with the Merimovich poset from \cite{MerimovichSupercompact}, relative to the $(\kappa,\mu)$-extender derived from $\map{j}{V}{M}$. Indeed, using the ideas of Merimovich from \cite[Theorem~5.3]{MerimovichPrikryOn}, one can construct, by iterating the original I2-embedding, an $M_\omega$-generic filter $G$. The difficulty, however, is that we need to show that $\lambda$ is exacting in $M_\omega[G]$. For this, we must verify that $j\restriction M_\omega[G]$ is an elementary embedding from $M_\omega[G]$ into itself. Although $j$ fixes $M_\omega$, it is not clear that it also fixes $G$, or even that $M_\omega[G]=M_\omega[j(G)]$.

One possible approach would be to establish that the Bukovský--Dehornoy \cite{Bukovsky,Dehornoy} phenomenon holds in this context; namely, that $M_\omega[G]=\bigcap_{n<\omega}M_n$, where $\map{j_n}{V}{M_n}$ denotes the $n$-th iterate of the original embedding. While the $\s$-inclusion is clear, the converse is not. Establishing it would require $M_\omega[G]$ to be closed under $\omega$-sequences in $V$ (see, e.g., \cite[Theorem~18]{HayutBukovski}). However, this closure fails in our context, since $\map{j}{V}{M}$ is an I2-embedding and hence $j[\lambda]\notin M$ by the Kunen Inconsistency.


\section*{Acknowledgements}
The first author presented the results of this paper at the Simons Semester \emph{``Gödel's Program''} held at IMPAN in June 2026. The authors are very grateful to the organizers for their kind invitations as well as to the participants for their comments and constructive feedback. Lücke gratefully acknowledges
support from the \emph{Deutsche Forschungsgemeinschaft} (Project numbers 522490605 and 567184449). 
Poveda was supported by project  PID2023-147428NB-I00 from the Spanish Government, by the \emph{Alexander von Humboldt Foundation} and by Fundación BBVA\footnote{The BBVA Foundation assumes no responsibility for the opinions, comments, or content included in the project and/or in any results derived from it, all of which are the sole and exclusive responsibility of their authors.} through \emph{Beca Leonardo de Investigación Científica y Creación Cultural 2026}.

\bibliographystyle{alpha}
\bibliography{references}
\end{document}